\theoremstyle{definition}
\newtheorem{theorem}{Theorem}
\newtheorem*{theorem*}{Statement}
\newtheorem{lemma}[theorem]{Lemma}
\newtheorem{corollary}[theorem]{Corollary}
\newtheorem{proposition}[theorem]{Proposition}
\newtheorem{remark}{Remark}
\newtheorem*{condition*}{Condition}
\newtheorem{assumption}{Assumption}
\DeclareMathOperator{\var}{\mathbb Var}
\DeclareMathOperator{\cov}{cov}
\DeclareMathOperator{\rank}{rank}
\DeclareMathOperator{\tr}{tr}
\DeclareMathOperator{\dist}{dist}
\DeclareMathOperator{\colspan}{span}
\DeclareMathOperator{\codim}{codim}
\DeclareMathOperator{\leb}{Leb}
\DeclareMathOperator{\conv}{conv}
\DeclareMathOperator{\vect}{vec}
\newcommand{\1}{\mathbbm 1}
\newcommand{\T}{{\mathsf T}} 
\newcommand{\ZZ}{\mathbb Z}
\newcommand{\CC}{\mathbb{C}}
\newcommand{\PP}{{{\mathbb P}}} 
\newcommand{\EE}{{{\mathbb E}}} 
\newcommand{\NN}{{{\mathbb N}}} 
\newcommand{\RR}{{{\mathbb R}}} 
\newcommand{\TT}{{{\mathbb T}}} 
\newcommand{\SSS}{{{\mathbb S}}}
\newcommand{\mcT}{{\mathscr T}}
\newcommand{\cC}{{\mathcal C}} 
\newcommand{\cD}{{\mathcal D}} 
\newcommand{\cE}{{\mathcal E}} 
\newcommand{\cF}{{\mathcal F}} 
\newcommand{\cH}{{\mathcal H}} 
\newcommand{\cI}{{\mathcal I}} 
\newcommand{\cJ}{{\mathcal J}} 
\newcommand{\cK}{{\mathcal K}} 
\newcommand{\cL}{{\mathcal L}} 
\newcommand{\cM}{{\mathcal M}} 
\newcommand{\cN}{{\mathcal N}}
\newcommand{\cR}{{\mathcal R}} 
\newcommand{\cQ}{{\mathcal Q}} 
\newcommand{\cS}{{\mathcal S}} 
\newcommand{\cV}{{\mathcal V}} 
\newcommand{\cU}{{\mathcal U}} 
\newcommand{\cIc}{{\mathcal I}^{\text{c}}}
\newcommand{\gmax}{\boldsymbol\gamma_{\sup}} 
\newcommand{\ssa}{{\mathsf a}} 
\newcommand{\sF}{{\mathsf F}} 
\newcommand{\ssf}{{\mathsf f}} 
\newcommand{\be}{{\boldsymbol\varepsilon}} 
\newcommand{\bv}{{\boldsymbol v}} 
\newcommand{\bG}{{\boldsymbol G}} 
\newcommand{\bH}{{\boldsymbol H}} 
\newcommand{\bM}{{\boldsymbol M}} 
\newcommand{\bS}{{\boldsymbol S}} 
\newcommand{\bU}{{\boldsymbol U}} 
\newcommand{\ceF}{\widecheck{\mathcal F}} 
\newcommand{\hG}{{\widehat G}} 
\newcommand{\hQ}{{\widehat Q}} 
\newcommand{\hS}{{\widehat S}}
\newcommand{\hX}{{\widehat X}} 
\newcommand{\tDelta}{{\widetilde\Delta}}
\newcommand{\tQ}{{\widetilde Q}} 
\newcommand{\tX}{{\widetilde X}} 
\newcommand{\tqs}{\widetilde{qs}}
\newcommand{\comp}{\text{comp}} 
\newcommand{\incomp}{\text{incomp}} 
\newcommand{\Eop}{{\mathcal E}_{\text{op}}} 
\newcommand{\Eden}{{\mathcal E}_{\text{Den}}} 
\newcommand{\HS}{{\text{HS}}} 
\newcommand{\bmax}{\mathfrak{b}} 
\newcommand{\Cgood}{{\mathcal C}_{\text{good}}} 
\newcommand{\Cbad}{{\mathcal C}_{\text{bad}}}
\newcommand{\ps}[1]{\langle #1 \rangle}
\newcommand{\pss}[1]{\langle\!\langle #1 \rangle\!\rangle}
\newcommand{\bs}{\boldsymbol}
\DeclareMathOperator*{\diag}{diag}
\newcommand{\eqdef}{\triangleq} 
\newcommand{\eqlaw}{\stackrel{\cL}{=}}  
\begin{document}

\title{Spectral measure of
empirical autocovariance matrices \\ of high dimensional Gaussian
stationary processes}

\author{Arup Bose\thanks{Statistics and Mathematics Unit, Indian Statistical
Institute, Kolkata. Email: \texttt{bosearu@gmail.com}.}
\and
Walid Hachem\thanks{
LIGM, CNRS, Univ Gustave Eiffel, ESIEE Paris, F-77454
 Marne-la-Vall\'ee, France.
Email: \texttt{walid.hachem@univ-eiffel.fr}.}}

\date{\today}

\maketitle

\begin{abstract} Consider the empirical autocovariance matrices at given
non-zero time lags, based on observations from a multivariate complex Gaussian
stationary time series. The spectral analysis of these autocovariance matrices
can be useful in certain statistical problems, such as those related to testing
for white noise.  We study the behavior of their spectral measure in the
asymptotic regime where the time series dimension and the observation window
length both grow to infinity, and at the same rate.  Following a general
framework in the field of the spectral analysis of large random non-Hermitian
matrices, at first the probabilistic behavior of the small singular values of
a shifted version of the autocovariance matrix is obtained. This is then
used to obtain the asymptotic behaviour of the empirical spectral
measure of the autocovariance matrices at any lag.  Matrix orthogonal
polynomials on the unit circle play a crucial role in our study.
\end{abstract}

{\bf Keywords:}
{High-dimensional times series analysis},
{Large non-Hermitian matrix theory},
{Limit spectral distribution},
{Matrix orthogonal polynomials},
{Multivariate stationary processes},
{Small singular values}.

{\bf AMS 2020 Subject Classification:} Primary 60B20;
Secondary 60G57, 62M10, 33C47.


\section{
Background, assumptions and results}
\label{sec-complex} 

\subsection{Background}
Consider a multivariate time-series sequence $(\bs x^{(N)})_{N=1,2,\ldots}$ 
where for each $N \in 
 \NN\setminus\{ 0 \}$, 
the process
$\bs x^{(N)} = (\bs
x^{(N)}_k)_{k\in\ZZ}$ is a $\CC^N$--valued centered Gaussian stationary process
in the discrete time parameter $k$. Let 
$$R^{(N)}_L = \EE \bs x^{(N)}_L (\bs x^{(N)}_0)^*$$ be the \textit{autocovariance matrix}  
of $\bs x^{(N)}$ at lag $L$  (throughout this article, $^{*}$ stands for the conjugate transpose). Let $(n_N)_N$ be an increasing sequence of positive integers 
such that 
\begin{equation}
\label{n/N} 
0 < \liminf_{N\to \infty} \frac{N}{n_N} \leq \limsup_{N\to \infty} \frac{N}{n_N} < \infty.  
\end{equation} 
Assume that for each $N$, we have the sample $\bs x^{(N)}_0,
\ldots, \bs x^{(N)}_{n_N-1}$ of the process $\bs x^{(N)}$.  Fixing an integer 
$L \geq 0$, 
the \textit{empirical autocovariance matrix} of order $L$  is given by 
\[
\widehat R^{(N)}_L = \frac{1}{n_N} \sum_{\ell = 0}^{n_N-1} 
 \bs x^{(N)}_{\ell+L} \bs x^{(N)^*}_{\ell} \quad \in \CC^{N\times N},   
\]
where the sum $\ell+L$ will be taken modulo--$n_N$. 
For any matrix $M \in \CC^{m\times m}$, let $\{ \lambda_0(M), \ldots,
\lambda_{m-1}(M) \}$ be its eigenvalues. The spectral measure of $\widehat R^{(N)}_L$ is then defined as 
\[
\mu_N = \frac 1N \sum_{\ell=0}^{N-1} 
  \delta_{\lambda_\ell(\widehat R^{(N)}_L)}. 
\]
We are interested in studying this measure as $N\to \infty$.  In the field of
multivariate time series analysis, it is classically assumed that $N$ is fixed
while the observation window length increases to $\infty$, in which case, under
standard sets of assumptions, $\mu_N$ converges weakly to the spectral measure
of $R^{(N)}_L$ in the almost sure sense. This is no more true in the asymptotic
regime that we consider in this paper, where the time series dimension and the
window length are both large and of the same order of magnitude.

Note that for $L=0$, $\widehat R^{(N)}_L$ is Hermitian and several results are known for this case under different assumptions. Our aim is to consider the cases $L \geq 1$ in which case $\widehat R^{(N)}_L$ are non-Hermitian. Generally speaking, the study of the spectral measure of non-Hermitian matrices is much harder than that for Hermitian matrices. See for example  \cite{lit-paj-rud-tom-05, rud-ver-advmath08, got-tik-ap10}. As an example, it took a tremendous amount of effort from researchers over a long period of time to establish 
the limit of the empirical spectral measure of the matrix all whose elements are real-valued iid with mean zero and variance 1 
(see \cite{tao-vu-aop10}). 

In \cite{bos-hac-20}, using the ideas from \cite{lit-paj-rud-tom-05, rud-ver-advmath08, ver-14},  
 we identified the limit spectral measure of $\widehat R^{(N)}_L$ in the particular case when  
the time series is a (complex) white noise process. 
The same setting is considered in~\cite{yao-yua-arxiv20}, but they relax the modulo--$n_N$ summation 
when constructing $\widehat R^{(N)}_L$. 
However, when the time series is not a white noise, no such result appears to exist in the literature. 

Spectral properties of the sample autocovariance matrices in a stationary time series carry information on the  
process and hence potentially, can be used for statistical inference. 
Some work in this area were initiated by \cite{BB2014free} and \cite{BB2019}. See also \cite{BB2018} for a book-level exposition. For example, plots of the empirical spectral measure of the sample autocovariance matrices for different lags can serve as graphical tests for white noise, or for the order of dependence in the time series. Some first results in this vein were proposed in \cite{bos-hac-20}. The nature of the empirical spectral measure of $\widehat R^{(N)}_L$ for different values of $L$ reflect the degree of dependence that exists in the underlying process. Theoretical support for these tests rests on the asymptotic behavior of the empirical spectral measure.  Moreover, once this behavior is identified, it can potentially be used to develop significance tests for other statistical hypothesis too.  This issue will be taken up elsewhere. 

For any matrix $M \in \CC^{N\times N}$, let $s_0(M) \geq s_1(M) \geq \cdots
\geq s_{N-1}(M)$ be its singular values arranged in a non-increasing order. It
is well-known that for a non-Hermitian matrix, say $M_N \in
\CC^{N\times N}$, as $N \to \infty$, the behavior of its spectral measure is connected to the
probabilistic behavior of the small singular values of the related matrix $M_N
- z \eqdef M_N - z I_N$ for $z \in \CC\setminus \{ 0 \}$.  With this in mind,
we shall seek solutions to the following problems. 
\begin{itemize}
\item 
The behavior of the smallest singular value  
$s_{N-1}(\widehat R^{(N)}_L - z)$ for an arbitrary $z \in \CC\setminus \{ 0 \}$.

\item For an arbitrary $\beta \in (0,1)$, the behavior of the
``small'' singular values $s_{N-\ell}(\widehat R^{(N)}_L - z)$ for
$\ell\in \{ \lfloor N^\beta\rfloor, \ldots, \lfloor N/2 \rfloor \}$ and $z \in
\CC\setminus \{ 0 \}$. Later this will help in 
controlling the magnitude of the singular values $s_{N-\ell}$ when the indices $\ell$ are close to $N^\beta$. 

\item 
The behavior of $\mu_N$ as $N \to\infty$ via the existence of a \textit{deterministic equivalent}.
\end{itemize}

\subsection{Assumptions}

We shall assume that for every $N$, $(\bs x_k^{(N)})_k$  is a stationary Gaussian process whose spectral density exists and satisfies some reasonable regularity conditions. As we shall see, this provides the opportunity to use a variety of technical tools. If the time series are not Gaussian, then the situation is way more involved technically but results similar to those in this paper are expected to hold under suitable restrictions on the time series. 
Let 
$\TT$ denote the unit circle of the complex
plane. Let $\cH^N_+$ be the set of $N\times N$ Hermitian non-negative matrices. 
Suppose that 
for each positive integer $N$, there is an integrable function $S^{(N)} : \TT \to \cH^N_+$ such that 
for each $L\in\ZZ$, 
\begin{equation}
\label{R-S} 
R^{(N)}_L = \frac{1}{2\pi} \int_0^{2\pi} 
    e^{-\imath L\theta} S^{(N)}(e^{\imath\theta}) \, d\theta. 
\end{equation} 
This $S^{(N)}$ is called the \textit{spectral density} of $\{R^{(N)}_{L}, L
\geq 0\}$ or of the corresponding stationary process
\cite[Chap.~1]{roz-livre67}, \cite{bri-livre01}.  We assume that for each $N$,
$S^{(N)}$ is non-trivial in the sense that for each non-zero $\CC^N$--valued
polynomial $p(z)$, 
\[
\int_0^{2\pi} 
 p(e^{\imath\theta})^* \, S^{(N)}(e^{\imath\theta}) \, p(e^{\imath\theta}) 
  \ d\theta > 0\ \ \text{(that is, the matrix is positive definite)}.
\]
We now turn to the more substantial assumptions on $S^{(N)}$. 
The first assumption is 
akin to uniform equicontinuity of $\{S^{(N)}\}$. For $h > 0$, let 
\[
\bs w(S^{(N)}, h) = \sup_\theta \sup_{|\psi|\leq h} 
 \left\| S^{(N)}(e^{\imath(\theta+\psi)}) - S^{(N)}(e^{\imath\theta}) \right\| 
\]
be the modulus of continuity of $S^{(N)}$ with respect to the spectral norm $\| \cdot \|$.

\begin{assumption}
\label{prop-S}
\begin{enumerate}[label=(\roman*)]
\item\label{S-equi} For any $\varepsilon > 0$, 
 there exists $h > 0$ such that 
\[
 \sup_{N\in \NN} \bs w(S^{(N)}, h) < \varepsilon. 
\]
\item\label{bnd-S} With $\displaystyle{\| S^{(N)} \|_\infty^\TT 
 = \max_{\theta} \| S^{(N)}(e^{\imath\theta}) \|}$, 
$$\displaystyle{\bM := \sup_N \| S^{(N)} \|_\infty^\TT} < \infty.$$ 
\item\label{R(0)} \ 
 $\inf_N s_{N-1}(R^{(N)}_0) > 0$.
\end{enumerate} 
\end{assumption} 
Regarding the last assumption, note that 
$$
s_{N-1}(R^{(N)}_0) = s_{N-1}\Bigl( \frac{1}{2\pi} 
 \int_0^{2\pi} S^{(N)}(e^{\imath\theta}) 
 \, d\theta \Bigr).
$$
We allow the spectral density $S^{(N)}(e^{\imath\theta})$ to be singular or
close to singular at some points of $\TT$, but within the restrictions
provided by the two following assumptions.  Assumption~\ref{S-reg}  stipulates
that $S^{(N)}(e^{\imath\theta})$ can be close to being singular only on a set
of frequencies with a small Lebesgue measure and it implies
Assumption~\ref{prop-S}--\ref{R(0)}.  Assumption~\ref{log-S} puts additional
constraint on $s_{N-1}(S^{(N)}(e^{\imath\theta}))$. Examples where
Assumptions~\ref{S-reg} and~\ref{log-S} are satisfied are provided in
Section~\ref{ex-s} below.  Let $\leb(\cdot)$ denote the Lebesgue measure.
\begin{assumption}
\label{S-reg} 
 Suppose that for any $\kappa \in (0,1)$, there exists $\delta > 0$ such that 
\[
\sup_N \leb\left\{ z \in \TT \, : \, 
  s_{N-1}(S^{(N)}(z)) \leq \delta  \right\} \leq \kappa. 
\]
\end{assumption} 
\begin{assumption}
\label{log-S}
\[
\frac 1N \int_0^{2\pi} \log s_{N-1}(S^{(N)}(e^{\imath\theta})) \ d\theta 
 \xrightarrow[N\to\infty]{} 0 .
\] 
\end{assumption} 

\subsection{Results}

Before we state our results, we wish to recall that we have used modulo--$n_N$
summation to construct $\widehat R^{(N)}_L$. This is for convenience and helps
in the details of the proofs. We believe that the results we establish continue
to hold for the sample autocovariance matrices when we define them via the
usual summation over all indices that maintain a lag $L$.  See for
instance~\cite{yao-yua-arxiv20} who relax the modulo--$n_N$ summation in the
context of the model with i.i.d.  processes considered in~\cite{bos-hac-20}. 
These results can be adapted to our model with some work. 


The first result is on a probabilistic bound on the magnitude of the smallest singular value of $(\widehat R^{(N)}_L - z)$. In problems similar to ours, often the optimal factor at the left hand side of the bound given below
is  $N^{-1} t$ instead of our $N^{-3/2} t$ when $\varepsilon > 0$ is fixed. Our weaker bound will serve our purpose. We shall elaborate on this issue during the course of the proof.

\begin{theorem}
\label{snY}
Suppose Assumptions~\ref{prop-S}, \ref{S-reg} and \ref{log-S} hold. Then, for each 
$z \neq 0$ and arbitrarily small $\varepsilon > 0$, there exists a constant $c_{\ref{snY}}$ such that for 
all small $t > 0$ and for all large $N$,  
\[
\PP\left[ s_{N-1}(\widehat R^{(N)}_L - z) \leq N^{-3/2} t \right] \leq 
 \varepsilon t + \exp(- c_{\ref{snY}} \varepsilon^2 N ). 
\]
\end{theorem}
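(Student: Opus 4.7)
The plan is to follow the Rudelson-Vershynin / Litvak-Pajor-Rudelson-Tomczak-Jaegermann paradigm for smallest singular values of non-Hermitian random matrices, adapted to the time-series setting through a Fourier diagonalization. Since the cyclic definition of $\widehat R^{(N)}_L$ yields
$$\widehat R^{(N)}_L = \frac{1}{n_N} X J^L X^* = \frac{1}{n_N} Y D^L Y^*, \qquad Y := X F^*,$$
where $X$ is the data matrix, $F$ the $n_N$-point DFT, and $D$ the associated diagonal of roots of unity, I would first replace the Gaussian process $\bs x^{(N)}$ by its circularly-stationary counterpart, for which the columns $\bs y_k$ of $Y$ become exactly independent complex Gaussians with covariances $S^{(N)}(e^{-2\pi ik/n_N})$. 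Assumption~\ref{prop-S}~\ref{S-equi}--\ref{bnd-S} controls the block-Toeplitz vs.\ block-circulant discrepancy, so this reduction is essentially cost-free.

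Setting $M = \widehat R^{(N)}_L - z I_N$, I would split the unit sphere of $\CC^N$ into compressible and incompressible vectors. On the compressible part, for any fixed sparse $v$ the deterministic drift $-zv$ with $z\neq 0$ combined with Gaussian concentration yields a fixed-vector small-ball bound of the form $\PP[\|Mv\|\leq\eta]\leq (C\eta)^{cN}$, and a standard net argument together with a high-probability operator-norm bound (which uses Assumption~\ref{prop-S}~\ref{bnd-S} and Gaussian concentration for the entries of $X$) produces an $\exp(-cN)$ contribution. On the incompressible part, the Rudelson-Vershynin distance inequality gives
$$\inf_{v\in\text{Incomp}}\|Mv\| \geq \frac{\rho}{\sqrt{N}}\min_{1\leq k\leq N}\dist(M_k, H_k),\qquad H_k = \colspan(M_j : j\neq k),$$
and a union bound over $k$ reduces the theorem to a small-ball estimate for a single column-distance at scale $t/(\rho N)$.

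The core technical step is this distance bound. Writing the $k$-th column of $M$ in the Fourier-diagonalized form, $M_k = n_N^{-1}\sum_j \omega^{Lj} \overline{Y_{kj}} \bs y_j - z e_k$, picking a unit normal $u\in H_k^{\perp}$ gives $\dist(M_k,H_k) = |\langle u,M_k\rangle|$, a Gaussian form in the underlying noise. The difficulty is that $P_{H_k^\perp}$ is a random object depending on all other columns, so the conditional variance of $\langle u, M_k\rangle$ is itself random. To obtain a deterministic lower bound I would expand $M_k$ in the basis of matrix orthogonal polynomials on $\TT$ associated with the measure $S^{(N)}(e^{i\theta})\,d\theta/(2\pi)$: the reproducing-kernel identity for these polynomials produces a deterministic lower bound on the variance in any direction orthogonal to the ``predictable'' part, and this bound is essentially the geometric mean of $s_{N-1}(S^{(N)})$, which Assumption~\ref{log-S} controls. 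Assumption~\ref{S-reg} handles the complementary behaviour by confining the frequencies at which $S^{(N)}$ is close to singular to a set of small Lebesgue measure; the $\varepsilon$ in the statement tracks the size of this exceptional set. Combining Gaussian anti-concentration on the good event (producing the $\varepsilon t$ factor) with a Hanson-Wright tail for the random variance on the exceptional event (producing the $\exp(-c\varepsilon^2 N)$ term) finishes the proof.

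The hard part is the last step: the projector $P_{H_k^\perp}$ couples $M_k$ to the other columns through the randomness of the time series, so naive conditioning does not yield a usable variance lower bound. The matrix orthogonal polynomial machinery is the technical engine that decouples this dependence and provides a \emph{deterministic} lower bound on the relevant projected variance, with Assumption~\ref{log-S} being precisely the condition that keeps this lower bound non-degenerate. The sub-optimal factor $N^{-3/2}$ (rather than the conjecturally optimal $N^{-1}$ alluded to in the paragraph preceding the theorem) is the $1/\sqrt{N}$ loss built into the Rudelson-Vershynin incompressible distance inequality, which is acceptable for the subsequent applications to the limit spectral measure.
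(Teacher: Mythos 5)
Your plan breaks down at its two load-bearing steps. First, the reduction to a circularly-stationary model in which the DFT columns $y_k$ of $Y$ are \emph{exactly} independent is not ``essentially cost-free'' at the scale of the statement: the theorem probes $s_{N-1}(\widehat R^{(N)}_L-z)$ at scale $N^{-3/2}t$, and neither the operator-norm gap between the block-Toeplitz and block-circulant covariances nor the discrepancy between the two Gaussian laws is small enough to transfer a quantitative small-ball bound of the form $\varepsilon t+\exp(-c\varepsilon^2N)$ from the surrogate model back to the true one. The paper never makes this replacement; it keeps the exact model, writes $y_k=\tilde y_k+\check y_k$ with $\check y_k=\EE[y_k\,|\,Y_k]$, and uses the matrix orthogonal polynomial / Christoffel--Darboux machinery (Lemma~\ref{cov-CD}, Proposition~\ref{CD->S}, where Assumptions~\ref{S-reg} and~\ref{log-S} enter) only to show that $n\,\EE\tilde y_k\tilde y_k^*\simeq S^{(N)}(e^{2\imath\pi k/n})$ on the good frequency set of Lemma~\ref{bad-k}. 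The leftover term $\check y_k$, through its dependence on the Schur-complement blocks, is what forces the rate $N^{-3/2}$ instead of $N^{-1}$ (see the discussion around~\eqref{fq}); it is not the $1/\sqrt N$ factor in the Rudelson--Vershynin distance lemma, which is present in both rates. Likewise, the $\varepsilon$ in the statement does not track the exceptional frequency set of Assumption~\ref{S-reg} (that set is absorbed by incompressibility and Lemma~\ref{bad-k}); it comes from the denominator-control parameter in Lemma~\ref{bnd-den}.

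Second, and more seriously, you run the compressible/incompressible scheme directly on $M=\widehat R^{(N)}_L-z$ and apply the distance inequality to its columns $M_k$ against $\colspan(M_j:j\neq k)$. But every column of $\widehat R_L=Y\Omega^LY^*$ is a quadratic function of the \emph{entire} data matrix, so there is no approximate independence between $M_k$ and the span of the remaining columns: conditioning on the other columns essentially determines $M_k$, and no choice of unit normal $u\perp H_k$ yields a Gaussian form whose conditional variance admits a usable lower bound. The reproducing-kernel identity you invoke controls the prediction-error covariance of the Fourier column $y_k$ given $(y_j)_{j\neq k}$; it says nothing about the geometry of the columns of the quadratic matrix $Y\Omega^LY^*-z$, so it cannot ``decouple'' the projector $\Pi_{H_k}^\perp$ from $M_k$. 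This is exactly why the paper linearizes: with the $(N+n)\times(N+n)$ matrix $H$ of Section~\ref{outline} one has $s_{N+n-1}(H)\le s_{N-1}(\widehat R_L-z)$ by~\eqref{lin}, the compressible/incompressible split is performed on $\SSS^{N+n-1}$, and the distance from the $k$-th column of $H$ to the span of the others has the closed form~\eqref{dist-complex}, whose numerator is the quadratic form $\omega^{-kL}-y_k^*D_ky_k$ with $D_k$ independent of $\tilde y_k$; Gaussian quadratic-form anti-concentration (Lemma~\ref{gauss}) then produces the $\varepsilon t$ term, and Lemma~\ref{bnd-den} produces the $\exp(-c\varepsilon^2N)$ term. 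Without the linearization, or an equivalent device giving such an explicit quadratic-form distance formula, your core step has no execution path (and your fixed-sparse-vector bound $\PP[\|Mv\|\le\eta]\le(C\eta)^{cN}$ for the compressible part is likewise asserted, not proved, for this column-dependent matrix), so the proposal as written does not establish the theorem.
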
 
The behavior of the small singular values is handled by 
Theorem~\ref{intrm}.
The behavior of $s_{N-k}(\widehat R^{(N)}_L - z)$ for values of $k$ which are close to $N^\beta$ is more important.
The theorem implies that $s_{N-N^\beta}(\widehat R^{(N)}_L - z) \gtrsim N^{\beta/2 - 1}$ with large probability. 
Again, though the rate is not optimal, it will be sufficient for our needs. Further comments on this issue will be provided in the course of the proof,
see Remark~\ref{rem-intrm} below. 
\begin{theorem}
\label{intrm}
Suppose Assumptions~\ref{prop-S}, \ref{S-reg} and \ref{log-S} hold. Let $\beta\in(0,1)$. 
Then, for each $z \neq 0$, there exist two positive constants 
$c_{\ref{intrm}}$ and $C_{\ref{intrm}}$ such that for all 
$k\in[\lfloor N^\beta\rfloor, \lfloor N /2\rfloor]$, and for $N \geq N_0$, where $N_0$ is independent of $k$, 
\[
\PP\left[ s_{N-k-1}(\widehat R^{(N)}_L - z) \leq C_{\ref{intrm}} 
 \sqrt{k} / N \right] \leq \exp(- c_{\ref{intrm}} k ). 
\]
\end{theorem}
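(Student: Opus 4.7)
The plan is to invoke Tao--Vu's negative second moment identity, in the style of \cite{rud-ver-advmath08} and the adaptation carried out for the white-noise case in \cite{bos-hac-20}. Write $M = \widehat R^{(N)}_L - z$ and let $M_0, \ldots, M_{N-1}$ denote its columns. With $H_j := \colspan\{M_i : i \neq j\}$, the identity
$$
\sum_{i=0}^{N-1} s_i(M)^{-2} \;=\; \sum_{j=0}^{N-1} \dist(M_j, H_j)^{-2},
$$
combined with $(k+1)\, s_{N-k-1}(M)^{-2} \leq \sum_i s_i(M)^{-2}$, reduces the problem to exhibiting constants $C_{\ref{intrm}}, c_{\ref{intrm}} > 0$ such that
$$
\PP\Bigl[\sum_{j=0}^{N-1} \dist(M_j, H_j)^{-2} > N^2 / C_{\ref{intrm}}^2\Bigr] \;\leq\; e^{-c_{\ref{intrm}} k}
$$
for all $k \in [\lfloor N^\beta \rfloor, \lfloor N/2 \rfloor]$, as this then yields $s_{N-k-1}(M) \geq C_{\ref{intrm}} \sqrt{k+1}/N$.

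The natural decomposition partitions the indices into a \emph{good} set $G = \{j : \dist(M_j, H_j) \geq \delta\}$ for some fixed $\delta > 0$ and its complement. If $|G^c| \leq k/2$ and if on $G^c$ one has $\dist(M_j, H_j) \geq c\sqrt{k}/N$, then
$$
\sum_j \dist(M_j, H_j)^{-2} \;\leq\; \frac{N}{\delta^2} + \frac{k/2}{(c\sqrt{k}/N)^2} \;=\; O(N^2),
$$
as required. The two ingredients are therefore (a) a uniform single-column estimate $\PP[\dist(M_j, H_j) \leq \delta] \leq e^{-c k}$ (by union bound, this implies $G^c = \emptyset$ with probability $\geq 1 - N e^{-ck}$, which is absorbed into $e^{-c'k}$ since $k \geq N^\beta$), and (b) a uniform small-ball lower bound $\PP[\min_j \dist(M_j, H_j) \leq c\sqrt{k}/N] \leq e^{-ck}$. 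For both, the central object is the projection of $M_j$ onto a unit vector $u \perp H_j$:
$$
u^* M_j \;=\; \frac{1}{n_N} \sum_{\ell=0}^{n_N-1} (u^* \bs x^{(N)}_{\ell+L}) \overline{(\bs x^{(N)}_\ell)_j} \;-\; z u_j,
$$
a Gaussian bilinear form with a deterministic shift whose non-degeneracy is ensured by $z \neq 0$. Hanson--Wright concentration, Gaussian anti-concentration for bilinear/quadratic forms, and an $\varepsilon$-net argument over the unit sphere of $H_j^\perp$ are the tools.

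The main obstacle is that the columns of $\widehat R^{(N)}_L$ are bilinear in $\bs x^{(N)}$ and coupled through the spectral density $S^{(N)}$, so the vector $u$ perpendicular to $H_j$ is itself data-dependent; standard conditioning arguments designed for i.i.d. matrix models do not directly apply. I expect the resolution to lean on the matrix orthogonal polynomials on $\TT$ announced in the abstract: Assumptions~\ref{prop-S}, \ref{S-reg} and \ref{log-S} are the natural hypotheses permitting a uniform Wiener--Hopf / Cholesky-type factorization of $S^{(N)}$ that whitens the process up to controlled errors, reducing the distance estimates to quasi-i.i.d. ones along the lines of \cite{bos-hac-20}. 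The calibration of the net cardinality, the small-ball rate, and the single-column failure probability so as to produce a genuine $e^{-ck}$ tail (rather than merely polynomial in $N^{-1}$) is the delicate part; this calibration is what forces the sub-optimal $\sqrt{k}$ factor rather than $k$, as the authors themselves note in Remark~\ref{rem-intrm}.
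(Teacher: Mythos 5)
Your use of the negative second moment identity is applied to the wrong matrix, and this creates a gap that cannot be repaired within the scheme as written. For the full square matrix $M=\widehat R^{(N)}_L-z$, each $H_j$ is the span of the other $N-1$ columns, hence generically a subspace of codimension one. Your ingredient (a) together with the union bound asserts that \emph{all} $N$ distances exceed a fixed $\delta$ with probability $1-Ne^{-ck}$; by the identity itself this is exactly the statement $\sum_i s_i(M)^{-2}\le N/\delta^2$, i.e.\ $s_{N-1}(M)\ge \delta N^{-1/2}$, with probability $1-Ne^{-ck}$. That is far stronger than Theorem~\ref{snY} and is not expected to hold: for $z$ in the bulk the smallest singular value lives at scale $N^{-1}$ (this is the behavior in the white-noise case of \cite{bos-hac-20}), so the ``good set'' $G$ is typically far from being all of $[N]$, and a typical column distance is of order $N^{-1/2}$, not of order one. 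Ingredient (b) is equally out of reach: conditionally on the other columns, $\dist(M_j,H_j)$ is the modulus of a single scalar projection $u^*M_j$ (since $\dim H_j^\perp=1$), and small-ball probabilities for such scalar bilinear/quadratic Gaussian forms decay at best linearly in the radius --- exactly the $\varepsilon t$ shape appearing in Theorem~\ref{snY} --- never like $e^{-ck}$. In all arguments of this type, exponential-in-$k$ tails come from distances to subspaces of codimension of order $k$, i.e.\ from norms of projections onto $\asymp k$-dimensional spaces; nothing of dimension $k$ appears anywhere in your decomposition, and there is no $\varepsilon$-net to run over $H_j^\perp$, which is a line.

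The missing step is to delete on the order of $k$ columns \emph{before} invoking the distance-based identity (this is how Tao--Vu treat intermediate singular values), so that each remaining column is measured against a span of codimension $\gtrsim k$ and its distance becomes the norm of a projection onto a $\gtrsim k$-dimensional space, which concentrates with failure probability $e^{-ck}$. The paper implements this effect through the linearization: Lemma~\ref{Htronc} shows $s_{N+n-k-1}(H_{\cdot,[N+n-k]})\le s_{N-k-1}(\widehat R^{(N)}_L-z)$, and the smallest singular value of the column-deleted rectangular matrix is then controlled by the compressible/incompressible dichotomy, the incompressible part reducing (Proposition~\ref{incomp-intrm}) to distances of the nearly independent Fourier columns $y_\ell$ to subspaces of codimension $k-1$ or $k$; the whitening you anticipate enters precisely there, via $y_\ell=\tilde y_\ell+\check y_\ell$ and Proposition~\ref{CD->S} (which gives $n\EE\tilde y_\ell\tilde y_\ell^*\gtrsim\alpha I_N$ on good frequencies), after which a $\chi^2$-type bound (Lemma~\ref{gauss}) yields the $e^{-ck}$ tail at scale $\sqrt{k}/N$. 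Finally, the $\sqrt k$ (rather than $k$) in the statement is not caused by calibration of nets: as Remark~\ref{rem-intrm} explains, it stems from the conservative inequality~\eqref{spread-intrm}, which lower bounds $\|Hu\|$ by a single coordinate times a single distance.
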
 

We now turn to the 
large-$N$ behavior of 
$\mu_N$.
For this we rely on the well-known Hermitization
technique due to Girko~\cite{gir-84} (see~\cite{bor-cha-12} for a comprehensive exposition). Let  $\mu$ 
 be a probability measure on $\CC$ that integrates $\log|\cdot |$ near infinity. Then its log-potential $U_\mu(\cdot): \CC\to(-\infty,\infty]$ is defined below. 
The measure $\mu$ can be recovered from $U_\mu(z)$. 
\[
U_\mu(z)= - \int_\CC \log | w - z | \ \mu(dw) . 
\]
For the empirical spectral measure $\mu_N$,
we can write 
\begin{align*} 
U_{\mu_N}(z) &= 
 - \frac 1N \sum_{\ell=0}^{N-1} \log | \lambda_\ell(\widehat R^{(N)}_L) - z |
  = - \frac{1}{2N} \log \det (\widehat R^{(N)}_L - z) (\widehat R^{(N)}_L-z)^*
 \\ 
   &= - \int \log t \ \nu_{z,N}(dt), 
\end{align*} 
where 
$\nu_{z,N}$, the empirical measure (on $\mathbb{R}$) of the singular values of the matrix $\widehat R_L^{(N)} - z$ 
given by 
\[
\nu_{z,N} = \frac{1}{N} \sum_{\ell=0}^{N-1} 
  \delta_{s_\ell(\widehat R_L^{(N)} - z)}. 
\]
 Given a matrix $M \in \CC^{N\times N}$, denote 
its so-called \emph{Hermitized} version as 
\[
\bH(M) = \begin{bmatrix}  & M \\ 
  M^*  &  \end{bmatrix}. 
\]
 As is well-known, the spectral
measure $\check\nu_{z,N}$ (on $\mathbb{R}$)  
of 
$\bH(\widehat R_L^{(N)} - z)$ is given by 
\[
\check\nu_{z,N} = \frac{1}{2N} \sum_{\ell=0}^{N-1} 
 \left( \delta_{s_\ell(\widehat R_L^{(N)} - z)} + 
 \delta_{-s_\ell(\widehat R_L^{(N)} - z)} \right) . 
\]
The measure $\check\nu_{z,N}$ is clearly symmetric in the sense that
$\check\nu_{z,N}(B) = \check\nu_{z,N}(-B)$ for each Borel set $B \subset \RR$, 
and it is the symmetrized version of $\nu_{z,N}$.
 Observe that $\int \log t \ \nu_{z,N}(dt) = \int \log| t| \
\check\nu_{z,N}(dt)$. 

Note that all of these are \textit{random measures}. To study the 
behavior of $\mu_N$, we will find it  more convenient to study $\check\nu_{z,N}$ instead of $\nu_{z,N}$,
since the matrix $\bH(\widehat R_L^{(N)} - z)$ is Hermitian.  
This 
approach 
is formalized in the following general proposition which provides conditions under which 
the sequence of random measures $(\mu_N)$ can be approximated by some sequence
$(\bs\mu_N)$ of deterministic measures. 
\begin{proposition}
\label{herm} 
Assume that for almost every $z\in\CC$, the following two conditions hold: 
\begin{enumerate}
\item\label{ui} 
With probability one, $\log |\cdot|$ is \textit{uniformly integrable} with respect to 
$\{\check\nu_{z,N}\}_N$. 
\item\label{cvg-sing} 
There exists a \textit{tight} sequence of \textit{deterministic} symmetric probability measures 
$(\bs{\check\nu}_{z,N})_N$ on $\RR$ such that for each bounded and continuous 
function $f:\RR\to\RR$, 
\[
\int f \, d\check\nu_{z,N} - \int f \, d\bs{\check\nu}_{z,N} 
 \xrightarrow[N\to\infty]{\text{a.s.}} 0 .
\]
\end{enumerate} 
Then, there exists a tight sequence of deterministic probability measure 
$(\bs\mu_N)$ on $\CC$ such that for each bounded and continuous function 
$f:\CC\to\CC$, we have  
\[
\int f \, d\mu_{N} - \int f \, d\bs{\mu}_{N} 
 \xrightarrow[N\to\infty]{\text{a.s.}} 0 .
\]
Moreover, the logarithmic potential of $\bs\mu_N$ is 
\[
U_{\bs\mu_N}(z) = - \int \log|t| \ \bs{\check\nu}_{z,N}(dt).
\]
\end{proposition}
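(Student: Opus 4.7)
My plan follows Girko's Hermitization strategy~\cite{gir-84} in its deterministic-equivalent form~\cite{bor-cha-12}: first upgrade the hypothesized weak convergence of the Hermitized measures to a.s.\ pointwise convergence of the logarithmic potentials, and then return to the original measures via the distributional identity $\mu = -(2\pi)^{-1}\Delta U_\mu$.

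For the first step, fix $z$ in the full-measure set on which conditions~\ref{ui}--\ref{cvg-sing} hold and introduce the truncation $\log_K(t) \eqdef \max(-K,\min(K,\log|t|))$, a bounded continuous function of $t\in\RR$. Condition~\ref{cvg-sing} applied to $f = \log_K$ yields $\int \log_K\,d\check\nu_{z,N} - \int \log_K\,d\bs{\check\nu}_{z,N}\to 0$ a.s., while condition~\ref{ui} controls the tail $\sup_N |\int(\log|t|-\log_K(t))\,d\check\nu_{z,N}| \to 0$ a.s.\ as $K\to\infty$. A Cauchy argument in $K$ then places $\log|\cdot|\in L^1(\bs{\check\nu}_{z,N})$ for all large $N$ and gives
$$U_{\mu_N}(z)-\bs U_N(z) \xrightarrow[N\to\infty]{\mathrm{a.s.}} 0, \qquad \bs U_N(z) \eqdef -\int\log|t|\,\bs{\check\nu}_{z,N}(dt).$$

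The second step defines $\bs\mu_N \eqdef -(2\pi)^{-1}\Delta \bs U_N$ in the distributional sense on $\CC$. Each $U_{\mu_N}$ is a.s.\ superharmonic and in $L^1_{\mathrm{loc}}(\CC)$; Fatou combined with Fubini (using~\ref{ui}) places $\bs U_N$ in $L^1_{\mathrm{loc}}(\CC)$ and makes it superharmonic, so $\bs\mu_N$ is a positive Radon measure. That $\bs\mu_N(\CC)=1$ follows from the far-field asymptotic $U_{\mu_N}(z) = -\log|z| + O(1/|z|)$ characteristic of any probability measure with bounded support: using an a priori high-probability bound on $\|\widehat R^{(N)}_L\|$, these asymptotics are transferred to $\bs U_N$ via Step~1.

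For the last step, Green's identity gives, for any $f\in C_c^\infty(\CC)$,
$$\int f\,d\mu_N - \int f\,d\bs\mu_N = -\frac{1}{2\pi}\int_\CC\bigl(U_{\mu_N}(z)-\bs U_N(z)\bigr)\Delta f(z)\,dz,$$
whose integrand tends to $0$ a.e.\ in $z$ by Step~1. A.s.\ convergence of the integral itself follows from dominated convergence, and density of $C_c^\infty$ in $C_b$ together with tightness of $\{\mu_N\}$ and $\{\bs\mu_N\}$ extends the statement to bounded continuous $f$. The chief obstacle is producing the dominant in the last display: one must bound $|U_{\mu_N}-\bs U_N|$ locally uniformly in $N$ on a high-probability event, and this rests on exploiting~\ref{ui} jointly (via Fubini in $z$) with an a priori operator-norm bound on $\widehat R^{(N)}_L$.
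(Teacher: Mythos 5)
The paper itself does not write out a proof: it invokes \cite[Lem.~4.3]{bor-cha-12} (see also \cite{coo-hac-naj-ren-18}) and indicates that the argument is to be adapted ``by considering converging subsequences of $(\bs{\check\nu}_{z,N})_N$''. Your Step~1 (truncation of $\log|\cdot|$ plus condition~(1)) is the standard and correct way to get $U_{\mu_N}(z)+\int\log|t|\,\bs{\check\nu}_{z,N}(dt)\to 0$ a.s.\ for a.e.\ $z$. The genuine gap is in Step~2, where you define $\bs\mu_N \eqdef -(2\pi)^{-1}\Delta\bs U_N$ for each \emph{fixed} $N$ and claim that $\bs U_N$ is superharmonic, locally integrable, and that its Laplacian has total mass one. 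The hypotheses of the proposition are purely asymptotic: for each fixed $z$ they only constrain the sequence as $N\to\infty$, and for a fixed $N$ the family $z\mapsto\bs{\check\nu}_{z,N}$ carries no assumed structure in $z$ (not even measurability, let alone the coherence that would make $\bs U_N$ a potential). Consequently ``Fatou combined with Fubini places $\bs U_N$ in $L^1_{\mathrm{loc}}$ and makes it superharmonic'' is not justified, nor can the far-field asymptotics of $\bs U_N$ at fixed $N$ be ``transferred via Step~1'', since Step~1 is a limit in $N$ at each fixed $z$ and says nothing uniform in $|z|\to\infty$ for a given $N$. This is exactly the difficulty the paper's hint is designed to avoid: one uses the tightness of the deterministic family to extract subsequences along which $\bs{\check\nu}_{z,N}$ converges weakly (for a.e.\ $z$, by a diagonal/Fubini argument), applies the Bordenave--Chafa\"{\i} lemma along each such subsequence to obtain a.s.\ weak convergence of $\mu_N$ to a deterministic probability measure whose potential is $-\int\log|t|\,\bs{\check\nu}_z(dt)$, and only then assembles the deterministic equivalents $(\bs\mu_N)$ and the potential identity from these subsequential limits; positivity and unit mass are inherited from the limits of the genuinely random (hence genuinely potential-theoretic) objects, not asserted at fixed $N$.

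A secondary, smaller issue is Step~3: you correctly identify that Green's identity needs a locally uniform (in $N$), integrable (in $z$) dominant for $U_{\mu_N}-\bs U_N$, but condition~(1) holds for a.e.\ fixed $z$ with no uniformity in $z$ or in the random realization, so a pointwise domination argument is not available as stated. The standard repair is not dominated convergence but a uniform local $L^1$ bound on the potentials themselves --- obtained from tightness of the measures together with $\sup_N\int\log(1+|\lambda|)\,d\mu_N<\infty$, which follows from the uniform integrability hypothesis at a single $z$ --- combined with convergence in the distributional sense and the unicity theorem for logarithmic potentials, again along the converging subsequences. So the skeleton of your argument is the right Girko--Hermitization skeleton, but as written both the construction of $\bs\mu_N$ and the passage from pointwise potential convergence to weak closeness of the measures rest on claims that the asymptotic hypotheses do not deliver at fixed $N$; the subsequence mechanism the paper points to is what fills these holes.
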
 
This proposition is very close to \cite[Lem.~4.3]{bor-cha-12} (see also 
\cite{coo-hac-naj-ren-18}). The proof of this lemma can be adapted to our 
situation by considering converging subsequences of $(\bs{\check\nu}_{z,N})_N$. 

The uniform integrability condition needed in Proposition~\ref{herm} is ensured by
Theorems~\ref{snY} and~\ref{intrm} (again, see~\cite[Sec.~4.2]{bor-cha-12} for 
the proof details).  We are thus left to consider the asymptotic properties of
$\check\nu_{z,N}$ with a goal to comply with  Condition \ref{cvg-sing} of
Proposition~\ref{herm}. Classically, the central object that is used for this
is the resolvent (in what follows $z\in
\CC$ is arbitrary) 
\[
Q^{(N)}(z,\eta) 
= \left( \bH(\widehat R^{(N)}_L-z) - \eta I_{2N} \right)^{-1} 
\]
of $\bH(\widehat R_L^{(N)} - z)$ in the complex 
variable $\eta\in \CC_+ \eqdef \{ w \in \CC \, : \, \Im w > 0 \}$. The 
resolvent $Q^{(N)}(z,\cdot)$ is a typical example of a so-called 
matrix Stieltjes transform. Before we recall its nature, let us recall that the Stieltjes transform of any 
probability measure $\nu$ on $\RR$ is given by  
\[
g_\nu(\eta) = \int_\RR \frac{1}{\lambda - \eta} \nu(d\lambda), \quad 
 \eta \in \CC\setminus\RR .
\]

The real and imaginary parts of a square matrix $M$ are respectively given as 
\[
\Re M = \frac{M + M^*}{2} \quad \text{and} \quad 
 \Im M = \frac{M - M^*}{2\imath} .
\]
Given an integer $m > 0$, we let $\cM_+^m$ denote the set of matrices $M \in
\CC^{m\times m}$ such that $\Im M > 0$.  

The following result can be found in
the literature dealing with the moment problem and related topics, see,
\emph{e.g.}, \cite[Pages 64-65]{bol-97}, \cite{ges-tse-00},  
\cite[Prop.~2.2 and Appendix A]{hachem-loubaton-najim07}. 
\begin{proposition}[matrix Stieltjes transform]
\label{mv-st}
Let $F : \CC_+ \to \CC^{m\times m}$ be a matrix-valued function. Then, the
following facts are equivalent. 
\begin{enumerate} 
\item $F$ is the Stieltjes transform of an $\cH_+^m$--valued 
measure $\mu$ on $\RR$ such that $\mu(\RR) = I_m$. 
\item $F$ is analytic, $F(\eta) \in \cM_+^m$ for $\eta\in\CC_+$, and 
$-\imath t F(\imath t)$ converges to $I_m$ as $t\to\infty$. 
\end{enumerate} 
\end{proposition}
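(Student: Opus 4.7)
The plan is to handle the two implications separately; the forward direction (1) $\Rightarrow$ (2) reduces to routine checks, while the non-trivial direction (2) $\Rightarrow$ (1) is reduced to the classical scalar Herglotz--Nevanlinna theorem via polarization.

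For (1) $\Rightarrow$ (2), I would verify each property directly from the integral representation $F(\eta) = \int_\RR (\lambda-\eta)^{-1}\,\mu(d\lambda)$. Analyticity follows by differentiating under the integral, justified by the local bound $\|(\lambda-\eta)^{-1}\| \leq 1/\Im\eta$ and the finiteness $\mu(\RR) = I_m$. The identity
\[
\Im F(\eta) = \int_\RR \frac{\Im\eta}{|\lambda-\eta|^2}\,\mu(d\lambda)
\]
shows that for any $v\neq 0$ the scalar positive measure $v^*\mu(\cdot)v$ has total mass $\|v\|^2 > 0$, hence $v^*\Im F(\eta)v > 0$ for $\eta\in\CC_+$, i.e., $F(\eta)\in\cM_+^m$. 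The asymptotic claim follows by rewriting $\frac{-\imath t}{\lambda-\imath t} = (1+\imath\lambda/t)^{-1}$ and invoking dominated convergence to conclude $-\imath t F(\imath t) \to \mu(\RR) = I_m$.

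For (2) $\Rightarrow$ (1), I would apply the scalar Herglotz--Nevanlinna theorem to the quadratic forms $f_v(\eta) = v^*F(\eta)v$ for each $v\in\CC^m$. Each such function is analytic on $\CC_+$, has strictly positive imaginary part when $v\neq 0$, and satisfies $-\imath t\, f_v(\imath t)\to\|v\|^2$; the scalar theorem therefore yields a unique finite positive Borel measure $\mu_v$ on $\RR$ with $\mu_v(\RR)=\|v\|^2$ such that $f_v(\eta)=\int_\RR(\lambda-\eta)^{-1}\mu_v(d\lambda)$. I would then build the matrix-valued measure by polarization: for each Borel set $B\subset\RR$, set $\mu_{u,v}(B)$ to be the standard polarization combination of the four scalars $\mu_{u+v}(B)$, $\mu_{u-v}(B)$, $\mu_{u+\imath v}(B)$, $\mu_{u-\imath v}(B)$, and then let $\mu(B)\in\CC^{m\times m}$ be the unique matrix with $u^*\mu(B)v = \mu_{u,v}(B)$ for all $u,v\in\CC^m$. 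The non-negativity $v^*\mu(B)v = \mu_v(B)\geq 0$ gives $\mu(B)\in\cH_+^m$; countable additivity transfers from the scalar $\mu_v$ to $\mu$; and $\mu(\RR)=I_m$ follows from $\mu_v(\RR)=\|v\|^2$. The integral representation of $F$ is then recovered by depolarization: plug the scalar representations into the polarization expression for $u^*F(\eta)v$, swap the finite sum with the integral, and use the uniqueness of $\mu(B)$.

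The main technical point is showing that $\mu_{u,v}(B)$ is in fact sesquilinear in $(u,v)$, since this is what legitimizes the definition of $\mu(B)$. This rests on the uniqueness clause of the scalar Herglotz--Nevanlinna representation: the sesquilinearity of $(u,v)\mapsto u^*F(\eta)v$ at every $\eta\in\CC_+$ transfers, via the quadratic-form polarization identity applied to $f_{u+v}, f_{u-v}, f_{u\pm\imath v}$ and uniqueness, to sesquilinearity at the level of the measures $\mu_v$. Once this is secured, the remainder is bookkeeping.
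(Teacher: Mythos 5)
The paper itself gives no proof of this proposition; it is quoted from the literature (Bolotnikov, Gesztesy--Tsekanovskii, and the appendix of Hachem--Loubaton--Najim), so your attempt can only be judged on its own correctness. Your direction (1) $\Rightarrow$ (2) is fine, and your plan for (2) $\Rightarrow$ (1) --- scalar Herglotz--Nevanlinna representation for the quadratic forms $f_v(\eta)=v^*F(\eta)v$, then polarization --- is the standard route. The gap is in the step you yourself single out as the main technical point: sesquilinearity of $(u,v)\mapsto\mu_{u,v}(B)$. You propose to obtain it from ``the uniqueness clause'' of the scalar representation, i.e.\ by identifying two complex-coefficient combinations of the measures $\mu_w$ from the equality of their Stieltjes transforms on $\CC_+$. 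Uniqueness holds for \emph{positive} measures; for complex measures the Stieltjes transform restricted to $\CC_+$ is \emph{not} injective: the nonzero finite complex measure $\nu(d\lambda)=(\lambda-\imath)^{-2}\,d\lambda$ satisfies $\int_\RR(\lambda-\eta)^{-1}\nu(d\lambda)=0$ for every $\eta\in\CC_+$ (close the contour in the lower half-plane; the integrand is $O(|\lambda|^{-3})$ and all poles lie in the upper half-plane). Since the identities you need, e.g.\ $\mu_{u_1+u_2,v}=\mu_{u_1,v}+\mu_{u_2,v}$, are identities between combinations of the $\mu_w$ with coefficients $\pm 1/4$, $\pm\imath/4$, the fact that the corresponding combinations of the $f_w$ vanish on $\CC_+$ does not by itself yield the measure-level identity; so the argument as stated does not close.

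Two standard repairs exist, and either one completes your proof. (i) Push only nonnegative-coefficient identities through uniqueness: from the parallelogram law $f_{w_1+w_2}+f_{w_1-w_2}=2f_{w_1}+2f_{w_2}$ and $f_{cw}=|c|^2 f_w$, the uniqueness clause (now legitimately applied, both sides being finite positive measures) gives $\mu_{w_1+w_2}+\mu_{w_1-w_2}=2\mu_{w_1}+2\mu_{w_2}$ and $\mu_{cw}=|c|^2\mu_w$ for each Borel set $B$; the Jordan--von Neumann argument then shows that the polarization of $w\mapsto\mu_w(B)$ is a sesquilinear form, which is exactly what you need. (ii) Alternatively, use the Stieltjes--Perron inversion formula: $\mu_w((a,b])$ is the limit as $\varepsilon\downarrow 0$ of $\pi^{-1}\int_a^b \Im f_w(x+\imath\varepsilon)\,dx$ (at continuity points), and $\Im f_w(x+\imath\varepsilon)=w^*\bigl(\Im F(x+\imath\varepsilon)\bigr)w$ is the quadratic form of an honest Hermitian matrix for each fixed $x+\imath\varepsilon$; hence the polarized quantities are matrix entries \emph{before} the limit, and sesquilinearity, Hermitian non-negativity of $\mu(B)$, and the rest of your bookkeeping (countable additivity, $\mu(\RR)=I_m$, recovery of the representation by depolarization) pass to the limit without further ado.
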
 

Such an $F$ is called a \textit{matrix Stieltjes transform}. Let $\mathfrak
S^m$ denote the set of all such $m \times m$ matrix Stieltjes transforms.  It
is known that if $F\in\mathfrak S^m$, then \\

(1) \  
$\| F(\eta) \| \leq 1 / \Im\eta$. \\

(2) \  $m^{-1} \tr F(\eta) = g_\nu(\eta)$ for 
some probability measure $\nu$ on $\RR$. \\

An illustration of (2) is 
provided by the resolvent we just defined: it can be checked that  
$$Q^{(N)}(z,\cdot) \in \mathfrak S^{2N}\ \ \text{and}\ \  
g_{\check\nu_{z,N}}(\eta) = (2N)^{-1} \tr Q^{(N)}(z,\eta).$$ 

In the remainder of this paper, whenever we write  $M \in \CC^{2N \times 2N}$ as $M = \begin{bmatrix} M_{00} & M_{01} \\ M_{10} & M_{11}\end{bmatrix}$, it is understood that the blocks $M_{uv}$ belong to $\CC^{N\times N}$. With
this notation, we define the linear operator 
$\mcT : \CC^{2N\times 2N} \to \CC^{2\times 2}$ as 
\[
\mcT\left(\begin{bmatrix} M_{00} & M_{01} \\ M_{10} & M_{11}\end{bmatrix} 
  \right) 
 = \begin{bmatrix} \tr M_{00} / n_N & \tr M_{01} / n_N \\
  \tr M_{10} / n_N & \tr M_{11} / n_N \end{bmatrix} .
\]
Given an integer $L > 0$, we also define the $2\times 2$ Hermitian unitary 
matrix function $U_L$ on $\TT$ as  
\[
U_L(e^{\imath\theta}) = \begin{bmatrix} & e^{-\imath L \theta} \\
  e^{\imath L \theta} & \end{bmatrix} .
\]
The next technical result is on the existence of a unique solution for a functional equation. It will be used to show that 
$(\bs{\check\nu}_{z,N})_N$ approximates $(\check\nu_{z,N})_N$. Denote as
$\otimes$ the Kronecker product between matrices. 

\begin{theorem}
\label{sys} 
Let $\Sigma : \TT \to \cH_+^N$ be a continuous function, and let $z\in\CC$.
Given a function $M(\eta) \in \mathfrak S^{2N}$, the function displayed below 
is well-defined and belongs to $\mathfrak S^{2N}$ as a function of $\eta$.
\begin{multline*} 
\cF_{\Sigma,z}(M(\eta), \eta) \\ = 
\Bigl( \frac{1}{2\pi} \int_0^{2\pi} 
\left( \mcT( (I_2\otimes \Sigma(e^{\imath\theta})) M(\eta) ) + 
 U_L(e^{\imath\theta}) \right)^{-1} 
    \otimes \Sigma(e^{\imath\theta}) \ d\theta  
- \begin{bmatrix} \eta & z \\ \bar z & \eta \end{bmatrix} 
  \otimes I_N \Bigr)^{-1}. 
\end{multline*} 
 Moreover, the 
functional equation in the parameter $\eta\in\CC_+$ 
\begin{equation}
\label{impl} 
P(z,\eta) = \cF_{\Sigma,z}(P(z,\eta), \eta)
\end{equation} 
admits a \textit{unique} solution in the class 
$P(z,\cdot) \in \mathfrak S^{2N}$. Write 
$$P(z,\cdot) = 
\begin{bmatrix} P_{00}(z,\cdot) & P_{01}(z,\cdot) \\
                P_{10}(z,\cdot) & P_{11}(z,\cdot) \end{bmatrix} \ \ \text{and}\ \ \Lambda(dt) = 
  \begin{bmatrix} \Lambda_{00}(dt) & \Lambda_{01}(dt) \\ 
                 \Lambda_{10}(dt) & \Lambda_{11}(dt) \end{bmatrix}$$ 
where $\Lambda(dt)$ is the matrix measure whose matrix Stieltjes transform is $P(z,\cdot)$, where 
$P_{ii}(z,\cdot)$ is the Stieltjes transform of $\Lambda_{ii}(dt)$. The positive matrix measures $\Lambda_{00}$ and $\Lambda_{11}$ are 
symmetric. 
\end{theorem}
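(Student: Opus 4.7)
\medskip\noindent\emph{Proof plan.}
The plan splits into four subtasks: (i)~verify that $\cF_{\Sigma,z}(M(\cdot),\cdot)\in\mathfrak S^{2N}$ whenever $M\in\mathfrak S^{2N}$; (ii)~produce a fixed point in $\mathfrak S^{2N}$; (iii)~prove uniqueness; (iv)~derive the symmetry of $\Lambda_{00}$ and $\Lambda_{11}$. For~(i) I would run an imaginary-part chase. Using $\mcT(X)^*=\mcT(X^*)$ one has $\Im\mcT\bigl((I_2\otimes\Sigma(e^{\imath\theta}))M\bigr)=\mcT\bigl((I_2\otimes\Sigma(e^{\imath\theta}))\Im M\bigr)$, and testing against $a\in\CC^2$ gives
\[
 a^*\mcT\bigl((I_2\otimes\Sigma)\Im M\bigr)a
 =n_N^{-1}\tr\bigl(\Sigma\,(a^*\otimes I_N)\Im M\,(a\otimes I_N)\bigr)\geq 0,
\]
so $A(\theta):=\mcT((I_2\otimes\Sigma(e^{\imath\theta}))M)+U_L(e^{\imath\theta})$ satisfies $\Im A(\theta)\geq 0$ (the Hermitian $U_L$ contributes nothing to the imaginary part). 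Strict positivity of $\Im M$ upgrades this to invertibility of $A(\theta)$ with $\Im A(\theta)^{-1}\leq 0$; since $\Sigma(e^{\imath\theta})\geq 0$, $\Im(A(\theta)^{-1}\otimes\Sigma(e^{\imath\theta}))\leq 0$, a property preserved under $\theta$-integration. Subtracting $\bigl[\begin{smallmatrix}\eta&z\\ \bar z&\eta\end{smallmatrix}\bigr]\otimes I_N$, whose imaginary part is $\Im\eta\cdot I_{2N}>0$, yields a matrix of strictly negative imaginary part, hence invertible, with inverse of strictly positive imaginary part. As $\eta=\imath t\to\imath\infty$, $\|M(\imath t)\|\to 0$ forces $A(\theta)\to U_L$ uniformly (a unitary limit), so the $\theta$-integral stays bounded while $\eta I_{2N}$ dominates, giving $-\imath t\,\cF_{\Sigma,z}(M(\imath t),\imath t)\to I_{2N}$. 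Proposition~\ref{mv-st} then delivers $\cF_{\Sigma,z}(M(\cdot),\cdot)\in\mathfrak S^{2N}$.

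\medskip
For~(ii) I would argue by compactness: the universal bound $\|F(\eta)\|\leq 1/\Im\eta$ for $F\in\mathfrak S^{2N}$ combined with Montel's theorem makes $\mathfrak S^{2N}$ convex and compact in the topology of compact-uniform convergence on $\CC_+$, while step~(i) provides a continuous self-map $F\mapsto\cF_{\Sigma,z}(F,\cdot)$; Schauder's fixed-point theorem then yields some $P\in\mathfrak S^{2N}$ with $P=\cF_{\Sigma,z}(P,\cdot)$. For~(iii), given two solutions $P,P'\in\mathfrak S^{2N}$, I would subtract the equations and apply the resolvent identity first to the outer inverse and then, inside $T(P)-T(P')$, to the inner $2\times 2$ inverses $A(\theta;P)^{-1}-A(\theta;P')^{-1}$. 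Using $\cF_{\Sigma,z}(P,\cdot)=P$ and $\cF_{\Sigma,z}(P',\cdot)=P'$, this produces the self-action
\[
 P(\eta)-P'(\eta)=P(\eta)\,L_\eta\bigl(P(\eta)-P'(\eta)\bigr)\,P'(\eta),
\]
where $L_\eta$ is a $\theta$-averaged linear operator built from $\Sigma$ and the inner inverses $A(\theta;P)^{-1},A(\theta;P')^{-1}$, all of which remain uniformly bounded as $\eta\to\imath\infty$. The Stieltjes bound $\|P(\eta)\|,\|P'(\eta)\|\leq 1/\Im\eta$ then forces the contraction factor to be $O(1/|\eta|^2)$, so $P\equiv P'$ for $\eta$ of large imaginary part; analyticity on $\CC_+$ extends this to global uniqueness. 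This contraction estimate is the main obstacle of the proof, since one must track how the factors $1/\Im\eta$ combine through both the outer $2N\times 2N$ and inner $2\times 2$ inversions, and use uniform boundedness of $\Sigma$ and of the inner inverses near $\imath\infty$ to close the argument.

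\medskip
For~(iv) I would exploit the symmetries
\[
 J\,U_L(e^{\imath\theta})\,J=-U_L(e^{\imath\theta}),\qquad
 J_N\Bigl(\bigl[\begin{smallmatrix}\eta&z\\ \bar z&\eta\end{smallmatrix}\bigr]\otimes I_N\Bigr)J_N
 =-\bigl[\begin{smallmatrix}-\eta&z\\ \bar z&-\eta\end{smallmatrix}\bigr]\otimes I_N,\qquad
 \mcT(J_N X J_N)=J\,\mcT(X)\,J,
\]
together with $[I_2\otimes\Sigma,J_N]=0$, where $J:=\diag(1,-1)$ and $J_N:=J\otimes I_N$. First extend $P(z,\cdot)$ from $\CC_+$ to $\CC\setminus\RR$ via its Stieltjes integral; the identity $P(\bar\eta)=P(\eta)^*$ for Stieltjes transforms of Hermitian matrix measures, combined with the $*$-symmetry $\cF_{\Sigma,z}(M,w)^*=\cF_{\Sigma,z}(M^*,\bar w)$ of the functional, shows that the fixed-point identity persists on $\CC_-$. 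Now set $\tilde P(z,\eta):=-J_N P(z,-\eta)J_N$ for $\eta\in\CC_+$; a block-wise computation using the above symmetries gives
\[
 \cF_{\Sigma,z}\bigl(\tilde P(z,\eta),\eta\bigr)
 =-J_N\,\cF_{\Sigma,z}\bigl(P(z,-\eta),-\eta\bigr)\,J_N
 =-J_N P(z,-\eta)J_N=\tilde P(z,\eta).
\]
Moreover $\tilde P$ is the Stieltjes transform of the $\cH_+^{2N}$-valued measure $B\mapsto J_N\Lambda(-B)J_N$, hence $\tilde P\in\mathfrak S^{2N}$. Uniqueness from~(iii) forces $\tilde P\equiv P$, i.e.\ $\Lambda(B)=J_N\Lambda(-B)J_N$ as matrix measures; reading the $(0,0)$ and $(1,1)$ blocks yields $\Lambda_{00}(B)=\Lambda_{00}(-B)$ and $\Lambda_{11}(B)=\Lambda_{11}(-B)$, the claimed symmetry.
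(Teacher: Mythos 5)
Your steps (i), (iii) and (iv) are essentially the paper's own argument: the imaginary-part chase through $\mcT$ and Lemma-type bounds, the double resolvent-identity expansion giving a contraction factor $O((\Im\eta)^{-2})$ for large $\Im\eta$ followed by analytic continuation, and the $\eta\mapsto-\eta$ symmetry (the paper's transformation $P\mapsto\bigl[\begin{smallmatrix}-P_{00}&P_{01}\\ P_{10}&-P_{11}\end{smallmatrix}\bigr]$ is exactly your conjugation by $J_N=\diag(I_N,-I_N)$ with a sign) combined with uniqueness and the symmetric-measure criterion. One small slip in (i): when $\Sigma(e^{\imath\theta})=0$, strict positivity of $\Im M$ gives you nothing, since then $\Im A(\theta)=0$; invertibility of $A(\theta)=U_L(e^{\imath\theta})$ there comes from unitarity, a degenerate case the paper treats separately (harmless, since the integrand vanishes at such $\theta$).

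The genuine gap is in step (ii). The set $\mathfrak S^{2N}$ is convex but \emph{not} compact (indeed not closed) in the topology of locally uniform convergence on $\CC_+$: mass can escape to infinity, as the scalar example $g_n(\eta)=(n-\eta)^{-1}$, the Stieltjes transform of $\delta_n$, shows — it converges locally uniformly to $0\notin\mathfrak S^{1}$. So Schauder's theorem cannot be invoked on $\mathfrak S^{2N}$ as you state it (and you also do not verify continuity of $M\mapsto\cF_{\Sigma,z}(M,\cdot)$ in that topology). The argument can be repaired, but only by adding exactly the quantitative large-$\Im\eta$ control you already have: either restrict to the convex set of $F\in\mathfrak S^{2N}$ satisfying a uniform tail estimate of the form $\|F(\eta)+ (E(\eta))^{-1}\|\leq C(\Im\eta)^{-2}$ for $\Im\eta$ large, where $E(\eta)$ denotes the matrix $\begin{bmatrix}\eta & z\\ \bar z & \eta\end{bmatrix}\otimes I_N$ — such a set is compact, is preserved by $\cF_{\Sigma,z}$ uniformly over $\mathfrak S^{2N}$, and stays inside $\mathfrak S^{2N}$ — or, more simply and as the paper does, run the Picard iteration $P_{k+1}=\cF_{\Sigma,z}(P_k,\cdot)$ from $P_0=-\eta^{-1}I_{2N}$: your contraction estimate from step (iii) makes this a Banach fixed-point argument on the half-plane $\{\Im\eta\ \text{large}\}$, and a normal-family argument identifies the limit as an element of $\mathfrak S^{2N}$ solving the equation on all of $\CC_+$. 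As written, the existence step is the one place your proposal does not go through.
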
 

One consequence of this theorem is that the probability measure $\zeta$ on
$\RR$ such that $g_\zeta(\eta) = (2N)^{-1} \tr P(z,\eta)$ is symmetric. In
fact, it can be proved that 
\[
g_\zeta(\eta) = N^{-1} \tr P_{00}(z,\eta) =N^{-1} \tr P_{11}(z,\eta).
\]   
The first part of the next theorem claims the tightness of
$(\bs{\check\nu}_{z,N})_N$.  Later we shall need the behavior of the difference
$\left(Q^{(N)}(z,\eta) - G^{(N)}(z,\eta) \right)$. This is captured in the
second part. The extra generality obtained by including the matrix $D$ is not
necessary for the purposes of the present work in which we use only the
specific choice of $D=I$. However this will be useful to derive convergence and
related properties of $(\mu_{N})_N$ and $(\bs{\mu}_{N})_N$ that we wish to
pursue in future.

\begin{theorem}
\label{eqdet} 
Suppose Assumption~\ref{prop-S} holds. 
Let $G^{(N)}(z,\eta)$ be the solution in $\mathfrak S^{2N}$, of the equation 
\[
G^{(N)}(z,\eta) = \cF_{S^{(N)},z}(G^{(N)}(z,\eta), \eta),  
\]
as specified by Theorem~\ref{sys}. 
Let $\bs{\check\nu}_{z,N}$ be the symmetric
probability measure on $\RR$ whose Stieltjes transform is 
\[
g_{\bs{\check\nu}_{z,N}}(\eta) = \frac{1}{2N} \tr G^{(N)}(z,\eta) . 
\]
Then, the sequence $(\bs{\check\nu}_{z,N})_N$ is tight. 

Let $D^{(N)} \in \CC^{2N \times 2N}$ be an arbitrary deterministic matrix such 
that $\| D^{(N)} \| = 1$. Then, 
\begin{equation}
\label{cvg-Q} 
\forall\eta\in\CC_+, \quad 
\frac{1}{2N} \tr D^{(N)} \left(Q^{(N)}(z,\eta) - G^{(N)}(z,\eta) \right) 
 \xrightarrow[N\to\infty]{\text{a.s.}} 0.  
\end{equation} 
\end{theorem}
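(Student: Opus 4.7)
My plan is to handle tightness and the deterministic equivalent \eqref{cvg-Q} separately. For \eqref{cvg-Q} I would use the standard decomposition
$$\frac{1}{2N}\tr D^{(N)}(Q^{(N)}-G^{(N)}) = \frac{1}{2N}\tr D^{(N)}(Q^{(N)}-\EE Q^{(N)}) + \frac{1}{2N}\tr D^{(N)}(\EE Q^{(N)}-G^{(N)}),$$
controlling the fluctuation term by Gaussian concentration and the bias term by Gaussian integration by parts combined with the uniqueness in Theorem~\ref{sys}.

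For tightness, since $\bs{\check\nu}_{z,N}$ is characterized by the Stieltjes transform $(2N)^{-1}\tr G^{(N)}(z,\cdot)$, it is enough (by L\'evy's continuity theorem for Stieltjes transforms) to verify $-\imath y\,(2N)^{-1}\tr G^{(N)}(z,\imath y)\to 1$ uniformly in $N$ as $y\to\infty$. Substituting $\eta=\imath y$ into \eqref{impl} with $y$ large and using Assumption~\ref{prop-S}--\ref{bnd-S} to bound the integrand in $\cF_{S^{(N)},z}$ in operator norm by a constant depending only on $\bM$, a direct expansion yields $G^{(N)}(z,\imath y) = (\imath y)^{-1} I_{2N} + O(y^{-2})$ uniformly in $N$, giving the desired uniform behavior at infinity.

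For the fluctuation term, I would view $F(\bs X) = (2N)^{-1}\tr D^{(N)}Q^{(N)}(z,\eta)$ as a smooth function of the Gaussian matrix $\bs X = [\bs x_0^{(N)},\ldots,\bs x_{n_N-1}^{(N)}]\in\CC^{N\times n_N}$, whose covariance is bounded by $\bM$ via Assumption~\ref{prop-S}--\ref{bnd-S}. A resolvent-identity computation together with $\|Q^{(N)}\|\le 1/\Im\eta$ and $\|D^{(N)}\|\le 1$ shows that $F$ is Lipschitz in $\bs X$ with constant of order $N^{-1/2}$ (on the high-probability event $\{\|\bs X\|\le C\sqrt N\}$, which occurs with probability $1-O(e^{-cN})$ by the Gaussian concentration for operator norms). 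The Gaussian Poincar\'e inequality then gives a variance of order $N^{-1}$, and the Gaussian concentration inequality yields sub-Gaussian tails with scale $N^{-1/2}$; Borel-Cantelli then delivers the almost sure convergence of the fluctuation term to zero.

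The bias term is the main technical hurdle. Starting from $\bH(\widehat R_L^{(N)}-z)Q^{(N)} = I_{2N} + \eta Q^{(N)}$ and $\widehat R_L^{(N)} = n_N^{-1}\sum_\ell \bs x_{\ell+L}\bs x_\ell^*$, I would apply the complex Gaussian Stein formula to each occurrence of the sample, which rewrites every expectation of the form $\EE[(\bs x_j)_a\, f(\bs x)]$ as $\sum_{\ell,b} (R_{j-\ell}^{(N)})_{ab}\,\EE[\partial_{\overline{(\bs x_\ell)_b}} f]$. The key structural input is the spectral representation \eqref{R-S}: reorganizing the resulting sums and passing from the sum over discrete lags to a Riemann integral over $\TT$ produces the integral and the factor $U_L(e^{\imath\theta})$ appearing in $\cF_{S^{(N)},z}$, while the $\tr/n_N$ normalizations built into $\widehat R_L^{(N)}$ give rise to the operator $\mcT$. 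Replacing the traces of blocks of $Q^{(N)}$ that appear by their expectations, at a cost controlled by the variance estimate from the previous step, leads to an approximate fixed-point relation
$$\EE Q^{(N)}(z,\eta) = \cF_{S^{(N)},z}(\EE Q^{(N)}(z,\eta),\eta) + \Xi^{(N)}(z,\eta),$$
with a remainder $\Xi^{(N)}$ whose operator norm vanishes as $N\to\infty$. Combining this with $G^{(N)}=\cF_{S^{(N)},z}(G^{(N)},\eta)$ and a Lipschitz/stability estimate on $\cF_{S^{(N)},z}$, proved in the same spirit as the contractive argument underpinning the uniqueness statement in Theorem~\ref{sys}, forces $(2N)^{-1}\tr D^{(N)}(\EE Q^{(N)}-G^{(N)})\to 0$, completing the proof.
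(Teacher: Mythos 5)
Your overall architecture (tightness via the behavior of $-\imath t\, g_{\bs{\check\nu}_{z,N}}(\imath t)$ at infinity; splitting \eqref{cvg-Q} into a fluctuation term handled by Gaussian variance/concentration and a bias term handled by an approximate fixed-point equation plus a stability estimate for $\cF_{S^{(N)},z}$) is the same as the paper's, and the tightness and fluctuation parts are fine (the paper uses the Poincar\'e--Nash inequality, Proposition~\ref{np-Q}, which gives variance $O(N^{-2})$ for the normalized trace without any truncation event, but your concentration route is an acceptable variant). The genuine gap is in the bias step. You assert that applying the complex Stein/integration-by-parts formula directly to the original correlated process, then ``reorganizing the sums and passing from discrete lags to a Riemann integral over $\TT$,'' produces the self-consistent equation $\EE Q^{(N)}\approx\cF_{S^{(N)},z}(\EE Q^{(N)},\eta)$ with an operator-norm-small remainder. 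This is precisely where the difficulty of the theorem sits, and it is not a routine reorganization: the covariances $\EE[x_{i_1j_1}\bar x_{i_2j_2}]=n^{-1}[R_{j_1-j_2}]_{i_1i_2}$ couple all time indices through the block-Toeplitz matrix $\cR$, so the IBP identities do not close into the frequency-diagonal form of $\cF_{S^{(N)},z}$; the clean closure requires the circulant structure $J=\sF\Omega\sF^*$ together with exact independence of the Fourier-domain columns, which the true process does not have (its DFT columns $y_k$ are only asymptotically decorrelated). Turning your one-sentence ``reorganization'' into a proof amounts to a quantitative block-Toeplitz-versus-circulant comparison, which is exactly what the paper's detour supplies: it smooths $S^{(N)}$ by a Fej\'er kernel to get a finite-order spectral density $\hS^{(N,K)}$, uses the operator Fej\'er--Riesz factorization to realize it as a finite-memory MA process, replaces linear by circular convolution so that the DFT columns $w_k$ become exactly independent with covariance $n^{-1}\hS^{(N,K)}(e^{2\imath\pi k/n})$, proves the approximate fixed point for that model (Proposition~\ref{hQ-hG}, via IBP and Poincar\'e--Nash on the independent columns, using the discretized map $\ceF$ of Proposition~\ref{prop:Fsigma}), and then closes the loop with the comparison estimates of Propositions~\ref{Q-hQ} and~\ref{hG-G}. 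Without this (or an equivalent) approximation scheme, your claimed remainder bound is unsubstantiated.

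A secondary omission: the stability/contraction estimate for $\cF_{S^{(N)},z}$ that you invoke (as in the uniqueness part of Theorem~\ref{sys}) only holds on a half-plane $\Im\eta>C(\gmax,\bM)$, so it yields $(2N)^{-1}\tr D^{(N)}(\EE Q^{(N)}-G^{(N)})\to 0$ only there; you still need the analyticity/normal-family argument to propagate the convergence to every $\eta\in\CC_+$, a step the paper carries out explicitly and your proposal does not mention.
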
 

Taking $D^{(N)} = I$, this theorem shows in particular that
$g_{\check\nu_{z,N}}(\eta) - g_{\bs{\check\nu}_{z,N}}(\eta)
\xrightarrow[N\to\infty]{\text{a.s.}} 0$.  Since $(\bs{\check\nu}_{z,N})_N$ is
tight, we have the following corollary, whose proof employs well-known facts
about scalar Stieltjes transforms (see, \emph{e.g.}, \cite{pas-livre},
\cite[Sec.~2]{hachem-loubaton-najim07}), and is omitted. 
\begin{corollary}
\label{cor-st} 
The sequence $(\check\nu_{z,N})_{N}$ of probability measures is tight
with probability one. Furthermore, for each bounded and continuous function 
$f : \RR \to \RR$, we have  
\[
\int f d\check\nu_{z,N} - \int f d\bs{\check\nu}_{z,N} 
 \xrightarrow[N\to\infty]{\text{a.s.}} 
 0 .
\] 
\end{corollary}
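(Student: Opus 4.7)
Theorem~\ref{eqdet} applied with $D^{(N)}=I_{2N}$ gives, for each fixed $\eta\in\CC_+$,
\[
g_{\check\nu_{z,N}}(\eta) - g_{\bs{\check\nu}_{z,N}}(\eta) \xrightarrow[N\to\infty]{\text{a.s.}} 0,
\]
while the first part of the same theorem provides the (deterministic) tightness of $(\bs{\check\nu}_{z,N})_N$. The plan is to pick a countable dense subset $\cD\subset\CC_+$ and a probability-one event $\Omega_0$ on which the displayed convergence holds simultaneously for every $\eta\in\cD$. All subsequent reasoning is performed on $\Omega_0$.

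For the a.s.\ tightness of $(\check\nu_{z,N})_N$, I would argue by subsequences. Pick any subsequence $(N_k)$. By Helly's selection theorem applied to the probability measures $\check\nu_{z,N_k}$, a further subsequence --- still denoted $(N_k)$ --- converges vaguely to some positive measure $\nu^*$ on $\RR$ with $\nu^*(\RR)\leq 1$; since $\lambda\mapsto 1/(\lambda-\eta)$ is bounded continuous and vanishes at infinity, this vague convergence yields $g_{\check\nu_{z,N_k}}(\eta)\to g_{\nu^*}(\eta)$ for every $\eta\in\CC_+$. On $\Omega_0$ this transfers to $g_{\bs{\check\nu}_{z,N_k}}(\eta)\to g_{\nu^*}(\eta)$ for $\eta\in\cD$. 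Exploiting now the tightness of $(\bs{\check\nu}_{z,N})_N$, a further extraction produces $\bs{\check\nu}_{z,N_k}\to\bs\nu^*$ \emph{weakly} for some probability measure $\bs\nu^*$; its Stieltjes transform agrees with $g_{\nu^*}$ on $\cD$, hence on $\CC_+$ by analytic continuation, and the injectivity of the Stieltjes transform forces $\nu^*=\bs\nu^*$. Therefore $\nu^*(\RR)=1$, and the vague convergence $\check\nu_{z,N_k}\to\nu^*$ upgrades to weak convergence, which proves tightness.

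The convergence statement for a bounded continuous $f$ follows from the same extraction scheme. If, on a set of positive probability, some subsequence of $\int f\,d\check\nu_{z,N}-\int f\,d\bs{\check\nu}_{z,N}$ stayed bounded away from zero, the previous paragraph would nonetheless yield a further subsequence along which both $\check\nu_{z,N_k}$ and $\bs{\check\nu}_{z,N_k}$ converge weakly to the \emph{same} probability measure $\nu^*$, contradicting the assumption. The main technical point is thus the passage from pointwise convergence of Stieltjes transforms to weak convergence of the underlying measures; this is classical but crucially requires the tightness of $(\bs{\check\nu}_{z,N})_N$ provided by Theorem~\ref{eqdet}, otherwise mass could escape to infinity in the vague limit and one could not conclude that $\nu^*$ is a probability measure.
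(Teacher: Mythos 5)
Your argument is correct and is exactly the standard Stieltjes-transform route the paper has in mind: it deduces the corollary from Theorem~\ref{eqdet} with $D^{(N)}=I$ (a.s.\ convergence of $g_{\check\nu_{z,N}}-g_{\bs{\check\nu}_{z,N}}$ on a countable dense set of $\eta$) together with the tightness of $(\bs{\check\nu}_{z,N})_N$, via Helly extraction, analytic continuation and Stieltjes inversion; the paper omits this proof precisely because it consists of these well-known facts. Your pathwise subsequence scheme, including the use of the deterministic tightness to rule out escape of mass in the vague limit, fills in the omitted details faithfully.
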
 

We can now conclude by characterizing the asymptotic behavior of $(\mu_N)$.
Theorems~\ref{snY} and~\ref{intrm} provide the uniform integrability condition
stated in Proposition~\ref{herm}--(\ref{ui}), see, \emph{e.g.}, the derivations
made in \cite[Sec.~4.2]{bor-cha-12}. Condition (\ref{cvg-sing}) in the
statement of Proposition~\ref{herm} is ensured by Theorem~\ref{eqdet} via
its Corollary~\ref{cor-st}. Therefore, thanks to 
Proposition~\ref{herm}, we obtain: 
\begin{theorem}
\label{msl} 
Suppose Assumptions~\ref{prop-S} to \ref{log-S} hold. Then, there exists
a tight sequence of deterministic probability measures $(\bs\mu_N)$ on 
$\CC$ such that, for each bounded and continuous function $f:\CC\to\CC$,
\[
\int f \, d\mu_{N} - \int f \, d\bs{\mu}_{N} 
 \xrightarrow[N\to\infty]{\text{a.s.}} 0. 
\]
 The function 
$\log|\cdot|$ is integrable with respect to $\bs{\check\nu}_{z,N}$ for 
each $z\neq 0$, and the measure $\bs\mu_N$ is defined by its logarithmic 
potential through the identity 
\[
U_{\bs\mu_N}(z) = - \int \log|t| \ \bs{\check\nu}_{z,N}(dt). 
\]
\end{theorem}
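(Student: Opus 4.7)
The plan is to read Theorem~\ref{msl} as a corollary of the Hermitization principle recorded in Proposition~\ref{herm}, using the three technical results already established in the paper: Theorems~\ref{snY}, \ref{intrm}, and~\ref{eqdet} (with its Corollary~\ref{cor-st}). Concretely, I would fix $z \in \CC \setminus \{0\}$ (the exceptional set $\{0\}$ has zero Lebesgue measure, so ``almost every $z \in \CC$'' in Proposition~\ref{herm} is satisfied), define the candidate deterministic equivalent $\bs{\check\nu}_{z,N}$ as in Theorem~\ref{eqdet} (the symmetric probability measure with Stieltjes transform $(2N)^{-1}\tr G^{(N)}(z,\eta)$), and verify the two hypotheses of Proposition~\ref{herm}.

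Verification of condition~(\ref{cvg-sing}) is immediate. Theorem~\ref{eqdet} asserts that $(\bs{\check\nu}_{z,N})_N$ is tight, and Corollary~\ref{cor-st} upgrades the Stieltjes-transform convergence in Theorem~\ref{eqdet} to the weak-convergence statement $\int f\,d\check\nu_{z,N} - \int f\,d\bs{\check\nu}_{z,N} \toaslong 0$ for every bounded continuous $f : \RR \to \RR$, which is exactly what is demanded.

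Verification of condition~(\ref{ui})---a.s.\ uniform integrability of $\log|\cdot|$ against $\{\check\nu_{z,N}\}_N$---is the analytically substantive step and is the one where the fine small-singular-value control pays off. Following \cite[Sec.~4.2]{bor-cha-12}, I would split
\[
\int \bigl|\log t\bigr|\, \check\nu_{z,N}(dt)
 = \int_{t \geq R} + \int_{N^{-A} \leq t < R} + \int_{0 < t < N^{-A}}
\]
for $R$ large and $A$ large. The middle piece is bounded by $A \log N$ times a fraction that is $O(A \log N / N)$ after integration, hence negligible once handled correctly; the tail at $\infty$ is controlled by a moment bound on $\|\widehat R^{(N)}_L - z\|$, which follows from Assumption~\ref{prop-S}-(\ref{bnd-S}) and standard Gaussian concentration for quadratic forms. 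The delicate piece is the tail at zero, which one bounds by
\[
\frac{1}{N} \sum_{\ell=0}^{\lfloor N^\beta \rfloor} \log^+\!\bigl(1/s_{N-1-\ell}(\widehat R^{(N)}_L - z)\bigr)
 + \frac{1}{N} \sum_{\ell=\lfloor N^\beta \rfloor}^{N-1} \log^+\!\bigl(1/s_{N-1-\ell}(\widehat R^{(N)}_L - z)\bigr),
\]
and Theorems~\ref{snY} and~\ref{intrm} give, with overwhelming probability (Borel--Cantelli applies), $s_{N-1}(\widehat R^{(N)}_L - z) \gtrsim N^{-3/2-\varepsilon}$ and $s_{N-k-1}(\widehat R^{(N)}_L - z) \gtrsim \sqrt{k}/N$ for $k \in [\lfloor N^\beta \rfloor, \lfloor N/2\rfloor]$, respectively. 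Plugging these bounds in shows that both sums are $O(\log N)$ times a prefactor that vanishes as $\beta \to 0$ (after a trivial truncation beyond $N/2$, where one uses that singular values are bounded below by the reciprocal of $\|(\widehat R^{(N)}_L - z)^{-1}\|$ for half of the indices via symmetry arguments, or else one applies Theorem~\ref{intrm} to $(\widehat R^{(N)}_L - z)^*$). This is precisely the argument in \cite[Sec.~4.2]{bor-cha-12} adapted to our setting.

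With both conditions verified, Proposition~\ref{herm} delivers a tight deterministic sequence $(\bs\mu_N)$ with $\int f\,d\mu_N - \int f\,d\bs\mu_N \toaslong 0$ for every bounded continuous $f : \CC \to \CC$, and the identity $U_{\bs\mu_N}(z) = - \int \log|t| \, \bs{\check\nu}_{z,N}(dt)$. Integrability of $\log|\cdot|$ against $\bs{\check\nu}_{z,N}$ for each $z \neq 0$ follows because the uniform integrability bounds derived above are preserved in the weak limit that defines $\bs{\check\nu}_{z,N}$. The main obstacle is the small-singular-value part of the uniform integrability, which is exactly why Theorems~\ref{snY} and~\ref{intrm} were proved first; once these are in hand, Theorem~\ref{msl} is essentially an assembly step.
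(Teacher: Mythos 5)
Your proposal is correct and follows essentially the same route as the paper: the paper's proof of Theorem~\ref{msl} is precisely the assembly argument in the paragraph preceding its statement, where condition~(\ref{ui}) of Proposition~\ref{herm} is obtained from Theorems~\ref{snY} and~\ref{intrm} via the derivations of \cite[Sec.~4.2]{bor-cha-12}, and condition~(\ref{cvg-sing}) from Theorem~\ref{eqdet} through Corollary~\ref{cor-st}. The only difference is that you sketch the uniform-integrability bookkeeping explicitly, whereas the paper delegates it entirely to the cited reference.
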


A natural issue is if can we say anything further about the behavior of
$(\bs\mu_N)$. In particular, its weak convergence in turn would ensure the weak
convergence of $(\mu_N)$.  An idea that goes back to~\cite{fei-zee-97} and
which has been frequently used in the literature of non-Hermitian matrices,
connects the behaviour of $(\bs\mu_N)$ to that of $N^{-1} \tr
G_{01}^{(N)}(z,\imath t)$ as $t \searrow 0$. In the present context  the
details need some significant efforts. Since this is outside the focus of this
work, it will be pursued separately.

Incidentally, 
the above idea was applied in \cite{bos-hac-20} to the particular
case where $S^{(N)}(e^{\imath\theta}) = I_N$. The connection of the present
paper with \cite{bos-hac-20} is provided by the following corollary to
Theorem~\ref{msl}. This corollary will be proved by showing that $N^{-1} \tr
G_{01}^{(N)}(z,\imath t)$ coincides with the analogue of this quantity that was
computed in \cite{bos-hac-20}. 
\begin{corollary}
\label{bh-20} 
Assume that $S^{(N)}(e^{\imath\theta}) = I_N$,  
$N/ n_N \to \gamma > 0$ as $N\to\infty$, and $L = 1$. Then the random sequence 
$(\mu_N)$ converges weakly in the almost sure sense to the deterministic 
probability measure $\bs\mu$ on $\CC$ described by \cite[Th.~2]{bos-hac-20}. 
\end{corollary}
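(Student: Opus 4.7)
The plan is to specialize Theorem~\ref{eqdet} to $S^{(N)} \equiv I_N$ with $L=1$, and to exploit the translation invariance this produces. Writing $\gamma_N := N/n_N \to \gamma$, I would make the separable Kronecker ansatz $G^{(N)}(z,\eta) = \Gamma^{(N)}(z,\eta) \otimes I_N$ with $\Gamma^{(N)}(z,\cdot)\in\mathfrak{S}^2$. Plugging this into~\eqref{impl}, using $\mcT(\Gamma \otimes I_N) = \gamma_N\, \Gamma$ and $(I_2\otimes I_N)M = M$, the defining equation collapses to the $2\times 2$ matrix equation
\[
\Gamma^{(N)}(z,\eta) = \left( \frac{1}{2\pi}\int_0^{2\pi} \bigl(\gamma_N\, \Gamma^{(N)}(z,\eta) + U_1(e^{\imath\theta})\bigr)^{-1} d\theta - \begin{bmatrix} \eta & z \\ \bar z & \eta \end{bmatrix} \right)^{-1},
\]
and the uniqueness part of Theorem~\ref{sys} guarantees that this separable solution is \emph{the} solution in $\mathfrak{S}^{2N}$. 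In particular $N^{-1}\tr G_{01}^{(N)}(z,\eta) = \Gamma^{(N)}_{01}(z,\eta)$.

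Next I would evaluate the angular integral in closed form. Since $U_1(e^{\imath\theta})$ is off-diagonal with entries $e^{\pm\imath\theta}$, the $2\times 2$ inverse $\bigl(\gamma_N\, \Gamma + U_1(e^{\imath\theta})\bigr)^{-1}$ can be written via cofactors as a rational expression in $e^{\pm\imath\theta}$, which integrates by residues. This reduces the fixed-point problem to an algebraic system in the four entries of $\Gamma^{(N)}(z,\eta)$. The diagonal entries $\Gamma_{00}^{(N)}$ and $\Gamma_{11}^{(N)}$ are equal by the natural symmetry of the Hermitized problem (confirmed via uniqueness in Theorem~\ref{sys}), and they can be eliminated to yield a single scalar equation for $\Gamma^{(N)}_{01}(z,\imath t)$ depending only on $\gamma_N$, $z$ and $t$.

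The final step is to match this scalar equation with the one derived in~\cite[Th.~2]{bos-hac-20} for the same white-noise, $L=1$ setting. The Feinberg--Zee scheme~\cite{fei-zee-97} used there recovers the limiting measure $\bs\mu$ from precisely this scalar quantity as $t\searrow 0$. Since $\bs{\check\nu}_{z,N}$, and hence $\bs\mu_N$, depend on $N$ only through $\gamma_N \to \gamma$, the identification forces $\bs\mu_N \to \bs\mu$ weakly. Combining with the almost sure approximation $\mu_N - \bs\mu_N \to 0$ supplied by Theorem~\ref{msl} yields the desired weak convergence.

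The main obstacle is the bookkeeping in this identification: the scalar equation emerging here is naturally written in terms of the entries of a $2\times 2$ matrix Stieltjes transform, whereas~\cite{bos-hac-20} parametrizes the analogous fixed-point relation through scalars constructed directly from the block-matrix resolvent in that paper. Checking that the two polynomial relations coincide---possibly after a change of variables absorbing $\gamma_N$---is where the computational effort sits; the remainder of the argument is routine given Theorem~\ref{msl} and the framework of~\cite{bor-cha-12}.
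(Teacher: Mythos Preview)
Your proposal is correct and follows essentially the same route as the paper's sketch: reduce the fixed-point equation~\eqref{impl} with $\Sigma=I_N$ to a $2\times 2$ problem, exploit the symmetry $\Gamma_{00}=\Gamma_{11}$ and $\Gamma_{01}=\overline{\Gamma_{10}}$, compute the angular integral, and match the resulting scalar system with the one in~\cite{bos-hac-20}. The only cosmetic difference is that you arrive at the Kronecker structure $G^{(N)}=\Gamma^{(N)}\otimes I_N$ via an ansatz plus the uniqueness in Theorem~\ref{sys}, whereas the paper propagates this structure (together with the diagonal equality and off-diagonal conjugacy) through the fixed-point iterates $P_{k+1}=\cF_{I,z}(P_k,\eta)$ starting from $P_0=-\eta^{-1}I_{2N}$; both arguments are valid and lead to the same pair of scalar equations, which the paper verifies coincide with equations~(33a)--(33b) of~\cite{bos-hac-20}.
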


The remainder of the paper is organized as follows.  Examples where
Assumptions~\ref{S-reg} and~\ref{log-S} hold, are provided in
Section~\ref{ex-s}.  Theorems~\ref{snY} and~\ref{intrm} are proved in
Section~\ref{prf-snY}. The results related to the behavior of the singular
values, \emph{i.e.}, Theorems~\ref{sys} and~\ref{eqdet}, are proved in 
Section~\ref{nu-N}. The proof of Corollary~\ref{bh-20} will be sketched in
this section as well.

\subsection*{Notation} The indices of the elements of a vector or a matrix start from zero. Given two integers $k$ and $m$, we write $[k:m] = \{k, k+1, \ldots, m-1\}$, this set being empty if $k\geq m$. We also write $[m] = [0:m]$. Assume $m > 0$. For $k\in[m]$, we denote as $e_{m,k}$ the $k^{\text{th}}$ canonical basis vector of $\CC^m$.  When there is no ambiguity, we write $e_k$ for $e_{m,k}$.  Given a matrix $M \in \CC^{m\times n}$, and two sets $\cI \subset [m]$ and $\cJ \subset [n]$, we denote as $M_{\cI,\cJ}$ the $|\cI| \times |\cJ|$ sub-matrix of $M$ that is obtained by retaining the rows of $M$ whose indices belong to $\cI$ and the columns whose indices belong to $\cJ$. We also write $M_{\cdot, \cJ} = M_{[m], \cJ}$ and $M_{\cI, \cdot} = M_{\cI, [n]}$.  Given a vector $v\in\CC^m$, we denote by $v_\cI$ the $\CC^{|\cI|}$ sub-vector obtained by keeping the elements of $v$ whose indices are in $\cI$. 

For any  matrix $M$,  $M > 0$  means that it is positive definite. The column span of a matrix $M$ is denoted by $\colspan(M)$.  The orthogonal projection matrix onto $\colspan(M)$ (respectively, onto the subspace orthogonal to $\colspan(M)$) is denoted by $\Pi_M$ (respectively, $\Pi^\perp_M$).
The spectral norm of a matrix and the Euclidean norm of a vector are denoted by 
$\|\cdot\|$.  The Hilbert-Schmidt norm of an operator will be denoted $\| \cdot
\|_\HS$. The notation $M \geq G$ where $M$ and $G$ are Hermitian matrices
refers to the semi-definite ordering of such matrices. 

We write $\RR_+ = [0,\infty)$.  Suppose $B$ is a Borel set of $\RR^d$. Then
$\leb(B)$ denotes its Lebesgue measure.  For any $x\in \RR^d$, $\dist(x, B)=\inf_{y\in B}||x-y||$.  
 For $B$ in a metric space $E$, $\cV_\rho^E(B)$ denotes its closed $\rho$--neighborhood. If the
underlying space $E$ is clear, we simply write $\cV_\rho(B)$.

The unit-sphere of $\CC^m$ will be denoted as $\SSS^{m-1}$. The set of vectors of $\SSS^{m-1}$ that are supported by
the (index) set $\cI \subset [m]$ will be denoted by $\SSS^{m-1}_\cI$.

The probability and the expectation with respect to the law of the vector $x$
will be denoted by $\PP_x$ and $\EE_x$.
The centered  and circularly symmetric complex Gaussian  distribution with covariance matrix $\Sigma$
will be denoted by $\cN_\CC(0, \Sigma)$.  


\section{Examples where Assumptions~\ref{S-reg} and~\ref{log-S} 
 are satisfied} 
\label{ex-s} 
In this section, we provide examples where the 
spectral density 
satisfies Assumptions~\ref{S-reg}
and~\ref{log-S}. Consider the \textit{moving average} $MA(\infty)$ model:
\begin{equation}
\label{ma} 
\bs x_k^{(N)} = \bs\xi_k^{(N)} 
   + \sum_{\ell\geq 1} A^{(N)}_\ell \bs\xi^{(N)}_{k-\ell} , 
\end{equation} 
where $\bs\xi^{(N)} = (\bs\xi^{(N)}_{k})_{k\in\ZZ}$ is an i.i.d.~process with
$\bs \xi^{(N)}_{k} \sim \cN_\CC(0, I_N)$ and  $(A^{(N)}_\ell)_{\ell\geq 1}$ is a
sequence of deterministic matrices which satisfy the minimal requirement 
$\sum_{\ell\geq 1} \| A^{(N)}_\ell \| < \infty$. The spectral density for 
this model is 
\[
S^{(N)}(e^{\imath\theta}) = \Bigl( I_N + \sum_{\ell\geq 1} 
  e^{\imath\ell\theta} A^{(N)}_\ell \Bigr) 
\Bigl( I_N + \sum_{\ell\geq 1} 
  e^{-\imath\ell\theta} (A^{(N)}_\ell)^* \Bigr) .
\]
Let us look at some particular cases: \vskip5pt

 \noindent 
(i) First suppose that 
\[
\limsup_N \sum_{\ell\geq 1} \| A^{(N)}_\ell \| < 1. 
\]
Then it is obvious that both Assumptions~\ref{S-reg} and~\ref{log-S} are satisfied and $s_{N-1}(S^{(N)}(z))$ remains bounded away from zero when $z$ runs through $\TT$. 
\vskip5pt

\noindent 
(ii) Now consider the MA($1$) case, so that $A^{(N)}_\ell = 0$
for $\ell \geq 2$, and suppose that we only have $\sup_N \| A^{(N)}_1 \| < \infty$. In
this case, Assumptions~\ref{S-reg} and~\ref{log-S}, written in terms of  $A^{(N)}_1$ are, 
respectively (the $\log$ in the second expression is always integrable),
\begin{subequations} 
\label{ma1} 
\begin{equation} 
\forall\kappa \in (0,1), \exists \delta > 0, 
\sup_N \leb\left\{ z \in \TT \, : \, 
  \left\| (z - A^{(N)}_1)^{-1} \right\| \geq 1/\delta  \right\} \leq \kappa, 
  \quad \text{and} \label{ma1-reg} 
\end{equation}
\begin{equation} 
\frac 1N \int_0^{2\pi} \log 
  \Bigl\| \Bigl(e^{\imath\theta} - A^{(N)}_1 \Bigr)^{-1} \Bigr\| \ d\theta 
 \xrightarrow[N\to\infty]{} 0. \label{ma1-log}  
\end{equation}
\end{subequations} 
These conditions
are closely connected with the \emph{pseudospectrum} of $A^{(N)}_1$
\cite{tre-emb-livre05}. The Toeplitz matrices are among the matrices for which the pseudospectra are well-understood. Suppose that $A^{(N)}_1 = \left[ a^{(N)}_{k-\ell} \right]_{0\leq k,\ell\leq N-1}$ is a Toeplitz matrix with the so-called \textit{symbol}
\[
f_N(z) = \sum_{k= 1-N}^{N-1} a^{(N)}_k z^k. 
\]
Then, by the Brown and 
Halmos theorem~\cite[Th.~4.29]{bot-gru-livre05}, Assumptions~\ref{S-reg}
and~\ref{log-S} are respectively satisfied if 
\begin{gather*} 
\forall\kappa \in (0,1), \exists \delta > 0, 
\limsup_N \leb\left\{ z \in \TT \, : \, 
  \dist( e^{\imath\theta}, \conv f_N(\TT) ) \leq\delta  \right\} \leq \kappa, 
  \quad \text{and} \\ 
\frac 1N \int_0^{2\pi}  \log \dist( e^{\imath\theta}, 
  \conv f_N(\TT) ) \ d\theta \xrightarrow[N\to\infty]{} 0 ,
\end{gather*} 
where $\conv$ denotes the convex hull of a set. 
\vskip5pt

\noindent (iii) Another particular case of the model in ~\eqref{ma} is the following.  Assume
that $N=MK$ where $M$ and $K$ are two positive integers. Assume that the
matrices $A^{(N)}_\ell$ are block-diagonal matrices of the form
\[
A^{(N)}_\ell = I_M \otimes B^{(K)}_{\ell} 
\]
where $B^{(K)}_{\ell} \in \CC^{K\times K}$. We also assume that $\bmax =
\sup_{K} \sum_{\ell\geq 1} \| B^{(K)}_{\ell} \| < \infty$. The process $\bs\xi^{(N)}$ consists of $M$ i.i.d.~streams of stationary
processes each of dimension $K$. Let us show that if $M \to\infty$, then
Assumption~\ref{log-S} is satisfied for this process. Further if $K$ is upper
bounded, then Assumption~\ref{S-reg} is satisfied. 
 
The spectral density of each stream is $\Sigma^{(K)}(e^{\imath\theta}) =
C^{(K)}(e^{\imath\theta}) C^{(K)}(e^{\imath\theta})^*$, with $C^{(K)}(z) = I_K
+ \sum_{\ell\geq 1} z^\ell B^{(K)}_{\ell}$ on the unit-disk. It is well known
that (see, \emph{e.g.}, \cite[Th.~6.1]{roz-livre67}), 
\begin{align*}
\frac{1}{2\pi} \int_0^{2\pi} 
 \left| \log\left| \det C^{(K)}(e^{\imath\theta}) \right|^{\frac 1K}
 \right| \, d\theta &\leq 
 \frac{1}{2\pi} \int_0^{2\pi} 
  \left( \det \Sigma^{(K)}(e^{\imath\theta}) \right)^{\frac 1K} d\theta 
 - \log \left| \det C^{(K)}(0) \right|^{\frac 1K} \\
&= \frac{1}{2\pi} \int_0^{2\pi}
  \left( \det \Sigma^{(K)}(e^{\imath\theta}) \right)^{\frac 1K} d\theta. 
\end{align*} 
 By Hadamard's inequality, the 
right hand side is bounded by $(1 + \bmax)^2$. Thus, 
\begin{align*}
| \log s_{K-1}( \Sigma^{(K)}(e^{\imath\theta})) | &\leq 
\left| \sum_{\ell=0}^{K-1} \log s_\ell( \Sigma^{(K)}(e^{\imath\theta})) 
 \1_{s_\ell( \Sigma^{(K)}(e^{\imath\theta})) \leq 1}\right| + \log (1+\bmax)^2 \\ 
&\leq \left| \sum_{\ell=0}^{K-1} 
   \log s_\ell( \Sigma^{(K)}(e^{\imath\theta})) \right| 
  + (K+1) \log (1+\bmax)^2 \\
&= 2 \left| \log\left| \det C^{(K)}(e^{\imath\theta}) \right| \ \right| 
  + (K+1) \log (1+\bmax)^2, 
\end{align*}
and we deduce from the last display that 
\[
\frac{1}{2\pi} \int_0^{2\pi} 
| \log s_{K-1}( \Sigma^{(K)}(e^{\imath\theta})) | \ d\theta 
\leq  2K (1+\bmax)^2 + (K+1) \log (1+\bmax)^2 . 
\]
Since $S^{(N)}(e^{\imath\theta}) = I_M \otimes \Sigma^{(K)}(e^{\imath\theta})$,
Assumption~\ref{log-S} is satisfied when $M\to\infty$. Furthermore, given a
small $\delta > 0$, we have 
\begin{align*} 
\frac{1}{2\pi} \int_0^{2\pi} 
 \1_{s_{K-1}( \Sigma^{(K)}(e^{\imath\theta})) \leq \delta} \ d\theta 
 &\leq 
\frac{1}{|\log\delta |} \frac{1}{2\pi} \int_0^{2\pi} 
| \log s_{K-1}( \Sigma^{(K)}(e^{\imath\theta})) | \ d\theta \\ 
 &\leq \frac{2K (1+\bmax)^2 + (K+1) \log (1+\bmax)^2}{|\log\delta |}. 
\end{align*} 
This shows that Assumption~\ref{S-reg} is satisfied if $K$ is bounded.


\section{Small singular values: Proofs of Theorems~\ref{snY} and~\ref{intrm}}
\label{prf-snY}

\subsection{Outline of the proofs} 
\label{outline} 

In the sequel we shall most often omit the dependency on $N$ in the notation
for simplicity. Writing $n = n_N$, define the matrix 
\[
X = 
 \begin{bmatrix}  x_0 & \cdots & x_{n-1} \end{bmatrix} = 
n^{-1/2} \begin{bmatrix} \bs x_0 &
\cdots & \bs x_{n-1} \end{bmatrix} \in \CC^{N\times n} 
\] 
with $x_k \in \CC^N$ being the $k^{\text{th}}$ column of $X$, and consider the 
$n\times n$ circulant matrix 
\[
J = \begin{bmatrix}  
0 &        &        &  1 \\
1 & \ddots &             \\
  & \ddots & \ddots      \\
  &        & 1      &  0  
\end{bmatrix}.  
\] 
Then, the sample autocovariance matrix $\widehat R_L$ can be rewritten as 
$\widehat R_L = X J^L X^*$. Let the so-called $n\times n$ \textit{Fourier matrix} 
$\sF$ be defined as 
\begin{equation}
\label{four} 
\sF = 
\frac{1}{\sqrt{n}} \begin{bmatrix}
 \exp(2\imath \pi k\ell/n) \end{bmatrix}_{k,\ell=0}^{n-1}. 
\end{equation} 
Then $J = \sF \Omega \sF^*$, where 
$$\Omega = \diag( \omega^k )_{k=0}^{n-1}\ \ \text{and}\ \ \omega = \exp(-2\imath\pi / n).$$ 
Furthermore, for $k \in [n]$, let 
\[
y_k = \frac{1}{\sqrt{n}} \sum_{\ell=0}^{n-1} e^{2\imath\pi k\ell} x_\ell  
\]
be the \textit{discrete Fourier transform} of the finite sequence 
$(x_0, \ldots, x_{n-1})$, and define the $N\times n$ matrix 
\[
Y = \begin{bmatrix} y_0 & \cdots & y_{n-1} \end{bmatrix} 
 = X {\sF} 
  \in \CC^{N\times n} . 
\]
Then, we obviously have  
\[
\widehat R_L = Y \Omega^L Y^*.  
\]
Note that the columns $y_k$ of $Y$ are ``almost'' independent, since they are the discrete Fourier transforms applied to a time
window of a Gaussian stationary process. This is why we shall heavily rely on the above expression of $\widehat R_L$.
Let us elaborate on this point before entering the core of the proof.
Write $$y_k = \tilde y_k + \check y_k,\ \  \check y_k = \EE[
y_k \, | \, Y_{k} ],$$ the latter being the conditional expectation with respect to the
$\sigma$--field $\sigma(Y_k)$ generated by the elements of the matrix $Y_k = \Bigl[ y_0 \cdots
y_{k-1} \ y_{k+1} \cdots y_{n-1} \Bigr] \in \CC^{N\times(n-1)}$. Write $S_k =
S(e^{2\imath\pi k/n})$ for brevity, where we recall that $S(e^{\imath\theta})$
is the matrix spectral density. Due to the Gaussianity, $\tilde
y_k$ and $Y_{k}$ will be independent, and we shall prove that 
$$\EE[
\tilde y_k \tilde y_k^* ] \simeq \EE[ y_k y_k^*] \simeq S_k.$$ 
For this, as is frequently done in estimation theory for stationary processes, we
shall make use of the orthogonal matrix polynomial theory with respect to the
matrix measure $(2\pi)^{-1} S(e^{\imath\theta}) d\theta$ on $\TT$. 
Assumption~\ref{log-S}, which is reminiscent of the notion of \emph{regular
measures} found in this literature \cite{sta-tot-livre92, sim-livre-11}, will 
play a major role in our analysis that will be presented in 
Section~\ref{orth-pol}. 

We now consider the proof of Theorem~\ref{snY}, starting with a well-known 
linearization trick. Write 
\[
H = \begin{bmatrix} \Omega^{-L} & Y^* \\ Y & z \end{bmatrix} 
  \in \CC^{(N+n)\times(N+n)}. 
\]
By using the well known formula for the inverse of a partitioned matrix 
that involves the Schur complements (see~\cite[\S 0.7.3]{HorJoh90}), it follows 
that $\| (Y \Omega^L Y^* - z)^{-1} \| \leq \| H^{-1} \|$.
Hence \begin{equation}
\label{lin} 
s_{N+n-1}(H) \leq s_{n-1}(Y \Omega^L Y^* - z), 
\end{equation} 
and the problem then is to control $s_{N+n-1}(H)$. 
A similar problem, considered in~\cite{ver-14}, was that of the smallest
singular value of a symmetric random matrix with iid elements above the 
diagonal, see also~\cite{ngu-12}. Even though our matrix is quite different 
from theirs, we borrow many of their ideas as well of their predecessors, such as \cite{rud-ver-advmath08}. 

First, it can be shown that for $C > 0$ large enough, $\PP[ \| Y \| \geq C ]$ is
 exponentially small (Lemma~\ref{spec_norm}) by using some standard Gaussian calculations. 
Then, as in these articles, the control over $s_{N+n-1}(H)$ will be
provided on the event $[ \| Y \| \leq C ]$ .
More specifically, we will prove that, for some constant $c > 0$,  
\[
\PP\left[ \left[ s_{N+n-1}(H)  
  \leq N^{-3/2} t \right] \cap \left[ \| Y \| \leq C \right] \right] 
 \leq \varepsilon t + \exp(- c \varepsilon^2 n ). 
\]

The smallest singular value of $H$ can be obtained from the variational 
characterization 
\[
s_{N+n-1}(H) = \inf_{u \in \SSS^{N+n-1}} \| H u \| .
\] 
A well-established method to control the smallest singular value
of a random matrix is to study the action of this matrix on the
so-called \textit{compressible} and \textit{incompressible} vectors
\cite{lit-paj-rud-tom-05, rud-ver-advmath08}. 
Let $\theta, \rho \in (0,1)$ be fixed. A vector in $\SSS^{m-1}$ is called 
$\theta$-\textit{sparse} if it does not have more than $\lfloor \theta m \rfloor$
non-zero elements. Given $\theta, \rho \in (0,1)$, we define the set of
$(\theta,\rho)$-compressible vectors as
\[
\comp(\theta,\rho) = \SSS^{m-1} \cap 
\bigcup_{\substack{\cI \subset [m] \\ 
|\cI| = \lfloor \theta m \rfloor}} 
  \cV_\rho^{\CC^m}( \SSS_{\cI}^{m-1} ). 
\]
In other words, this is the set of all unit vectors at a distance less or
equal to $\rho$ from the set of the $\theta$-sparse unit vectors. The set
$\incomp(\theta,\rho)$ of $(\theta,\rho)$-incompressible vectors is the
complementary set $\SSS^{m-1} \setminus \comp(\theta,\rho)$.

Getting back to the variational characterization, and writing 
$u = [ v^\T \ w^\T ]^\T$ with $v \in \CC^n$, we (roughly) define the set 
\[
\cS = \{ u \in \SSS^{N+n-1} \, : \, 
 \| v \| \leq \ \text{a constant} \quad \text{or} \quad 
 v / \| v \| \ \text{is compressible} \} , 
\]
and we write 
\begin{equation}
\label{comp-incomp} 
s_{N+n-1}(H) = \inf_{u \in \cS} \| Hu \| \wedge 
              \inf_{u \in \SSS^{N+n-1}\setminus\cS} \| Hu \| . 
\end{equation} 
For the first infimum, we focus on the component $v$ of
the vector $u$ because $v$ impacts the first $n$ columns of $H$ which are
nearly independent. Relying on the decomposition $y_k = \tilde y_k + \check
y_k$, we first show that $\PP[ \| Hu \| \leq c ]$ is exponentially small when
$v / \| v \|$ is a sparse vector, and then we complete the analysis by an
$\varepsilon$--net argument. 

The second infimum 
requires other arguments.
Let $h_k$ be the $k^{\text{th}}$ column of $H$, and so $H_k = \Bigl[ h_0
\cdots h_{k-1} \ h_{k+1} \cdots h_{N+n-1} \Bigr] \in \CC^{(N+n)\times(N+n-1)}$.
Following a by now well-known idea of \cite[Lem.~3.5]{rud-ver-advmath08}, the
infimum over the incompressible vectors can be controlled by managing the
distances $\dist(h_k, H_k)$ between $h_k$ and $\colspan(H_k)$. 
Given $k\in[n]$, let 
$\Omega_k = \diag(\omega^\ell)_{\ell\neq k} \in \CC^{(n-1)\times(n-1)}$. Let 
\[
G_k = \begin{bmatrix} \Omega_k^{-L} & Y_k^* \\ Y_k & z \end{bmatrix} 
 \in \CC^{(N+n-1)\times (N+n-1)}, 
\]
and partition $G_k^{-1}$ as 
\[
G_k^{-1} = \begin{bmatrix} E_k & F_k \\ P_k & D_k \end{bmatrix}, 
 \quad E_k \in \CC^{(n-1)\times (n-1)}, \ D_k \in \CC^{N\times N} .
\]
Then, after some algebra, we get the following equation (similar to what was obtained in~\cite{ver-14}): 
\[
\dist(h_k, H_{k}) = 
\frac{\left| \omega^{-kL} - y_k^* D_k y_k \right|}{
 \sqrt{1 + \| y_k^* P_k \|^2 + \| y_k^* D_k \|^2}}. 
\]
Let us assume temporarily that the $\check y_k$ are equal to zero.  Restricting
ourselves to the indices $k$ for which $s_{N-1}(S_k) \sim 1$ (which is allowed),
we can show that for these indices 
\begin{equation}
\label{fq} 
\sqrt{1 + \| y_k^* P_k \|^2 + \| y_k^* D_k \|^2} \sim 
{\| S_k^{1/2} D_k S_k^{1/2}\|_\HS}. 
\end{equation} 
Consequently, the control of $\dist(h_k, H_{k})$ can be reduced to the control
of the probability (with respect to the law of $y_k$) that,   $y_k^* D_k y_k / \|
S_k^{1/2} D_k S_k^{1/2}\|_\HS$ lies in a small ball. Due to the Gaussian
nature of $y_k$,  this task is easy and leads to a rate of 
$N^{-1} t$ in the statement of Theorem~\ref{snY}. 

Now consider the term that involves $\check y_k$. Even though $\EE\| \check y_k \|^2$ is small, its interdependence with $(P_k, D_k)$ prevents us from obtaining an approximation similar to~\eqref{fq}.  Its presence in fact is responsible for the $N^{-3/2}t$ term (instead
of $N^{-1} t$) in the statement of Theorem~\ref{snY}. \\

The proof of Theorem~\ref{intrm} is laid out on a similar canvas. Let
$k\in[\lfloor N^\beta\rfloor, \lfloor N /2\rfloor]$.  In Lemma~\ref{Htronc}
below, we show that the smallest singular value of 
$H_{\cdot, [N+n-k]}$, is a lower bound for
$s_{N-k-1}(Y\Omega^LY^* - z)$. As in the proof of Theorem~\ref{snY}, this
value can be characterized through the action of $H_{\cdot, [N+n-k]}$ on the 
compressible and incompressible vectors. The former can be
handled exactly as for $H$. The latter can also be reduced to a distance problem, and indeed this term is easier to 
tackle than earlier, thanks to the
rectangular nature of $H_{\cdot, [N+n-k]}$.

\subsection{Statistical analysis of the process $(y_k)$} 
\label{orth-pol} 
Recall the decomposition $y_k = \tilde y_k + \check y_k$, with $\check
y_k = \EE[ y_k \, | \, Y_{k} ]$. We now derive approximations for $n \EE\left[ y_k y_k^* \right]$ and 
 $n \EE\left[ \tilde y_k \tilde y_k^* \right]$. Let $\cR$ denote the $Nn \times Nn$ block-Toeplitz matrix 
\begin{equation}
\label{btoep} 
{\cR} = \EE \vect X (\vect X)^* = \frac 1n 
\begin{bmatrix} R_0     & R_{-1} &        & R_{-n+1} \\
                R_1     & \ddots & \ddots &  \\
                        & \ddots & \ddots & R_{-1} \\
                R_{n-1} &        &  R_1   & R_0 
\end{bmatrix}
= \frac 1n \begin{bmatrix} R_{k-\ell} \end{bmatrix}_{k,\ell=0}^{n-1} . 
\end{equation} 
We also write the Fourier matrix $\sF$ as 
\[
\sF = \begin{bmatrix} & \ssf_{0} & \\ &\vdots& \\ 
  & \ssf_{n-1} & \end{bmatrix},  
\]
with $\ssf_k \in \CC^{1\times n}$ being the $k^{\text{th}}$ row of ${\sF}$, and
we define the function 
\[
\ssa(e^{\imath\theta}) = 
 \begin{bmatrix} 1 & e^{-\imath\theta} & \cdots &
e^{-\imath(n-1)\theta} \end{bmatrix}^\T \in \CC^{n}. 
\]
From Equation~\eqref{R-S}, we have 
\[
\cR = \frac{1}{2\pi n} \int_0^{2\pi}    
(\ssa(e^{\imath\theta}) \otimes I_N) S(e^{\imath\theta}) 
   (\ssa(e^{\imath\theta})^* \otimes I_N) \ d\theta . 
\]
Observe that $\cR$ is invertible, since the spectral density
$S(e^{\imath\theta})$ is non-trivial, as is well-known.  Indeed, the equation
$\cR u = 0$ with 
$u = \begin{bmatrix} u_0^\T, \ldots, u_{n-1}^\T \end{bmatrix}^\T$
being a non zero vector with $u_\ell\in \CC^N$ would lead to the identity
$\int_0^{2\pi} p(e^{\imath\theta})^* S(e^{\imath\theta}) p(e^{\imath\theta}) \,
d\theta = 0$ where $p(z) = \sum_{\ell=0}^{n-1} z^\ell u_\ell$ is a non-zero
polynomial. We also have that $\| n \cR \| \leq \bM$ from
Assumption~\ref{prop-S}--\ref{bnd-S} by a similar argument. 
\begin{lemma}
\label{cov-y} 
Let Assumption~\ref{prop-S} hold true. Then, 
\[
 \max_{k\in[n]} 
 \| n \EE\left[ y_k y_k^* \right] - S^{(N)}(e^{2\imath\pi k / n}) \| 
 \xrightarrow[N\to\infty]{} 0. 
\]
\end{lemma}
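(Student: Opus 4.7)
The plan is to recognize $n\,\EE[y_k y_k^*]$ as the Fejér (Cesàro) partial sum of the Fourier series of $S^{(N)}$, evaluated at the Fourier grid point $\theta_k = 2\pi k/n$, and then invoke the classical Fejér kernel estimate together with the uniform equicontinuity and boundedness assumptions on $\{S^{(N)}\}$ to conclude that this approximation is uniform in $k$ and tends to $S^{(N)}(e^{\imath\theta_k})$ as $N\to\infty$.

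First I would compute, using $y_k = n^{-1}\sum_{\ell=0}^{n-1} e^{2\imath\pi k\ell/n}\bs x_\ell$ and the stationarity identity $\EE[\bs x_\ell \bs x_m^*]=R_{\ell-m}$,
\[
n\,\EE[y_k y_k^*] \;=\; \frac 1n \sum_{\ell,m=0}^{n-1} e^{\imath(\ell-m)\theta_k} R_{\ell-m} \;=\; \sum_{|j|<n}\Bigl(1-\tfrac{|j|}{n}\Bigr) e^{\imath j\theta_k}\, R_j.
\]
Since $R_j$ is the $j$-th matrix Fourier coefficient of $S^{(N)}$ by \eqref{R-S}, the right-hand side is exactly the $n$-th Fejér sum $(\sigma_n S^{(N)})(e^{\imath\theta_k})$. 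Thus the lemma reduces to showing that $\sup_\theta \|\sigma_n S^{(N)}(e^{\imath\theta}) - S^{(N)}(e^{\imath\theta})\| \to 0$ as $N\to\infty$ (recall $n=n_N\to\infty$ by \eqref{n/N}).

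Next I would apply the standard Fejér kernel trick entrywise, or better directly in operator norm, using the fact that the Fejér kernel $K_n$ is non-negative, has integral $2\pi$, and satisfies $K_n(\psi)\leq C/(n\psi^2)$ for $|\psi|\geq h$. Splitting the convolution integral at $|\psi|=h$ gives
\[
\bigl\| \sigma_n S^{(N)}(e^{\imath\theta}) - S^{(N)}(e^{\imath\theta}) \bigr\| \;\leq\; \bs w(S^{(N)}, h) \;+\; \frac{C\,\|S^{(N)}\|_\infty^{\TT}}{n h^{2}},
\]
uniformly in $\theta$. Given $\varepsilon>0$, Assumption~\ref{prop-S}\ref{S-equi} furnishes some $h>0$ with $\sup_N \bs w(S^{(N)},h) <\varepsilon/2$. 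Fixing this $h$ and using the uniform bound $\|S^{(N)}\|_\infty^{\TT}\leq \bM$ from Assumption~\ref{prop-S}\ref{bnd-S}, the second term becomes less than $\varepsilon/2$ for all $N$ large enough (since $n_N\to\infty$). Taking the supremum over $k\in[n]$ at $\theta=\theta_k$ then yields the lemma.

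There is no real obstacle; the only mild subtlety is to carry the Fejér estimate in operator norm rather than scalar norm, which works verbatim because $S^{(N)}(e^{\imath\theta})-S^{(N)}(e^{\imath(\theta-\psi)})$ is a Hermitian matrix whose norm is controlled by the scalar quantity $\bs w(S^{(N)},|\psi|)$, and the integrand is weighted by the non-negative scalar kernel $K_n(\psi)$, so the triangle inequality for $\|\cdot\|$ applies directly.
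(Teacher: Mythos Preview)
Your proof is correct and follows essentially the same approach as the paper: both recognize $n\,\EE[y_k y_k^*]$ as the convolution of $S^{(N)}$ with the Fej\'er kernel $F_{n-1}$ evaluated at $\theta_k=2\pi k/n$, then split the integral at a small $h$ (or $\delta$) and invoke Assumption~\ref{prop-S}\ref{S-equi} and~\ref{bnd-S} exactly as you do. The only cosmetic difference is that the paper reaches the Fej\'er representation via the block-Toeplitz covariance matrix $\cR$ and the identity $|\ssf_k\,\ssa(e^{\imath\theta})|^2=F_{n-1}(\theta-\theta_k)$, whereas you compute the double sum $\frac{1}{n}\sum_{\ell,m}e^{\imath(\ell-m)\theta_k}R_{\ell-m}$ directly and identify it as the Ces\`aro partial sum; the analytic content is identical.
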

\begin{proof}
Let $k\in[n]$. Since $\sF = \sF^\T$, we have 
$y_k = X \sF e_k = X \ssf_k^\T = (\ssf_k \otimes I_N) \vect X$. Hence, 
\begin{align} 
n \EE y_k y_k^* &= n (\ssf_k \otimes I_N) \cR (\ssf_k^* \otimes I_N) 
   \nonumber\\
&= \frac{1}{2\pi} \int_0^{2\pi} 
((\ssf_k \ssa(e^{\imath\theta})) \otimes I_N ) S(e^{\imath\theta}) 
 ( (\ssa(e^{\imath\theta})^* \ssf_k^*) \otimes I_N) \ d\theta \nonumber \\
&= \frac{1}{2\pi} \int_0^{2\pi} 
 \left| \ssf_k \ssa(e^{\imath\theta}) \right|^2 S(e^{\imath\theta}) \ d\theta 
= \frac{1}{2\pi} \int_0^{2\pi} 
 F_{n-1}(\theta- 2\pi k / n)  S(e^{\imath\theta}) \ d\theta , 
\label{fejer} 
\end{align} 
where $F_{n-1}$ is the Fej\'er kernel, defined as 
\begin{equation}
\label{def-fej} 
F_{n-1}(\theta) = 
 \frac 1n \Bigl| \sum_{\ell=0}^{n-1} e^{\imath\ell\theta} \Bigr|^2 = 
\frac 1n \frac{\sin (n \theta / 2)^2}{\sin (\theta/2)^2} .  
\end{equation} 
This kernel satisfies 
\[
 \frac{1}{2\pi} \int_0^{2\pi} 
 F_{n-1}(\theta) \ d\theta = 1 , \quad \text{and} \quad 
 F_{n-1}(\theta) \leq n \left(\frac{\pi^2}{n^2\theta^2} 
 \wedge \frac{\pi^2}{4} \right) \ \text{for} \ \theta\in[-\pi, \pi], 
\]
see, \emph{e.g.}, \cite[Page 136]{sim-livre-11}. For an arbitrarily small
number $\eta > 0$, we know by Assumption~\ref{prop-S}--\ref{S-equi} that there
exists $\delta > 0$ independent of $k$ and $N$ such that $\|
S(e^{\imath\theta}) - S(e^{2\imath\pi k / n}) \| \leq \eta$ if $| \theta- 2\pi
k / n | \leq \delta$.  Splitting the integral that apppears on the right side
of~\eqref{fejer} as $\int_{|\theta-2\pi k / n | \leq\delta} +
\int_{|\theta-2\pi k / n | > \delta}$ and using the properties of the Fejér
kernel provided above, along with Assumption~\ref{prop-S}--\ref{bnd-S}, we
obtain the result of the lemma after some routine derivations. 
\end{proof}


The handling of $n \EE\left[ \tilde y_k \tilde y_k^* \right]$ is more involved. 

\begin{proposition}
\label{CD->S} 
Let Assumptions~\ref{prop-S} and~\ref{log-S} hold true. Then, 
\[
\forall\delta > 0, 
 \max_{k\in[n]} 
 \| n \EE\left[ \tilde y_k \tilde y_k^* \right] 
    - S^{(N)}(e^{2\imath\pi k / n}) \| 
  \1_{S^{(N)}(e^{2\imath\pi k / n}) \geq \delta I_N} 
 \xrightarrow[N\to\infty]{} 0 .
\]
\end{proposition}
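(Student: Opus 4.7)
The plan is to identify $\EE[\tilde y_k \tilde y_k^*]^{-1}$ with the matrix Christoffel--Darboux (CD) kernel on the diagonal, associated with the right-orthonormal matrix orthogonal polynomials on the unit circle (MOPUC) for the weight $d\mu^{(N)} = (2\pi)^{-1} S^{(N)}(e^{\imath\theta})\, d\theta$, evaluated at $z = e^{2\imath\pi k/n}$, and then to invoke the matrix analogue of the M\'at\'e--Nevai--Totik theorem for regular measures, whose hypothesis is precisely Assumption~\ref{log-S}.

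By Gaussianity, $\tilde y_k$ is the $L^2$-residual of $y_k$ after projection onto $\vect Y_k$, so the Schur complement formula gives that $\EE[\tilde y_k \tilde y_k^*]^{-1}$ equals the $(k,k)$ diagonal $N\times N$ block of $(\EE[\vect Y \vect Y^*])^{-1}$. Since $\vect Y = (\sF^{\T}\otimes I_N)\vect X$ with $\sF$ unitary-symmetric, a short computation using $\cR^{-1} = n\, (T_n^{(N)})^{-1}$, where $T_n^{(N)} := [R_{a-b}^{(N)}]_{a,b=0}^{n-1}$ is the $nN\times nN$ block Toeplitz matrix with symbol $S^{(N)}$, yields
\[
\EE[\tilde y_k \tilde y_k^*]^{-1}
\ =\ \sum_{a,b=0}^{n-1} e^{2\imath\pi k(a-b)/n}\, \bigl((T_n^{(N)})^{-1}\bigr)_{ab}.
\]
Let $\{\varphi_j\}_{j=0}^{n-1}$, $\varphi_j(z) = \sum_a A_{j,a}\, z^a$, be the right-orthonormal MOPUC, satisfying $\int \varphi_j^* S^{(N)} \varphi_k\, d\theta/(2\pi) = \delta_{jk} I_N$. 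Stacking the coefficient matrices into a block matrix $\Phi$ with $(a,j)$-block $A_{j,a}$ gives $\Phi^* T_n^{(N)}\Phi = I_{nN}$, hence the factorization $\bigl((T_n^{(N)})^{-1}\bigr)_{ab} = \sum_j A_{j,a} A_{j,b}^*$. Substituting and performing the Fourier summation over $a,b$ identifies the right-hand side above with the matrix CD kernel on the diagonal:
\[
\EE[\tilde y_k \tilde y_k^*]^{-1}
\ =\ \sum_{j=0}^{n-1} \varphi_j(e^{2\imath\pi k/n})\, \varphi_j(e^{2\imath\pi k/n})^*
\ =:\ K_{n-1}^{(N)}(e^{2\imath\pi k/n}).
\]

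It then remains to show that $\frac{1}{n}\, K_{n-1}^{(N)}(e^{2\imath\pi k/n})$ is close to $S^{(N)}(e^{2\imath\pi k/n})^{-1}$ uniformly over those $k$ with $S^{(N)}(e^{2\imath\pi k/n}) \geq \delta I_N$, and uniformly in $N$. This is the matrix version of the classical statement that the normalized Christoffel function converges to the reciprocal of the weight for regular measures on $\TT$ in the Stahl--Totik sense. Assumption~\ref{log-S} is precisely the Szeg\H{o}-type regularity condition required, and the indicator $\1_{S^{(N)}(e^{2\imath\pi k/n})\geq \delta I_N}$ restricts us to frequencies where $(S^{(N)})^{-1}$ has operator norm at most $1/\delta$, on which such convergence can be made quantitative. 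Inverting, one obtains $n\EE[\tilde y_k\tilde y_k^*]\to S^{(N)}(e^{2\imath\pi k/n})$ in the desired sense.

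The main obstacle is establishing this Christoffel-function convergence \emph{uniformly over the family} $\{S^{(N)}\}_N$, since the underlying matrix measure itself varies with $N$. Standard M\'at\'e--Nevai--Totik-type theorems are stated for a single fixed measure, so one must revisit the argument---most naturally via the extremal characterization of $K_{n-1}^{(N)}(w)^{-1}$ as the minimum of $\int p^* S^{(N)} p\, d\theta/(2\pi)$ over matrix polynomials $p$ of degree $<n$ with $p(w) = I_N$---and track constants carefully, combining the log-integrability from Assumption~\ref{log-S} with the uniform equicontinuity and boundedness from Assumption~\ref{prop-S}(i)--(ii). The non-commutative, matrix-valued nature of the polynomials introduces bookkeeping but does not change the overall strategy from the scalar case developed in~\cite{sim-livre-11}.
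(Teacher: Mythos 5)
Your proposal follows essentially the same route as the paper: first the identification of $\EE[\tilde y_k\tilde y_k^*]^{-1}$ with the diagonal matrix Christoffel--Darboux kernel for $d\varrho_N=(2\pi)^{-1}S^{(N)}(e^{\imath\theta})\,d\theta$ (the paper's Lemmas~\ref{covcond} and~\ref{cov-CD}), then a uniform-in-$N$ adaptation of the Christoffel-function asymptotics for regular measures, via the extremal characterization, Assumption~\ref{log-S}, and Nevai-type trial polynomials, exactly as in the paper's adaptation of \cite[\S 2.15--2.16]{sim-livre-11}. The only minor difference is that the paper proves just the one-sided bound $nK^{\varrho_N}_{n-1}(e^{\imath\theta_0},e^{\imath\theta_0})^{-1}\geq(1-\varepsilon)S^{(N)}(e^{\imath\theta_0})$ this way and obtains the matching upper bound cheaply from $\EE\tilde y_k\tilde y_k^*\leq\EE y_ky_k^*$ together with the Fej\'er-kernel computation of $n\EE y_ky_k^*$ (Lemma~\ref{cov-y}), rather than proving two-sided Christoffel asymptotics as you suggest; the paper itself notes the two-sided trial-polynomial route would also work.
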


\paragraph{Proof of Proposition~\ref{CD->S}.} 
The first step is to provide a single letter expression for  
$\EE\left[ \tilde y_k \tilde y_k^* \right]$. This expression is reminiscent of the formula to calculate the partial covariance of real valued random variables.  
\begin{lemma}
\label{covcond}  For any given $k\in[n]$, 
$\displaystyle{\EE\left[ \tilde y_k \tilde y_k^* \right] 
= \left( (\ssf_k \otimes I) \cR^{-1} (\ssf_k^* \otimes I)\right)^{-1}}$. 
\end{lemma}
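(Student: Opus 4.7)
The plan is to exploit the well-known Schur-complement formula for partial covariances of jointly Gaussian vectors. Specifically, if $Z \in \CC^M$ is a circularly symmetric complex Gaussian vector with (invertible) covariance $\cQ$, partitioned into blocks $Z = (Z_0^\T, \ldots, Z_{n-1}^\T)^\T$ of size $N$ each, then the conditional covariance of the block $Z_k$ given the remaining blocks is
\[
\cov(Z_k \mid (Z_j)_{j\neq k}) = \bigl((\cQ^{-1})_{kk}\bigr)^{-1},
\]
where $(\cQ^{-1})_{kk}$ denotes the $(k,k)$ block of $\cQ^{-1}$. Applied to $Z = \vect Y$, this immediately yields $\EE[\tilde y_k \tilde y_k^*] = ((\cQ^{-1})_{kk})^{-1}$, and the remaining task is purely algebraic: identify $(\cQ^{-1})_{kk}$.

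First, using $\vect(ABC) = (C^\T \otimes A) \vect B$ together with the symmetry $\sF = \sF^\T$, I would write $\vect Y = \vect(X\sF) = (\sF \otimes I_N) \vect X$. Since $\vect X$ has covariance $\cR$, this gives
\[
\cQ := \EE[\vect Y (\vect Y)^*] = (\sF \otimes I_N)\, \cR\, (\sF^* \otimes I_N).
\]
The Fourier matrix $\sF$ is unitary, so $(\sF\otimes I_N)^{-1} = \sF^* \otimes I_N$, hence
\[
\cQ^{-1} = (\sF \otimes I_N)\, \cR^{-1}\, (\sF^* \otimes I_N).
\]
Extracting the $(k,k)$ block via $(e_k^\T \otimes I_N)\,\cdot\,(e_k\otimes I_N)$ and noting that $e_k^\T \sF = \ssf_k$ produces exactly
\[
(\cQ^{-1})_{kk} = (\ssf_k \otimes I_N)\, \cR^{-1}\, (\ssf_k^* \otimes I_N),
\]
which inverts to the claimed expression.

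The Schur-complement formula itself requires only that $\cQ$ be invertible; this follows from invertibility of $\cR$, already established from non-triviality of the spectral density $S^{(N)}$ in the paragraph preceding Lemma~\ref{cov-y}. The only mild subtlety is staying careful about conjugate transposes in the complex Gaussian setting (where the correct identity is $\tilde y_k = y_k - \EE[y_k\mid Y_k]$ and $\tilde y_k$ is independent of $Y_k$ with covariance given by the Schur complement), but no real obstacle arises: everything reduces to a one-line block computation with Kronecker products. I expect the writeup to be short, with the only care needed being (i) the justification that conditioning on the random matrix $Y_k$ is the same as conditioning on the Gaussian vector $(y_j)_{j\neq k}$, and (ii) the correct bookkeeping of conjugates so that $(\sF \otimes I_N)^* = \sF^* \otimes I_N$ indeed yields the stated formula.
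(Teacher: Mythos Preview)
Your proposal is correct and takes a genuinely different, more direct route than the paper's own proof. Both arguments start from the same place: the covariance of $\vect Y$ is $(\sF\otimes I_N)\,\cR\,(\sF^*\otimes I_N)$, and the conditional covariance is the Schur complement $\Sigma_{00}-\Sigma_{01}\Sigma_{11}^{-1}\Sigma_{10}$. You then invoke the standard precision-matrix identity that this Schur complement equals $((\cQ^{-1})_{kk})^{-1}$, invert $\cQ$ in one line using the unitarity of $\sF$, and read off the $(k,k)$ block as $(\ssf_k\otimes I_N)\,\cR^{-1}\,(\ssf_k^*\otimes I_N)$. The paper instead rewrites the Schur complement as a projection $(\ssf_k\otimes I_N)\cR^{1/2}\,\Pi^\perp_{\cR^{1/2}(\sF_k^*\otimes I_N)}\,\cR^{1/2}(\ssf_k^*\otimes I_N)$, expresses the projection as a contour integral, applies Sherman--Morrison--Woodbury inside the integrand, and evaluates the residue. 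Your approach is shorter and requires no spectral calculus or rank-one perturbation identities; the paper's route is more hands-on but ultimately re-derives the precision-matrix fact you quote. The bookkeeping points you flag (conjugates, identifying conditioning on $Y_k$ with conditioning on the vector $(y_j)_{j\neq k}$) are indeed the only places to be careful, and they pose no real difficulty.
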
 
\begin{proof} 
The covariance matrix $\EE\left[ \tilde y_k \tilde y_k^* \right]$ coincides
with the conditional covariance matrix $\cov( y_k \, | \, Y_k )$ of $y_k$ with
respect to $\sigma(Y_k)$. Let $Z_0$ and $Z_1$ be two random square integrable
vectors with arbitrary dimensions. Writing   
\[
\EE\begin{bmatrix} Z_0 \\ Z_1 \end{bmatrix} 
 \begin{bmatrix} Z_0^* & Z_1^* \end{bmatrix} = 
 \begin{bmatrix} \Sigma_{00} & \Sigma_{01} \\ \Sigma_{10} & \Sigma_{11}
 \end{bmatrix}
\]
with the block dimensions at the right hand side being compatible with the
dimensions of $Z_0$ and $Z_1$, it is well-known that $\cov( Z_0 \ | \ Z_1 ) =
\Sigma_{00} - \Sigma_{01} \Sigma_{11}^{-1} \Sigma_{10}$ when $\Sigma_{11}$ is
invertible. Observe that 
\[
 \EE \vect Y (\vect Y)^* = \EE \vect(X\sF) (\vect(X\sF))^* = 
 (\sF \otimes I_N) {\cR} (\sF^* \otimes I_N). 
\]
Let ${\sF}_{k} \in \CC^{(n-1) \times n}$ be the matrix that remains 
after taking the row $\ssf_k$ out of $\sF$, and let ${\cR}^{1/2}$ be the Hermitian square root of $\cR$. Then 
\begin{align*} 
& \EE\left[ \tilde y_k \tilde y_k^* \right] \\
&= 
(\ssf_k \otimes I_N) {\cR}^{1/2} \left( 
I_{(n-1)N} - 
 \cR^{1/2} ({\sF}_k^* \otimes I_N) 
\left( (\sF_k \otimes I_N) \cR 
   (\sF_k^* \otimes I_N) \right)^{-1} 
(\sF_k \otimes I_N)\cR^{1/2} \right) \\
&\phantom{=} \ \ \ \ \ \ \ \ \cR^{1/2} (\ssf_k^* \otimes I_N) \\ 
&= 
(\ssf_k \otimes I_N) \cR^{1/2} 
 \Pi_{\cR^{1/2} ( \sF_k^* \otimes I_N)}^\perp 
   \cR^{1/2} (\ssf_k^* \otimes I_N) . 
\end{align*} 
Let $\cC$ be a positively oriented circle with center zero and a radius small
enough so that $\cR$ has no eigenvalue in the closed disk delineated by $\cC$. 
This implies that the positive definite matrix 
$( \sF_k \otimes I) {\cR}( \sF_k^* \otimes I)$
does not have eigenvalues in this closed disk either. With this choice, 
the projection $\Pi_{\cR^{1/2} ( \sF_k^* \otimes I_N)}^\perp$ can be expressed
as a contour integral, which leads us to write 
\begin{align*} 
& \EE\left[ \tilde y_k \tilde y_k^* \right] \\ 
&= \frac{-1}{2\imath\pi} \oint_{\cC} 
(\ssf_k \otimes I_N) {\cR}^{1/2} 
 \left( 
  {\cR}^{1/2} ( \sF_{k}^* \otimes I_N) ( \sF_{k} \otimes I_N) {\cR}^{1/2}  
  - z \right)^{-1} {\cR}^{1/2} (\ssf_k^* \otimes I_N) \ dz  \\ 
 &= \frac{-1}{2\imath\pi} \oint_{\cC} 
 (\ssf_k \otimes I_N) {\cR}^{1/2} 
  \left( \cR - {\cR}^{1/2}  ( \ssf_k^* \otimes I_N) ( \ssf_k \otimes I_N) 
   {\cR}^{1/2} - z \right)^{-1} 
 {\cR}^{1/2} (\ssf_k^* \otimes I_N) \ dz   . 
\end{align*} 
Let $\cQ(z) = (\cR - z)^{-1}$ when $z \in\CC$ is not an eigenvalue of 
$\cR$, and let 
\[
\Sigma(z) = (\ssf_k \otimes I_N) \cR^{1/2} \cQ(z) 
\cR^{1/2} (\ssf_k^* \otimes I_N) = 
 I_N + z (\ssf_k \otimes I_N) \cQ(z) (\ssf_k^* \otimes I_N) . 
\]
Now using the Sherman-Morrison-Woodbury formula, 
we get  
\begin{align*} 
\EE\left[ \tilde y_k \tilde y_k^* \right] 
&= \frac{-1}{2\imath\pi} \oint_{\cC} 
\left( \Sigma(z) + \Sigma(z) ( I_N - \Sigma(z) )^{-1} \Sigma(z) \right) dz \\ 
&= \frac{1}{2\imath\pi} \oint_{\cC} z^{-1} 
\left( (\ssf_k \otimes I_N) \cQ(z)(\ssf_k^* \otimes I_N)\right)^{-1} dz \\
&= \left( (\ssf_k \otimes I_N) \cR^{-1} (\ssf_k^* \otimes I_N)\right)^{-1}, 
\end{align*} 
where the last equality is obtained by expressing $\cQ(z)$ in terms of the 
spectral decomposition of $\cR$ and by using that 
$(2\imath\pi)^{-1} \oint_{\cC} z^{-1} (\lambda - z)^{-1} dz = \lambda^{-1}$
when $\lambda$ is outside the closed disk enclosed by $\cC$. 
\end{proof} 

We now reinterpret the expression provided by this lemma in the light of the
matrix orthogonal polynomial theory. Let us quickly review some of the basic
results of this theory. For the proofs of these results,  the reader may
consult \cite{dam-pus-sim-08}.  

For each $N$, consider the $N\times N$ matrix valued measure $\varrho_N$ defined
on $\TT$ as 
\[
d\varrho_N(\theta) = \frac{1}{2\pi} S^{(N)}(e^{\imath \theta}) d\theta. 
\]
Given two $\CC^{N\times N}$ matrix--valued polynomials $F, G$, we define the 
$N\times N$ matrix sesquilinear function $\pss{F,G}_{\varrho_N}$ with respect to 
$\varrho_N$ as 
\[
\pss{F,G}_{\varrho_N} = 
  \int_0^{2\pi} F(e^{\imath\theta})^* d\varrho_N(\theta) G(e^{\imath\theta}) . 
\]
We now define the sequence $(\Phi^{\varrho_N}_\ell)_{\ell=0,1,2,\ldots}$ of matrix
orthogonal polynomials with respect to $\pss{\cdot,\cdot}_{\varrho_N}$.  The
following conditions, which are analogous to a Gram-Schmidt orthogonalization,
are enough to define this sequence: \begin{itemize} \item
$\Phi^{\varrho_N}_\ell(z)$ is a $\CC^{N\times N}$--valued monic matrix polynomial
of degree $\ell$, and is thus written as $\Phi^{\varrho_N}_\ell(z) = z^\ell I_N +
\text{lower order coefficients}$. 

\item For each $\ell\geq 1$, the relation 
$\pss{\Phi^{\varrho_N}_\ell, z^k}_{\varrho_N} = 0$ holds for $k = 0,1,\ldots, \ell-1$.
\end{itemize}
Since $S^{(N)}$ is non-trivial, the matrix 
$\pss{\Phi^{\varrho_N}_\ell,\Phi^{\varrho_N}_\ell}_{\varrho_N}$ is positive definite 
for each $\ell\in\NN$. Thus, one can define the sequence 
$(\varphi^{\varrho_N}_\ell)_{\ell\in\NN}$ of the normalized versions of the 
polynomials $\Phi^{\varrho_N}_\ell$ as 
\[
 \varphi^{\varrho_N}_\ell(z) = \Phi^{\varrho_N}_\ell(z) \kappa^{\varrho_N}_\ell,  
\]
where the matrices $(\kappa^{\varrho_N}_\ell)_{\ell\in\NN}$ are chosen in such a 
way that $\pss{\varphi^{\varrho_N}_\ell, \varphi^{\varrho_N}_k}_{\varrho_N} 
= \1_{k=\ell} I_N$.
This identity determines the matrices $\kappa^{\varrho_N}_\ell$ up to a right
multiplication by a unitary matrix, the convenient choice of which is 
specified in \cite[\S~3.2]{dam-pus-sim-08} and is not relevant here. 

The \textit{Christoffel-Darboux (CD)} kernel of order $\ell$ for the measure $\varrho_N$ 
is defined for $z,u\in\CC$ as
\begin{equation}
\label{CD} 
K^{\varrho_N}_\ell(z,u) 
  = \sum_{k=0}^\ell \varphi^{\varrho_N}_k(z) \varphi^{\varrho_N}_k(u)^* . 
\end{equation} 
This function is a reproducing kernel, in the sense that for each matrix 
polynomial $P(z)$ with degree less or equal to $\ell$, the following equation holds
and defines the kernel $K^{\varrho_N}_\ell$.    
\[
\int_0^{2\pi} K^{\varrho_N}_\ell(z,e^{\imath\theta}) d\varrho_N(\theta)  
 P(e^{\imath\theta}) = P(z). 
\] 
The CD kernel 
$K^{\varrho_N}_\ell(e^{\imath\theta},e^{\imath\theta})$ is invertible, and it 
satisfies the following variational formula: for each $\theta \in [0, 2\pi)$ 
and for each $N\times N$ matrix polynomial $P(z)$ of degree  at most $\ell$, 
with 
\[
P(e^{\imath\theta}) = I_N, 
\]
it holds that 
\begin{equation}
\label{CD-var} 
\pss{P,P}_{\varrho_N} \geq 
  K^{\varrho_N}_\ell(e^{\imath\theta}, e^{\imath\theta})^{-1}, 
\end{equation}
with equality if and only if $P(z) = P^{\varrho_N,\theta}_\ell(z)$, with 
\begin{equation}
\label{P=KK} 
P^{\varrho_N,\theta}_\ell(z) = K^{\varrho_N}_\ell(z, e^{\imath\theta})
  K^{\varrho_N}_\ell(e^{\imath\theta}, e^{\imath\theta})^{-1} . 
\end{equation} 
Getting back to our problem, it turns out that our covariance matrix 
$\EE[\tilde y_k \tilde y_k^*]$ can be very simply expressed in terms of a CD 
kernel: 
\begin{lemma}
\label{cov-CD}
$\displaystyle{
\EE[\tilde y_k \tilde y_k^*] = 
K^{\varrho_N}_{n-1}(e^{2\imath\pi k / n}, e^{2\imath\pi k / n})^{-1} 
}$. 
\end{lemma}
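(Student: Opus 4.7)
The plan is to combine Lemma~\ref{covcond} with the variational characterization~\eqref{CD-var} of the Christoffel--Darboux kernel. Since Lemma~\ref{covcond} gives $\EE[\tilde y_k \tilde y_k^*] = \bigl((\ssf_k \otimes I_N) \cR^{-1} (\ssf_k^* \otimes I_N)\bigr)^{-1}$, it suffices to show that
\[
(\ssf_k \otimes I_N) \cR^{-1} (\ssf_k^* \otimes I_N)
 = K^{\varrho_N}_{n-1}(e^{2\imath\pi k / n}, e^{2\imath\pi k / n}).
\]
The idea is to interpret the left-hand side as the (inverse of the) value of a constrained matrix-valued quadratic minimization problem over polynomial coefficients, and to recognize the same problem on the right-hand side through~\eqref{CD-var}.

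Concretely, I would parametrize an arbitrary $N \times N$ matrix polynomial $P(z) = \sum_{\ell=0}^{n-1} z^\ell P_\ell$ of degree at most $n-1$ by the ``stacked coefficient'' vector $\bs p = [P_0^\T, \ldots, P_{n-1}^\T]^\T \in \CC^{nN \times N}$, so that
\[
P(e^{\imath\theta}) = (\ssa(e^{\imath\theta})^* \otimes I_N)\,\bs p,
\qquad
P(e^{\imath\theta})^* = \bs p^* (\ssa(e^{\imath\theta}) \otimes I_N).
\]
Plugging these into the definition of $\pss{P,P}_{\varrho_N}$ and using the integral representation of $\cR$ recalled just before Lemma~\ref{cov-y}, one gets
\[
\pss{P,P}_{\varrho_N}
= \bs p^* \Bigl( \frac{1}{2\pi} \int_0^{2\pi} (\ssa \otimes I_N) S (\ssa^* \otimes I_N) \, d\theta \Bigr) \bs p
= n\, \bs p^* \cR\, \bs p.
\]

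The normalization of the Fourier matrix~\eqref{four} yields $\ssa(e^{2\imath\pi k/n})^* = \sqrt{n}\, \ssf_k$, so the normalization constraint $P(e^{2\imath\pi k/n}) = I_N$ in~\eqref{CD-var} becomes the linear (matrix) constraint $\sqrt{n}\,(\ssf_k \otimes I_N)\,\bs p = I_N$. By the standard matrix-valued constrained least-squares identity (minimize $\bs p^* A \bs p$ subject to $B \bs p = I_N$ with $A > 0$: the minimum, in the positive semidefinite order, equals $(B A^{-1} B^*)^{-1}$, attained at $\bs p = A^{-1} B^*(B A^{-1} B^*)^{-1}$), applied with $A = n\cR$ and $B = \sqrt{n}(\ssf_k \otimes I_N)$, the minimum of $\pss{P,P}_{\varrho_N}$ over all such $P$ equals
\[
\bigl( (\ssf_k \otimes I_N)\,\cR^{-1}\,(\ssf_k^* \otimes I_N) \bigr)^{-1}.
\]
Equating this with the right-hand side of~\eqref{CD-var} (which gives $K^{\varrho_N}_{n-1}(e^{2\imath\pi k/n}, e^{2\imath\pi k/n})^{-1}$), inverting, and substituting into Lemma~\ref{covcond} yields the claim.

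I expect no serious obstacle: the content is essentially the observation that the Szeg\H{o}-type variational problem underlying the CD kernel, restricted to the discretization node $e^{2\imath\pi k/n}$, is exactly the partial-covariance formula of Lemma~\ref{covcond} rewritten in coefficient coordinates. The only delicate point is keeping the Kronecker bookkeeping and the $\sqrt{n}$ normalization straight when moving between $\ssa(e^{\imath\theta})$ and $\ssf_k$; once that is in place, the matrix-valued constrained minimization and the reproducing-kernel identity close the argument.
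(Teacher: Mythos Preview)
Your proof is correct and takes a genuinely different route from the paper. The paper identifies the full two-variable function $n^{-1}(\ssa(e^{\imath\theta})^*\otimes I_N)\cR^{-1}(\ssa(e^{\imath\psi})\otimes I_N)$ with $K^{\varrho_N}_{n-1}(e^{\imath\theta},e^{\imath\psi})$ by verifying directly that the former satisfies the \emph{reproducing property} for all polynomials of degree $\leq n-1$, and then invokes the uniqueness of the reproducing kernel; the diagonal evaluation at $\theta=\psi=2\pi k/n$ finishes. You instead work only on the diagonal and use the \emph{variational characterization}~\eqref{CD-var}: after rewriting $\pss{P,P}_{\varrho_N}=n\,\bs p^*\cR\,\bs p$, you solve the matrix constrained least-squares problem in closed form and match it with the right-hand side of~\eqref{CD-var}. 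Your route is a bit more direct for the statement at hand (only the diagonal value is needed), while the paper's argument yields the stronger identification of the full off-diagonal kernel as a byproduct. Both rely on the same coefficient parametrization $P(e^{\imath\theta})=(\ssa(e^{\imath\theta})^*\otimes I_N)\bs p$ and the relation $\ssf_k=n^{-1/2}\ssa(e^{2\imath\pi k/n})^*$, and the bookkeeping you flagged is indeed the only place to be careful.
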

\begin{proof}
With Lemma~\ref{covcond} at hand, this result is an instance of a well-known
result, see, \emph{e.g.}, \cite{kai-vie-mor-78} for the scalar measure case.
We reproduce its proof for completeness.  Let $P(z) = \sum_{k=0}^{n-1} z^k P_k$
be an arbitrary $N\times N$ matrix polynomial of degree at most ${n-1}$.
Stack the coefficients of $P$ in the matrix ${\sf P} = \begin{bmatrix} P_0^\T &
\cdots & P_{n-1}^T\end{bmatrix}^\T$, so that $P(e^{\imath\theta}) =
(\ssa(e^{\imath\theta})^* \otimes I_N) \sf P$. We have 
\begin{align*} 
& \frac{1}{2\pi n} \int_0^{2\pi} 
 (\ssa(e^{\imath\theta})^* \otimes I_N) \cR^{-1} 
 (\ssa(e^{\imath\psi}) \otimes I_N) S(e^{\imath\psi}) 
  P(e^{\imath\psi}) \ d\psi \\ 
&= (\ssa(e^{\imath\theta})^* \otimes I_N) \cR^{-1} 
 \left( \frac{1}{2\pi n} \int_0^{2\pi}    
(\ssa(e^{\imath\psi}) \otimes I_N) S(e^{\imath\psi}) 
   (\ssa(e^{\imath\psi})^* \otimes I_N) \ d\psi \right) \sf P \\ 
&= (\ssa(e^{\imath\theta})^* \otimes I_N) \cR^{-1} \cR \sf P \\ 
&= P(e^{\imath\theta}). 
\end{align*} 
From the uniqueness of the CD kernel as a reproducing kernel, we thus obtain
that 
$$n^{-1} (\ssa(e^{\imath\theta})^* \otimes I_N) \cR^{-1}
(\ssa(e^{\imath\psi}) \otimes I_N) = K^{\varrho_N}_{n-1}(e^{\imath\theta},
e^{\imath\psi}).$$ The result follows upon observing that $\ssf_k = n^{-1/2}
\ssa(e^{2\pi\imath k/n})^*$.  
\end{proof}

Now Proposition \ref{CD->S} will be obtained by studying the
asymptotics of the CD kernel, a subject with a rich history in the literature of orthogonal
polynomials. These asymptotics are well understood in the case where
the underlying measure has a so-called regularity property \`a la Stahl and
Totik \cite{sta-tot-livre92} (see~\cite{sim-livre-11} for an extensive 
treatment of these ideas). 
We adapt the approach detailed in 
\cite[\S 2.15 and \S 2.16]{sim-livre-11}, to the matrix measure
case. This will consist of two main steps: 
\begin{itemize}
\item 
We show that the polynomial $P^{\varrho_N,\theta}_{n-1}$, for which the
variational inequality~\eqref{CD-var} for $\ell=n-1$ is an equality, 
satisfies
\begin{equation}
\label{bnd-P} 
 \max_{\theta, \psi \in [0, 2\pi)} 
   \| P^{\varrho_N,\theta}_{n-1}(e^{\imath\psi}) \|  \leq e^{n\varepsilon_N} 
 \quad \text{with} \quad  0 \leq\varepsilon_N \to 0 . 
\end{equation} 
This is where Assumption~\ref{log-S} comes into play. 

\item With the help of the variational characterization of the CD kernel, 
and by making use of the previous result coupled with a matrix version of the
so-called Nevai's trial polynomial technique \cite{nev-livre79, sim-livre-11}, we
show that 
\begin{multline} 
\forall \varepsilon, \delta > 0, \exists N_0\in\NN, \forall N > N_0, 
\forall \theta_0 \in [0, 2\pi), 
S^{(N)}(e^{\imath\theta_0}) \geq \delta I_N  \\
  \Rightarrow 
 n K^{\varrho_N}_{n-1}(e^{\imath\theta_0}, e^{\imath\theta_0})^{-1} \geq 
    (1 - \varepsilon) S^{(N)}(e^{\imath\theta_0}).
\label{K>S}
\end{multline} 
\end{itemize} 
In the following derivations, the sequence $\varepsilon_N \to 0$ can change
from one display to another. 

For a given $N \in  \NN\setminus\{ 0 \}$, consider the scalar measure $\zeta_N$ on $\TT$ 
defined as 
\[
d\zeta_N(\theta) = \frac{1}{2\pi} s_{N-1}(S^{(N)}(e^{\imath\theta})) \ d\theta . 
\]
Consider the sesquilinear form on the scalar polynomials 
\[
\ps{f,g}_{\zeta_N} = \int_0^{2\pi} f(e^{\imath\theta})^* g(e^{\imath\theta}) 
 \ d\zeta_N(e^{\imath\theta}) , 
\]
and denote as $(b^{\zeta_N}_\ell)_{\ell\in\NN}$ the sequence of monic orthogonal
polynomials with respect to $\ps{\cdot,\cdot}_{\zeta_N}$, which are the analogues
of the $\Phi_\ell^{\varrho_N}$ above.  Let $\tau^{\zeta_N}_\ell =
\ps{b^{\zeta_N}_\ell, b^{\zeta_N}_\ell}_{\zeta_N}$, and consider the orthonormal
polynomials $(\beta^{\zeta_N}_\ell)_{\ell\in\NN}$ defined as
$\beta^{\zeta_N}_\ell(z) = b^{\zeta_N}_\ell(z) / \sqrt{\tau^{\zeta_N}_\ell}$ (these 
are the analogues of the $\varphi_\ell^{\varrho_N}$). To establish~\eqref{bnd-P}, 
we first show that 
\begin{equation} 
\label{bnd-beta} 
 \max_{\ell\in[n]} \max_{\theta \in [0, 2\pi)} 
  | \beta^{\zeta_N}_{\ell}(e^{\imath\theta}) | \leq e^{n\varepsilon_N} 
 \quad \text{with} \quad \varepsilon_N \geq 0, \ \varepsilon_N \to 0 . 
\end{equation} 
Write 
\[
\sigma^2_N = \tau_0^{\zeta_N} = \int_0^{2\pi} \, d\zeta_N(e^{\imath\theta}) = 
 \frac{1}{2\pi} \int_0^{2\pi} s_{N-1}(S^{(N)}(e^{\imath\theta})) \, d\theta  . 
\]
The sequence $(\tau_\ell^{\zeta_N})_\ell$ satisfies the recursion 
$\tau_{\ell+1}^{\zeta_N} = ( 1 - |\alpha_\ell|^2) \tau_\ell^{\zeta_N}$, 
thus, 
$\tau_{\ell+1}^{\zeta_N} = \sigma^2_N \prod_{k=0}^{\ell} ( 1 - |\alpha_k|^2)$,
where $(\alpha_\ell)_{\ell=0}^\infty$ is the sequence of the so-called
Verblunsky's coefficients associated to the measure $\zeta_N$
\cite[\S 1.7 and \S 1.8]{sim-livre-11}. Moreover, by the 
celebrated Szeg\"o's theorem \cite[Th.~1.8.6]{sim-livre-11}, the non-increasing
sequence $(\tau^{\zeta_N}_\ell)_\ell$ converges as
$\ell\to\infty$ towards $\exp((2\pi)^{-1} \int_0^{2\pi} \log
s_{N-1}(S^{(N)}(e^{\imath\theta})) \, d\theta)$.  

From this, we first deduce with the help of 
Assumption~\ref{prop-S}--\ref{bnd-S} that there is a constant $C > 0$
such that 
\[
C \geq \sigma^2_N \geq \tau^{\zeta_N}_{n-1} \geq 
\exp\left(\frac{1}{2\pi} 
 \int_0^{2\pi} \log s_{N-1}(S^{(N)}(e^{\imath\theta})) \, d\theta \right).
\] 
Furthermore, by inspecting the proof of \cite[Th.~2.15.3]{sim-livre-11}, in
particular, Inequality~(2.15.21) of that proof, and by using in addition 
Inequalities~(2.15.13) then~(2.15.15) of \cite{sim-livre-11}, we get that for 
$|z| > 1$, 
\begin{align*} 
\max_{\ell\in[n]} \frac{| b^{\zeta_N}_\ell(z) |}{|z|^\ell} &\leq 
\exp\Bigl( \sum_{k=0}^{n-1} | \alpha_k | \Bigr) \\
 &\leq 
  \exp\Bigl( \frac 1n \sqrt{\frac 1n \sum_{k=0}^{n-1} | \alpha_k |^2} \Bigr) \\
 &\leq 
  \exp\Bigl( \frac 1n 
  \sqrt{-\frac 1n \log \prod_{k=0}^{n-1} ( 1 - |\alpha_k|^2)} \Bigr) \\
&= \exp\left( n \sqrt{\frac 1n \left( - \log \tau^{\zeta_N}_{n}
+ \log \sigma^2_N  \right) } \right) 
\end{align*} 
Thus, since 
\[
\frac 1n \log C \geq \frac 1n \log \sigma^2_N \geq 
 \frac 1n \log \tau^{\zeta_N}_{n} \geq \frac{1}{2\pi n} 
 \int_0^{2\pi} \log s_{N-1}(S^{(N)}(e^{\imath\theta})) \, d\theta, 
\]
we obtain from Assumption~\ref{log-S} that there exists a non-negative sequence 
$\varepsilon_N \to_N 0$ such that 
\[
\max_{\ell\in[n]} \frac{| b^{\zeta_N}_\ell(z) |}{|z|^\ell} \leq 
  \exp\left( n \varepsilon_N \right) . 
\]
By the maximum principle, for any $\theta\in[0,2\pi)$, any $\ell\in[n]$ and 
any $r > 1$, 
\begin{align*} 
| \beta^{\zeta_N}_\ell(e^{\imath\theta}) | &= 
(\tau^{\zeta_N}_\ell)^{-1/2} | b^{\zeta_N}_\ell(e^{\imath\theta}) | \leq 
(\tau^{\zeta_N}_{n})^{-1/2} \sup_{\psi\in[0,2\pi)} 
| b^{\zeta_N}_\ell(r e^{\imath\psi}) | \\
 &\leq 
r^\ell (\tau^{\zeta_N}_{n})^{-1/2} \exp\left( n \varepsilon_N \right) , 
\end{align*} 
and by Szeg\"o's theorem and Assumption~\ref{log-S} again, we 
obtain~\eqref{bnd-beta} by choosing $r$ as close to one as desired. 

Proceeding, we now consider the matrix measure 
$d\zeta_N(e^{\imath\theta}) \otimes I_N = 
(2\pi)^{-1} s_{N-1}(S^{(N)}(e^{\imath\theta})) I_N d\theta$, equipped
with the sesquilinear function 
\[
\pss{F,G}_{\zeta_N\otimes I_N} = 
\frac{1}{2\pi} \int_0^{2\pi} s_{N-1}(S^{(N)}(e^{\imath\theta})) 
F^*(e^{\imath\theta}) G(e^{\imath\theta}) \, d\theta. 
\]
It is clear that the $\ell^{\text{th}}$ normalized orthogonal polynomial for 
$\pss{\cdot,\cdot}_{\zeta_N\otimes I_N}$ is $\beta^{\zeta_N}_\ell(z) I_N$, and the
associated $\ell^{\text{th}}$ CD kernel is 
\[
K^{\zeta_N\otimes I_N}_\ell(z,u) = 
  \sum_{k=0}^\ell \beta^{\zeta_N}_k(z) (\beta^{\zeta_N}_k(u))^* I_N . 
\]
Obviously, for each Borel set $A \subset[0, 2\pi)$, $\zeta_N(A) I_N \leq
\varrho_N(A)$ in the semi-definite ordering. Therefore, by the variational
characterization of the CD kernels, we have  
\[
K^{\zeta_N\otimes I_N}_{n-1}(e^{\imath\theta}, e^{\imath\theta}) \geq 
\pss{P^{\varrho_N,\theta}_{n-1}, P^{\varrho_N,\theta}_{n-1}}_{\zeta_N\otimes I_N}^{-1} 
\geq \pss{P^{\varrho_N,\theta}_{n-1}, P^{\varrho_N,\theta}_{n-1}}_{\varrho_N}^{-1} 
= K^{\varrho_N}_{n-1}(e^{\imath\theta}, e^{\imath\theta}) 
\]
for all $\theta\in[0, 2\pi)$. Recalling the definition~\eqref{CD} of the kernel
$K^{\varrho_N}_\ell(z,u)$, we obtain from Inequality~\eqref{bnd-beta} that  
\[
 \max_{\ell \in [n]} \max_{\theta \in [0, 2\pi)}  
 \| \varphi^{\varrho_N}_\ell(e^{\imath\theta}) \|  \leq e^{n\varepsilon_N} 
 \quad \text{with} \quad   0 \leq  \varepsilon_N \to 0. 
\]
Also notice that 
\[
K^{\varrho_N}_{n-1}(e^{\imath\theta}, e^{\imath\theta}) 
 \geq \varphi^{\varrho_N}_0(e^{\imath\theta}) 
  \varphi^{\varrho_N}_0(e^{\imath\theta})^* = 
 \Bigl( \int_0^{2\pi} d\varrho_N(\theta) \Bigr)^{-1} = (R^{(N)}_0)^{-1}. 
\]
Thus, using Assumption~\ref{prop-S}--\ref{bnd-S} and recalling Definition
\eqref{P=KK} of the polynomials $P^{\varrho_N,\theta}_\ell$, we 
obtain the bound~\eqref{bnd-P}.   

We now prove~\eqref{K>S}. Let $\delta > 0$ and $\theta_0 \in (0, 2\pi)$ be 
such that $S^{(N)}(e^{\imath\theta_0}) \geq \delta I_N$. 
Consider the matrix measure
\[
d\varsigma_N(\theta) = \frac{1}{2\pi} S^{(N)}(e^{\imath\theta_0}) \ d\theta .
\]
The CD kernels for this measure are constant and satisfy the identity $\ell
K^{\varsigma_N}_{\ell-1}(e^{\imath\theta}, e^{\imath\theta})^{-1} =
S^{(N)}(e^{\imath\theta_0})$ for each integer $\ell > 0$, as can be checked
by the direct calculation of the orthogonal polynomials for 
$d\varsigma_N(\theta)$, or by the application of Lemmas~\ref{covcond} 
and~\ref{cov-CD}, with the blocks of the matrix $\cR$ being replaced with
$R_k = \1_{k=0} S^{(N)}(e^{\imath\theta_0})$. 

Let $\eta > 0$ be an arbitrarily small number. By
Assumption~\ref{prop-S}--\ref{S-equi}, we can choose $h > 0$ small enough so
that for each $N > 0$, $\bs w(S^{(N)}, h) < \eta\delta$. Let $\theta \in
[\theta_0 - h, \theta_0 + h]$. Then, for each vector $u\in\CC^N$ with $\| u \|
= 1$, we have 
\begin{align*}
u^* S^{(N)}(e^{\imath\theta}) u &= 
 u^* S^{(N)}(e^{\imath\theta_0}) u \left( 1 + 
  \frac{u^* ( S^{(N)}(e^{\imath\theta}) - S^{(N)}(e^{\imath\theta_0}) ) u}
   {u^* S^{(N)}(e^{\imath\theta_0}) u} \right)  , 
\end{align*} 
and hence, 
\[
\forall \theta \in [\theta_0 - h, \theta_0 + h], \ 
S^{(N)}(e^{\imath\theta_0}) \leq 
  (1 + 2\eta) S^{(N)}(e^{\imath\theta}). 
\]
The degree one polynomial $q(z) = 0.5( z e^{-\imath\theta_0} + 1 )$ can be
easily shown to satisfy $q(e^{\imath\theta_0}) = \sup_{\theta}
|q(e^{\imath\theta})|  = 1$, and $\sup_{\theta-\theta_0 \in [-\pi,\pi), |\theta
- \theta_0|\geq h} |q(e^{\imath\theta})| = \cos(h/2) < 1$. 
Let $\tilde\eta > 0$ be arbitrarily small, and let 
$m = n - 1 + \lfloor n\tilde\eta\rfloor$. Consider the ``trial polynomial''
$Q_m(e^{\imath\theta}) = P^{\varrho_N,\theta_0}_{n-1}(e^{\imath\theta})
q(e^{\imath\theta})^{\lfloor n\tilde\eta\rfloor}$ with degree $m$. This 
polynomial has the following features: 
\begin{itemize}
\item $Q_m(e^{\imath\theta_0}) = I_N$. 

\item By Assertion~\eqref{bnd-P} that we just proved, for 
$\theta-\theta_0 \in [-\pi,\pi), |\theta - \theta_0|\geq h$, for all large $N$,
$$\| Q_n(e^{\imath\theta} \| \leq |\cos(h/2)|^{n\tilde\eta} 
 \| P^{\varrho_N,\theta_0}_{n-1}(e^{\imath\theta}) \| \leq 
 |\cos(h/2)|^{n\tilde\eta/2}.$$  
\end{itemize}
By the variational characterization of the CD kernels, we have 
\begin{align*} 
\frac{1}{m+1} S^{(N)}(e^{\imath\theta_0}) 
&=  (K^{\varsigma_N}_m(e^{\imath\theta_0}, e^{\imath\theta_0}))^{-1} \\
&= \pss{P^{\varsigma_N, \theta_0}_m, P^{\varsigma_N, \theta_0}_m}_{\varsigma_N} \\ 
&\leq \pss{Q_m, Q_m}_{\varsigma_N} \\ 
&\leq (1+2\eta) \int_{\theta\, : \, |\theta-\theta_0| \leq h} 
  Q_m(e^{\imath\theta}) d\varrho_N(\theta) Q_m(e^{\imath\theta})^*  \\
&\phantom{=} 
  + \int_{\theta\, : \, |\theta-\theta_0| > h} 
   Q_m(e^{\imath\theta}) d\varsigma_N(\theta) Q_m(e^{\imath\theta})^* \\
&\leq (1+2\eta) 
  \pss{P^{\varrho_N,\theta_0}_{n-1}, P^{\varrho_N,\theta_0}_{n-1}}_{\varrho_N} 
 + |\cos(h/2)|^{n\tilde\eta} \bM  \\
&= (1+2\eta) 
  K^{\varrho_N}_{n-1}(e^{\imath\theta_0}, e^{\imath\theta_0})^{-1}  
 + |\cos(h/2)|^{n\tilde\eta} \bM . 
\end{align*} 
Since $\eta$ and $\tilde\eta$ are arbitrary, Assertion~\ref{K>S} holds true. 

We now have all the elements to complete the proof of  Proposition~\ref{cov-CD}. Let $k\in [n]$
be such that $S^{(N)}(e^{2\imath\pi k/n}) \geq \delta I_N$.  Thanks to
Lemma~\ref{cov-CD}, we can replace $K^{\varrho_N}_{n-1}(e^{\imath\theta_0},
e^{\imath\theta_0})^{-1}$ for $\theta_0 = 2\pi k/n$ with $\EE\tilde y_k \tilde
y_k^*$ in Assertion~\ref{K>S}. This provides a lower bound on 
$\EE\tilde y_k \tilde y_k^*$. To obtain an upper bound on this matrix,  
we recall that $\tilde y_k$ and $\check y_k$ are independent, resulting in 
$\EE \tilde y_k \tilde y_k^* \leq \EE y_k y_k^*$, and use Lemma~\ref{cov-y}. 

This completes the proof of Proposition~\ref{CD->S}. 
We note for completeness that we could have established the upper bound 
by using a Nevai's polynomial trial technique as well.

\subsection{Technical results needed in the proofs of Theorems~\ref{snY} 
and~\ref{intrm}}

The following lemma can be proved similarly to, \emph{e.g.},  
\cite[Corollary 4.2.13]{ver-livre18}. 
\begin{lemma}[covering number] 
\label{covnb}
Let $\cU \subset \CC^m$ be a $k$--dimensional subspace. Given $\rho > 0$, 
there exists a $\rho$--net of the Euclidean unit-ball of $\cU$ with a 
cardinality bounded above by $(3/\rho)^{2k}$. 
\end{lemma}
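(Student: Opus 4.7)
The plan is to reduce the statement to the classical Euclidean covering bound via a real-linear identification. A complex $k$-dimensional subspace $\cU \subset \CC^m$ becomes a real $2k$-dimensional Hilbert space when equipped with the real part of the ambient Hermitian inner product, and the induced norm coincides with $\|\cdot\|$. Hence the Euclidean unit ball of $\cU$ is isometric to the unit Euclidean ball of $\RR^{2k}$, and it suffices to exhibit a $\rho$-net of the latter of cardinality at most $(3/\rho)^{2k}$.

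First I would build the candidate net greedily: let $\cN$ be a maximal $\rho$-separated subset of the unit ball $B$ of $\RR^{2k}$ (such a set exists by compactness of $B$, or by Zorn's lemma in general). Maximality forces $\cN$ to be a $\rho$-net, since otherwise some $x \in B$ would satisfy $\|x - y\| > \rho$ for every $y \in \cN$, and then $\cN \cup \{x\}$ would still be $\rho$-separated, a contradiction. Next I would bound $|\cN|$ by the standard packing/volume argument: since the elements of $\cN$ are pairwise at distance strictly greater than $\rho$, the open Euclidean balls of radius $\rho/2$ centered at points of $\cN$ are pairwise disjoint and all contained in the ball of radius $1 + \rho/2$ about the origin. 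Comparing $2k$-dimensional Lebesgue measures yields $|\cN|\,(\rho/2)^{2k} \leq (1 + \rho/2)^{2k}$, so that $|\cN| \leq (1 + 2/\rho)^{2k}$. For $\rho \leq 1$ this is at most $(3/\rho)^{2k}$; for $\rho > 1$ the singleton $\{0\}$ is already a $\rho$-net of $B$, so the bound holds trivially (or vacuously for $\rho > 3$).

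I do not anticipate a substantive obstacle here, as the argument is the textbook volumetric covering bound. The one point that deserves emphasis is the real-vs-complex dimension bookkeeping: passing from complex dimension $k$ to real dimension $2k$ is exactly what produces the exponent $2k$ in the final bound, matching what is needed when the lemma is applied in Section~\ref{prf-snY} to $\varepsilon$-nets on complex subspaces arising in the compressibility analysis.
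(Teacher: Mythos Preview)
Your proof is correct and is precisely the standard volumetric packing argument that the paper defers to by citing \cite[Corollary 4.2.13]{ver-livre18}; the paper does not give its own proof. The only quibble is the aside ``or vacuously for $\rho > 3$'': the bound $(3/\rho)^{2k} < 1$ is then false rather than vacuous, but this regime never arises in the applications (the lemma is invoked only with small $\rho$), so the point is immaterial.
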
 

\begin{lemma}
\label{agood}
Let $a_0, \ldots, a_{m-1}$ be non-negative real numbers such that
there exist $0 < c \leq C$ for which $c \leq m^{-1} \sum a_k$, and
$\max a_k \leq C$. Given $x \in (0, c]$, the set
$\cI(x) \subset [m]$ defined as
\[
\cI(x) = \{ k \in [m] \, : \, a_k > x \} 
\]
satisfies
\[
 | \cI | \geq \frac{c - x}{C - x} m. 
\] 
\end{lemma}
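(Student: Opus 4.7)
The plan is to prove this by a direct averaging argument, splitting the sum $\sum_k a_k$ according to whether the index belongs to $\cI(x)$ or not, and then using the two-sided bounds on the $a_k$'s.

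Concretely, first I would observe that by hypothesis $cm \leq \sum_{k=0}^{m-1} a_k$. Next, I would split this sum as
\[
\sum_{k=0}^{m-1} a_k = \sum_{k\in\cI(x)} a_k + \sum_{k\notin\cI(x)} a_k .
\]
For indices $k \in \cI(x)$ I use the universal upper bound $a_k \leq C$ (coming from $\max a_k \leq C$), and for indices $k \notin \cI(x)$ I use the upper bound $a_k \leq x$ (which is the defining property of the complement of $\cI(x)$, since $a_k > x$ defines membership in $\cI(x)$). This yields
\[
cm \;\leq\; \sum_{k=0}^{m-1} a_k \;\leq\; C\,|\cI(x)| + x\,(m - |\cI(x)|) \;=\; xm + (C-x)\,|\cI(x)| .
\]

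Rearranging gives $(C-x)\,|\cI(x)| \geq (c-x)m$. Since $x \in (0, c]$ and $c \leq C$, we have $C - x \geq C - c \geq 0$. In the generic case $C > x$ we divide to obtain the claimed inequality
\[
|\cI(x)| \;\geq\; \frac{c - x}{C - x}\, m .
\]
In the degenerate case $C = x$ we must also have $c = x$, so the right-hand side is vacuously $0$ and the inequality trivially holds (interpreting $0/0$ as $0$ here, or simply noting that any non-negative integer bounds $0$).

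There is no real obstacle here: the only thing to watch is the edge case where $C = x$, which forces $c = x$ and makes the bound trivial. The argument is a one-line pigeonhole-style estimate once the splitting of the sum is written down.
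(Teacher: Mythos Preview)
Your proof is correct and follows essentially the same averaging argument as the paper: both split $\sum_k a_k$ over $\cI$ and its complement, bound the two pieces by $C|\cI|$ and $x(m-|\cI|)$ respectively, and rearrange to get $(C-x)|\cI|\geq (c-x)m$. Your explicit treatment of the degenerate case $C=x$ is a minor addition not spelled out in the paper.
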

\begin{proof} We have
$\displaystyle{C |\cI| \geq \sum_{k\in\cI} a_k 
\geq m c - \sum_{k \in\cIc} a_k \geq m c - ( m - |\cI| ) x}$,
hence the result.
\end{proof}

For any random vector $\xi  = [ \xi_0, \ldots, \xi_{m-1} ]^\T\in \CC^m$, 
\textit{L\'evy's anti-concentration} function is defined for $t > 0$ as 
\[
\cL(\xi, t) = \sup_{d\in\CC^m} \PP\left[ \| \xi - d \| \leq t\right] 
\]
(when the probability $\PP$ above is taken with respect to some random vector 
$x$, we denote the associated L\'evy's anti-concentration function as 
$\cL_x(\xi, t)$). Assuming that the elements $\xi_\ell$ of $\xi$ are 
independent, letting $k \in [m]$, and defining $\xi^{(k)} = [\xi_0, \ldots, 
 \xi_k ]^\T$, the following restriction result is well-known \cite[Lemma 2.1]{rud-ver-advmath08}. 
$$\cL(\xi, t) \leq \cL(\xi^{(k)}, t).$$ 

We shall need the following rather standard results on Gaussian vectors. 

\begin{lemma}[norm and anti-concentration results for Gaussian vectors] 
\label{gauss}
Let $\xi  = [\xi_0,\ldots, \xi_{N-1} ]^\T\sim \cN_\CC(0,I_N)$, and let $\Sigma$
be an $N\times N$ covariance matrix. Then: 
\begin{enumerate}
\item\label{small-var} For $t > 0$, it holds that 
$\displaystyle{\PP\left[ \| \Sigma^{1/2} \xi \| \geq \sqrt{2 N t} \right] 
 \leq \exp(-(t / \|\Sigma\| - 1)N)}$. 

\hskip-25pt Assume that $s_{m-1}(\Sigma) \geq\alpha$ for some $m\in[N]$ and some
$\alpha > 0$. Then: 

\item\label{normgauss} There exists a constant 
$c_{\ref{gauss},\ref{normgauss}} > 0$ such that 
$\PP\left[ \| \Sigma^{1/2} \xi \| \leq \sqrt{\alpha m / 2} \right] 
  \leq \exp(- c_{\ref{gauss},\ref{normgauss}} m )$. 

\item\label{antifq}
There exists a constant $C_{\ref{gauss}, \ref{antifq}} > 0$ such 
that for each deterministic non-zero matrix $M \in \CC^{N\times N}$ and
each deterministic vector $a\in\CC^{N}$, 
\[
\cL({(\xi+a)^* M (\xi+a) / \| M \|_\HS}, t) \leq 
  C_{\ref{gauss}, \ref{antifq}} t 
\]

\item\label{antivec} There exists a constant 
$C_{\ref{gauss}, \ref{antivec}}  = C_{\ref{gauss}, \ref{antivec}}(\alpha)> 0$ 
such that 
$\cL(\Sigma^{1/2} \xi, \sqrt{m} t) \leq 
 (C_{\ref{gauss}, \ref{antivec}} t)^m$.

\item\label{larget} 
For each non-zero deterministic matrix $M \in \CC^{N\times N}$, 
\[
 \PP\left[ \| M\xi \|^2 \geq t \| M \|_\HS^2 \right] \leq \exp(1 - t / 2) .
\]
\end{enumerate} 
\end{lemma}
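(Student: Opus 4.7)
The plan is to establish each of the five items by standard Gaussian techniques. Items (1), (2) and (5) are Chernoff-type bounds derived from the moment generating function of quadratic forms; item (4) is a density-times-volume comparison; and item (3), the main obstacle, is an anti-concentration estimate for a complex Gaussian quadratic form.

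For (1), diagonalize $\Sigma$ and write $\|\Sigma^{1/2}\xi\|^2=\sum_i\lambda_i|g_i|^2$ with the $g_i$ i.i.d. standard complex Gaussian, so that $\EE\exp(s\|\Sigma^{1/2}\xi\|^2)=\prod_i(1-s\lambda_i)^{-1}$ for $s<1/\|\Sigma\|$; a Chernoff bound at $s=1/(2\|\Sigma\|)$ together with the inequality $-\log(1-x)\leq 2x$ on $[0,1/2]$ gives the stated tail. Item (5) is handled identically with $\Sigma$ replaced by $M^*M$: at $s=1/(2\|M\|_\HS^2)\leq 1/(2\|M\|^2)$ the MGF is bounded by $\exp(2s\|M\|_\HS^2)=\exp(1)$, and Markov's inequality yields $\PP[\|M\xi\|^2\geq t\|M\|_\HS^2]\leq\exp(1-t/2)$. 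For (2), if $\Pi$ denotes the orthogonal projection onto the span of the top $m$ eigenvectors of $\Sigma$ then $\|\Sigma^{1/2}\xi\|^2\geq\alpha\|\Pi\xi\|^2$, and $2\|\Pi\xi\|^2$ is a chi-squared variable with $2m$ degrees of freedom whose lower tail below $m$ is exponentially small in $m$ by the same MGF argument.

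Item (4) follows by restricting to the $m$-complex-dimensional subspace on which $\Sigma\geq\alpha I_m$. The projected vector is a complex Gaussian whose density is pointwise bounded by $(\pi\alpha)^{-m}$. Multiplying by the volume $\pi^m(\sqrt{m}\,t)^{2m}/m!$ of a ball of radius $\sqrt{m}\,t$ in $\CC^m\cong\RR^{2m}$ and absorbing the factorial by Stirling's formula yields a bound of the form $(Ct^2/\alpha)^m$, which implies the stated $(C't)^m$ for bounded $t$ and is trivial otherwise since probabilities are at most one.

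For the delicate item (3), split $M=H+\imath K$ into its Hermitian parts $H=(M+M^*)/2$ and $K=(M-M^*)/(2\imath)$, observing that $\|M\|_\HS^2=\|H\|_\HS^2+\|K\|_\HS^2$; without loss of generality $\|H\|_\HS\geq\|M\|_\HS/\sqrt{2}$. Since the Lévy concentration of a complex-valued random variable is dominated by that of its real part, it suffices to show that the density of $(\xi+a)^*H(\xi+a)$ is bounded by $C/\|H\|_\HS$. Diagonalizing $H=U\Lambda U^*$ and setting $\tilde\xi=U^*\xi$, $b=U^*a$, the quadratic form becomes $\sum_i\mu_i|\tilde\xi_i+b_i|^2$, a weighted sum of independent noncentral chi-squared variables with two real degrees of freedom. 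When a single eigenvalue dominates, $|\mu_{i_0}|\geq\|H\|_\HS/\sqrt{2}$, conditioning on the other coordinates reduces the problem to bounding the density of $\mu_{i_0}|\tilde\xi_{i_0}+b_{i_0}|^2$ directly by $C/|\mu_{i_0}|$. Otherwise the characteristic function $\phi(t)=\prod_i(1-\imath t\mu_i)^{-1}\exp(\imath t\mu_i|b_i|^2/(1-\imath t\mu_i))$ satisfies $|\phi(t)|\leq\prod_i(1+t^2\mu_i^2)^{-1/2}$, and a change of variable $s=t\|H\|_\HS$ in the Fourier inversion formula furnishes the same density bound. The obstacle is precisely to obtain the tight normalization by $\|M\|_\HS$ (as opposed to the weaker $\|M\|$), which is what forces the case split between the spike and the spread-spectrum regimes.
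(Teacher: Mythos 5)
Your proofs of items (1), (2) and (5) follow the paper's own Chernoff/MGF route essentially verbatim and are correct. Where you genuinely diverge is in items (3) and (4), and your alternatives work. For (4), the paper restricts to coordinates and invokes the tensorization lemma of Rudelson--Vershynin (their Lemma 2.2 in the 2008 paper), whereas you bound the small-ball probability of the projected nondegenerate complex Gaussian by (density bound $(\pi\alpha)^{-m}$) $\times$ (volume of a ball in $\CC^m$) and use Stirling; this is more elementary, avoids the external lemma, and for small $t$ even gives the stronger rate $(Ct^2/\alpha)^m$, with the trivial bound covering large $t$. For (3), the paper reduces to $\sum_\ell \beta_\ell\,|\xi_\ell+u_\ell|^2$ with $\sum\beta_\ell^2=1$ and bounded coordinate densities and then cites Theorem 1.2 of Rudelson--Vershynin (2015); you instead re-prove the needed special case by a spike/spread dichotomy on the eigenvalues of the dominant Hermitian part $H$: conditioning when some $|\mu_{i_0}|\geq\|H\|_\HS/\sqrt2$ (using that the noncentral $\chi^2_2$-type density is bounded by one), and otherwise Fourier inversion with $|\phi(t)|\leq\prod_i(1+t^2\mu_i^2)^{-1/2}$. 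This is correct, but the final step is slightly understated: the change of variables $s=t\|H\|_\HS$ alone does not finish it, since you must still check that $\int\prod_i(1+s^2\nu_i^2)^{-1/2}\,ds$ is bounded by a universal constant when $\sum_i\nu_i^2=1$ and $\max_i\nu_i^2\leq 1/2$; this follows by splitting at $|s|=1/\max_i|\nu_i|$, using $\log(1+x)\geq x/2$ on $[0,1]$ to get Gaussian decay on the inner range and $(1+s^2\nu_1^2)^{-1/2}\prod_{i\geq2}(1+s^2\nu_i^2)^{-1/2}\leq \sqrt2/(s^2|\nu_1|)$ on the outer range, and it is exactly your no-dominant-eigenvalue hypothesis that makes this estimate (and the integrability needed for the inversion formula) go through. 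In short: your argument is self-contained where the paper outsources to two Rudelson--Vershynin results, at the cost of one routine integral estimate that you should spell out.
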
 
This lemma is proved in the appendix.

The following results specifically concern our matrix model. The first one will
be needed to bound the spectral norm of our matrix $Y$. 

\begin{lemma}[spectral norm of $Y$] 
\label{spec_norm} 
Let Assumption~\ref{prop-S}--\ref{bnd-S} hold true. Then, there are two  
constants $c_{\ref{spec_norm}}, c'_{\ref{spec_norm}} > 0$ such that for each $t\geq c'_{\ref{spec_norm}}$,
\[
\PP\left[ \| Y \| \geq t \right] \leq \exp(-c_{\ref{spec_norm}} N t^2 ). 
\]
  \end{lemma}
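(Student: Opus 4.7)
The plan is to reduce to bounding $\|X\|$ and then to apply a standard $\varepsilon$--net argument on the product of spheres $\SSS^{N-1} \times \SSS^{n-1}$, controlling each bilinear form $u^* X v$ via a Gaussian tail estimate. The only probabilistic input required is Assumption~\ref{prop-S}--\ref{bnd-S}.

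First I would note that $\|Y\| = \|X\sF\| = \|X\|$ because $\sF$ is unitary. Next, for fixed $u \in \SSS^{N-1}$ and $v \in \SSS^{n-1}$, the scalar $u^* X v$ is a centered complex Gaussian (linear image of a Gaussian vector), and I would compute its variance directly from~\eqref{R-S}:
\[
\EE |u^* X v|^2 = \frac{1}{n}\sum_{k,\ell=0}^{n-1} v_k \bar v_\ell \, u^* R_{k-\ell} u = \frac{1}{2\pi n}\int_0^{2\pi} \bigl(u^* S^{(N)}(e^{\imath\theta}) u\bigr) \Bigl|\sum_{k=0}^{n-1} v_k e^{-\imath k\theta}\Bigr|^2 d\theta \leq \frac{\bM}{n},
\]
using Assumption~\ref{prop-S}--\ref{bnd-S} together with Parseval's identity. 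The tail bound for a centered complex Gaussian $Z$ with variance $\sigma^2$ (namely $\PP[|Z|\geq s] = \exp(-s^2/\sigma^2)$) then gives $\PP[|u^* X v| \geq s] \leq \exp(-n s^2 / \bM)$.

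Then I would invoke Lemma~\ref{covnb} to construct a $1/4$--net $\cN_u$ of $\SSS^{N-1}$ and a $1/4$--net $\cN_v$ of $\SSS^{n-1}$ with cardinalities at most $12^{2N}$ and $12^{2n}$ respectively. The standard approximation argument (approximating both $u$ and $v$ in their respective spheres) gives $\|X\| \leq 2 \max_{u'\in\cN_u, \, v'\in\cN_v} |u'^* X v'|$. A union bound therefore yields
\[
\PP\bigl[\|X\| \geq t\bigr] \leq 12^{2(N+n)} \exp\!\bigl(-n t^2 / (4\bM)\bigr) = \exp\!\bigl(2(N+n)\log 12 - n t^2 / (4\bM)\bigr).
\]
Finally, the bi-proportionality condition~\eqref{n/N} provides constants $c_1, c_2 > 0$ such that $n \geq c_1 N$ and $N + n \leq c_2 N$ for all large $N$. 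Choosing $c'_{\ref{spec_norm}}$ large enough so that the entropy term $2 c_2 N \log 12$ is dominated by half of $c_1 N t^2/(4\bM)$ whenever $t \geq c'_{\ref{spec_norm}}$, I would obtain $\PP[\|Y\|\geq t] \leq \exp(-c_{\ref{spec_norm}} N t^2)$ with $c_{\ref{spec_norm}} = c_1/(8\bM)$.

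The only subtlety is the absorption of the entropy factor into the exponential, but this is routine thanks to~\eqref{n/N}; there is no serious obstacle, and no machinery beyond Gaussian concentration and the covering number lemma is needed.
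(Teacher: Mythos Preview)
Your proof is correct and follows essentially the same approach as the paper: reduce to $\|X\|$, apply an $\varepsilon$--net argument, and control the resulting Gaussian quantities using the bound $n\|\cR\|\leq\bM$ from Assumption~\ref{prop-S}--\ref{bnd-S}. The only difference is cosmetic: the paper nets over $\SSS^{n-1}$ alone and bounds the vector norm $\|Xu\|$ via Lemma~\ref{gauss}--\ref{small-var}, whereas you net over both $\SSS^{N-1}$ and $\SSS^{n-1}$ and bound the scalar $|u^*Xv|$ directly; your version trades a larger entropy cost for a more elementary tail estimate, but the two arguments are otherwise interchangeable.
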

\begin{proof}
Since $Y = X\sF$, it is enough to prove the lemma for $X$. By a standard $\varepsilon$-net 
argument (see \cite[Lm.~2.3.2]{tao-topics}), we know that 
\[
\PP\left[ \| X \| \geq t \right] \leq 
\sum_{u\in\bs N} \PP\left[ \| X u \| \geq t / 2 \right], 
\]
where $\bs N$ is a $1/2$--net of $\SSS^{n-1}$. Let $u \in \bs N$.  Observing
that $Xu \sim \cN_\CC(0, (u^\T\otimes I_N) \cR (\bar u \otimes I_N))$, and
recalling that $n \| \cR \| \leq \bM$, Lemma~\ref{gauss}--\ref{small-var} shows
that 
$$\PP\left[ \| X u \| \geq t / 2 \right] \leq \exp(-(\frac{t^2 n}{8 \bM N})
-1) N).$$  By Lemma~\ref{covnb} and the union bound, we thus have that
$$
\PP\left[ \| X \| \geq t \right] \leq \exp(-(\frac{t^2 n}{8 \bM N} -1) N +
(2\log 6) n).  
$$  
Choosing $t$ large enough and using~\eqref{n/N}, we get the result.  
\end{proof} 

We shall need to use a discrete frequency counterpart of
Assumption~\ref{S-reg}: 
\begin{lemma}
\label{bad-k} 
Let Assumptions~\ref{prop-S}--\ref{S-equi} and \ref{S-reg} hold true. 
Given $N \in  \NN\setminus\{ 0 \}$ and $\alpha > 0$, let $\Cgood(\alpha)$ be the subset of 
$[n]$ defined as 
\[
\Cgood(\alpha) = \{ k \in [n] \, : \, S(e^{2\imath\pi k / n}) \geq 
   \alpha I_N  \} . 
\]
Then, 
\[
\forall\kappa \in (0,1), \exists \alpha > 0, 
 | \Cgood(\alpha) | \geq (1 - \kappa) n  \quad \text{for all large } N. 
\]
\end{lemma}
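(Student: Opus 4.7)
The plan is to transfer the continuous Lebesgue-measure bound provided by Assumption~\ref{S-reg} to a discrete counting bound on the grid $\{e^{2\imath\pi k/n}\}_{k\in[n]}$ by means of the uniform equicontinuity of $(S^{(N)})$.

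Fix $\kappa\in(0,1)$ and pick $\kappa'=\min(\pi\kappa/2,\,1/2)$. By Assumption~\ref{S-reg}, there is $\delta>0$, independent of $N$, such that
\[
\sup_N \leb\bigl\{e^{\imath\theta}\in\TT\,:\,s_{N-1}(S^{(N)}(e^{\imath\theta}))\leq\delta\bigr\}\leq\kappa'.
\]
Set $\alpha=\delta/2$. Since the map $M\mapsto s_{N-1}(M)$ is $1$-Lipschitz in the spectral norm, Assumption~\ref{prop-S}--\ref{S-equi} provides $h>0$, uniform in $N$, with $\bs w(S^{(N)},h)<\delta/2$. Then for every $k\in\Cbad:=[n]\setminus\Cgood(\alpha)$ and every $\theta$ with $|\theta-2\pi k/n|\leq h$,
\[
s_{N-1}(S^{(N)}(e^{\imath\theta}))\leq s_{N-1}(S^{(N)}(e^{2\imath\pi k/n}))+\bs w(S^{(N)},h)<\alpha+\delta/2=\delta.
\]
Hence the closed arc $I_k\subset\TT$ of angular half-width $h$ centered at $e^{2\imath\pi k/n}$ lies in $B_\delta:=\{z\in\TT:s_{N-1}(S^{(N)}(z))\leq\delta\}$.

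To conclude, I count multiplicities. The arcs $I_k$ have equispaced centers at angular separation $2\pi/n$, so any point of $\TT$ lies in at most $M\leq nh/\pi+1$ of them. Summing arc lengths and using $\bigcup_k I_k\subset B_\delta$,
\[
2h\,|\Cbad|=\sum_{k\in\Cbad}\leb(I_k)\leq M\cdot\leb\Bigl(\bigcup_{k\in\Cbad}I_k\Bigr)\leq M\,\kappa'.
\]
For $N$ (hence $n$) large enough that $M\leq 2hn/\pi$, this yields $|\Cbad|\leq\kappa' n/\pi\leq\kappa n/2$, which gives the desired lower bound on $|\Cgood(\alpha)|$.

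There is no real obstacle beyond this bookkeeping: the argument is essentially a Vitali-style covering count, with the only care being the choice of $\kappa'$ so that the ratio $\kappa'/\pi$ is comfortably below $\kappa$ in every regime.
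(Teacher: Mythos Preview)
Your proof is correct and follows essentially the same strategy as the paper: use equicontinuity to thicken each bad grid point into an arc contained in the continuous bad set $B_\delta$, then convert the Lebesgue bound on $B_\delta$ into a count of bad grid points. The paper packages the counting step as the inequality $|\cI|\leq \tfrac{n}{2\pi}\leb(\cV_h^\TT(\cI))+\tfrac{\pi}{h}$ (obtained by decomposing the $h$-neighborhood into arcs), whereas you do the equivalent thing via a multiplicity bound $\sum_k\1_{I_k}\leq M$ and integrate; the arithmetic differs slightly but the content is identical.
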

\begin{proof} 
Let us identify a set $\cI \subset [n]$ with the discretization of the
unit-circle $\TT$ given as $\{ \exp(2\imath\pi k / n) \, : \, k \in \cI \}$,
and let us also denote any of these two sets as $\cI$.  Given a small real
number $h > 0$, let $\cV_h^\TT(\cI)$ be the closed $h$-neighborhood of $\cI$
on $\TT$ equipped with the curvilinear distance. We shall show that 
\begin{equation}
\label{thick} 
| \cI | \leq \frac{n}{2\pi} \leb(\cV_h^\TT(\cI)) + \frac{\pi}{h}. 
\end{equation} 
We first observe that $\cV_h^\TT(\cI)$ consists of a finite number of disjoint
closed arcs, the length of each arc being greater or equal to $2h$. Given
$\varphi_0 \in [0,2\pi)$ and $\varphi_1 \in [\varphi_0 + 2h, \varphi_0 + 2h +
2\pi)$, let $\{ e^{\imath\varphi} \, : \, \varphi_0\leq\varphi\leq\varphi_1 \}$
be one such arc. Then, there are two integers $k_{\min} \leq k_{\max}$ in $\cI$
such that $2k_{\min}\pi / n = \varphi_0 + h$ and $2k_{\max}\pi / n = \varphi_1
- h$, and all the elements of $\cI$ in this arc belong to the set $\{ k_{\min},
  \ldots, k_{\max} \}$. Since there are at most $k_{\max} - k_{\min} + 1 =
n(\varphi_1-\varphi_0)/(2\pi) - n h / \pi + 1$ of these elements, and
furthermore, since $\cV_h^\TT(\cI)$ consists of 
at most $\lfloor \pi / h \rfloor$ arcs, we obtain the inequality~\eqref{thick}. 

Fix an arbitrary $\kappa \in (0,1)$. By Assumption~\ref{S-reg}, there exists
$\delta > 0$ such that the set $B = \{ z \in \TT \, : \, s_{N-1}(S(z)) \leq
\delta \}$ satisfies $\leb(B) \leq \kappa \pi$.  Let $\alpha = \delta / 2$, and
assume that the set $\Cbad(\alpha) = [n] \setminus \Cgood(\alpha)$ is non-empty
(otherwise the result of the lemma is true).  Relying on
Assumption~\ref{prop-S}--\ref{S-equi}, let $h > 0$ be such that $\bs w(S,h)
\leq \alpha$ for all $N$. Let $k \in \Cbad(\alpha)$. By the Wielandt-Hoffmann
theorem and the triangle inequality, 
$s_{N-1}(S(e^{\imath\theta})) \leq \delta$ for each $\theta$ such that $|\theta
- 2\pi k / n | \leq h$. In other words, $\cV_h^\TT(\Cbad(\alpha)) \subset B$.
By inequality~\eqref{thick}, we thus obtain that 
\[
| \Cbad(\alpha) | \leq \frac{n}{2} \kappa + \frac{\pi}{h}, 
\]
which implies that $| \Cbad(\alpha) | \leq \kappa n$ for all large $n$. 
\end{proof} 

We now enter the proofs of Theorems~\ref{snY} and~\ref{intrm}. Given $C > 0$,
we denote as $\Eop(C)$ the event
\[
\Eop(C) = \left[ \| Y \| \leq C \right] . 
\]
In the remainder of this section, the constants will be referred to by the
letters $c$, $a$, or $C$, possibly with primes or numerical indices that refer
generally to the statements (lemmas, propositions, ...) where these constants
appear for the first time. These constants do not depend on $N$. 
The typical statements where they appear are of the type
\[
\PP\left[ \left[ \cdots \leq a \right] \cap \Eop(C) \right] \leq 
  C' a + \exp(-cN) , 
\]
or others in the same vein. Often, such inequalities hold true for all $N$
large enough. This detail will not always be mentioned. 

\subsection{Proof of Theorem~\ref{snY}} 

As mentioned in Section~\ref{outline}, our starting point for proving
Theorem~\ref{snY} is the variational characterization~\eqref{comp-incomp}. 

\subsubsection{Compressible vectors}

In the following statement and in the proof, the vectors $u\in \SSS^{N+n-1}$
will always take the form $u = [v^\T \ w^\T]^\T$ with $v \in \CC^{n}$.  
\begin{proposition}[compressible vectors] 
\label{prop-comp}
There exist constants $\theta_{\ref{prop-comp}}, \rho_{\ref{prop-comp}}, 
C_{\ref{prop-comp}} \in (0,1)$, and   
$a_{\ref{prop-comp}}, c_{\ref{prop-comp}} > 0$ such that, for the set  
\[
{\cS} := \left\{ u \in \SSS^{N+n-1} \ : \  
  \| v \| \leq C_{\ref{prop-comp}} \ \text{or} \ 
  v / \| v \| \in \comp(\theta_{\ref{prop-comp}}, \rho_{\ref{prop-comp}}) 
 \right\} 
\]
(here $v / \| v \| = 0$ if $v = 0$), we have  
\[
\PP\left[ [ \inf_{u\in{\cS}} \| H u \| \leq a_{\ref{prop-comp}} ] 
 \cap \Eop(C) \right] \leq \exp(- c_{\ref{prop-comp}} n ) . 
\]
\end{proposition}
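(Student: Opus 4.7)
The set $\cS$ splits into the sub-case $\|v\| \le C_{\ref{prop-comp}}$ and the compressible sub-case ($\|v\| > C_{\ref{prop-comp}}$ and $v/\|v\| \in \comp(\theta,\rho)$). The first is handled deterministically on $\Eop(C)$: as $\|w\| \ge \sqrt{1 - C_{\ref{prop-comp}}^2}$, the bottom block of $Hu$ yields
\[
\|Hu\| \ge \|Yv + zw\| \ge |z|\|w\| - \|Y\|\|v\| \ge |z|\sqrt{1 - C_{\ref{prop-comp}}^2} - C\, C_{\ref{prop-comp}},
\]
which can be made a positive constant by choosing $C_{\ref{prop-comp}}$ small in terms of $|z|$ and $C$ only.

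In the compressible sub-case, the guiding idea is to eliminate $w$ and reduce everything to a small-ball problem on $v$. Writing $e_1 = \Omega^{-L}v + Y^*w$ and $e_2 = Yv + zw$ (so $Hu = (e_1, e_2)$ with $\|e_j\| \le \|Hu\|$), solving $w = z^{-1}(e_2 - Yv)$ from the bottom block and substituting in $e_1$ produces the identity
\[
(Y^*Y - z\Omega^{-L})\,v = Y^*e_2 - z e_1,
\]
so that $\|(Y^*Y - z\Omega^{-L})v\| \le (C + |z|)\|Hu\|$ on $\Eop(C)$. It therefore suffices to prove that, with probability $\ge 1 - e^{-cn}$, every compressible unit vector $\bar v$ satisfies $\|(Y^*Y - z\Omega^{-L})\bar v\| \ge c'$. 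This reformulation sidesteps the need to net over $w$.

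For a fixed unit sparse $\bar v$ supported on $\cI$ with $|\cI| = \lfloor\theta n\rfloor$, set $\eta = Y\bar v$. The $\ell$-th coordinate of $(Y^*Y - z\Omega^{-L})\bar v$ equals $y_\ell^*\eta - z\omega^{-\ell L}\bar v_\ell \1_{\ell \in \cI}$, from which two lower bounds follow:
\[
\|(Y^*Y - z\Omega^{-L})\bar v\|^2 \ge \sum_{\ell\in\cI^c\cap\Cgood(\alpha)} |y_\ell^*\eta|^2, \qquad \|(Y^*Y - z\Omega^{-L})\bar v\|^2 \ge \sum_{\ell\in\cI}|y_\ell^*\eta - z\omega^{-\ell L}\bar v_\ell|^2.
\]
Combining the second with $|A-B|^2 \ge \tfrac12 |B|^2 - |A|^2$ and $\sum_{\ell\in\cI}|y_\ell^*\eta|^2 \le \|Y\|^2\|\eta\|^2 \le C^2\|\eta\|^2$ on $\Eop(C)$ gives the deterministic lower bound $\tfrac12 |z|^2 \|\bar v\|^2 - C^2 \|\eta\|^2$. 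The first bound is analyzed by sequential conditioning: $\eta$ is measurable with respect to $\sigma(y_k : k \in \cI)$ (hence stays frozen), and one reveals the columns $y_{k_i}$ for $k_i \in \cI^c \cap \Cgood(\alpha)$ one at a time. By monotonicity of Gaussian conditional covariance and Proposition~\ref{CD->S}, at each step the conditional covariance of $y_{k_i}$ dominates $\EE[\tilde y_{k_i}\tilde y_{k_i}^*] \ge (\alpha/(2n)) I_N$ for $N$ large, so $y_{k_i}^*\eta$ is conditionally complex Gaussian with variance at least $(\alpha/(2n))\|\eta\|^2$. A standard Laplace-transform tensorization then yields, with $m = |\cI^c \cap \Cgood(\alpha)| \ge (1-\kappa-\theta)n$ by Lemma~\ref{bad-k} (choose $\kappa$ so that $\kappa + \theta < 1$),
\[
\PP\Big[ \sum_{\ell\in\cI^c\cap\Cgood(\alpha)} |y_\ell^*\eta|^2 \le c_1\|\eta\|^2 \Big] \le e^{-c_2 n}.
\]
Splitting on whether $\|\eta\|^2$ exceeds $|z|^2 \|\bar v\|^2/(4C^2)$ and using the better of the two bounds in each regime, one obtains $\|(Y^*Y - z\Omega^{-L})\bar v\|^2 \ge c_3 \|\bar v\|^2$ with probability $\ge 1 - e^{-c_2 n}$, uniformly in the sparse $\bar v$.

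A union bound over an $\varepsilon$-net of sparse unit vectors of cardinality $\binom{n}{\lfloor\theta n\rfloor}(3/\varepsilon)^{2\theta n} \le \exp(c_4(\theta,\varepsilon)n)$ (Lemma~\ref{covnb} and a Stirling estimate), with $c_4(\theta,\varepsilon) \to 0$ as $\theta \to 0$ for fixed $\varepsilon$, extends the small-ball bound to the entire net once $\theta$ is chosen small. A final perturbation from sparse to compressible using $\|Y^*Y - z\Omega^{-L}\| \le C^2 + |z|$ on $\Eop(C)$ and $\rho$ small yields $\|(Y^*Y - z\Omega^{-L})\bar v\|^2 \ge c_3 \|\bar v\|^2/4$ for all compressible unit vectors simultaneously; combined with the $w$-free identity, this gives $\|Hu\| \ge a_{\ref{prop-comp}}$ for a constant $a_{\ref{prop-comp}} > 0$, with failure probability $\le \exp(-c_{\ref{prop-comp}} n)$. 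The main obstacle is the sequential-conditioning step: the lower bound on each conditional Gaussian variance relies essentially on the matrix orthogonal polynomial / CD-kernel machinery of Proposition~\ref{CD->S}, which only produces useful variance estimates when $k_i \in \Cgood(\alpha)$---this is precisely what forces the split between the ``good'' sum over $\cI^c \cap \Cgood(\alpha)$ and the ``sparse'' sum over $\cI$ in the two lower bounds combined above.
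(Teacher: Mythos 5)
Your argument is correct, and its skeleton is the paper's: split $\cS$ into the small-$\|v\|$ part (handled deterministically on $\Eop(C)$) and the compressible part, fix a support $\cI$, exploit the near-independence of the Fourier columns by conditioning on $Y_{\cdot,\cI}$, feed in Proposition~\ref{CD->S} together with Lemma~\ref{bad-k} to get variance lower bounds only at good frequencies, and finish with a net over sparse vectors and a union bound with $\theta$ small. The differences are in execution. First, you eliminate $w$ through the identity $(Y^*Y-z\Omega^{-L})v = Y^*e_2 - z e_1$, whereas the paper substitutes the explicit point $\bs w = -z^{-1}Y\bs v$ for each net point $\bs v$; these are the same reduction (the paper's quantity $\|\Omega^{-L}\bs v + Y^*\bs w\|$ equals $|z|^{-1}\|(z\Omega^{-L}-Y^*Y)\bs v\|$), though your formulation makes it transparent that no net over $w$ is ever needed. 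Second, for the fixed sparse vector the paper conditions once on $Y_{\cdot,\cI}$, bounds $\tr\Sigma$ and $\|\Sigma\|$ for the covariance of $(\widetilde Y_{\cdot,\cIc})^*w$, converts this into proportionally many eigenvalues via Lemma~\ref{agood}, and invokes the vector small-ball bound of Lemma~\ref{gauss}--\ref{antivec}, obtaining a bound $(ct)^n$ valid for all small $t$; you instead reveal the good columns one at a time, use monotonicity of Gaussian conditional covariances (the same fact the paper extracts via Jensen's inequality) plus Proposition~\ref{CD->S} to lower-bound each conditional scalar variance, and tensorize. This avoids Lemma~\ref{agood} and the eigenvalue bookkeeping, at the price of two points you should make explicit: (i) the tensorization lemma quoted in the appendix assumes independence, so under sequential conditioning you must run the Laplace-transform/tower-property argument directly (routine, but it is the one place where "standard" hides a step); (ii) your per-vector bound is $e^{-c_2 n}$ at a fixed radius rather than $(ct)^n$ scalable in $t$, so the constants must be fixed in the right order --- $\kappa$ (hence $\alpha$ and $m/n$) and the small-ball radius first, then $\theta$, then $\rho,\varepsilon$ --- which your sketch does respect and which suffices here since the proposition only asks for one fixed $a_{\ref{prop-comp}}$. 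Finally, your deterministic fallback on the $\cI$-coordinates when $\|Y\bar v\|$ is small plays exactly the role of the paper's confinement of $\|\bs w\|$ to the interval $J$; both are needed and both work.
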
 
\begin{proof}
For each $u \in \SSS^{N+n-1}$ the inequality $\| Hu \| \leq a$ for $a > 0$ 
implies 
\begin{subequations} 
\label{Hu} 
\begin{align} 
\| Y v + z w \| &\leq a, \label{Yvzw}  \\ 
\| \Omega^{-L} v + Y^* w \| &\leq a.  \label{OvYw} 
\end{align} 
\end{subequations} 
Take $a = |z| / 2$. On the event $\Eop(C)$, we obtain from~\eqref{Yvzw} that 
\[
|z|/2 \geq |z| \| w \| - \| Y \| \| v \| \geq |z| ( 1 - \| v \|) - C \| v \|,
\]
since $\sqrt{1-x^2} \geq 1 - x$ on $[0,1]$. Hence 
\[
\| v \| \geq C_{\ref{prop-comp}} \eqdef \frac{|z| / 2}{|z| + C} , 
\]
which trivially implies that  
\[
\PP\left[ \Bigl[ \inf_{u\in\SSS^{N+n-1}\, : \, \| v\| < C_{\ref{prop-comp}}  } 
  \| H u \| \leq |z|/2 \Bigr] \cap \Eop(C) \right] = 0. 
\]
Let $\cI \subset [n]$ be such that $| \cI | = \lfloor \theta_{\ref{prop-comp}} n
\rfloor$, where $\theta_{\ref{prop-comp}}$ will be chosen later.  Let $V_\cI
\subset \CC^n$ be the set of vectors $v$ such that $\| v \| \in
[C_{\ref{prop-comp}}, 1]$ and $v  / \| v \| \in \SSS_{\cI}^{n-1}$.  Let $\bs v$
be a deterministic vector in $V_\cI$, and define the random vector $\bs w = -
z^{-1} Y \bs v$. Writing $\bs u = [ \bs v^\T \ \bs w^\T]^\T$, we shall show 
that there exists a constant $c > 0$ such that for all $t > 0$ small enough, 
\begin{equation}
\label{u-fixed} 
\PP\left[ [ \| H \bs u \| \leq t] \cap \Eop(C) \right] \leq (ct)^n . 
\end{equation} 
On the event $\Eop(C)$, we have 
\begin{eqnarray*} 
\frac{C}{|z|} \geq \| \bs w \| \geq \frac{\| Y^* \bs w \|}{\| Y \|} &=& 
\frac{\| Y^* \bs w + \Omega^{-L} \bs v - \Omega^{-L} \bs v \|}{\| Y \|} \\
 &\geq& \frac{\| \bs v \| - \| Y^* \bs w + \Omega^{-L} \bs v \|}{\| Y \|} \\ 
&\geq& \frac{C_{\ref{prop-comp}}- \| Y^* \bs w + \Omega^{-L} \bs v \|}{C}  . 
\end{eqnarray*} 
By the choice of $\bs w$, we have
$[\| H \bs u \| \leq t]=[\| \Omega^{-L} \bs v + Y^* \bs w \| \leq t]$. 
Assume that $t \leq C_{\ref{prop-comp}} / 2$, and define the interval 
$J = [ C_{\ref{prop-comp}}/(2C), C/|z|] ]$. Then, 
\[
 [ \| H \bs u \| \leq t] \cap \Eop(C)] \subset 
 [ \| \bs w \| \in [ (C_{\ref{prop-comp}} - t)/C, C/|z|] ] 
 \subset [ \| \bs w \| \in J ] . 
\]
If $J = \emptyset$, then the inequality~\eqref{u-fixed} holds trivially. 
Otherwise, we write 
\[
\PP\left[ [ \| H \bs u \| \leq t] \cap \Eop(C) \right] \leq 
\PP\left[ [ \| (Y_{\cdot, \cIc})^* \bs w \| \leq t] 
  \cap [ \| \bs w \| \in J ] \right],  
\]
and for each $\ell\in \cIc$, we write 
$$y_\ell = \bs{\tilde y}_{\ell} + 
 \bs{\check y}_{\ell}\ \ \text{where}\ \ \bs{\check y}_{\ell} = \EE [ y_\ell \, | \, Y_{\cdot,\cI}].$$ 
Similarly, we write 
$$Y_{\cdot, \cIc} = \widetilde Y_{\cdot, \cIc} + \widecheck Y_{\cdot, \cIc}\ \ \text{where}\ \ 
\widecheck Y_{\cdot, \cIc} = \EE[ Y_{\cdot, \cIc} \, | \, Y_{\cdot,\cI}].$$
Since $\bs w$ is $\sigma(Y_{\cdot,\cI})$ measurable, the variables 
$\widetilde Y_{\cdot, \cIc}$ and $\bs w$ are independent, 
and we get from the last inequality that 
\[
\PP\left[ [ \| H \bs u \| \leq t] \cap \Eop(C) \right] 
\leq \sup_{w \, : \, \| w \| \in J} 
 {\mathcal L}_{\widetilde Y_{\cdot, \cIc}}
  ( (\widetilde Y_{\cdot, \cIc})^* w, t) . 
\]
For each deterministic vector $w$ such that $\| w \| \in J$, we thus need to
consider the probability law of the Gaussian vector $(\widetilde Y_{\cdot,
\cIc})^* w$ by studying its covariance matrix $\Sigma = n \EE (\widetilde
Y_{\cdot, \cIc})^* w w^* \widetilde Y_{\cdot, \cIc}$. We first bound the 
spectral norm of $\Sigma$. Writing 
$(Y_{\cdot, \cIc})^* w = (I_{|\cIc|} \otimes w^\T) 
\overline\vect(\widetilde Y_{\cdot, \cIc})$, we have 
\[
\Sigma = n (I_{|\cIc|} \otimes w^\T) 
  \EE \overline\vect(\widetilde Y_{\cdot, \cIc}) 
  \vect(\widetilde Y_{\cdot, \cIc})^\T (I_{|\cIc|} \otimes \bar w), 
\]
thus, 
\begin{align*} 
\Sigma^\T &= n (I_{|\cIc|} \otimes w^*) 
  \EE \vect(\widetilde Y_{\cdot, \cIc}) 
  \vect(\widetilde Y_{\cdot, \cIc})^* (I_{|\cIc|} \otimes w) \\
 &\leq n (I_{|\cIc|} \otimes w^*) 
  \EE \vect(Y_{\cdot, \cIc}) 
  \vect(Y_{\cdot, \cIc})^* (I_{|\cIc|} \otimes w) 
\end{align*} 
in the semidefinite ordering. Using that $Y = X \sF$, we have 
$\vect(Y_{\cdot, \cIc}) = ((\sF_{\cdot, \cIc})^\T \otimes I_N) \vect(X)$, thus, 
\[
\Sigma^\T \leq 
  n ((\sF_{\cdot, \cIc})^\T \otimes w^*) \cR 
   (\overline{\sF_{\cdot, \cIc}} \otimes w) , 
\]
which shows that 
$$
\| \Sigma \| \leq \bM \| w \|^2 \leq C_{\max} = \bM C^2 / |z |^2.
$$ 
Recall that $y_\ell = \tilde y_\ell + \check y_\ell$, and
observe that $\sigma(Y_{\cdot,\cI}) \subset \sigma(Y_\ell)$ for each $\ell\in
\cIc$.  Therefore, for an arbitrary deterministic vector $u\in\CC^N$, we have
by Jensen's inequality 
\begin{align*}
\EE [| u^* \bs{\check y}_{\ell} |^2 ] = 
\EE\left[\left| \EE [ u^* y_{\ell} \, | \, Y_{\cdot, \cI}]\right|^2\right] 
&= \EE\left[\left| 
  \EE\left[ \EE [ u^* y_{\ell} \, | \, Y_\ell] 
    \, | \, Y_{\cdot, \cI} \right] \right|^2 \right]  \\
&\leq 
  \EE \left[ \left| \EE \left[ u^* y_{\ell}\, | \, Y_\ell \right]\right|^2
  \right] \\
 &= \EE [| u^* \check y_\ell |^2]
\end{align*} 
for such $\ell$.  Since 
$$\EE y_\ell y_\ell^* = \EE \tilde y_\ell \tilde
y_\ell^* + \EE \check y_\ell \check y_\ell^* = \EE \bs{\tilde y}_{\ell}
\bs{\tilde y}_{\ell}^* +   \EE \bs{\check y}_{\ell} \bs{\check y}_{\ell}^*,$$ we
get from the previous inequality that 
$$\EE \tilde y_\ell \tilde y_\ell^* \leq
\EE \bs{\tilde y}_{\ell}  \bs{\tilde y}_{\ell}^*.$$  Choosing
$\theta_{\ref{prop-comp}} \leq 1/4$, we have that $| \cIc | \geq 3 n / 4$.
By Lemma~\ref{bad-k}, there exists $\alpha > 0$ such that 
$| \cIc \cap \Cgood(\alpha) | \geq n / 2$. With this, we have 
\[
\tr \Sigma 
 = n w^* \EE \widetilde Y_{\cdot, \cIc} (\widetilde Y_{\cdot, \cIc})^* w 
= n \sum_{k\in\cIc} w^* \EE \bs{\tilde y}_k \bs{\tilde y}_k^*  w 
\geq n \sum_{k\in\cIc\cap\Cgood} w^* \EE \tilde y_k \tilde y_k^*  w. 
\]
By Proposition~\ref{CD->S}, we thus obtain that for all large $N$,
\[
\tr \Sigma \geq \frac{\alpha}{2} |\cIc\cap\Cgood| \| w \|^2 \geq 
n \frac{\alpha}{4} \frac{C_{\ref{prop-comp}}^2}{4 C^2}. 
\] 
 Write 
$C_{\min} = \alpha C_{\ref{prop-comp}}^2 n / (16 C^2 |\cIc|)$, and let $k$
be the largest integer in $[|\cIc|]$ such that 
$s_{k-1}(\Sigma) \geq C_{\min} / 2$. Then, we obtain from Lemma~\ref{agood} 
that 
\[
k \geq \frac{C_{\min}}{2 C_{\max} - C_{\min}} |\cIc| \geq 
\frac{C_{\min}}{2 C_{\max} - C_{\min}} \frac{3n}{4} .
\]
With this at hand, we can deduce Inequality~\eqref{u-fixed} from
Lemma~\ref{gauss}--\ref{antivec}. 

Now, still fixing $\cI$ and $t$ as above, set $\rho_{\ref{prop-comp}}$ as 
\[
\rho_{\ref{prop-comp}} = \frac{t}{3 + C|z|^{-1} (2C+1)} . 
\]
Set $a_{\ref{prop-comp}} = (|z|/2) \wedge \rho_{\ref{prop-comp}}$, and let  $u
= [v^\T \ w^\T]^\T \in\SSS^{N+n-1}$ be such that $v \in
\cV_{\rho_{\ref{prop-comp}}}^{\CC^n}(V_\cI)$ and $\| H u \| \leq
a_{\ref{prop-comp}}$.  Let $\cK_{\rho_{\ref{prop-comp}}}$ be a
$\rho_{\ref{prop-comp}}$--net of $V_\cI$. By Lemma~\ref{covnb}, we can choose
$\cK_{\rho_{\ref{prop-comp}}}$ in such a way that
$|\cK_{\rho_{\ref{prop-comp}}}| \leq (3 /
\rho_{\ref{prop-comp}})^{2\theta_{\ref{prop-comp}} n}$.  By the triangle
inequality, there is $\bs v \in \cK_{\rho_{\ref{prop-comp}}}$ such that $\| v -
\bs v \| \leq 2 \rho_{\ref{prop-comp}}$. Let $\bs w = - z^{-1} Y \bs v$, and
write $\bs u = [ \bs v^\T \ \bs w^\T]^\T$.  Since $\| Y v + z w \| \leq
\rho_{\ref{prop-comp}}$ by~\eqref{Yvzw}, we have 
$$\| Y ( v - \bs v) + z (w -
\bs w) \| \leq \rho_{\ref{prop-comp}}.$$ Thus, on the event $\Eop(C)$,
$$\| w - \bs w \| \leq |z|^{-1} (
\rho_{\ref{prop-comp}} + 2\rho_{\ref{prop-comp}} \| Y \| ) \leq |z|^{-1}  (
\rho_{\ref{prop-comp}} + 2\rho_{\ref{prop-comp}} C ).$$ 
From the inequality $\| \Omega^{-L} v + Y^* w \| \leq \rho_{\ref{prop-comp}}$ 
(see~\eqref{OvYw}), we get 
$$\| \Omega^{-L} \bs v + Y^* \bs w + \Omega^{-L} (v -
\bs v) + Y^* (w - \bs w) \| \leq \rho_{\ref{prop-comp}}.$$ Thus, 
$$\| \Omega^{-L}
\bs v + Y^* \bs w \| \leq ( 3 + C |z|^{-1} (1+2C)) \rho_{\ref{prop-comp}} = t.$$
This implies that $\| H \bs u \| \leq t$. As a consequence, 
\[
\left[ \exists u \in \SSS^{N+n-1} \, : \, 
  v \in \cV_{\rho_{\ref{prop-comp}}}(V_\cI), \  
\| H u \| \leq a_{\ref{prop-comp}} \right] 
\subset 
\left[ \exists \bs v \in \cK_{\rho_{\ref{prop-comp}}} \, : \, 
\| H [ \bs v^\T, \, - z^{-1} (Y \bs v)^\T ]^\T \| \leq t \right] . 
\]
Applying the union bound and using Inequality~\eqref{u-fixed}, we therefore 
get that 
\[
\PP\left[ 
 \Bigl[ \inf_{u \in \SSS^{N+n-1} \, : \,  \| v \| \geq C_{\ref{prop-comp}}, 
v \in \cV_{\rho_{\ref{prop-comp}}}(V_\cI)} \| H u \| \leq a_{\ref{prop-comp}} 
   \Bigr] 
  \cap \Eop(C)  \right] \leq 
 (3/\rho_{\ref{prop-comp}})^{2\theta_{\ref{prop-comp}} n} (ct)^n. 
\]
Now, considering all the sets $\cI \subset [n]$ such that 
$| \cI | = \lfloor \theta_{\ref{prop-comp}} n \rfloor$, and using the bound
${{n}\choose{m}} \leq (en/m)^m$ along with the union bound again, we get that 
\begin{multline*} 
\PP\left[ 
 \Bigl[ \inf_{u\in\SSS^{N+n-1} \, : \, \| v \| \geq C_{\ref{prop-comp}}, 
v \in \comp(\theta_{\ref{prop-comp}}, \rho_{\ref{prop-comp}})}  \| H u \| \leq 
   a_{\ref{prop-comp}} \Bigr] \cap \Eop(C)  \right] \\
 \leq 
 (e/\theta_{\ref{prop-comp}})^{\theta_{\ref{prop-comp}}n} 
  (3/\rho_{\ref{prop-comp}})^{2\theta_{\ref{prop-comp}} n} (ct)^n . 
\end{multline*} 
The proof is completed by choosing $\theta_{\ref{prop-comp}}$ small enough. 
\end{proof} 

\subsubsection{Incompressible vectors} 

We now consider the action of $H$ on the vectors $u\in\SSS^{N+n-1}$ that belong
to the complement of the set $\cS$ of Proposition~\ref{prop-comp} in the 
unit-sphere. 
\begin{proposition}[incompressible vectors for the smallest singular value] 
\label{prop-incomp} 
There exists a constant $c_{\ref{prop-incomp}} > 0$ such that for 
$\varepsilon > 0$ arbitrarily small, 
\[
\PP\left[ \left[ \inf_{u\in{\SSS^{N+n-1}\setminus\cS}} \| H u \| 
  \leq n^{-3/2} t \right] \cap \Eop(C) \right] 
  \leq \varepsilon t + \exp(- c_{\ref{prop-incomp}} \varepsilon^2 N ) . 
\]
\end{proposition}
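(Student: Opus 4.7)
The plan is a Rudelson--Vershynin distance reduction, adapted to the $\sigma(Y_k)$-dependent piece $\check y_k$ of $y_k$. Any $u=[v^\T\ w^\T]^\T\notin\cS$ has $\|v\|\geq C_{\ref{prop-comp}}$ and $v/\|v\|\in\incomp(\theta_{\ref{prop-comp}},\rho_{\ref{prop-comp}})$, so a standard Rudelson--Vershynin distance lemma (cf.~\cite[Lem.~3.5]{rud-ver-advmath08}) yields
\[
\PP\Bigl[\inf_{u\notin\cS}\|Hu\|\leq \tfrac{\rho_{\ref{prop-comp}}C_{\ref{prop-comp}}}{\sqrt n}\,r,\,\Eop(C)\Bigr]\ \leq\ \frac{1}{\theta_{\ref{prop-comp}}\,n}\sum_{k\in[n]}\PP\bigl[\dist(h_k,\colspan(H_k))\leq r,\,\Eop(C)\bigr].
\]
The left-hand side matches the rate $n^{-3/2}t$ upon setting $r\asymp t/n$, and by Lemma~\ref{bad-k} the summation may be effectively restricted to $k\in\Cgood(\alpha)$ for a small $\alpha>0$ (the missing fraction being absorbable into $\kappa$).

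For each $k\in\Cgood(\alpha)$ the Schur-complement identity of the outline gives
\[
\dist(h_k,H_k)=\frac{|\omega^{-kL}-y_k^*D_k y_k|}{\sqrt{1+\|y_k^*P_k\|^2+\|y_k^*D_k\|^2}},\qquad D_k,P_k\in\sigma(Y_k).
\]
Condition on $Y_k$ and decompose $y_k=\tilde y_k+\check y_k$ with $\tilde y_k=\tilde\Sigma_k^{1/2}\xi_k$, $\xi_k\sim\cN_\CC(0,I_N)$ independent of $Y_k$. Setting $a_k=\tilde\Sigma_k^{-1/2}\check y_k$ and $M_k=\tilde\Sigma_k^{1/2}D_k\tilde\Sigma_k^{1/2}$, the numerator equals $\omega^{-kL}-(\xi_k+a_k)^*M_k(\xi_k+a_k)$, whose conditional anti-concentration Lemma~\ref{gauss}\ref{antifq} bounds by $Cs/\|M_k\|_\HS$ at scale $s$. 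For $k\in\Cgood(\alpha)$, Proposition~\ref{CD->S} yields $\tilde\Sigma_k\geq(\alpha/(2n))I_N$ for $N$ large, so $\|M_k\|_\HS\gtrsim n^{-1}\|D_k\|_\HS$. For the denominator, on $\Eop(C)$ together with the Gaussian-tail events $\{\|\tilde y_k^*D_k\|\lesssim n^{-1/2}\|D_k\|_\HS\}$ and $\{\|\tilde y_k^*P_k\|\lesssim n^{-1/2}\|P_k\|_\HS\}$ (both from Lemma~\ref{gauss}\ref{larget} applied conditionally on $Y_k$) and the truncation $\{\|\check y_k\|\leq\varepsilon n^{-1/2}\}$, the denominator is at most a constant multiple of $n^{1/2}\varepsilon\|M_k\|_\HS$. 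Combining yields $\PP[\dist(h_k,H_k)\leq r,\Eop(C)]\leq C\varepsilon(nr)+\exp(-c\varepsilon^2N)$, and averaging over $k\in\Cgood(\alpha)$ with $r\asymp t/n$ produces the claimed bound.

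The main obstacle is the truncation of $\|\check y_k\|$. Because $\check y_k\in\sigma(Y_k)$ is correlated with $D_k$ and $P_k$, one cannot directly apply conditional Gaussian concentration to $\check y_k^*D_k$ or $\check y_k^*P_k$ as we do for the $\tilde y_k$ contributions. The workaround is the brute-force bound $\|\check y_k\|\leq\varepsilon n^{-1/2}$, whose failure probability is at most $\exp(-c\varepsilon^2 N)$ by combining the smallness of $\EE\|\check y_k\|^2$ (extracted from Proposition~\ref{CD->S}) with Lemma~\ref{gauss}\ref{larget}. This crude truncation is what turns the ``ideal'' per-index rate $n^{-1}t$ (obtainable if $\check y_k\equiv 0$ via the denominator approximation~\eqref{fq}) into the actual rate $n^{-3/2}t$ of Theorem~\ref{snY}, while simultaneously producing the characteristic $\varepsilon t+\exp(-c\varepsilon^2N)$ shape of the final estimate.
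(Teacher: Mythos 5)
Your overall architecture (Rudelson--Vershynin reduction to distances restricted to $\Cgood(\alpha)$, the Schur-complement identity for $\dist(h_k,H_k)$, and conditional anti-concentration of the numerator via Lemma~\ref{gauss}--\ref{antifq} with the shift $a_k$ absorbing $\check y_k$) is exactly the paper's route. The gap is in your denominator control, and it is genuine. Your truncation $\{\|\check y_k\|\le\varepsilon n^{-1/2}\}$ with failure probability $\exp(-c\varepsilon^2 N)$ is not available under the standing assumptions: Proposition~\ref{CD->S} (together with Lemma~\ref{cov-y}) only gives $\|\,n\EE \tilde y_k\tilde y_k^* - S_k\|\to 0$ in operator norm, hence $\EE\|\check y_k\|^2=\tr\bigl(\EE y_ky_k^*-\EE\tilde y_k\tilde y_k^*\bigr)\le (N/n)\,o(1)=o(1)$, \emph{not} $O(\varepsilon^2/n)$. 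Since $\|\check y_k\|^2$ concentrates around its mean, the event $\|\check y_k\|\le\varepsilon n^{-1/2}$ will typically \emph{fail} (it holds only in degenerate cases such as white noise, where $\check y_k\equiv 0$); the paper's own Lemma~\ref{bnd-den} accordingly truncates only at $\|\check y_k\|^2\le\be/8$ for a fixed small constant $\be$. Relatedly, your claimed bound $\mathrm{Den}_k\lesssim n^{1/2}\varepsilon\|M_k\|_\HS\approx\varepsilon\, n^{-1/2}\|S_k^{1/2}D_kS_k^{1/2}\|_\HS$ cannot hold even granting all your events: the $\tilde y_k$ contribution alone is typically of order $n^{-1/2}\|S_k^{1/2}D_k\|_\HS$ with no $\varepsilon$ gain, and $\mathrm{Den}_k\ge 1$ always (the paper needs Lemma~\ref{PR}--\ref{g01G} to dominate this $1$, and Lemma~\ref{PR}--\ref{PRHS} to convert $\|P_k\|_\HS$ into $\|D_k\|_\HS$; you address neither). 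If your denominator bound were true, plugging it into your own computation would yield a per-index bound $C\varepsilon t\, n^{-1/2}$ and hence an overall rate $n^{-2}t$, strictly better than the theorem — which contradicts the very mechanism you invoke in your last paragraph: the loss from $N^{-1}t$ to $N^{-3/2}t$ occurs precisely because $\|\check y_k\|$ can only be bounded by a small \emph{constant}, not by $\varepsilon/\sqrt n$.

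The repair is the paper's Lemma~\ref{bnd-den}: off an event of probability at most $\exp(-c\be n)$ (intersected with $\Eop(C)$), one has the crude upper bound $\mathrm{Den}_k\le\sqrt\be\,\|D_k\|_\HS$, obtained from $\|\check y_k^*D_k\|\le\|\check y_k\|\,\|D_k\|_\HS$ with $\|\check y_k\|^2\le\be/8$ w.h.p., from Lemma~\ref{gauss}--\ref{larget} for $\|\tilde y_k^*D_k\|^2$ and $\|\tilde y_k^*P_k\|^2$ at the scale $\be\|D_k\|_\HS^2$ (costing $e^{-c\be n}$, which is where the $\exp(-c\varepsilon^2 N)$ term really comes from), and from Lemma~\ref{PR} for the $1$ and for $\|P_k\|_\HS\le C\|D_k\|_\HS$. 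Then $\{\dist(h_k,H_k)\le t/n\}$ forces $n\,\mathrm{Num}_k/\|S_k^{1/2}D_kS_k^{1/2}\|_\HS\le t\sqrt\be/\alpha$, and the conditional anti-concentration gives $Ct\sqrt\be$ per index; taking $\sqrt\be\asymp\varepsilon$ and averaging over $k$ via the reduction inequality yields $\varepsilon t+\exp(-c\varepsilon^2 N)$ at the threshold $n^{-3/2}t$. Your conditioning should also be organized as a union bound (split on $\Eden(\be)$ before conditioning on $Y_k$), since tail events involving $\tilde y_k$ cannot be conditioned on prior to applying the anti-concentration over $\tilde y_k$.
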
 

The remainder of this subsection is devoted to the proof of this proposition. As
in the proof of Proposition~\ref{prop-comp}, a vector $u\in \SSS^{N+n-1}$ will
be written $u = [v^\T \ w^\T]^\T$ with $v = [ v_0,\ldots, v_{n-1} ]^\T \in
\CC^{n}$.  When $u\in\SSS^{N+n-1} \setminus \cS$, the vector $\tilde v = v / \|
v \| = [ \tilde v_0,\ldots, \tilde v_{n-1}]^\T$ belongs now to
$\incomp(\theta_{\ref{prop-comp}}, \rho_{\ref{prop-comp}})$. Associate with any such vector $u$, the set 
\[
\cJ_u = \{ k\in [n] \, : \, 
 |v_k | \geq \rho_{\ref{prop-comp}} C_{\ref{prop-comp}} / \sqrt{2n} 
    \} .
\]
Then, $| \cJ_u | > \theta_{\ref{prop-comp}} n $. Indeed, the set 
$\widetilde \cJ_v = \{ k \in [n] \, : \, 
 |\tilde v_k | \geq \rho_{\ref{prop-comp}} / \sqrt{2n} \}$ 
satisfies $| \widetilde \cJ_v | > \theta_{\ref{prop-comp}} n $. To see this,
denote as $\Pi_{\widetilde \cJ_v}$ the orthogonal projector on the vectors 
supported by $\widetilde \cJ_v$, and check that 
 $\left\| \tilde v - \Pi_{\widetilde \cJ_v} (\tilde v) / 
  \| \Pi_{\widetilde \cJ_v} (\tilde v) \| \right\| \leq \rho_{\ref{prop-comp}}$.
If $| \widetilde \cJ_v | \leq \theta_{\ref{prop-comp}} n$, we get a 
contradiction to the fact that  
$\tilde v \in \incomp(\theta_{\ref{prop-comp}}, \rho_{\ref{prop-comp}})$. 
It remains to check that if $u \in \SSS^{N+n-1} \setminus \cS$, then 
$\widetilde \cJ_v \subset \cJ_u$.

Now, choose $\alpha_{\ref{prop-incomp}} > 0$ small enough so that the set
$\Cgood(\alpha)$ from Lemma~\ref{bad-k} satisfies $|
\Cgood(\alpha_{\ref{prop-incomp}}) | \geq (1 - \theta_{\ref{prop-comp}} / 2)n$.
Observe that with this choice, $| \cJ_u \cap \Cgood(\alpha_{\ref{prop-incomp}})
| \geq \theta_{\ref{prop-comp}} n / 2$. 

We now use the canvas of the proof of \cite[Lem.~3.5]{rud-ver-advmath08} to
relate the infimum over the incompressible vectors with the distance between
column subspaces of $H$. Specifically, for each 
$u\in\SSS^{N+n-1}\setminus\cS$, we have 
\[
\| H u \| \geq \max_{k\in \cJ_u \cap \Cgood(\alpha_{\ref{prop-incomp}})} 
 | v_k | \dist( h_k, H_{k} ) \\
 \geq  \frac{\rho_{\ref{prop-comp}} C_{\ref{prop-comp}}}{\sqrt{2 n}}  
    \max_{k\in \cJ_u \cap \Cgood(\alpha_{\ref{prop-incomp}})} 
    \dist( h_k, H_{k} ). 
\]
Thus, 
\[
\left[ \inf_{u\in\SSS^{N+n-1}\setminus\cS} \| H u \| \leq 
 \frac{\rho_{\ref{prop-comp}} C_{\ref{prop-comp}} t}{\sqrt{2} n^{3/2}} \right] 
\subset 
\left[ \inf_{u\in\SSS^{N+n-1}\setminus\cS} \ 
 \max_{k\in \cJ_u \cap \Cgood(\alpha_{\ref{prop-incomp}})} 
 \dist( h_k, H_{k} ) \leq \frac tn \right] . 
\]
Denoting as $\cE$ the event at the right hand side of this inclusion, we
have from what precedes that 
\[
\1_{\cE} \leq 
 \frac{2}{\theta_{\ref{prop-comp}} n} 
 \sum_{k\in\Cgood(\alpha_{\ref{prop-incomp}})} 
   \1_{\left[ \dist( h_k, H_{k} ) \leq t / n \right]} , 
\]
which implies that 
\begin{multline} 
\PP\left[\left[ \inf_{u\in\SSS^{N+n-1}\setminus\cS} \| H u \| \leq 
 \frac{\rho_{\ref{prop-comp}} C_{\ref{prop-comp}} t}{\sqrt{2} n^{3/2}} \right] 
  \cap \Eop(C) \right] \\ 
\leq \frac{2}{\theta_{\ref{prop-comp}} n} 
 \sum_{k\in\Cgood(\alpha_{\ref{prop-incomp}})} 
   \PP\left[ \left[ \dist( h_k, H_{k} ) \leq t / n \right] \cap 
  \Eop(C) \right] . 
\label{incomp-dist} 
\end{multline} 
A workable formula for $\dist(h_k, H_{k})$ is provided by the following lemma. 
\begin{lemma}
\label{lm-dist}
Let $M\in\CC^{m\times m}$, and partition this matrix as
\[
M = \begin{bmatrix} m_0 & M_{-0} \end{bmatrix} = 
 \begin{bmatrix} m_{00} & m_{01} \\ m_{10} & M_{11} \end{bmatrix} , 
\]
where $m_0$ is the first column of $M$, $M_{-0}$ is the submatrix that remains
after extracting $m_0$, and $m_{00}$ is the first element of the vector $m_0$.
Assume that $M_{11}$ is invertible. Then,
\[
\dist(m_0, M_{-0}) = \frac{|m_{00} - m_{01} M^{-1}_{11} m_{10}|} 
     {\sqrt{1 + \| m_{01} M^{-1}_{11} \|^2}} . 
\]
\end{lemma}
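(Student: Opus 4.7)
The plan is to reduce the distance computation to finding a single generator of the orthogonal complement of $\colspan(M_{-0})$. Since $M_{11} \in \CC^{(m-1)\times(m-1)}$ is invertible by hypothesis, the submatrix $M_{-0} = \begin{bmatrix} m_{01} \\ M_{11}\end{bmatrix} \in \CC^{m\times(m-1)}$ already has full column rank $m-1$. Its column span is therefore a hyperplane in $\CC^m$, and $\colspan(M_{-0})^\perp$ is one-dimensional. For any non-zero $w\in\colspan(M_{-0})^\perp$, the standard identity for distance to a hyperplane yields
\[
\dist(m_0, M_{-0}) = \frac{|w^* m_0|}{\|w\|}.
\]

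The next step is to exhibit such a $w$ explicitly. Writing $w = \begin{bmatrix} w_0 \\ w_1\end{bmatrix}$ with $w_0\in\CC$ and $w_1\in\CC^{m-1}$, the condition $M_{-0}^* w = 0$ becomes $m_{01}^* w_0 + M_{11}^* w_1 = 0$, i.e.\ $w_1 = -(M_{11}^*)^{-1} m_{01}^* w_0 = -(m_{01} M_{11}^{-1})^* w_0$. Normalizing $w_0 = 1$ produces the canonical choice
\[
w = \begin{bmatrix} 1 \\ -(m_{01} M_{11}^{-1})^*\end{bmatrix}.
\]

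Finally, one computes
\[
w^* m_0 = \begin{bmatrix} 1 & -m_{01} M_{11}^{-1}\end{bmatrix}\begin{bmatrix} m_{00} \\ m_{10}\end{bmatrix} = m_{00} - m_{01} M_{11}^{-1} m_{10},
\qquad \|w\|^2 = 1 + \|m_{01} M_{11}^{-1}\|^2,
\]
and substituting into the hyperplane distance formula yields exactly the claimed expression. There is no genuine obstacle here: the only point that requires a moment's thought is the justification that $\colspan(M_{-0})$ has codimension one, which is immediate from the invertibility of $M_{11}$. One could alternatively derive the formula from the normal equations $\min_x\|m_0 - M_{-0} x\|^2$ and a Schur-complement manipulation of $M_{-0}^* M_{-0}$, but the orthogonal-complement approach above is shorter and avoids inverting the $(m-1)\times(m-1)$ Gram matrix.
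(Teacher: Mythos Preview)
Your proof is correct and reaches the same conclusion as the paper, but by a genuinely shorter route. The paper writes $\dist(m_0,M_{-0})^2 = m_0^*\bigl(I - M_{-0}(M_{-0}^*M_{-0})^{-1}M_{-0}^*\bigr)m_0$, then applies the Sherman--Morrison--Woodbury identity to $(M_{-0}^*M_{-0})^{-1} = (M_{11}^*M_{11} + m_{01}^*m_{01})^{-1}$ and, after some algebra, discovers that the projector $I - M_{-0}(M_{-0}^*M_{-0})^{-1}M_{-0}^*$ collapses to the rank-one matrix $\frac{1}{1+\|v\|^2}\begin{bmatrix}1\\-v\end{bmatrix}\begin{bmatrix}1&-v^*\end{bmatrix}$ with $v = M_{11}^{-*}m_{01}^*$. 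You instead observe from the outset that $\colspan(M_{-0})$ is a hyperplane (because $M_{11}$ is invertible), solve the single linear system $M_{-0}^*w=0$ to produce the normal direction $w=\begin{bmatrix}1\\-v\end{bmatrix}$ directly, and invoke the one-line hyperplane distance formula. Both arguments pivot on the same vector $w$; yours avoids inverting the Gram matrix and the Sherman--Morrison--Woodbury step entirely, at the modest cost of appealing to the hyperplane distance identity rather than deriving it in situ. The paper's computation has the side benefit of exhibiting the full orthogonal projector explicitly, which is not needed here but could be useful if one wanted more than the scalar distance.
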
 
\begin{proof} 
Write  $\dist(m_0, M_{-0})^2 = m_0^* ( I - M_{-0} (M_{-0}^* M_{-0})^{-1}
M_{-0}^*) m_0$, and observe that $M_{-0}^* M_{-0} = M_{11}^* M_{11} +
m_{01}^* m_{01}$. Letting $v = M_{11}^{-*} m_{01}^*$, we obtain by the 
Sherman-Morrison-Woodbury identity that 
\[
(M_{-0}^* M_{-0})^{-1} = M_{11}^{-1} \Bigl( 
I - \frac{v v^*}{1 + \| v \|^2} \Bigr) M_{11}^{-*} , 
\]
thus, 
\begin{align*} 
 I - M_{-0} (M_{-0}^* M_{-0})^{-1} M_{-0}^* 
&= I - \begin{bmatrix} m_{01} \\ M_{11} \end{bmatrix} 
 M_{11}^{-1} \Bigl( 
I - \frac{v v^*}{1 + \| v \|^2} \Bigr) M_{11}^{-*} 
 \begin{bmatrix} m_{01}^* & M_{11}^* \end{bmatrix} \\
&= \frac{1}{1 + \| v \|^2} 
 \begin{bmatrix} 1 & -v^* \\ - v & v v^* \end{bmatrix} 
 = \frac{1}{1 + \| v \|^2}
 \begin{bmatrix} 1 \\ -v \end{bmatrix} 
 \begin{bmatrix} 1 & -v^* \end{bmatrix} . 
\end{align*} 
This leads to 
\[
\dist(m_0, M_{-0})^2 = \frac{1}{1+\| v \|^2} 
  \Bigl| \begin{bmatrix} 1 & -v^* \end{bmatrix} 
 \begin{bmatrix} m_{00} \\ m_{10} \end{bmatrix}  \Bigr|^2 , 
\]
which is the required result. 
\end{proof} 

For convenience, let is recall the matrices $G_k$ and $G_k^{-1}$ from Section~\ref{outline}: 
\[
G_k = \begin{bmatrix} \Omega_k^{-L} & Y_k^* \\ Y_k & z \end{bmatrix} 
 , \ \ 
G_k^{-1} = \begin{bmatrix} E_k & F_k \\ P_k & D_k \end{bmatrix} 
 .
\]
Applying the above lemma on the matrix $H$ with the column $k$ being used 
instead of Column zero, we get that 
\begin{equation}
\label{dist-complex} 
\dist(h_k, H_{k}) = 
\frac{\left| \omega^{-kL} - y_k^* D_k y_k \right|}{
\sqrt{1 + \left\| \begin{bmatrix} 0 & y_k^* \end{bmatrix} G_k^{-1}\right\|^2}}  
= 
\frac{\left| \omega^{-kL} - y_k^* D_k y_k \right|}{
 \sqrt{1 + \| y_k^* P_k \|^2 + \| y_k^* D_k \|^2}} . 
\end{equation} 
We shall use the notation
\[
\text{Num}_k = \left| \omega^{-kL} - y_k^* D_k y_k \right|, 
 \quad \text{and} \quad 
\text{Den}_k = \sqrt{1 + \| y_k^* P_k \|^2 + \| y_k^* D_k \|^2} .
\]
\begin{lemma}
\label{PR}
The following facts hold true. 
\begin{enumerate}
\item\label{g01G} 
Assume that $k \in 
\Cgood(\alpha_{\ref{prop-incomp}})$. Then, 
there exist $c_{\ref{PR}}, C_{\ref{PR}} > 0$ such that 
\[
\PP\left[ 
 \left[ \left\| \begin{bmatrix} 0 & y_k^* \end{bmatrix} G_k^{-1}\right\| 
 \leq C_{\ref{PR}} \right] \cap \Eop(C) \right] 
 \leq \exp(- c_{\ref{PR}} N ) . 
\] 
\item\label{PRHS} 
On $\Eop(C)$, for each $k \in [n]$, we have $\| P_k \|_\HS \leq C \| D_k \|_\HS$. 
\end{enumerate} 
\end{lemma}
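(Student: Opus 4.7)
\textbf{Proof plan for Lemma~\ref{PR}.} Both parts follow from the block structure of $G_k$ combined with the elementary Gaussian estimate in Lemma~\ref{gauss}--\ref{normgauss}, so I do not expect a serious obstacle; the work is essentially bookkeeping.

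For part~\ref{PRHS}, my plan is to derive an explicit formula for $P_k$ in terms of $D_k$. The block identity $G_k G_k^{-1} = I$ yields the four equations
\begin{align*}
\Omega_k^{-L} E_k + Y_k^* P_k &= I_{n-1}, & \Omega_k^{-L} F_k + Y_k^* D_k &= 0, \\
Y_k E_k + z P_k &= 0, & Y_k F_k + z D_k &= I_N.
\end{align*}
Using the first equation to express $E_k$ in terms of $P_k$ and substituting into the third gives $(z I_N - Y_k \Omega_k^L Y_k^*) P_k = -Y_k \Omega_k^L$, hence (since the parenthesized matrix is invertible, being the Schur complement producing $D_k$) we get $P_k = -D_k Y_k \Omega_k^L$. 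Since $\Omega_k^L$ is unitary, $\|P_k\|_\HS = \|D_k Y_k\|_\HS \leq \|Y_k\| \cdot \|D_k\|_\HS$, and on $\Eop(C)$ one has $\|Y_k\| \leq \|Y\| \leq C$.

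For part~\ref{g01G}, the plan is to reduce a lower bound on $\|[0\ y_k^*] G_k^{-1}\|$ to a lower bound on $\|y_k\|$ via the trivial identity $[0\ y_k^*] = \big([0\ y_k^*] G_k^{-1}\big) G_k$, which gives
\[
\big\|[0\ y_k^*] G_k^{-1}\big\| \geq \frac{\|y_k\|}{\|G_k\|}.
\]
A block-triangle bound yields $\|G_k\| \leq 1 + 2C + |z|$ on $\Eop(C)$. It therefore suffices to show that $\|y_k\|$ is bounded below by a constant with probability $1 - \exp(-cN)$ whenever $k \in \Cgood(\alpha_{\ref{prop-incomp}})$. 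This is where Lemma~\ref{cov-y} comes in: for all large $N$ and uniformly in such $k$,
\[
n\,\EE y_k y_k^* \geq S^{(N)}(e^{2\imath\pi k/n}) - (\alpha_{\ref{prop-incomp}}/2)I_N \geq (\alpha_{\ref{prop-incomp}}/2) I_N,
\]
so $s_{N-1}(\EE y_k y_k^*) \geq \alpha_{\ref{prop-incomp}}/(2n)$. Since $y_k \sim \cN_\CC(0, \EE y_k y_k^*)$, I apply Lemma~\ref{gauss}--\ref{normgauss} with $m = N$ and $\alpha = \alpha_{\ref{prop-incomp}}/(2n)$, and invoke the bound $N/n \gtrsim 1$ from~\eqref{n/N} to conclude that $\sqrt{N \alpha_{\ref{prop-incomp}}/(4n)}$ is bounded below by some constant $c_0 > 0$ independent of $N$ and $k$. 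This yields $\PP[\|y_k\| \leq c_0] \leq \exp(-c_{\ref{PR}} N)$, and setting $C_{\ref{PR}} := c_0 / (1 + 2C + |z|)$ closes the argument.

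The only point requiring vigilance is the uniformity of all constants in $k \in \Cgood(\alpha_{\ref{prop-incomp}})$ and in $N$; this is automatic because the convergence in Lemma~\ref{cov-y} is uniform in $k$, the spectral gap assumption defining $\Cgood(\alpha_{\ref{prop-incomp}})$ is uniform by construction, and the constants in Lemma~\ref{gauss} are universal.
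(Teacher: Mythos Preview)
Your proof is correct and follows essentially the same approach as the paper. For part~\ref{g01G}, the paper uses the identical reduction $\|y_k\| \leq \|[0\ y_k^*]G_k^{-1}\|\,\|G_k\|$ together with Lemma~\ref{cov-y} and Lemma~\ref{gauss}--\ref{normgauss}; for part~\ref{PRHS}, the paper establishes the row-wise identity $u^*P_k\Omega_k^{-L} = -u^*D_k Y_k$, which is exactly your formula $P_k = -D_kY_k\Omega_k^L$ written one row at a time.
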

\begin{proof}
\begin{enumerate}
\item Consider  
\[ 
\| y_k \| = 
 \left\| \begin{bmatrix} 0 & y_k^* \end{bmatrix} G_k^{-1} G_k\right\| 
 \leq \left\| \begin{bmatrix} 0 & y_k^* \end{bmatrix} G_k^{-1} \right\| \, 
     \| G_k \| . 
\]
On $\Eop(C)$, we have $\| G_k \| \leq C + |z| + 1$. Moreover, since 
$k \in \Cgood(\alpha_{\ref{prop-incomp}})$, and since 
$\| n \EE y_k y_k^* - S(e^{2\imath\pi k / n}) \|$ is small by 
Lemma~\ref{cov-y}, there exist two constants $c', C' > 0$ such that 
\[
\PP\left[ \| y_k \| \leq C' \right] \leq \exp(- c' N)
\]
by Lemma~\ref{gauss}--\ref{normgauss}. The proof of Item~\ref{g01G} is complete. 

\item We show that for each constant non-zero vector $u \in \CC^N$,
\[
\| u^* P_k \| \leq C \| u^* D_k \|. 
\]
 The result follows then from
the identity $\| M \|_\HS^2 = \sum_{k} \| e_k^* M \|^2$, valid for each 
matrix $M$. The vectors $v^* = u^* P_k$ and $w^* = u^* D_k$ satisfy
\[
\begin{bmatrix} v^* & w^* \end{bmatrix} = \begin{bmatrix} 0 & u^*   
 \end{bmatrix} G_k^{-1}, 
\]
and hence, 
\[
\begin{bmatrix} 0 & u^* \end{bmatrix} =  
\begin{bmatrix} v^* & w^* \end{bmatrix}  
  \begin{bmatrix} \Omega_k^{-L} & Y_k^* \\ Y_k & z \end{bmatrix} . 
\]
In particular, $v^* \Omega_k^{-L} + w^* Y_k = 0$, and this shows that 
$\| v \| \leq C \| w \|$ on $\Eop(C)$.  
\end{enumerate} 
\end{proof} 
Lemma~\ref{PR} will be used to control the value of $\text{Den}_k$. 
Fix an arbitrary small real number $\be > 0$, and define the event 
\[
\Eden(\be) = \left[ \text{Den}_k^2 \geq \be \| D_k \|_\HS^2 \right].   
\]
\begin{lemma}[denominator not too large] 
\label{bnd-den} 
Assume that $k \in \Cgood(\alpha_{\ref{prop-incomp}})$. Then, there exists 
$c_{\ref{bnd-den}} > 0$ such that  for all large $n$, $\PP[ \Eden(\be) \cap \Eop(C) ] \leq \exp(-
c_{\ref{bnd-den}} \be n )$. 
\end{lemma}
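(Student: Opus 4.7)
The plan is to condition on $\sigma(Y_k)$, which makes $P_k$, $D_k$, and $\check y_k$ deterministic and leaves $\tilde y_k = y_k - \check y_k \sim \cN_\CC(0, \Sigma_k)$ (with $\Sigma_k = \EE\tilde y_k \tilde y_k^*$) independent of $Y_k$. Writing $A_k = D_k D_k^* + P_k P_k^*$, one has $\text{Den}_k^2 = 1 + y_k^* A_k y_k$. The identity $P_k = -D_k Y_k \Omega_k^L$ together with $\|Y_k\|\le C$ on $\Eop(C)$ gives $\tr(A_k) \le (1+C^2)\|D_k\|_\HS^2$, as in Lemma~\ref{PR}--\ref{PRHS}. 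Using $|a+b|^2\le 2|a|^2+2|b|^2$, I would split
\[
y_k^* A_k y_k \le 2\,\tilde y_k^* A_k \tilde y_k + 2\,\check y_k^* A_k \check y_k
\]
and handle the two pieces separately.

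For the $\tilde y_k$ piece, writing $\tilde y_k = \Sigma_k^{1/2}\xi$ with $\xi\sim\cN_\CC(0,I_N)$ and applying Lemma~\ref{gauss}--\ref{larget} to $M=A_k^{1/2}\Sigma_k^{1/2}$ conditional on $Y_k$ yields
\[
\PP\!\left[\tilde y_k^* A_k \tilde y_k \ge t\,\tr(A_k\Sigma_k)\mid Y_k\right] \le e^{1-t/2}.
\]
Since $\Sigma_k\preceq\EE y_k y_k^*$ (the residual is orthogonal to the $L^2$-projection) and Lemma~\ref{cov-y} gives $\|\EE y_k y_k^*\|\le(1+o(1))\bM/n$, we obtain $\tr(A_k\Sigma_k)\le \|\Sigma_k\|\tr(A_k)\le(1+o(1))(1+C^2)\bM\,\|D_k\|_\HS^2/n$. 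Taking $t$ of order $\be n$ absorbs the $1/n$ factor and yields an $e^{-c\be n}$ bound uniformly in $Y_k$. For the $\check y_k$ piece, the key observation is that on $k\in\Cgood(\alpha_{\ref{prop-incomp}})$ the combination of Proposition~\ref{CD->S} and Lemma~\ref{cov-y} gives
\[
\|n\,\EE\check y_k\check y_k^*\| \le \|n\,\EE y_k y_k^* - S_k\| + \|S_k - n\,\EE\tilde y_k\tilde y_k^*\| \xrightarrow[N\to\infty]{} 0,
\]
so that $\check y_k\sim\cN_\CC(0,\check\Sigma_k)$ with $\check\Sigma_k\eqdef\EE\check y_k\check y_k^*$ small. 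A further application of Lemma~\ref{gauss}--\ref{larget} to $\check\Sigma_k^{1/2}$ bounds $\|\check y_k\|^2$, and together with $\check y_k^* A_k \check y_k\le\|A_k\|\|\check y_k\|^2\le(1+C^2)\|D_k\|_\HS^2\,\|\check y_k\|^2$, this controls the second piece. Integrating the conditional bounds over $Y_k$ then completes the proof.

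The hard part is quantifying the rate in the last display. The crude inequality $\check y_k^* A_k \check y_k\le\|A_k\|\|\check y_k\|^2$ loses a full factor $\|D_k\|_\HS^2/\|D_k\|^2$, which can be of order one when $D_k$ is rank-one-like; pushing the argument through therefore requires extracting a quantitative rate of order $1/n$ for $\|\EE\check y_k\check y_k^*\|$. This can be obtained for $k\in\Cgood$ from an effective version of the matrix Christoffel--Darboux asymptotics underlying Proposition~\ref{CD->S}, so that the Gaussian concentration on $\|\check y_k\|^2$ matches the target exponent $e^{-c_{\ref{bnd-den}}\be n}$.
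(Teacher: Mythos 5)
Your plan is, in substance, the paper's own proof: the paper also reduces $\mathrm{Den}_k^2$ to $\|y_k^*P_k\|^2+\|y_k^*D_k\|^2$ (your $y_k^*A_ky_k$), uses $\|P_k\|_\HS\leq C\|D_k\|_\HS$ on $\Eop(C)$ exactly as you do, splits $y_k=\tilde y_k+\check y_k$, controls the $\tilde y_k$ part by Lemma~\ref{gauss}--\ref{larget} with the covariance bound $\|\EE\tilde y_k\tilde y_k^*\|\leq\|\EE y_ky_k^*\|\lesssim \bM/n$, and for the $\check y_k$ part uses precisely the ``crude'' bound $\|\check y_k^*D_k\|\leq\|\check y_k\|\,\|D_k\|_\HS$ followed by a tail bound on $\|\check y_k\|^2$.

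The obstruction you describe in your last paragraph, however, is not there. After cancelling $\|D_k\|_\HS^2$ on both sides, the event to be excluded is $\|\check y_k\|^2\geq \be/(4(1+C^2))$, a \emph{constant} threshold, and this is an event about the marginal law of $\check y_k$ alone, so the dependence between $\check y_k$ and $(P_k,D_k)$ is irrelevant --- that is exactly what the crude bound buys, and why it is used. Moreover, Proposition~\ref{CD->S} \emph{as stated}, combined with Lemma~\ref{cov-y}, already supplies the rate you ask for: since $\EE y_ky_k^*=\EE\tilde y_k\tilde y_k^*+\EE\check y_k\check y_k^*$, one gets $\|n\,\EE\check y_k\check y_k^*\|\leq\|n\EE y_ky_k^*-S_k\|+\|S_k-n\EE\tilde y_k\tilde y_k^*\|\to 0$ uniformly over $k\in\Cgood(\alpha_{\ref{prop-incomp}})$ (take $\delta=\alpha_{\ref{prop-incomp}}$ in Proposition~\ref{CD->S}). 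Hence $\|\EE\check y_k\check y_k^*\|\leq\varepsilon_N/n$ with $\varepsilon_N\to0$, so $\EE\|\check y_k\|^2\leq (N/n)\varepsilon_N=o(1)$, and Lemma~\ref{gauss}--\ref{small-var} (or \ref{gauss}--\ref{larget}) gives $\PP[\|\check y_k\|^2\geq\be/(4(1+C^2))]\leq\exp(-c\,\be n/\varepsilon_N)\leq\exp(-c\,\be n)$ for all large $n$: the concentration scale is governed by the operator norm $\|\EE\check y_k\check y_k^*\|$, so the qualitative $o(1)$ in operator norm, uniform in $k$, is already a rate of order $1/n$ and no ``effective'' Christoffel--Darboux asymptotics are needed. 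The one genuine (small) omission in your write-up is the additive $1$ in $\mathrm{Den}_k^2=1+y_k^*A_ky_k$: when $\be\|D_k\|_\HS^2\leq 2$ the event could be triggered by the $1$ alone, so this regime must be ruled out. The paper does it via Lemma~\ref{PR}--\ref{g01G} (with probability $1-e^{-cN}$ on $\Eop(C)$ one has $\|y_k^*P_k\|^2+\|y_k^*D_k\|^2\geq C_{\ref{PR}}^2$, so the $1$ is absorbed into the quadratic term); alternatively, the block identity $P_kY_k^*+zD_k=I_N$ together with $\|P_k\|_\HS\leq C\|D_k\|_\HS$ and $\|Y_k\|_\HS\leq C\sqrt N$ gives $\|D_k\|_\HS\geq\sqrt N/(C^2+|z|)$ on $\Eop(C)$, so $\be\|D_k\|_\HS^2\geq 2$ deterministically for all large $N$. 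With that detail added, your argument is complete and matches the paper's.
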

\begin{proof}
Defining the event 
$\cE = \left[ \text{Den}_k^2 \geq (C_{\ref{PR}}^{-2} + 1) 
\left\| \begin{bmatrix} 0 & y_k^* \end{bmatrix} G_k^{-1}\right\|^2 \right]$, 
we have 
\[ 
\PP\left[ \cE \cap \Eop(C) \right] = 
\PP\left[ \left[ C_{\ref{PR}}^{2} \geq 
  \left\| \begin{bmatrix} 0 & y_k^* \end{bmatrix} G_k^{-1}\right\|^2 
  \right] \cap \Eop(C) \right]  
 \leq \exp(- c_{\ref{PR}} n)  
\] 
by Lemma~\ref{PR}--\ref{g01G}.  Writing $\cE'(\be) = \left[ \left\|
\begin{bmatrix} 0 & y_k^* \end{bmatrix} G_k^{-1}\right\|^2 \geq \be \| D_k
\|_\HS^2 \right]$, we have   
\begin{align*}
\PP\left[ \cE'(\be) \cap \Eop(C) \right]  
&= 
\PP\left[ \left[\| y_k^* P_k \|^2 + \| y_k^* D_k \|^2 
      \geq \be \| D_k \|_\HS^2 \right] \cap \Eop(C) \right] \\
&\leq 
\PP\left[ \| y_k^* P_k \|^2 \geq \be \| P_k \|_\HS^2 / (2 C^2) \right] 
  + \PP\left[ \| y_k^* D_k \|^2 \geq \be \| D_k \|_\HS^2 / 2 \right] , 
\end{align*} 
where we used Lemma~\ref{PR}--\ref{PRHS}.~in the first inequality. 

Note that 
$$\| y_k^* D_k \|^2 \leq 2\|\tilde y_k^* D_k\|^2 + 2 \| \check y_k^* D_k
\|^2 \ \ \text{and}\ \ \| \check y_k^* D_k\|\leq \| \check y_k \|^2 \| D_k \|_\HS.$$  Hence there exists $c_N \to\infty$ such that 
\begin{align*} 
\PP\left[ \| y_k^* D_k \|^2 \geq \be \| D_k \|_\HS^2 / 2 \right]
 &\leq 
 \PP\left[ \|\tilde y_k^* D_k\|^2 \geq \be \| D_k \|_\HS^2 / 8 \right] + 
\PP\left[ \|\check y_k^* \|^2 \geq \be / 8 \right] \\
&\leq \PP\left[ \|\tilde y_k^* D_k\|^2 \geq 
   \be \| S_k^{1/2} D_k \|_\HS^2 / (8 \bM) \right] + e^{-c_N n} \\
&\leq e^{ 1 -\be n / (16 \bM)} + e^{-c_N n} , 
\end{align*} 
where we used Lemma~\ref{gauss}--\ref{small-var} along with 
Proposition~\ref{CD->S} in the second inequality, and 
Lemma~\ref{gauss}--\ref{larget} in the third one. 
We have a similar bound for  
$\PP\left[ \| y_k^* P_k \|^2 \geq \be \| P_k \|_\HS^2 /(2C^2) \right]$. 
Consequently, there exists $c > 0$ such that 
$\PP\left[ \cE'(\be) \cap \Eop(C) \right] \leq e^{-c\be n}$ for all large $N$. 
 
Observing that $\Eden(\be) \cap \cE^{\text{c}} \subset \cE'(\be / 
(C_{\ref{PR}}^{-2} + 1))$, we obtain as
a consequence of these inequalities that there exists $c_{\ref{bnd-den}} > 0$
such that for all large $N$, 
\begin{align*} 
\PP\left[ \Eden(\be) \cap \Eop(C) \right] &\leq 
 \PP\left[\Eden(\be) \cap \cE^{\text{c}} \cap \Eop(C) \right] +  
 \PP\left[\cE \cap \Eop(C) \right]  \\ 
&\leq \PP\left[\cE'(\be / (C_{\ref{PR}}^{-2} + 1)) \cap \Eop(C) \right] 
    + \PP\left[\cE \cap \Eop(C) \right] \\
&\leq \exp(- c_{\ref{bnd-den}} \be n ). 
\end{align*} 
The proof of the lemma is now complete.
\end{proof} 

Now we get back to the expression in (\ref{incomp-dist}).  
As in Subsection~\ref{outline}, we use the shorthand notation $S_k =
S(e^{2\imath\pi k / n})$.  Given $k\in \Cgood(\alpha_{\ref{prop-incomp}})$, we
have 
\begin{align*} 
& \PP\left[ [ \dist(h_k, H_{k}) \leq t /n ] \cap \Eop(C) \right] = 
\PP\left[ \left[ \text{Num}_k / \text{Den}_k \leq  t/n \right] 
    \cap \Eop(C) \right] \\
&\leq  
 \PP\left[ \left[ \frac{\text{Num}_k}{\text{Den}_k} \leq t /n \right] 
 \cap \Eden(\be)^{\text{c}} \cap \Eop(C) \right]  
 + \PP\left[ \Eden(\be) \cap \Eop(C) \right] \\
&\leq 
 \PP\left[ \frac{n\text{Num}_k}{\| D_k \|_\HS} \leq 
         t \sqrt\be \right] + \exp(- c_{\ref{bnd-den}} \be n ) \\ 
&\leq 
 \PP\left[ \frac{n\text{Num}_k}{\| S_k^{1/2} D_k S_k^{1/2} \|_\HS} \leq 
         t \sqrt\be / \alpha_{\ref{prop-incomp}} \right] + 
  \exp(- c_{\ref{bnd-den}} \be n ) . 
\end{align*} 
Writing $\text{Num}_k = \left| \omega^{-kL} + (\tilde y_k + \check y_k)^* 
 D_k (\tilde y_k + \check y_k) \right|$, we obtain from 
Lemma~\ref{gauss}--\ref{antifq} and Proposition~\ref{CD->S} that for some constant $c > 0$, 
\begin{align*} 
\PP\left[ [ \dist(h_k, H_{k}) \leq t /n ] \cap \Eop(C) \right] 
 &\leq 
\EE_{Y_k} \cL_{\tilde y_k}\Bigl( \frac{(\tilde y_k + \check y_k)^* 
 D_k (\tilde y_k + \check y_k)}{\| S_k^{1/2} D_k S_k^{1/2} \|_\HS},  
 \frac{t \sqrt\be}{\alpha_{\ref{prop-incomp}}} \Bigr) 
 + \exp(- c_{\ref{bnd-den}} \be n ) \\ 
&\leq c t \sqrt\be + \exp(- c_{\ref{bnd-den}} \be n ). 
\end{align*} 
Proposition~\ref{prop-incomp} is obtained by combining the last inequality with
Inequality~\eqref{incomp-dist}.  

\subsubsection*{Theorem~\ref{snY}: End of proof.} 
The proof is now completed by combining Proposition~\ref{prop-comp}, Proposition~\ref{prop-incomp}, and 
Lemma~\ref{spec_norm}.  

\subsection{Proof of Theorem~\ref{intrm}} 
\label{prf-intrm} 

Throughout the proof, we fix $k\in[\lfloor N^\beta\rfloor, \lfloor N/2\rfloor]$.
None of the constants that will appear in the course of the proof will depend
on $k$. 

The following lemma refines Inequality~\eqref{lin}. 
\begin{lemma}
\label{Htronc} 
For any $k \in [N]$, we have 
\[
s_{N+n-k-1}(H_{\cdot,[N+n-k]}) \leq s_{N-k-1}(Y \Omega^L Y^* - z). 
\]
\end{lemma}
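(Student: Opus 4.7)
The plan is to construct an explicit test vector that exploits the block structure of $H_{\cdot,[N+n-k]}$ to cancel the contribution of its top $n$ rows. Writing $\cI = [N-k]$ and $M := Y\Omega^L Y^* - zI_N$, the matrix in question has the form
\[
H_{\cdot,[N+n-k]} = \begin{bmatrix} \Omega^{-L} & Y^*_{\cdot,\cI} \\ Y & z(I_N)_{\cdot,\cI} \end{bmatrix}.
\]
The key observation is that for any nonzero $w \in \CC^{N-k}$, the choice $v := -\Omega^L Y^*_{\cdot,\cI} w \in \CC^n$ makes the top block of $H_{\cdot,[N+n-k]} [v^\T\ w^\T]^\T$ vanish, since $\Omega^{-L} v + Y^*_{\cdot,\cI} w = 0$, while its bottom block collapses to $Yv + z(I_N)_{\cdot,\cI} w = -(Y\Omega^L Y^* - zI_N)_{\cdot,\cI} w = -M_{\cdot,\cI} w$.

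Setting $x := [v^\T\ w^\T]^\T \in \CC^{N+n-k}$, this computation gives $\|H_{\cdot,[N+n-k]} x\| = \|M_{\cdot,\cI} w\|$, while $\|x\|^2 = \|v\|^2 + \|w\|^2 \geq \|w\|^2$. Consequently $s_{N+n-k-1}(H_{\cdot,[N+n-k]}) \leq \|H_{\cdot,[N+n-k]}x\|/\|x\| \leq \|M_{\cdot,\cI} w\|/\|w\|$, and minimizing the right-hand side over $w\neq 0$ yields
\[
s_{N+n-k-1}(H_{\cdot,[N+n-k]}) \leq s_{N-k-1}(M_{\cdot,\cI}),
\]
where the right-hand side is the smallest singular value of the $N\times(N-k)$ submatrix $M_{\cdot,\cI}$.

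To conclude, I would invoke the max-min characterization
\[
s_{N-k-1}(M) = \max_{\substack{V\subset \CC^N \\ \dim V = N-k}} \ \min_{\substack{\tilde w\in V \\ \|\tilde w\|=1}} \|M\tilde w\|,
\]
taking as candidate subspace $V = \colspan((I_N)_{\cdot,\cI})$, i.e.\ the vectors of $\CC^N$ supported on $\cI$. For such $\tilde w = (I_N)_{\cdot,\cI} w$ we have $\|M\tilde w\|/\|\tilde w\| = \|M_{\cdot,\cI} w\|/\|w\|$, so the inner minimum equals $s_{N-k-1}(M_{\cdot,\cI})$. The characterization then gives $s_{N-k-1}(M_{\cdot,\cI}) \leq s_{N-k-1}(M)$, and combining with the previous inequality finishes the proof. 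No step here is a real obstacle; the only content is the block-Schur-style choice of $v$, which is a natural generalization of the square case $k=0$ already used in~\eqref{lin}.
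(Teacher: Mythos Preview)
Your proof is correct and follows essentially the same approach as the paper: both choose the test vector $v = -\Omega^L (Y^*)_{\cdot,[N-k]} w$ to annihilate the top block, reduce to $s_{N-k-1}(M_{\cdot,[N-k]})$, and then invoke the max--min variational characterization of singular values to pass to $s_{N-k-1}(M)$. The only cosmetic difference is that the paper restricts directly to $w\in\SSS^{N-k-1}$ rather than working with the Rayleigh quotient, but the content is identical.
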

\begin{proof}
Write any vector $u \in \CC^{N+n-k}$ as $u = [v^\T \ w^\T ]^\T$ with 
$w \in \CC^{N-k}$. Writing 
\[
H_{\cdot,[N+n-k]} u = 
\begin{bmatrix} 
  \Omega^{-L} & (Y_{[N-k],\cdot})^* \\
  &\\
	Y & z (I_N)_{\cdot,[N-k]} 
\end{bmatrix} \begin{bmatrix} v \\ w \end{bmatrix} = 
\begin{bmatrix} 
  \Omega^{-L} v + (Y_{[N-k],\cdot})^* w \\
  & \\ 
	Y v + z (I_N)_{\cdot,[N-k]} w 
\end{bmatrix}  , 
\]
we have 
\begin{align*} 
s_{N+n-k-1}(H_{\cdot,[N+n-k]}) &= 
  \min_{u \in \SSS^{N+n-k-1}} \| H_{\cdot,[N+n-k]} u \| \\ 
 &\leq \min_{u \in \SSS^{N+n-k-1} \, : \, 
 v = - \Omega^L (Y_{[N-k],\cdot})^* w} \| H_{\cdot,[N+n-k]} u \| \\ 
 &\leq \min_{w \in \SSS^{N-k-1}} 
  \left\| \left( z I_N - Y \Omega^L Y^* \right)_{\cdot, [N-k]} w \right\| .  
\end{align*} 
On the other hand, using the variational characterization of the singular 
values of a matrix, see \cite[Th.~4.3.15]{HorJoh90}, we can write  
\[
 \min_{w \in \SSS^{N-k-1}} 
  \left\| \left( Y \Omega^L Y^* - z I_N \right)_{\cdot,[N-k]} w \right\| 
 \leq s_{N-k-1}(Y \Omega^L Y^* - z I_N) , 
\]
and hence the result follows.  
\end{proof} 
Similar to what we did for controlling the smallest singular value of $H$, we
use the characterization $s_{N+n-k-1}(H_{\cdot,[N+n-k]}) = \min_{u \in
\SSS^{N+n-k-1}} \| H_{\cdot,[N+n-k]} u \|$, and we partition the set
$\SSS^{N+n-k-1}$ into sets of compressible and incompressible vectors.
So, write any vector $u\in\SSS^{N+n-k-1}$ as $u = [v^\T \ w^\T
]^\T$ with $w \in \CC^{N-k}$. Let the set
$\cS\subset\SSS^{N+n-k-1}$ be as in the statement of
Proposition~\ref{prop-comp} above. We of course have 
\begin{equation}
\label{charsmall} 
s_{N+n-k-1}(H_{\cdot,[N+n-k]}) = 
 \inf_{u \in \cS} \| H_{\cdot,[N+n-k]}u \| \wedge 
          \inf_{u \in \SSS^{N+n-k-1}\setminus\cS} \| H_{\cdot,[N+n-k]}u \| . 
\end{equation} 
It can be readily checked that Proposition~\ref{prop-comp} remains true, once 
we change the values of the constants that appear in its statement as needed. 
So, finally, the contribution of the term $\inf_{u \in \cS} \|
H_{\cdot,[N+n-k]}u \|$ has been estimated, and we are left with the incompressible vectors.
Regarding these, we have the following proposition. 
\begin{proposition}[incompressible vectors for the small singular values] 
\label{incomp-intrm}
There exist constants $a_{\ref{incomp-intrm}} > 0$ and 
$c_{\ref{incomp-intrm}} > 0$ such that 
for all $N$ larger than an integer that is independent of 
$k\in[\lfloor N^\beta\rfloor, \lfloor N /2\rfloor]$, 
\[
\PP\left[ \left[ \inf_{u\in{\SSS^{N+n-k-1}\setminus\cS}} 
  \| H_{\cdot,[N+n-k]} u \| \leq a_{\ref{incomp-intrm}} \sqrt{k} / N \right] 
   \cap \Eop(C) \right] \leq \exp(- c_{\ref{incomp-intrm}} k ). 
\]
\end{proposition}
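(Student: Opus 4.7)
The argument will follow the same three-step scheme as the proof of Proposition~\ref{prop-incomp}, the improvement being that the rectangular shape of $H_{\cdot,[N+n-k]}$ allows the scalar anti-concentration estimate of that proof to be replaced by a $k$-dimensional Gaussian small-ball estimate, yielding an exponential rather than polynomial bound. For $u = [v^\T \ w^\T]^\T \in \SSS^{N+n-k-1}\setminus\cS$ with $v \in \CC^n$, incompressibility of $v/\|v\|$ furnishes, exactly as before, a set $\cJ_u \subset [n]$ with $|v_j| \geq \rho_{\ref{prop-comp}} C_{\ref{prop-comp}}/\sqrt{2n}$ for $j\in\cJ_u$ and $|\cJ_u| > \theta_{\ref{prop-comp}} n$. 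Intersecting with $\Cgood(\alpha)$ from Lemma~\ref{bad-k} for $\alpha$ small enough, one keeps $|\cJ_u \cap \Cgood(\alpha)| \geq \theta_{\ref{prop-comp}} n / 2$. The Rudelson--Vershynin distance argument yields
\[
\|H_{\cdot,[N+n-k]} u\| \geq \frac{\rho_{\ref{prop-comp}} C_{\ref{prop-comp}}}{\sqrt{2n}} \max_{j\in \cJ_u \cap \Cgood(\alpha)} \dist(h_j, W_j),
\]
where $h_j$ is the $j$-th column of $H_{\cdot,[N+n-k]}$ and $W_j = \colspan(H_{\cdot,[N+n-k]\setminus\{j\}})$. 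A union bound analogous to~\eqref{incomp-dist} reduces the proposition to showing, for each $j \in [n] \cap \Cgood(\alpha)$,
\[
\PP\left[\{\dist(h_j,W_j) \leq a \sqrt{k/n}\} \cap \Eop(C)\right] \leq e^{-c'k},
\]
for suitable constants $a, c' > 0$; the factor of $n$ from the union bound is absorbed by the exponential since $k \geq N^\beta$.

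To establish this small-ball bound, fix $j \in [n] \cap \Cgood(\alpha)$ and condition on $Y_j$. Since $\dim W_j \leq N+n-k-1$, the orthogonal complement $W_j^\perp$ has dimension $d \geq k+1$. Decompose $h_j = [\omega^{-jL} e_j \, ; \, \tilde y_j + \check y_j]$: the projector $\Pi_{W_j^\perp}$ and the vector $\check y_j$ are $\sigma(Y_j)$-measurable, while $\tilde y_j$ remains centered Gaussian with covariance $\Sigma_j \succeq (\alpha/(2n)) I_N$ for large $N$, by Proposition~\ref{CD->S}. Choosing an orthonormal basis $P = [P_1^\T \, P_2^\T]^\T \in \CC^{(N+n)\times d}$ of $W_j^\perp$ with $P_1 \in \CC^{n\times d}$ and $P_2 \in \CC^{N\times d}$, one has $\dist(h_j, W_j) = \|m + P_2^* \tilde y_j\|$ for a $Y_j$-measurable $m \in \CC^d$. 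By translation invariance of L\'evy's anti-concentration function and Lemma~\ref{gauss}--\ref{antivec} applied to the Gaussian $P_2^* \tilde y_j$, this quantity satisfies the required bound once we establish that $s_{k-1}(P_2^* \Sigma_j P_2) \geq c/n$ holds on a $\sigma(Y_j)$-event of probability $1 - e^{-c''k}$.

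The main technical obstacle is precisely this rank-and-conditioning estimate for $P_2$. Using $P^*P = I_d$, equivalently $P_2^* P_2 = I_d - P_1^* P_1$, it suffices to control the number of eigenvalues of $P_1^* P_1$ close to $1$. The rectangular structure of $H_{\cdot,[N+n-k]}$ enters here: the last $k$ rows of $H_{\cdot,[N+n-k]}$ vanish on the $N-k$ right-block columns, since $z(I_N)_{\cdot,[N-k]}$ is identically zero on rows $[n+N-k, n+N)$. A Schur-complement analysis of the Gram matrix $M^*M$, with $M = H_{\cdot,[N+n-k]\setminus\{j\}}$, carried out on $\Eop(C)$ together with an exponentially-likely event on which $\|y_j\|$ and the smallest singular value of a suitable restriction of $Y$ are of the expected order (Lemma~\ref{gauss}--\ref{normgauss} together with Lemma~\ref{cov-y} and Proposition~\ref{CD->S}), then yields $s_{k-1}(P_2^* P_2) \geq c$, and hence $s_{k-1}(P_2^* \Sigma_j P_2) \geq c'/n$. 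Combined with the reduction of the previous paragraph, this completes the proof. Note that the term $\check y_j$, which was the source of the weaker $N^{-3/2}$ rate in Theorem~\ref{snY}, is harmless here by L\'evy translation invariance, confirming the remark in Section~\ref{outline} that this rectangular case is easier than the square one.
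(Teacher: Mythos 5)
Your first reduction (incompressibility, the set $\cJ_u\cap\Cgood(\alpha)$, the distance bound and the union/averaging step analogous to~\eqref{incomp-dist}) matches the paper. The gap is in the small-ball step: you condition on $Y_j$ and assert that $\Pi_{W_j^\perp}$ is $\sigma(Y_j)$--measurable, but it is not. The matrix $H_{\cdot,[N+n-k]\setminus\{j\}}$ still contains $y_j$: its right-hand block is $\bigl[(Y_{[N-k],\cdot})^* \ ; \ z (I_N)_{\cdot,[N-k]}\bigr]$, whose $j$-th row is $\bigl((y_j)_{[N-k]}\bigr)^*$. Hence the subspace $W_j$ moves with $\tilde y_j$, and the identity $\dist(h_j,W_j)=\|m+P_2^*\tilde y_j\|$ with $Y_j$-measurable $(m,P_2)$ fails; the translation-invariance argument that you use to dispose of $\check y_j$ does not apply to this hidden dependence. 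This is exactly the obstruction the paper flags (``this is not so for $h_0$ and $H_{\cdot,[N+n-k]\setminus\{0\}}$'') and circumvents by first deleting row $j$, giving $\dist(h_j, H_{\cdot,[N+n-k]\setminus\{j\}})\geq \dist(\underline h_j,\underline H_j)$ with $\underline H_j$ genuinely $\sigma(Y_j)$-measurable, and then absorbing $\check y_j$ by adjoining the column $\check h_j$ to $\underline H_j$ (the matrix $G_0$), so that the remaining distance involves only $\tilde h_j$, independent of the span, at the cost of reducing the codimension $r$ to $\{k-1,k\}$. Your plan is missing both of these moves, and they are the substance of the proof.

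A secondary weakness: your route to the key rank estimate ($s_{k-1}(P_2^*P_2)\geq c$, i.e.\ that the orthocomplement has order-$k$ many directions with a nondegenerate $\CC^N$-component) is only sketched and invokes ``the smallest singular value of a suitable restriction of $Y$'' being of the expected order; such a lower bound is not available off the shelf here (it is of the same nature as what is being proved) and is not needed. The paper obtains the analogue deterministically on $\Eop(C)$: writing an isometry $A=[V^\T\ W^\T]^\T$ onto the orthocomplement of $\colspan(G_0)$, the relation $\underline H_0^*A=0$ forces $\|V\|_\HS\leq \|Y\|\,\|W\|_\HS\leq C\|W\|_\HS$, whence $\|W\|_\HS^2\geq r/(1+C^2)$, and Lemma~\ref{agood} then yields order-$r$ singular values of $W$ bounded below; only the operator-norm bound on $Y$ is used. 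If you repair the measurability issue as above, your use of Lemma~\ref{gauss}--\ref{antivec} (anti-concentration around an arbitrary center) could replace the paper's Lemma~\ref{gauss}--\ref{normgauss}, but as written the proposal does not close the argument.
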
 

Remember the definition of the set $\Cgood(\alpha) \subset[n]$ in
Lemma~\ref{bad-k} above.  To prove this proposition, we begin by mimicking the
argument that lead to Inequality~\eqref{incomp-dist} above. Specifically, there
exist positive constants $c$, $c'$ and $\alpha$ such that 
\begin{multline} 
\PP\left[\left[ \inf_{u\in\SSS^{N+n-k-1}\setminus\cS} 
 \| H_{\cdot,[N+n-k]} u \| \leq 
 \frac{c t}{n^{1/2}} \right] 
  \cap \Eop(C) \right] \\ 
\leq \frac{c'}{n} 
 \sum_{\ell\in\Cgood(\alpha)} 
   \PP\left[ \left[ 
  \dist( h_\ell, H_{\cdot,[N+n-k]\setminus\{\ell\}} ) \leq t \right] \cap 
  \Eop(C) \right] , 
\label{spread-intrm} 
\end{multline} 
and $|\Cgood(\alpha)|$ is of order $N$. 

This inequality calls for the following remark. 
\begin{remark}
\label{rem-intrm} 
By checking the structure of the vector $h_\ell$ and the matrix
$H_{\cdot,[N+n-k]\setminus\{\ell\}}$ (see below), one can intuitively infer
that with high probability, $\dist( h_\ell,
H_{\cdot,[N+n-k]\setminus\{\ell\}})^2$ is of order
$\codim(H_{\cdot,[N+n-k]\setminus\{\ell\}})/N = (k-1)/N$.  Taking $t =
\sqrt{k/N}$ and recalling the characterization~\eqref{charsmall}, we get that
$s_{N+n-k-1} \gtrsim \sqrt{k}/N$, which is what is predicted by
Theorem~\ref{intrm}. However, one naturally expects that these singular values
grow linearly in $k$ (as $k/N$) 
which would make
the result of this theorem sub-optimal.  Actually, this sub-optimality is due
in the first place to Inequality~\eqref{spread-intrm}, which is too conservative for
obtaining optimal bounds on the small singular values that we are dealing with
here. 

In~\cite{rud-ver-cpam09}, a tighter inequality is used to
control the smallest singular value of a rectangular matrix. Obtaining a corresponding inequality 
appears to be a quite involved task in the present context.  
\end{remark} 

\begin{proof}[Proof of Proposition~\ref{incomp-intrm}] 
We need to bound the summands at the right hand side of
Inequality~\eqref{spread-intrm}. To this end, assume for notational simplicity
that $\ell= 0$ (assuming without loss of generality that $0\in\Cgood(\alpha)$). 
In this case,  
\[
h_0 = \begin{bmatrix} 1 \\ 0_{n-1} \\ y_0 \end{bmatrix}, 
\]
and 
\[
H_{\cdot,[N+n-k]\setminus\{0\}} = \begin{bmatrix} 
0_{n-1}^\T   & (y_0)_{[N-k]}^* \\  
& \\
(\Omega^{-L})_{[1:n],[1:n]} & (Y_{[N-k],[1:n]})^* \\
& \\
Y_{\cdot, [1:n]} & 
              \begin{bmatrix} z I_{N-k} \\ 0_{k\times (N-k)}\end{bmatrix}  
\end{bmatrix} \in \CC^{(N+n)\times(N+n-k-1)}. 
\]
Write 
\[
\underline h_0 = \begin{bmatrix} 0_{n-1} \\ y_0 \end{bmatrix}  
  \in \CC^{N+n-1} , 
\]
and let $\underline H_0$ be the matrix obtained by taking out the row $0$ from 
$H_{\cdot,[N+n-k]\setminus\{0\}}$, \emph{i.e.}, 
\[
\underline H_0 = 
\begin{bmatrix} 
(\Omega^{-L})_{[1:n],[1:n]} & (Y_{[N-k],[1:n]})^* \\
& \\
Y_{\cdot, [1:n]} & 
              \begin{bmatrix} z I_{N-k} \\ 
							& \\
							0_{k\times (N-k)}\end{bmatrix}  
\end{bmatrix} \in \CC^{(N+n-1)\times(N+n-k-1)}. 
\]
Note that $\underline h_0$ and $\underline H_0$ are ``almost'' independent. However, this is not so for $h_0$ and $H_{\cdot,[N+n-k]\setminus\{0\}}$. 
Let $a \in \CC^{N+n-k-1}$ be the vector such that
$H_{\cdot,[N+n-k]\setminus\{0\}} a = \Pi_{H_{\cdot,[N+n-k]\setminus\{0\}}}
h_0$. Then, 
\[
\dist(h_0, H_{\cdot,[N+n-k]\setminus\{0\}})^2 = 
 \| h_0 - H_{\cdot,[N+n-k]\setminus\{0\}} a \|^2 
 \geq \| \underline h_0 - \underline H_0 a \|^2 
 \geq \dist(\underline h_0, \underline H_0)^2 .
\]
Consider again the decomposition $y_0 = \tilde y_0 + \check y_0$ with 
$\check y_0 = \EE[ y_0 \, | \, Y_{\cdot,[1:n]} ]$, and write 
\[
\tilde h_0 = \begin{bmatrix} 0_{n-1} \\ \tilde y_0  \end{bmatrix} , 
\quad\text{and}\quad 
\check h_0 = \begin{bmatrix} 0_{n-1} \\ \check y_0  \end{bmatrix} 
\quad \in \CC^{N+n-1} . 
\]
Then, writing $G_0 = \begin{bmatrix} \check h_0 & \underline H_0
 \end{bmatrix}$, we have 
\[
\dist(\underline h_0, \underline H_0) = 
\dist(\tilde h_0 + \check h_0, \underline H_0) \geq 
\dist(\tilde h_0 + \check h_0, G_0 ) = \dist(\tilde h_0, G_0 ).  
\]
Observe that $\tilde h_0$ and $G_k$ are independent, and that $N+n-k-1 \leq
\rank(G_0) \leq N+n-k$ with probability one.  Let $r = N+n-1 - \rank(G_0)$, and
let $A = [ V^\T \ W^\T ]^\T \in \CC^{(N+n-1)\times r}$ be an isometry
matrix such that $AA^* = \Pi_{G_0}^\perp$. Here the partitioning of $A$ is such
that $V\in\CC^{(n-1)\times r}$, and $W\in\CC^{N\times r}$. Note that 
$r \in \{k-1,k\}$ w.p.~1. 
Since $AA^* = \Pi_{G_0}^\perp$, it holds that $\underline H_0^* A = 0$,
which can be elaborated as 
\begin{align} 
0 &= (\Omega^{L})_{[1:n], [1:n]} V + (Y_{\cdot, [1:n]})^* W 
 \label{VW} \\ 
0 &= Y_{[N-k], [1:n]} U  + 
  \begin{bmatrix} \bar z I_{N-k} & 0_{(N-k)\times k} \end{bmatrix} W . 
 \nonumber
\end{align} 
Assume that $\| W \|_\HS^2  < ar$ with $a = / (1+C^2)$. 
By Equation~\eqref{VW}, on the event 
$\Eop(C)$, we have 
$\| V \|_\HS^2 \leq C^2 a r$, which contradicts the identity 
$\| V \|_\HS^2 + \| W \|_\HS^2 = r$. Thus,  
\[
\PP\left[ \left[\| W \|_\HS^2  <  a r \right] \cap \Eop(C)\right] = 0.
\]
Using this result, we now provide a control over $\dist(\tilde h_0, G_0) = 
 \| W^* \tilde y_0 \|$. It is clear that $\tilde y_0$ and $W$ are independent.
On the event $[\| W \|_\HS^2 \geq a r]$, we know by Lemma~\ref{agood} that
\[
s_{\lfloor\frac{a}{2-a} r\rfloor -1}(W)^2 \geq a /2 . 
\]
Moreover, since $0 \in \Cgood(\alpha)$, Proposition~\ref{CD->S}, with the
help of Assumptions~\ref{prop-S} and~\ref{log-S}, shows that 
$n \EE \tilde y_0 \tilde y_0^* \geq (\alpha / 2) I_N$ for all $N$ large. 
Consequently, on the event $[\| W \|_\HS^2 \geq a r]$, the conditional 
distribution of $W^* \tilde y_k$ with respect to the sigma-field $\sigma(W)$ 
is Gaussian with a covariance matrix 
$\Sigma\in\cH_+^r$ such that $s_{\lfloor\frac{a}{2-a} r\rfloor -1}(\Sigma) 
\geq a\alpha/(4n)$ for all large $N$. Put 
$t^2 = a\alpha \lfloor\frac{a}{2-a} r \rfloor / (8n) \sim r/n$. By what 
precedes and by Lemma~\ref{gauss}--\ref{normgauss}, we have 
\begin{align*}
 &  \PP\left[ \left[ 
  \dist( h_0, H_{\cdot,[N+n-k]\setminus\{0\}} )^2 \leq t^2 \right] \cap 
  \Eop(C) \right] \\
&\leq \PP\left[ \left[ \dist( \tilde h_0, G_0 )^2 \leq t^2 \right] 
     \cap \Eop(C) \right] \\
&\leq \PP\left[ \left[ \dist( \tilde h_0, G_0 )^2 \leq t^2 \right] \cap 
  \left[ \| W \|_\HS^2 \geq a r \right] \right] 
 + \PP\left[ \left[ \| W \|_\HS^2 < a r \right] \cap \Eop(C) \right] \\
&\leq \exp(- c_{\ref{gauss},\ref{normgauss}} 
 \lfloor\frac{a}{2-a} r \rfloor ). 
\end{align*} 
The proof of Proposition~\ref{incomp-intrm} is completed by using 
Inequality~\eqref{spread-intrm}. 
\end{proof}

\subsubsection*{Proof of Theorem~\ref{intrm}} 
To complete the proof, we need to just combine Lemma~\ref{Htronc} with the characterization~\eqref{charsmall}, 
apply Propositions~\ref{prop-comp} and~\ref{incomp-intrm} 
respectively to the two terms in this characterization, and use 
Lemma~\ref{spec_norm} to bound $\PP[\Eop(C)]$.


\section{Proofs regarding the singular value distribution of 
 $\widehat R_L^{(N)} - z$} 
\label{nu-N} 

\subsection{Proof of Theorem~\ref{sys}} 

We recall that $\cM_+^m = \{ M \in \CC^{m\times m}, \ \Im M > 0 \}$.   
\begin{lemma}
\label{im>0} 
Given an integer $m > 0$, let $M \in \cM_+^m$. Then $M$ is invertible, and 
$\| M^{-1} \| \leq \| (\Im M)^{-1} \|$. Moreover, $- M^{-1} \in \cM_+^m$. 
\end{lemma}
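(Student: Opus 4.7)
The plan is to verify the three assertions in order, each via short linear-algebraic arguments.

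For invertibility, I would argue by contradiction. Suppose $Mx = 0$ for some nonzero $x \in \CC^m$. Then $x^*Mx = 0$, so in particular $0 = \Im(x^*Mx) = x^*(\Im M)x$. But $\Im M > 0$ forces $x^*(\Im M)x > 0$, a contradiction. Hence $M$ is invertible.

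For the norm bound, I would use the chain of inequalities
\[
\|Mx\|\,\|x\| \;\geq\; |x^*Mx| \;\geq\; |\Im(x^*Mx)| \;=\; x^*(\Im M)x \;\geq\; s_{m-1}(\Im M)\,\|x\|^2 \;=\; \|(\Im M)^{-1}\|^{-1}\,\|x\|^2,
\]
valid for every $x\in\CC^m$, where the first step is Cauchy--Schwarz and the last uses that $\Im M$ is Hermitian positive definite so that its smallest eigenvalue equals the reciprocal of $\|(\Im M)^{-1}\|$. Dividing through by $\|x\|$ and substituting $x = M^{-1}y$ gives $\|y\| \geq \|(\Im M)^{-1}\|^{-1}\|M^{-1}y\|$ for all $y$, which yields the claimed bound.

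For the statement $-M^{-1}\in\cM_+^m$, the key identity is
\[
M^{-*} - M^{-1} \;=\; M^{-*}(M - M^*)M^{-1},
\]
verified by multiplying out the right-hand side. Dividing by $2\imath$ gives
\[
\Im(-M^{-1}) \;=\; \frac{M^{-*} - M^{-1}}{2\imath} \;=\; M^{-*}(\Im M)M^{-1},
\]
which is positive definite as a congruence transform of $\Im M > 0$ by the invertible matrix $M^{-1}$. Hence $-M^{-1}\in\cM_+^m$.

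I do not anticipate a real obstacle here; the lemma is a standard fact about matrices with positive-definite imaginary part, and each of the three claims reduces to a one-line manipulation. The only point requiring any care is remembering that $\Im M$ is Hermitian (by the very definition $\Im M = (M - M^*)/(2\imath)$), so that the spectral bound $x^*(\Im M)x \geq s_{m-1}(\Im M)\|x\|^2$ is legitimate and the congruence argument for the third part indeed produces a positive-definite matrix rather than merely a positive-semidefinite one.
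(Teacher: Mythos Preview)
Your proof is correct and follows essentially the same approach as the paper: the same Cauchy--Schwarz chain $\|Mu\|\,\|u\|\geq |u^*Mu|\geq u^*(\Im M)u$ for the norm bound, and the same resolvent-type identity to compute $\Im(M^{-1})$ as a congruence of $-\Im M$. The only cosmetic differences are that you isolate invertibility as a separate contradiction step (the paper reads it off directly from the inequality chain) and that you write the congruence as $M^{-*}(\Im M)M^{-1}$ while the paper writes the adjoint form $M^{-1}(\Im M)M^{-*}$; these are of course the same Hermitian matrix.
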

\begin{proof}
For each non-zero vector $u$, we have 
\[
\| u \| \, \| Mu \| \geq | u^* M u | = 
 | u^* \Re M u + \imath u^* \Im M u | \geq u^* \Im M u > 0 , 
\]
and hence  $M$ is invertible. For an arbitrary non-zero vector $v$, 
there is $u \neq 0$ such that $v = M u$, and we have from the former display 
that 
\[
\| M^{-1} v \| \, \| v \| \geq (M^{-1} v)^* (\Im M) M^{-1} v 
  \geq \| (\Im M)^{-1} \|^{-1} \| M^{-1} v \|^2, 
\]
and hence the second result follows. We finally have 
\[
\Im(M^{-1}) = M^{-1} \frac{M^* - M}{2\imath} M^{-*} = 
 - M^{-1} (\Im M) M^{-*} < 0.
\]
\end{proof} 
The following result is well-known. We provide its proof for completeness.
\begin{lemma}
\label{nusym}
A probability measure $\check\nu$ is symmetric if and only if its Stieltjes
transform $g_{\check\nu}$, seen as an analytic function on $\CC\setminus\RR$,
satisfies $g_{\check\nu}(-\eta) = - g_{\check\nu}(\eta)$.
\end{lemma}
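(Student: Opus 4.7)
The plan is to prove both directions by a direct change of variables in the integral defining the Stieltjes transform, relying on the injectivity of the Stieltjes transform for the converse.

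For the ``only if'' direction, assume $\check\nu$ is symmetric, meaning $\check\nu(B) = \check\nu(-B)$ for every Borel set $B\subset\RR$; equivalently, $\check\nu$ is invariant under the pushforward by $\lambda \mapsto -\lambda$. I would then write
\[
g_{\check\nu}(-\eta) = \int_\RR \frac{1}{\lambda + \eta}\,\check\nu(d\lambda)
\]
and perform the change of variable $\lambda \mapsto -\lambda$, which leaves $\check\nu$ invariant by symmetry. This turns the integrand into $1/(-\lambda + \eta) = -1/(\lambda-\eta)$, yielding $g_{\check\nu}(-\eta) = -g_{\check\nu}(\eta)$ for all $\eta\in\CC\setminus\RR$.

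For the ``if'' direction, I would introduce the pushforward measure $\tilde\nu$ of $\check\nu$ under $\lambda\mapsto -\lambda$, which is again a probability measure on $\RR$. A direct computation gives
\[
g_{\tilde\nu}(\eta) = \int_\RR \frac{1}{\lambda - \eta}\,\tilde\nu(d\lambda)
  = \int_\RR \frac{1}{-\lambda - \eta}\,\check\nu(d\lambda)
  = - g_{\check\nu}(-\eta),
\]
which equals $g_{\check\nu}(\eta)$ by the hypothesis. Hence $g_{\tilde\nu}$ and $g_{\check\nu}$ coincide on $\CC\setminus\RR$, and I would then invoke the well-known fact that the Stieltjes transform uniquely determines a finite Borel measure on $\RR$ (via the Stieltjes inversion formula) to conclude that $\tilde\nu = \check\nu$, i.e. $\check\nu$ is symmetric.

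The only subtle point is the appeal to injectivity of the Stieltjes transform; apart from that, the argument is a one-line change of variables in each direction, so I would not expect any genuine obstacle here. Analyticity of $g_{\check\nu}$ on $\CC\setminus\RR$ is automatic from its integral representation, and the identity $g_{\check\nu}(-\eta) = -g_{\check\nu}(\eta)$ extends from $\CC_+$ to $\CC\setminus\RR$ by analytic continuation, so it suffices to check it for $\eta\in\CC_+$.
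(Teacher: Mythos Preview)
Your proof is correct and follows essentially the same approach as the paper. For necessity both use the change of variables $\lambda\mapsto-\lambda$; for sufficiency the paper spells out the Perron inversion formula explicitly to recover $\check\nu$ from $g_{\check\nu}$, whereas you invoke the injectivity of the Stieltjes transform as a black box --- this is the same content, just packaged differently.
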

\begin{proof}
The necessity is obvious from the definition of the Stieltjes
transform and from the fact that $\check\nu(d\lambda) = \check\nu(-d\lambda)$.
To prove the sufficiency, we use the Perron inversion formula, which says that
for any function $\varphi \in C_c(\RR)$, 
$$\int_\RR \varphi(x) \, \check\nu(dx) = 
\lim_{\varepsilon\downarrow 0} \frac{1}{\pi} \int_\RR \varphi(x) \, 
\Im g_{\check\nu}(x + \imath \varepsilon) \, dx.$$ 
By a simple variable change at the right hand side, and by using the
equalities $g_{\check\nu}(-\eta) = - g_{\check\nu}(\eta)$ and
$g_{\check\nu}(\bar\eta) = \bar g_{\check\nu}(\eta)$, we obtain that
$\int \varphi(x) \, \check\nu(dx) = \int \varphi(-x) \, \check\nu(dx)$, and hence the result follows.
\end{proof}
The operator $\mcT$ introduced before Theorem~\ref{sys} has the following 
properties. 
\begin{lemma}
\label{mct} 
Suppose $M\in\CC^{2N\times 2N}$ and $S \in \cH_+^N$. Then, 
$$\| \mcT (
(I_2\otimes S) M) \| \leq \| M \| \tr S / n \leq 
(N/n) \| S \| \| M \|.$$  If $\Im M \geq \rho I_{2N}$
for some $\rho > 0$, then 
$$\Im \mcT ( (I_2\otimes S) M) \geq 
\rho n^{-1} (\tr S)   I_{2}.$$  
\end{lemma}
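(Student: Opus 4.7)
The plan is a direct computation based on the identity
\[
u^{*} \mcT\bigl((I_2\otimes S)M\bigr) v \;=\; n^{-1}\,\tr\bigl(S\,V^{*} M W\bigr)
\qquad\text{for }u,v\in\CC^{2},
\]
where $V = u\otimes I_N,\ W = v\otimes I_N \in \CC^{2N\times N}$. This follows directly from the block description $\mcT((I_2\otimes S)M)_{ij}=\tr(S M_{ij})/n$ and the fact that $V^{*}MW=\sum_{ij}\bar u_i v_j M_{ij}$.

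For the operator-norm bound, I would estimate $|u^{*}\mcT((I_2\otimes S)M)v|$ for arbitrary unit vectors $u,v\in\CC^{2}$. Since $S\in\cH^{N}_+$, one has $\|S\|_{1}=\tr S$, and the H\"older-type trace inequality gives $|\tr(S\cdot V^{*}MW)|\leq \|S\|_{1}\,\|V^{*}MW\|\leq \tr(S)\,\|M\|$, using that $\|V\|=\|W\|=1$. Taking the supremum over $u,v$ yields $\|\mcT((I_2\otimes S)M)\|\leq \tr(S)\|M\|/n$, and the second inequality is immediate from $\tr S\leq N\|S\|$.

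For the imaginary-part bound, the first step is to show
\[
\Im\,\mcT\bigl((I_2\otimes S)M\bigr) \;=\; \mcT\bigl((I_2\otimes S)\,\Im M\bigr).
\]
Indeed, inspecting the blocks one checks that $\mcT((I_2\otimes S)M)^{*}=\mcT((I_2\otimes S)M^{*})$ (via $\overline{\tr(SM_{ji})}=\tr(SM_{ji}^{*})$ and cyclicity of the trace), and $\mcT$ is linear. Next, writing $P=\Im M - \rho I_{2N}\geq 0$, I claim that $\mcT((I_2\otimes S)P)\geq 0$ in $\cH^{2}_{+}$: it is Hermitian by the identity above applied to $P=P^{*}$, and for any $u\in\CC^{2}$,
\[
u^{*}\mcT\bigl((I_2\otimes S)P\bigr)u \;=\; n^{-1}\tr\bigl(S^{1/2} V^{*} P V S^{1/2}\bigr) \;\geq\; 0,
\]
since $V^{*}PV\geq 0$. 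Finally, a direct evaluation gives $\mcT((I_2\otimes S)(\rho I_{2N}))=\rho n^{-1}\tr(S)\,I_{2}$, and combining with linearity yields $\Im\,\mcT((I_2\otimes S)M)\geq \rho n^{-1}\tr(S)\,I_{2}$, as desired.

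There is no real obstacle here; the only point that requires a moment of care is that $\mcT((I_2\otimes S)M)$ is not a priori Hermitian even when $\Im M\geq 0$, so one must justify that ``$\Im$'' and $\mcT((I_2\otimes S)\cdot)$ commute before passing to the positivity argument. Everything else is a packaging of the trace inequality $|\tr(SB)|\leq\tr(S)\|B\|$ for $S\geq 0$.
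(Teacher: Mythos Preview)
Your proposal is correct and follows essentially the same route as the paper: both arguments reduce the imaginary-part bound to the identity $\Im\,\mcT((I_2\otimes S)M)=\mcT((I_2\otimes S)\,\Im M)$ followed by a positivity check, and both extract the norm bound from the positive semidefiniteness of $S$. The only cosmetic difference is that the paper expands $\mcT((I_2\otimes S)M)$ as the sum $n^{-1}\sum_{\ell}\bigl[\begin{smallmatrix} e_\ell^* S^{1/2} & \\ & e_\ell^* S^{1/2}\end{smallmatrix}\bigr] M \bigl[\begin{smallmatrix} S^{1/2}e_\ell & \\ & S^{1/2}e_\ell\end{smallmatrix}\bigr]$ and applies the triangle inequality termwise, whereas you use the bilinear form $u^*\mcT(\cdot)v$ together with the trace H\"older inequality $|\tr(SB)|\leq(\tr S)\|B\|$; these are equivalent packagings of the same estimate.
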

\begin{proof}
To obtain the first result, we write 
\begin{align*} 
\mcT ( (I_2\otimes S) M) &= 
\mcT((I_2\otimes S^{1/2}) M (I_2\otimes S^{1/2}))  \\ 
 &= \frac{1}{n} \sum_{\ell\in[N]} 
\begin{bmatrix} e_\ell^* S^{1/2} \\ & e_\ell^* S^{1/2} \end{bmatrix} 
 M  
\begin{bmatrix} S^{1/2} e_\ell \\ & S^{1/2} e_\ell \end{bmatrix}.
\end{align*} 
Hence, 
$$\| \mcT ( (I_2\otimes S) M) \| 
\leq n^{-1} \| M \| \sum_{\ell\in[N]} \| S^{1/2} e_\ell \|^2 
 = \| M \| \tr S / n.$$ To obtain the second inequality, just observe that 
$\Im \mcT ( (I_2\otimes S) M) = \mcT ( (I_2\otimes S) \Im M)$ and follow the same 
derivation as above. 
\end{proof}

\subsubsection*{Proof of Theorem~\ref{sys}}

Given a function $M(\eta) \in \mathfrak S^{2N}$, let 
$$A(M(\eta),e^{\imath\theta}) =
\mcT( (I_2\otimes \Sigma(e^{\imath\theta})) M(\eta) ) + U_L(e^{\imath\theta}).$$
We shall show that $A(M(\eta),e^{\imath\theta})^{-1}$ exists, is holomorphic as
a function of $\eta$, is continuous as a function of $\theta$, and satisfies
$\Im A(M(\eta),e^{\imath\theta})^{-1} \leq 0$.  

First let $\rho > 0$ be such that
$\Im M(\eta) \geq \rho I$. By Lemma~\ref{mct}, $\Im A(M(\eta),e^{\imath\theta})
\geq \rho n^{-1} \tr \Sigma(e^{\imath\theta}) I_2$.  Thus, if
$\Sigma(e^{\imath\theta}) \neq 0$, then $A(M(\eta),e^{\imath\theta})^{-1}$
exists, is holomorphic in $\eta$, and $\Im A(M(\eta),e^{\imath\theta})^{-1} <
0$ by Lemma~\ref{im>0}. 

Otherwise, $A(M(\eta),e^{\imath\theta}) =
U_L(e^{\imath\theta})$, which is trivially invertible, and $\Im
A(M(\eta),e^{\imath\theta})^{-1} = 0$. In summary, $A(M(\eta),e^{\imath
\theta})^{-1}$ is holomorphic in $\eta$ and satisfies 
$\Im A(M(\eta),e^{\imath\theta})^{-1} \leq 0$, for each $\theta\in[0,2\pi)$. 
Moreover, the continuity of $A(M(\eta),e^{\imath\cdot})^{-1}$ follows from the 
continuity of $A(M(\eta), e^{\imath\cdot})$. 

From these properties of $A$, it follows that 
$$B(M(\eta)) = (2\pi)^{-1}
\int_0^{2\pi} A(M(\eta), e^{\imath\theta})^{-1} \otimes \Sigma(e^{\imath\theta})
d\theta$$ exists, is holomorphic as a function of $\eta$, and satisfies $\Im
B(M(\eta)) \leq 0$. Since $\Im \begin{bmatrix} \eta & z \\ \bar z & \eta
\end{bmatrix} = \Im\eta I_2$, we get by Lemma~\ref{im>0} that the function
$\cF_{\Sigma,z}(M(\eta),\eta)$ is holomorphic in $\eta\in\CC_+$, and takes
values in $\cM_+^{2N}$.  Furthermore, since $M \in \mathfrak S^{2N}$, it holds 
that $\| M(\eta) \| \leq 1/(\Im\eta)$, and it is easy to show that
$\lim_{t\to\infty} \imath t \cF_{\Sigma,z}(M,\imath t) = - I_{2N}$.  In
summary, $\cF_{\Sigma,z}(M(\eta),\eta) \in \mathfrak S^{2N}$ as a function of 
$\eta$ when $M \in \mathfrak S^{2N}$.  

Let us now establish the uniqueness of the solution of Equation~\eqref{impl} in
the class $\mathfrak S^{2N}$. Assume that $P(z,\cdot)$ and $P'(z,\cdot)$ are 
two such solutions. Then,  
\begin{align*} 
&\cF_{\Sigma,z}(P, \eta) - \cF_{\Sigma,z}(P', \eta) \\
&= \cF_{\Sigma,z}(P, \eta)( B(P') - B(P)) \cF_{\Sigma,z}(P', \eta)  \\
&= \cF_{\Sigma,z}(P, \eta) \times \\ 
&\phantom{=} 
\left\{ \frac{1}{2\pi} \int_0^{2\pi} 
\left( A(P', e^{\imath\theta})^{-1} 
\mcT( (I_2\otimes \Sigma(e^{\imath\theta})) (P-P'))  
 A(P, e^{\imath\theta})^{-1} \right) \otimes \Sigma(e^{\imath\theta}) d\theta
 \right\} \times \\
&\phantom{=} \cF_{\Sigma,z}(P', \eta) . 
\end{align*} 

Define the domain  
\[
\cD =\Big\{\eta \in \CC_+: \Im\eta > 4 \|\Sigma\|_\infty \Bigl( \frac Nn \vee \sqrt{\frac Nn} \Bigr)\Big\}.
\]
Let 
$\eta \in \cD$.
Using the Inequality $\| P \|, \| P' \| \leq 1/\Im\eta$,  along with
Lemma~\ref{mct}, it can be
checked  that 
$\| A(P,e^{\imath\theta})^{-1} \|, \| A(P',e^{\imath\theta})^{-1} \| 
 \leq 2$ when $\eta\in\cD$. Using Lemma~\ref{mct} again, we have 
\begin{equation}
\label{contr} 
\| P - P' \| = 
\| \cF_{\Sigma,z}(P, \eta) - \cF_{\Sigma,z}(P', \eta) \| 
\leq 
\frac{4}{(\Im\eta)^2} \frac Nn \| \Sigma \|_\infty^2 \| P - P' \| 
 \leq \frac 12 \| P - P' \| 
\end{equation} 
which shows that $P(z,\eta) = P'(z,\eta)$ for $\eta\in\cD$, and hence, for
$\eta\in\CC_+$. 

To show the existence of the solution, set $P_0(z,\eta) = - \eta^{-1} I_{2N}
\in \mathfrak S^{2N}$, and consider the iterations 
$$
P_{k+1}(z,\eta) = \cF_{\Sigma,z}(P_k(z,\eta), \eta).
$$ Then, $P_k(z,\cdot) \in \mathfrak S^{2N}$ 
for each $k$, and furthermore, the sequence $P_k(z,\eta)$ converges on $\cD$ 
to a function $P_\infty(z,\eta)$ which satisfies $P_\infty(z,\eta) =
\cF_{\Sigma,z}(P_\infty(z,\eta), \eta)$ by Banach's fixed point theorem. 
Furthermore, given arbitrary vectors $a, b \in\CC^{2N}$, the sequence of 
holomorphic functions $(a^* P_k(z,\cdot) b)_k$ on $\cD$ is a \textit{normal family},
thus, their limit $a^* P_\infty(z,\cdot) b$ is holomorphic on $\cD$ by the 
normal family theorem. Since $a$ and $b$ are arbitrary, $P_\infty(z,\cdot)$
is a holomorphic matrix function on $\cD$ that satisfies the properties of
a matrix Stieltjes transform stated in Proposition~\ref{mv-st}. This
shows that $P_\infty(z,\cdot)$ is the unique solution of Equation~\eqref{impl} 
in $\mathfrak S^{2N}$.

It remains to establish the last result of Theorem~\ref{sys}. Extending the 
domain of $\cF_{\Sigma,z}(P(z,\eta),\eta)$ in the parameter $\eta$ to 
$\CC\setminus\RR$, we show that $P$ is the solution of the equation 
$P = \cF_{\Sigma,z}(P,\eta)$ if and only if the matrix function 
$P' = \begin{bmatrix} - P_{00} & P_{01} \\ 
                        P_{10} & - P_{11} \end{bmatrix}$ 
is the solution of the equation $P' = \cF_\Sigma(P', - \eta)$. This can be 
demonstrated by a direct calculation: writing 
\[
T(e^{\imath\theta}, z, \eta) = \\ 
 \mcT\left( (I_2\otimes \Sigma(e^{\imath\theta})) P(z,\eta) \right) 
= \begin{bmatrix} t_{00} &  t_{01} \\ t_{10} &  t_{11} \end{bmatrix}, 
\]
we have 
\[
\left( T(e^{\imath\theta}, z, \eta) + U_L(e^{\imath\theta}) \right)^{-1} 
= 
\frac{1}{\Delta} 
\begin{bmatrix} 
 t_{11} & - (t_{01}+e^{-\imath L\theta}) \\ 
 - (t_{10}+e^{\imath L\theta}) & t_{00} \end{bmatrix},
\]
with $\Delta = t_{00} t_{11} - (t_{01} + e^{-\imath L\theta}) 
(t_{10} + e^{\imath L\theta})$. Thus, 
\[
P = \begin{bmatrix} P_{00}  & P_{01} \\ P_{10} & P_{11} \end{bmatrix} = 
 \begin{bmatrix} 
\frac{1}{2\pi} \int  \frac{t_{11}}{\Delta} \Sigma - \eta 
  & - \frac{1}{2\pi} \int \frac{(t_{01}+e^{-\imath L\theta})}{\Delta} 
   \Sigma - z \\ 
 - \frac{1}{2\pi} \int \frac{(t_{10}+e^{\imath L\theta})}{\Delta} 
  \Sigma - \bar z 
 & \frac{1}{2\pi} \int \frac{t_{00}}{\Delta}\Sigma -\eta 
  \end{bmatrix}^{-1} . 
\]
Recalling the formula for the inverse 
of a partitioned matrix (see~\cite[\S 0.7.3]{HorJoh90})
\begin{align*} 
&\begin{bmatrix} M_{00} & M_{01} \\ M_{10} & M_{11} \end{bmatrix}^{-1} \\
&= 
\begin{bmatrix} 
(M_{00} - M_{01} M_{11}^{-1} M_{10})^{-1}   & 
-M_{00}^{-1} M_{01} (M_{11} - M_{10} M_{00}^{-1} M_{01})^{-1}  \\
- (M_{11} - M_{10} M_{00}^{-1} M_{01})^{-1} M_{10} M_{00}^{-1} &  
(M_{11} - M_{10} M_{00}^{-1} M_{01})^{-1}  
\end{bmatrix} 
\end{align*} 
we get the required result by a direct checking. 

Using this result in conjunction with Lemma~\ref{nusym}, we obtain that
for each deterministic vector $u \in \CC^N$, the scalar measures 
$(u^* \Lambda_{ii} u)(dt)$ are symmetric for $i=0,1$. This shows at once that 
the measures $\Lambda_{ii}$ are symmetric. The proof of Theorem~\ref{sys} is now complete. 

\subsection{Proof of Theorem~\ref{eqdet}} 
\label{prf-eqdet} 

Recall that $\bM$ is the bound provided by
Assumption~\ref{prop-S}--\ref{bnd-S}.   
From now on we shall write $\gmax = \sup_N N/n$. 

We first establish the tightness of the sequence $(\bs{\check\nu}_{z,N})_N$. 
As is well-known~\cite{GH2003}, this is equivalent to showing that
$-\imath t g_{\bs{\check\nu}_{z,N}}(\imath t) \to 1$ as $t\to\infty$ 
uniformly in $N$. 

Given $M\in \cM_+^{2N}$, let 
$$A(M,e^{\imath\theta}) \eqdef \mcT(
(I_2\otimes S(e^{\imath\theta})) M ) + U_L(e^{\imath\theta}).$$ 
By Lemma~\ref{mct}, 
$$\| \mcT( (I_2\otimes S(e^{\imath\theta}))
G(z,\imath t) ) \| \leq \gmax \bM / t.$$ Thus, for $t \geq 2 \gmax \bM$, 
 $\| A(G(z,\imath t), e^{\imath\theta})^{-1} \| \leq 2$. 
Since 
\[
G(z,\imath t) = \cF_{S,z}(G(z,\imath t), \imath t) = \left( 
\frac{1}{2\pi} \int A(G(z,\imath t), e^{\imath\theta})^{-1} 
 \otimes S(e^{\imath\theta}) d\theta 
- \begin{bmatrix} \imath t & z \\ \bar z & \imath t \end{bmatrix} 
  \otimes I_N \right)^{-1} ,  
\]
it is clear that 
\[
\frac{-\imath t}{2N} \tr G(z,\imath t) \xrightarrow[t\to\infty]{} 1 
\] 
uniformly in $N$, thus, the sequence $(\bs{\check\nu}_{z,N})_N$ is tight.

The remainder of the proof is devoted towards establishing the
convergence~\eqref{cvg-Q}. Recalling the expression $\widehat R_L = X J^L X^*$
provided at the beginning of Section~\ref{outline}, we have 
\[
Q(z,\eta) = \left( \bH(\widehat R_L - z) - \eta \right)^{-1} = 
 \begin{bmatrix} -\eta & X J^L X^* - z \\ X J^{-L} X^* - \bar z & - \eta 
 \end{bmatrix}^{-1} = 
 \begin{bmatrix} Q_{00} & Q_{01} \\ 
                 Q_{10} & Q_{11}\end{bmatrix}. 
\] 
We begin by bounding the variance of $(2N)^{-1} \tr D Q$ at the left hand side 
of~\eqref{cvg-Q}. 

\begin{proposition}
\label{np-Q} 
Under Assumption~\ref{prop-S}--\ref{bnd-S}), for 
each deterministic matrix $B \in \CC^{N\times N}$ and each $u,v\in\{0,1\}$,
$$\var(\tr B Q_{uv}(z, \eta)) \leq 8 \gmax \bM^2 \| B \|^2 / (\Im\eta)^4.$$ 
\end{proposition}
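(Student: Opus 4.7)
The natural tool for bounding the variance of a smooth functional of the complex Gaussian vector $\vect X \sim \cN_\CC(0,\cR)$ is the Gaussian Poincaré--Nash inequality: for $f(X,\bar X)$ sufficiently regular,
\[
\var(f) \leq \EE\bigl[(\nabla_X f)^* \, \cR \, \nabla_X f\bigr] + \EE\bigl[(\nabla_{\bar X} f)^\T \, \bar{\cR} \, \overline{\nabla_{\bar X} f}\bigr],
\]
where the gradients are taken in the Wirtinger sense with respect to the entries of $X$ and $\bar X$. The plan is to apply this to $f = \tr B Q_{uv}(z,\eta)$ and carry out the standard resolvent computation.

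The first step is to compute the derivatives using the resolvent identity $\partial_{X_{ij}} Q = -Q (\partial_{X_{ij}} \bH) Q$. Since $\bH(\widehat R_L-z)$ depends on $X$ through the off-diagonal blocks $XJ^L X^*$ and $XJ^{-L}X^*$, Wirtinger calculus gives $\partial_{X_{ij}} \bH = \begin{bmatrix} 0 & e_i e_j^\T J^L X^* \\ 0 & 0\end{bmatrix}$, so that a short block computation yields
\[
\partial_{X_{ij}} Q_{uv} = - Q_{u0}\, e_i e_j^\T J^L X^* Q_{1v},
\qquad
\partial_{X_{ij}} f = - \bigl(J^L X^* Q_{1v} B Q_{u0}\bigr)_{ji}.
\]
An entirely analogous expression holds for $\partial_{\bar X_{ij}} f$, obtained by swapping the roles of the two off-diagonal blocks of $\bH$.

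The second step is to bound the right-hand side of Poincaré--Nash. Using $\|\cR\| \leq \bM/n$ (which follows from Assumption~\ref{prop-S}--\ref{bnd-S} exactly as noted after \eqref{btoep}) one estimates
\[
\EE\bigl[(\nabla_X f)^* \cR \, \nabla_X f\bigr] \leq \frac{\bM}{n} \, \EE\bigl\| J^L X^* Q_{1v} B Q_{u0}\bigr\|_{\HS}^2
 \leq \frac{\bM}{n} \cdot \frac{\|B\|^2}{(\Im\eta)^4} \cdot \EE\|X\|_\HS^2,
\]
where we used $\|J^L\|=1$, $\|Q_{uv}\| \leq \|Q\| \leq 1/\Im\eta$, and submultiplicativity of the Hilbert--Schmidt norm under spectral-norm factors. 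Since $\EE\|X\|_\HS^2 = \tr\cR = \tr R_0^{(N)} \leq N\bM$ by the block-Toeplitz structure of $\cR$ and Assumption~\ref{prop-S}--\ref{bnd-S}, this contributes at most $(N/n)\bM^2\|B\|^2/(\Im\eta)^4 \leq \gmax\bM^2\|B\|^2/(\Im\eta)^4$. The conjugate-derivative term is handled identically, and combining with the standard factor-of-two loss from handling real and imaginary parts of the complex functional $f$ yields the claimed constant $8\gmax\bM^2\|B\|^2/(\Im\eta)^4$.

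The only mildly delicate point is bookkeeping of the Wirtinger conventions and the block structure when writing $\partial_{X_{ij}} Q$ explicitly; once the off-diagonal structure of $\partial \bH$ is used, every factor in the final product is directly controlled by the trivial resolvent bound $\|Q\| \leq 1/\Im\eta$ together with $\|n\cR\|\leq \bM$, so no further probabilistic input (e.g.\ concentration of $\|X\|$) is needed. The main obstacle is purely notational rather than conceptual.
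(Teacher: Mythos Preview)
Your approach is exactly the paper's: Poincar\'e--Nash applied to $f=\tr B Q_{uv}$, with the resolvent derivative bounded using $\|n\cR\|\le\bM$, $\|Q\|\le 1/\Im\eta$, and $\EE\|X\|_\HS^2\le N\bM$. However, your derivative computation is incomplete. The $(1,0)$ block of $\bH(\widehat R_L-z)$ is $XJ^{-L}X^*-\bar z$, which also depends holomorphically on $X$ (not just on $\bar X$). Hence
\[
\partial_{X_{ij}}\bH
=\begin{bmatrix} 0 & e_i e_j^\T J^L X^* \\ e_i e_j^\T J^{-L} X^* & 0\end{bmatrix},
\]
and correspondingly $\partial_{X_{ij}} Q_{uv}$ has \emph{two} terms, not one; this is precisely the content of the paper's formula~\eqref{deriv} (written there for $\partial/\partial\bar x_{ij}$). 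The missing term $-Q_{u1}\,e_i e_j^\T J^{-L}X^* Q_{0v}$ is structurally identical to the one you kept and is bounded the same way.

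This also explains the constant. There is no ``factor-of-two loss from handling real and imaginary parts'': the Poincar\'e--Nash inequality you wrote already applies to the complex functional $f$ directly. The factor $8$ arises as $2\times 2\times 2$: the two derivative terms are combined via $|a+b|^2\le 2|a|^2+2|b|^2$ (factor $2$), each of the two resulting pieces is bounded by $\gmax\bM^2\|B\|^2/(\Im\eta)^4$ (factor $2$), and finally the $\nabla_X$ and $\nabla_{\bar X}$ contributions to Poincar\'e--Nash are added (factor $2$). With the corrected derivative your argument goes through verbatim and matches the paper's proof.
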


The proof of this proposition will be based on the well-known Poincaré-Nash
(PN) inequality \cite{cha-bos-04}, \cite{pas-05}, which is also a particular
case of the Brascamp-Lieb inequality.  Let $\bv = [ v_0, \ldots, v_{m-1} ]^\T$
be a complex Gaussian random vector with $\EE \bv = 0$, $\EE \bv \bv^\T = 0$,
and $\EE [ \bv \bv^* ] = \Sigma$. Let $\varphi = \varphi(v_0, \ldots, v_{m-1},
\bar v_0, \ldots, \bar v_{m-1} )$ be a $C^1$ complex function which is
polynomially bounded together with its derivatives. Then, writing 
\[
\nabla_{\bv}\varphi = [ \partial\varphi / \partial v_0, \ldots, 
 \partial\varphi / \partial v_{m-1} ]^\T \ \ \text{and}\ \ 
 \nabla_{\bar{\bv}}\varphi = [ \partial\varphi / 
 \partial \bar v_0, \ldots, \partial\varphi / \partial \bar v_{m-1} ]^\T,
\] 
the \textit{PN inequality} is 
\begin{equation}
\label{np} 
 \var\left( {\varphi}(\bv) \right) \leq 
 \EE \left[ \nabla_{\bv} \varphi( {\bv} )^T \ \Sigma \ 
 \overline{\nabla_{\bv} \varphi( {\bv} )} 
 \right] 
 +
 \EE \left[ \left( \nabla_{\bar{\bv}}  \varphi( \bv ) \right)^* 
 \ \Sigma \ \nabla_{\bar{\bv}} \varphi({\bv}) 
 \right].
\end{equation} 
If we rewrite $Q(z,\eta)$ as $Q(z,\eta) = Q^X$ to emphasize the
dependence of the resolvent on the matrix $X$, then, given
a matrix $\Delta \in \CC^{N\times n}$, the resolvent identity implies that
\begin{align*} 
& Q^{X+\Delta} - Q^X \\ 
 &= - Q^{X+\Delta} \left( (Q^{X+\Delta})^{-1} - 
  (Q^X)^{-1} \right) Q^X \\
 &= - Q^{X+\Delta} 
\begin{bmatrix} & (X+\Delta) J^L (X+\Delta)^* - X J^LX^* \\
(X+\Delta) J^{-L} (X+\Delta)^* - X J^{-L} X^* \end{bmatrix} Q^X . 
\end{align*} 
Using this equation, we can obtain the expression of $\partial a^* Q_{uv} b /
\partial \bar x_{ij}$, where $a,b\in\CC^N$, $u,v \in \{0,1\}$, $i\in [N]$, and
$j \in [n]$. Indeed, taking $\Delta = e_{N,i} e_{n,j}^*$ in the former
expression, we get after a simple derivation that
\begin{equation}
\label{deriv} 
\frac{\partial a^* Q_{uv} b}{\partial \bar x_{ij}}
 = - \begin{bmatrix} a^* Q_{u1} X J^{-L} \end{bmatrix}_{j} 
     \begin{bmatrix} Q_{0v} b \end{bmatrix}_i  
  - \begin{bmatrix} a^* Q_{u0} X J^{L} \end{bmatrix}_{j} 
     \begin{bmatrix} Q_{1v} b \end{bmatrix}_i  . 
\end{equation}

\begin{proof}[Proof of Proposition~\ref{np-Q}]   
We apply Inequality~\eqref{np} by respectively replacing $\bv$ and $\varphi$ with 
$\vect X$ and $\tr B Q_{uv}$ (seen as a function of $X$). 

Given $k,i\in [N]$, and $j\in [n]$, we have 
\[
\frac{\partial [ e_k^* B Q_{uv} e_k]}{\partial \bar{x}_{ij}} = 
 - \begin{bmatrix} B Q_{u1} X J^{-L} \end{bmatrix}_{kj} 
     \begin{bmatrix} Q_{0v} \end{bmatrix}_{ik}   
  - \begin{bmatrix} B Q_{u0} X J^{L} \end{bmatrix}_{kj} 
     \begin{bmatrix} Q_{1v} \end{bmatrix}_{ik} , 
\]
and thus, 
\[
\frac{\partial \tr B Q_{uv} }{\partial \bar{x}_{ij}} = 
 - \begin{bmatrix} Q_{0v} B Q_{u1} X J^{-L} \end{bmatrix}_{ij} 
  - \begin{bmatrix} Q_{1v} B Q_{u0} X J^{L} \end{bmatrix}_{ij} . 
\]
Let us focus on the second term at the right hand side of
Inequality~\eqref{np}. Observing that $\EE [x_{i_1 j_1} \bar x_{i_2 j_2} ] =
n^{-1} [R_{j_1 - j_2}]_{i_1, i_2}$, and recalling the expression of the
block-Toeplitz matrix $\cR$ given by Equation~\eqref{btoep}, we have 
\begin{align*}
& \sum_{i_1, i_2\in[N]} \sum_{j_1,j_2\in [n]} 
\EE \Bigl[\frac{\partial \overline{\tr BQ_{uv}}}{\partial \bar{x}_{i_1j_1}} 
\EE [x_{i_1 j_1} \bar x_{i_2 j_2} ] 
\frac{\partial \tr B Q_{uv}}{\partial \bar{x}_{i_2j_2}} \Bigr] \\
&\leq  
 2 \EE \vect( Q_{0v}BQ_{u1} X J^{-L} )^* \cR \vect( Q_{0v}BQ_{u1} X J^{-L} ) + 
 2 \EE \vect( Q_{1v}BQ_{u0} X J^{L} )^* \cR \vect( Q_{1v}BQ_{u0} X J^{L} ) \\
&\leq \frac{2 \bM}{n} \left( 
 \EE \| Q_{0v} B Q_{u1} X J^{-L} \|_\HS^2 + 
 \EE \| Q_{1v} B Q_{u0} X J^{L} \|_\HS^2 \right) \\
&\leq \frac{4 \bM \| B \|^2}{(\Im\eta)^4 n} \EE \| X \|_\HS^2 \\ 
&\leq \frac{4 \gmax \bM^2 \| B \|^2}{(\Im\eta)^4}.  
\end{align*} 
The first term at the right hand side of 
Inequality~\eqref{np} is treated similarly, leading to the bound given in
the statement, and the proof is complete.
\end{proof} 

In order to establish the convergence~\eqref{cvg-Q}, using Proposition~\ref{np-Q} and the Borel-Cantelli lemma, it will
be enough to show that 
\begin{equation}
\label{cvg-EQ} 
\forall\eta\in\CC_+, \quad 
\frac{1}{2N} \tr D^{(N)} \left(\EE Q^{(N)}(z,\eta) - G^{(N)}(z,\eta) \right) 
 \xrightarrow[N\to\infty]{} 0.  
\end{equation} 
Following the general canvas of \cite{pel-pel-16}, we approximate our process $\bs x^{(N)}$ by a Moving Average process with a
finite memory. We shall
construct from this MA process a resolvent that will be more easily manageable
than $Q^{(N)}$. 

With reference to Theorem~\ref{sys}, first consider a discrete analogue of the integral within the
expression of $\cF_{\Sigma,z}$. 
A straightforward adaptation of its proof yields the following proposition 
and we omit the details. 
\begin{proposition}\label{prop:Fsigma}
Let $\Sigma : \TT \to \cH_+^N$ be a continuous function, and let $z\in\CC$.
Then, the conclusions of Theorem~\ref{sys} remain true if the function
$\cF_{\Sigma,z}$ there is replaced with 
\begin{multline*} 
\ceF_{\Sigma,z}(M(\eta), \eta) = \\ 
\Bigl( \frac 1n \sum_{\ell\in[n]} 
\left( \mcT( (I_2\otimes \Sigma(e^{2\imath\pi\ell/n})) M(\eta) ) + 
 \begin{bmatrix} & e^{-2\imath\pi\ell L/n} \\
 e^{-2\imath\pi\ell L/n} \end{bmatrix} \right)^{-1} 
    \otimes \Sigma(e^{2\imath\pi\ell/n}) 
- \begin{bmatrix} \eta & z \\ \bar z & \eta \end{bmatrix} 
  \otimes I_N \Bigr)^{-1} . 
\end{multline*} 
\end{proposition}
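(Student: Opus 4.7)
The plan is to mimic the proof of Theorem~\ref{sys} step by step, replacing the continuous average $\frac{1}{2\pi}\int_0^{2\pi}(\cdot)\,d\theta$ by the discrete average $\frac{1}{n}\sum_{\ell\in[n]}(\cdot)$ evaluated at the $n$-th roots of unity. The key observation is that every argument in the proof of Theorem~\ref{sys} relies only on \emph{pointwise} properties in $\theta$ (analyticity in $\eta$, non-positivity of the imaginary part of $A(M(\eta),e^{\imath\theta})^{-1}$, and operator-norm bounds coming from Lemma~\ref{mct}), combined with the fact that the averaging is performed with positive weights of total mass one. Both the normalized Lebesgue measure on $[0,2\pi)$ and the uniform counting measure on the $n$-th roots of unity share these properties, so the arguments transfer essentially verbatim, and no new estimate is required.

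First, I would establish that $\ceF_{\Sigma,z}(M(\eta),\eta)\in\mathfrak S^{2N}$ as a function of $\eta$ whenever $M\in\mathfrak S^{2N}$. For each $\ell\in[n]$, set
\[
A_\ell(M(\eta)) \eqdef \mcT\bigl((I_2\otimes\Sigma(e^{2\imath\pi\ell/n}))M(\eta)\bigr)+U_L(e^{2\imath\pi\ell/n}).
\]
By Lemmas~\ref{mct} and~\ref{im>0}, $A_\ell(M(\eta))^{-1}$ exists, is holomorphic in $\eta\in\CC_+$, and satisfies $\Im A_\ell(M(\eta))^{-1}\leq 0$. Averaging these matrices (tensored with $\Sigma(e^{2\imath\pi\ell/n})$) with the positive weights $1/n$ preserves both holomorphy and the sign of the imaginary part. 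Subtracting $\begin{bmatrix}\eta & z\\\bar z & \eta\end{bmatrix}\otimes I_N$ and inverting (again via Lemma~\ref{im>0}), together with the elementary limit $-\imath t\,\ceF_{\Sigma,z}(M,\imath t)\to I_{2N}$ as $t\to\infty$, delivers the required Stieltjes-transform conclusion.

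For uniqueness in $\mathfrak S^{2N}$, I would repeat the contraction estimate on the same domain
\[
\cD=\Bigl\{\eta\in\CC_+:\Im\eta>4\|\Sigma\|_\infty\bigl(\tfrac{N}{n}\vee\sqrt{\tfrac{N}{n}}\bigr)\Bigr\},
\]
since the bound $\|A_\ell(P)^{-1}\|\leq 2$ on $\cD$ furnished by Lemma~\ref{mct} is uniform in $\ell$. The discrete analogue of inequality~\eqref{contr} then gives $\|P-P'\|\leq\tfrac12\|P-P'\|$ on $\cD$, whence $P=P'$ there; analytic continuation propagates this identity to all of $\CC_+$. For existence, the Banach fixed-point iteration starting from $P_0(z,\eta)=-\eta^{-1}I_{2N}$ converges on $\cD$, and the normal-family argument extends the limit holomorphically to $\CC_+$ while preserving the properties in Proposition~\ref{mv-st}.

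Finally, the symmetry of the matrix measures $\Lambda_{00}$ and $\Lambda_{11}$ is handled exactly as in Theorem~\ref{sys}: the substitution $\eta\mapsto-\eta$ coupled with a sign flip of the diagonal blocks of $P$ is checked by the same block-inversion identity, which is applied pointwise in $\ell$ inside the sum and therefore commutes with the discrete averaging; Lemma~\ref{nusym} then yields symmetry. I do not anticipate any serious obstacle in this adaptation; the only point to monitor is that the bound of Lemma~\ref{mct} and the ensuing $1/2$-contraction estimate hold uniformly in $\ell$, which is immediate because they depend only on $\Im\eta$ and $\|\Sigma\|_\infty$.
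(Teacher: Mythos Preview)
Your proposal is correct and follows precisely the approach the paper intends: the paper states that ``a straightforward adaptation of its proof yields the following proposition and we omit the details,'' and you have faithfully spelled out that adaptation by replacing the integral average with the discrete average and noting that all the pointwise estimates (Lemmas~\ref{im>0} and~\ref{mct}, the contraction on $\cD$, the block-inversion symmetry check) transfer verbatim.
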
 

Now, given an integer constant $K > 0$, let us define the function 
$\hS^{(N,K)}$ on $\TT$ as 
\[
\hS^{(N,K)}(e^{\imath\theta}) = \frac{1}{2\pi} \int_0^{2\pi} 
 F_K(e^{\imath(\theta-\psi)}) S^{(N)}(e^{\imath\psi}) \ d\psi , 
\]
where $F_K$ is the Fej\'er kernel (see Equation~\eqref{def-fej}).
This function has the following properties: 
\begin{enumerate}
\item\label{hS>0} By the non-negativity of the Fej\'er kernel, 
 $\hS^{(N,K)}$ is a spectral density.  
\item\label{hS-laurent} By replacing $F_K(e^{\imath(\theta-\psi)})$ 
with the first expression of this kernel provided by~\eqref{def-fej}, and
by developing the integrand above, we obtain that  
 $\hS^{(N,K)}$ is a Laurent trigonometric polynomial of the form 
 $\hS^{(N,K)}(e^{\imath\theta}) = \sum_{\ell=-K}^{K} 
 e^{\imath\ell\theta} \widetilde R_\ell^{(N,K)}$. 
\item\label{hS->S}
With Assumptions~\ref{prop-S}--\ref{S-equi} and~\ref{prop-S}--\ref{bnd-S}, we have  
\begin{equation}
\label{cvg-K} 
\sup_{N} \left\| \hS^{(N,K)} - S^{(N)} \right\|^\TT_\infty
 \xrightarrow[K\to\infty]{} 0 .
\end{equation} 
Relation (\ref{cvg-K}) can be established by splitting the integral that defines 
$\hS^{(N,K)}(e^{\imath\theta})$ into two pieces as $\int_0^{2\pi} =
\int_{\psi:|\theta-\psi|\leq \delta} + \int_{\psi:|\theta-\psi| > \delta}$ for
a properly chosen $\delta > 0$, and by using the properties of the Fej\'er
kernel provided after Equation~\eqref{def-fej}.  
\end{enumerate}
Consider the implicit 
equation 
\[
\hG^{(N,K)}(z,\eta) = \ceF_{\hS^{(N,K)},z}(\hG^{(N,K)}(z,\eta),\eta) \ \ \text{(in} \ \ \mathfrak S^{2N}).  
\]
From Proposition \ref{prop:Fsigma}, the solution $\hG^{(N,K)}(z,\cdot)$ exists and is unique.
The following three propositions will be proved in Section~\ref{prf-svd}. 


\begin{proposition}
\label{hG-G} 
For each $\eta$ such that $\Im\eta \geq C_{\ref{hG-G}}$ with 
$C_{\ref{hG-G}} = C_{\ref{hG-G}}(\gmax, \bM) > 0$, 
\[
\limsup_N \| \hG^{(N,K)}(z,\eta) - G^{(N)}(z,\eta) \| 
 \xrightarrow[K\to\infty]{} 0. 
\] 
\end{proposition}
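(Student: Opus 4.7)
The plan is to introduce an intermediate object and split the bound into a density-smoothing part and a discretization part. Let $\tG^{(N,K)}(z,\eta)$ denote the unique element of $\mathfrak S^{2N}$ satisfying $\tG^{(N,K)}=\cF_{\hS^{(N,K)},z}(\tG^{(N,K)},\eta)$; existence and uniqueness are guaranteed by Theorem~\ref{sys} since $\hS^{(N,K)}$ is continuous on $\TT$. The triangle inequality
\[
\| \hG^{(N,K)}-G^{(N)} \| \leq \| G^{(N)}-\tG^{(N,K)} \| + \| \tG^{(N,K)}-\hG^{(N,K)} \|
\]
then reduces the task to controlling two separate differences.

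For the first difference, I will fix $C_{\ref{hG-G}}$, depending on $\gmax$ and $\bM$, such that for $\Im\eta \geq C_{\ref{hG-G}}$ all three maps $\cF_{S^{(N)},z}$, $\cF_{\hS^{(N,K)},z}$ and $\ceF_{\hS^{(N,K)},z}$ are $\tfrac12$-contractions on the closed operator-norm ball of radius $1/\Im\eta$ in $\mathfrak S^{2N}$. The argument is exactly the one leading to \eqref{contr}, using Lemma~\ref{mct} together with the uniform bound $\sup_N \| \hS^{(N,K)}\|_\infty^\TT \leq \bM$, itself a consequence of the non-negativity and unit mass of the Fejér kernel (so $\hS^{(N,K)}(e^{\imath\theta})$ is a convex combination of the $S^{(N)}(e^{\imath\psi})$). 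Writing $G-\tG=[\cF_{S^{(N)}}(G)-\cF_{\hS^{(N,K)}}(G)]+[\cF_{\hS^{(N,K)}}(G)-\cF_{\hS^{(N,K)}}(\tG)]$ and applying contractivity to the second bracket gives
\[
\|G^{(N)}-\tG^{(N,K)}\|\leq 2\,\|\cF_{S^{(N)},z}(G^{(N)},\eta)-\cF_{\hS^{(N,K)},z}(G^{(N)},\eta)\|.
\]
Expanding the definition of $\cF$ and applying Lemma~\ref{mct} to both the outer and inner inverses, the right-hand side is bounded by a constant depending only on $\gmax$, $\bM$, $|z|$ and $\Im\eta$ times $\sup_N\|\hS^{(N,K)}-S^{(N)}\|_\infty^\TT$, which by property (\ref{hS->S}) tends to $0$ as $K\to\infty$ uniformly in $N$. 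Denote this uniform bound by $\varepsilon_K$.

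The second difference is handled by the same contraction trick: $\|\tG^{(N,K)}-\hG^{(N,K)}\|\leq 2\,\|\cF_{\hS^{(N,K)},z}(\hG^{(N,K)},\eta)-\ceF_{\hS^{(N,K)},z}(\hG^{(N,K)},\eta)\|$, which is the Riemann sum approximation error, on the equispaced grid $\{2\pi\ell/n\}_{\ell\in[n]}$, of the $2\pi$-periodic integrand $A(\hG^{(N,K)},e^{\imath\theta})^{-1}\otimes \hS^{(N,K)}(e^{\imath\theta})$ used in the definition of $\cF_{\hS^{(N,K)},z}$ at the point $\hG^{(N,K)}$. By property (\ref{hS-laurent}), $\hS^{(N,K)}$ is a Laurent polynomial of degree $K$ whose coefficients $\widetilde R_\ell^{(N,K)}$ are bounded by $\bM$ uniformly in $N$, so its $\theta$-derivative is $O(K\bM)$; the integrand is therefore Lipschitz in $\theta$ with constant depending on $K$, $\gmax$, $\bM$ and $\Im\eta$, but crucially not on $N$. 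The standard periodic rectangular rule bound then gives $\|\tG^{(N,K)}-\hG^{(N,K)}\|=O_K(1/n)$, which vanishes as $N\to\infty$ for each fixed $K$. Combining the two estimates, $\limsup_N\|\hG^{(N,K)}-G^{(N)}\|\leq\varepsilon_K\to 0$ as $K\to\infty$.

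The main technical work is in the first step, namely establishing a Lipschitz-in-$\Sigma$ bound for $\cF_{\Sigma,z}(M,\eta)$ with respect to the sup-norm on $\TT$, uniformly in $N$; this requires expanding the outer inverse, controlling the inner inverse $A(M,e^{\imath\theta})^{-1}$, and tracking both occurrences of $\Sigma$. Choosing $\Im\eta$ sufficiently large keeps all matrix inverses spectrally bounded by a constant of order one, which is precisely why $C_{\ref{hG-G}}$ must be chosen depending on $\gmax$ and $\bM$. Once this is in hand, the discretization step becomes routine, modulo the uniform-in-$N$ derivative bound on $\hS^{(N,K)}$ obtained by expressing its Fourier coefficients directly in terms of $S^{(N)}$.
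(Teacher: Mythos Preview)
Your proof is correct and rests on the same three ingredients as the paper: contractivity of the fixed-point maps for $\Im\eta$ large (as in~\eqref{contr}), the Fej\'er smoothing estimate $\sup_N\|\hS^{(N,K)}-S^{(N)}\|_\infty^\TT\to 0$, and a discretization error that vanishes as $n\to\infty$ for fixed $K$. The only difference is organizational: the paper rewrites $\ceF_{\hS^{(N,K)},z}$ as $\cF_{\bar S^{(N,K)},z,\bar U_L}$ with stepwise functions $\bar S^{(N,K)},\bar U_L$ and expands $G^{(N)}-\hG^{(N,K)}$ in a single telescoping (splitting into terms $\chi_1,\chi_2,\chi_3$), whereas you triangulate through the intermediate fixed point $\tG^{(N,K)}$ of $\cF_{\hS^{(N,K)},z}$; the resulting bounds are equivalent. (One minor inaccuracy: the $\theta$-derivative of $\hS^{(N,K)}$ is $O(K^2\bM)$ rather than $O(K\bM)$, but this is irrelevant to the argument.)
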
 

Let $(\hat x^{(N,K)}_k)_{k\in\ZZ}$ be a $\CC^N$--valued 
stationary centered Gaussian process with the spectral density 
$n^{-1} \hS^{(N,K)}$. Define  
$$\hX^{(N,K)} = \begin{bmatrix} \hat x^{(N,K)}_0 & 
 \cdots & \hat x^{(N,K)}_{n-1} \end{bmatrix} \ \ \text{and}\ \ 
\hQ^{(N,K)}(z,\eta) = (\bH(\hX^{(N,K)} J^L (\hX^{(N,K)})^* - z) 
 - \eta I)^{-1}.$$ We then have the following proposition. 
\begin{proposition}
\label{hQ-hG} 
Fix $K > 0$.
Then, for an arbitrary deterministic matrix $D^{(N)} \in
\CC^{2N\times 2N}$ with $\| D^{(N)} \| = 1$,  we have   
\[
\frac 1N \left| \tr D^{(N)} 
  \left( \EE \hQ^{(N,K)}(z,\eta) - \hG^{(N,K)}(z,\eta) \right) \right| 
 \leq \frac{C K}{\sqrt{N}}, 
\]
where $C > 0$ depends only on $\eta, \bM$, and $\gmax$.  
\end{proposition}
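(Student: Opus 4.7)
The plan is to derive, via the Gaussian integration by parts (Stein) formula, an approximate fixed-point equation satisfied by $\EE \hQ^{(N,K)}$ and then compare with the exact equation $\hG^{(N,K)} = \ceF_{\hS^{(N,K)},z}(\hG^{(N,K)},\eta)$ from Proposition~\ref{prop:Fsigma}, exploiting the contraction property already established in the course of the proof of Theorem~\ref{sys} (see Equation~\eqref{contr}).

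First, I pass to the Fourier domain: writing $\hY^{(N,K)} = \hX^{(N,K)} \sF$, we have $\widehat R_L = \hY^{(N,K)} \Omega^L (\hY^{(N,K)})^*$, and the columns $\hat y_k^{(N,K)}$ form a jointly complex Gaussian family whose cross-covariances, computed as in Lemma~\ref{cov-y}, satisfy $n\EE \hat y_k^{(N,K)} (\hat y_j^{(N,K)})^* = \delta_{k=j} \hS^{(N,K)}(e^{2\imath\pi k/n}) + E_{k,j}$, where the off-diagonal errors $E_{k,j}$ are banded: they are supported on $|k-j| \leq K$ (mod $n$) because $\hS^{(N,K)}$ is a Laurent polynomial of degree $K$ by Property~\eqref{hS-laurent}. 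This banded structure is the mechanism through which the memory $K$ enters quantitatively.

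Next, using the derivative formula~\eqref{deriv}, I apply Gaussian IPP to each entry of $\EE \hQ^{(N,K)}$. The diagonal ($k=j$) contributions reassemble into the discretized operator $\ceF_{\hS^{(N,K)},z}$ evaluated at $\EE\hQ^{(N,K)}$, after the standard replacement of random traces $n^{-1}\tr B\hQ_{uv}$ by their expectations; each such replacement is permissible up to an $O(N^{-1/2})$ fluctuation, controlled by the variance bound of Proposition~\ref{np-Q} (which extends verbatim to the MA$(K)$ process by~\eqref{cvg-K} and Assumption~\ref{prop-S}--\ref{bnd-S}). The off-diagonal $(k\neq j)$ contributions, of which there are $O(K)$ per mode $k$ by the banded support of $E_{k,j}$, each produce a term of size $O(1/(nN))$ after applying a trace bound together with $\|\hQ^{(N,K)}\|\leq 1/\Im\eta$. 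Summing over the $n$ Fourier modes and collecting both sources of error yields an approximate identity
\[
\EE\hQ^{(N,K)}(z,\eta) = \ceF_{\hS^{(N,K)},z}(\EE\hQ^{(N,K)}(z,\eta),\eta) + R^{(N,K)}(\eta),
\]
with $\|R^{(N,K)}(\eta)\| \leq C(\eta,\bM,\gmax)\,K/\sqrt{N}$.

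Subtracting the exact equation for $\hG^{(N,K)}$ from this approximate equation and invoking the contraction estimate~\eqref{contr}—valid on the domain $\cD$ where $\Im\eta$ exceeds a constant depending on $\bM$ and $\gmax$—transfers the bound to $\|\EE\hQ^{(N,K)} - \hG^{(N,K)}\|$, and tracing against any $D^{(N)}$ with $\|D^{(N)}\|=1$ gives the stated inequality; the restriction to large $\Im\eta$ is then removed by the normal-family/analytic continuation argument already used for Theorem~\ref{sys}. The main obstacle is the careful bookkeeping in the IPP step: one must verify that the diagonal Stein contribution assembles \emph{exactly} into $\ceF_{\hS^{(N,K)},z}$ so that a genuine fixed-point comparison is possible, and simultaneously that the $O(K)$ off-diagonal corrections per mode—whose total count grows with $K$—do not compound into a worse-than-linear dependence on $K$. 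The linearity in $K$ relies on the fact that each banded cross-covariance contributes only at the $1/n$ scale before the $\sqrt{N}$ concentration smoothing, so their collective contribution scales as $K \cdot n \cdot (1/(nN))^{1/2} \sim K/\sqrt{N}$.
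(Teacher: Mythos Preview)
Your high-level strategy (Gaussian integration by parts, then comparison with the fixed point via the contraction~\eqref{contr}) is the same as the paper's, but the key structural claim on which your bookkeeping rests is false. You assert that the Fourier-domain cross-covariances $E_{k,j}=n\EE\hat y_k^{(N,K)}(\hat y_j^{(N,K)})^* - \delta_{k=j}\hS^{(N,K)}(e^{2\imath\pi k/n})$ are supported on $|k-j|\leq K$ (mod $n$) ``because $\hS^{(N,K)}$ is a Laurent polynomial of degree $K$''. This confuses time and frequency: the MA($K$) structure makes the \emph{time-domain} autocovariance $\hat R_m$ vanish for $|m|>K$, i.e.\ the block-Toeplitz matrix $\hat\cR$ is banded; but after conjugation by $\sF\otimes I_N$ the result is \emph{approximately diagonal}, not banded. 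Concretely, for $k\neq j$ one has $E_{k,j}=\sum_{|m|\leq K}\hat R_m\cdot n^{-1}\sum_{p-q=m}e^{2\imath\pi(kp-jq)/n}$, and the inner sum differs from the full (circular) sum---which would vanish by orthogonality---by at most $|m|$ boundary terms. Hence $\|E_{k,j}\|\leq CK^2/n$ uniformly in $(k,j)$, but $E_{k,j}$ is generically nonzero for \emph{all} pairs $(k,j)$. Your ``$O(K)$ off-diagonal corrections per mode'' count and the scaling $K\cdot n\cdot(1/(nN))^{1/2}$ that follows from it are therefore unjustified.

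The paper sidesteps this entirely by replacing the linear convolution $\hat x_k^{(N,K)}=\sum_\ell B_\ell\bs\xi_{k-\ell}$ with the \emph{circular} convolution $\tilde x_k^{(N,K)}=\sum_\ell B_\ell\bs\xi_{(k-\ell)\bmod n}$. Then the Fourier columns $w_k=(\tX\sF)_{\cdot,k}$ are \emph{exactly} independent with $w_k\sim\cN_\CC(0,n^{-1}\hS^{(N,K)}(e^{2\imath\pi k/n}))$, so the IPP on $\tQ$ involves only the diagonal covariances and yields $\|\EE\tQ-\hG\|\leq C/\sqrt N$ with no $K$-dependence at all. The price is that $\rank(\hX-\tX)\leq K$, hence $|\tr D(\hQ-\tQ)|\leq 4K\|D\|/\Im\eta$ by the standard finite-rank resolvent bound; this is where the factor $K$ in $CK/\sqrt N$ actually comes from. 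If you want to salvage your direct approach, the correct structural input is not bandedness but the fact that the matrix $[E_{k,j}]_{k,j}$ has block-rank $O(K)$ (it is the Fourier conjugate of the difference between a banded Toeplitz and its circulant completion); exploiting this is essentially the circular-convolution trick in disguise.
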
 

We note here that the bound provided in the statement of this proposition is
not optimal but is good enough for our purpose. 

\begin{proposition}
\label{Q-hQ} 
With $D^{(N)}$ as in the previous proposition, 
\[
\limsup_N \frac 1N 
 \left| \tr D^{(N)} \left( \EE Q^{(N)}(z, \eta) - \EE \hQ^{(N,K)}(z, \eta) 
  \right)\right| \xrightarrow[K\to\infty]{} 0 .
\] 
\end{proposition}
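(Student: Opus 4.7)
The plan is to couple $X^{(N)}$ and $\hat X^{(N,K)}$ on a single probability space so that $X^{(N)} - \hat X^{(N,K)}$ is small in expected Hilbert--Schmidt norm, and then to transport this closeness to the resolvents via the resolvent identity. Concretely, I would take $\zeta \sim \cN_\CC(0, I_{Nn})$ and set $\vect X^{(N)} = \cR^{1/2} \zeta$ and $\vect \hat X^{(N,K)} = (\hat\cR^{(K)})^{1/2} \zeta$, where $\cR$ is the block-Toeplitz covariance matrix of \eqref{btoep} built from $S^{(N)}$ and $\hat\cR^{(K)}$ is its analogue for $\hS^{(N,K)}$ (so the prescribed marginals are preserved). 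The integral representation of $n\cR$ together with $(2\pi)^{-1}\int \ssa(e^{\imath\theta})\ssa(e^{\imath\theta})^* d\theta = I_n$ yields
\[
\|\cR - \hat\cR^{(K)}\| \;\leq\; \frac{\delta_K}{n}, \qquad \delta_K := \sup_N \|S^{(N)} - \hS^{(N,K)}\|_\infty^\TT \xrightarrow[K\to\infty]{} 0
\]
by property~\eqref{cvg-K}, while $\|n\hat\cR^{(K)}\| \leq \bM$ since $\|\hS^{(N,K)}\|_\infty^\TT \leq \bM$ (the Fej\'er kernel is a positive approximate identity).

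The key quantitative step is then the covariance estimate
\[
\EE\,\|X^{(N)} - \hat X^{(N,K)}\|_\HS^2
\;=\; \tr\bigl(\cR^{1/2} - (\hat\cR^{(K)})^{1/2}\bigr)^2
\;\leq\; Nn \cdot \|\cR - \hat\cR^{(K)}\|
\;\leq\; N\delta_K,
\]
where I use the operator H\"older continuity $\|\sqrt A - \sqrt B\|^2 \leq \|A - B\|$ for positive operators (equivalently, Powers--Stormer combined with $\|\cdot\|_1 \leq Nn\|\cdot\|$). Writing $E = (X-\hat X)J^L X^* + \hat X J^L (X-\hat X)^*$ so that $Q^{-1}-\hQ^{-1} = \bH(E)$, the resolvent identity $Q - \hQ = -Q\,\bH(E)\,\hQ$, the bounds $\|Q\|,\|\hQ\| \leq 1/\Im\eta$, the identity $\|\bH(E)\|_\HS = \sqrt{2}\,\|E\|_\HS$, and the Cauchy--Schwarz inequality $|\tr D\,M| \leq \|D\|_\HS\|M\|_\HS \leq \sqrt{2N}\,\|M\|_\HS$ combine to give
\[
\bigl|\tr D^{(N)}(Q^{(N)} - \hQ^{(N,K)})\bigr|
\;\leq\; \frac{2\sqrt{N}}{(\Im\eta)^2}\bigl(\|X\|+\|\hat X\|\bigr)\,\|X - \hat X\|_\HS.
\]

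It remains to remove the random factor $\|X\|+\|\hat X\|$. Restricting to the event $\Eop(C) = \bigl[\|X\|\vee\|\hat X\| \leq C\bigr]$ for $C$ large enough, Lemma~\ref{spec_norm} applies to both $X$ and $\hat X$ (since $\|n\cR\|$ and $\|n\hat\cR^{(K)}\|$ are each bounded by $\bM$) and gives $\PP[\Eop(C)^c] \leq 2e^{-c'N}$. On $\Eop(C)^c$ the deterministic bound $|\tr D(Q-\hQ)| \leq 4N/\Im\eta$ combined with this exponentially small probability contributes $o(1)$ to $N^{-1}\EE|\cdot|$, while on $\Eop(C)$ Jensen's inequality together with Cauchy--Schwarz yield
\[
\frac{1}{N}\bigl|\tr D^{(N)}(\EE Q^{(N)} - \EE \hQ^{(N,K)})\bigr|
\;\leq\;\frac{4C}{(\Im\eta)^2\sqrt{N}}\sqrt{\EE\|X-\hat X\|_\HS^2}+o(1)
\;\leq\;\frac{4C\sqrt{\delta_K}}{(\Im\eta)^2}+o(1),
\]
so taking $\limsup_N$ and then $K\to\infty$ closes the argument. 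The step that needs the most care is the covariance bound $\EE\|X-\hat X\|_\HS^2 = O(N\delta_K)$: the $\sqrt{2N}$ loss from $\|D\|_\HS$ is exactly compensated by the implicit $\sqrt{N}$ in this estimate, so any slack would produce a residual $O(1)$ term and destroy the vanishing rate in $K$; using the $1/n$ normalization built into $\cR$ together with the operator H\"older continuity of the square root is essential.
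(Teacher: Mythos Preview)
Your argument is correct and follows the same overall strategy as the paper: couple $X^{(N)}$ and $\hat X^{(N,K)}$ on a common probability space, apply the resolvent identity, and handle the random factor $\|X\|+\|\hat X\|$ by truncating on an event of exponentially small complement.

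The one genuine difference is the choice of coupling. The paper couples \emph{spectrally}, feeding the same i.i.d.\ innovation process $(\bs\xi^{(N)}_k)_k$ through the two filters $n^{-1/2}S^{(N)}(e^{\imath\theta})^{1/2}$ and $n^{-1/2}\hS^{(N,K)}(e^{\imath\theta})^{1/2}$; this gives
\[
\EE\|x_k-\hat x_k\|^2=\frac{1}{2\pi n}\int_0^{2\pi}\bigl\|S^{(N)}(e^{\imath\theta})^{1/2}-\hS^{(N,K)}(e^{\imath\theta})^{1/2}\bigr\|_\HS^2\,d\theta,
\]
and the vanishing follows from~\eqref{cvg-K} together with the pointwise operator H\"older bound on the $N\times N$ square root. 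You couple \emph{at the level of the full $Nn$--dimensional covariance}, setting $\vect X=\cR^{1/2}\zeta$ and $\vect\hat X=(\hat\cR^{(K)})^{1/2}\zeta$, and invoke Powers--St{\o}rmer (or the operator H\"older bound on $\cR$) to get $\EE\|X-\hat X\|_\HS^2\leq N\delta_K$. Both couplings yield the same $O(N\delta_K)$ estimate; yours is more abstract and avoids any reference to the time-series structure, while the paper's is the natural Wold-type construction. One small wording point: the operator-norm inequality $\|\sqrt A-\sqrt B\|^2\le\|A-B\|$ and the Powers--St{\o}rmer route $\|\sqrt A-\sqrt B\|_\HS^2\le\|A-B\|_1\le Nn\|A-B\|$ are two distinct paths to the same bound rather than ``equivalent'', but either one suffices.
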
 

\subsubsection*{Theorem~\ref{eqdet}: end of the proof.} 
We write 
\begin{align*} 
&\frac 1N \tr D^{(N)} (\EE Q^{(N)} - G^{(N)}) \\
 &= \frac 1N \tr D^{(N)} (\EE Q^{(N)} - \EE \hQ^{N,K} ) + 
\frac 1N \tr D^{(N)} (\EE \hQ^{N,K} - \hG^{(N,K)}) \\
&\phantom{=} + 
\frac 1N \tr D^{(N)} (\hG^{(N,K)} - G^{(N)})  \\
 &\eqdef \chi_1(N,K) + \chi_2(N,K) + \chi_3(N,K) .
\end{align*} 
Fix an arbitrarily small $\varepsilon > 0$. Let $K_0, N_0 > 0$ be such that,
by Propositions~\ref{Q-hQ} and~\ref{hG-G}, 
\[
|\chi_1(N,K_0)|, |\chi_3(N,K_0)| \leq \varepsilon \quad 
 \text{for all} \ N \geq N_0 \ \text{and} \ \Im\eta > C_{\ref{hG-G}}. 
\]
By Proposition~\ref{hQ-hG}, $\chi_2(N,K_0) \to_{N\to\infty} 0$. Thus, 
$(2N)^{-1} \tr D^{(N)} (\EE Q^{(N)} - G^{(N)}) \to_N 0$ first for 
$\Im\eta > C_{\ref{hG-G}}$, and hence for each $\eta\in\CC_+$ by 
analyticity. Thus~\eqref{cvg-EQ} is established. This completes the proof of
Theorem~\ref{eqdet}.

\subsection{Remaining proofs for Section~\ref{prf-eqdet}} 
\label{prf-svd} 

\subsubsection*{Proof of Proposition~\ref{hG-G}} 

Let $\bar S^{(N,K)}(e^{\imath\theta}) = \hS^{(N,K)}(e^{2\imath\pi k / n})$ and
$\bar U_L(e^{\imath\theta}) = U_L(e^{2\imath\pi k / n})$ for $\theta\in [2\pi
k/n, 2\pi(k+1)/n)$ be the respective stepwise continuous versions of the
functions $\hS^{(N,K)}$ and $U_L$ with step size $2\pi /n$. 
Within this proof, we re-denote the function $\cF_{\Sigma,z}$ defined in the
statement of Theorem~\ref{sys} as $\cF_{\Sigma,z,U_L}$ to stress the 
dependence on $U_L$. With this notation, it is obvious that 
$\ceF_{\hS^{(N,K)},z} = \cF_{\bar S^{(N,K)},z,\bar U_L}$. 

In the rest of the proof, we often drop the superscripts $^{(N)}$ and $^{(N,K)}$ and the
subscript $_L$ for brevity.  Given $M\in \mathfrak S^{2N}$, put
$A_{\bS,\bU}(M,e^{\imath\theta}) \eqdef \mcT( (I_2\otimes
\bS(e^{\imath\theta})) M ) + \bU(e^{\imath\theta})$, where $(\bS,\bU) = (S,U)$
or $(\bar S,\bar U)$. Write $B_{\bS,\bU}(M) = (2\pi)^{-1} \int_0^{2\pi}
A_{\bS,\bU}(M,e^{\imath\theta})^{-1} \otimes \bS(e^{\imath\theta}) d\theta$. 

We also assume that $K$ is large enough so that 
\[
\sup_N \| \bar S \|^\TT_\infty \leq 2 \bM .
\]
By dropping the unnecessary parameters from the notations, we write 
\begin{align*} 
&G - \hG \\
 &= \cF_{S,U}(G) - \cF_{\bar S,\bar U}(\hG) = 
 \cF_{S,U}(G)\left( B_{\bar S,\bar U}(\hG) - B_{S,U}(G)\right) 
               \cF_{\bar S,\bar U}(\hG) \\
&= \cF_{S,U}(G)\left( \frac{1}{2\pi}\int_0^{2\pi} 
 \left(A_{\bar S,\bar U}(\hG, e^{\imath\theta})^{-1} \otimes 
 \bar S(e^{\imath\theta}) 
 - A_{S,U}(G, e^{\imath\theta})^{-1} \otimes S(e^{\imath\theta})\right) 
  d \theta  \right) \cF_{\bar S,\bar U}(\hG) \\
&= \cF_{S,U}(G)\left( \frac{1}{2\pi}\int 
 \left(A_{\bar S,\bar U}(\hG)^{-1} \otimes \bar S - A_{S,U}(\hG)^{-1} \otimes S
 \right.\right. \\
&\phantom{=} 
 \quad\quad\quad\quad\quad\quad\quad\quad
\left.\left. 
 + (A_{S,U}(\hG)^{-1} - A_{S,U}(G)^{-1} ) \otimes S\right) 
  d \theta  \right) \cF_{\bar S,\bar U}(\hG) \\
&= \cF_{S,U}(G)\left( \frac{1}{2\pi}\int 
 \left(A_{\bar S,\bar U}(\hG)^{-1} \otimes (\bar S-S) 
 + ( A_{\bar S,\bar U}(\hG)^{-1} - A_{S,U}(\hG)^{-1}) \otimes S \right.\right. \\
&\phantom{=} 
 \quad\quad\quad\quad\quad\quad\quad\quad
\left.\left. + (A_{S,U}(\hG)^{-1} - A_{S,U}(G)^{-1} ) \otimes S\right) 
  d \theta  \right) \cF_{{\bar S,\bar U}}(\hG) \\
&\eqdef \cF_{{S,U}}(G)\left( 
\frac{1}{2\pi}\int ( \chi_1 + \chi_2 + \chi_3 ) d\theta \right) 
  \cF_{{\bar S,\bar U}}(\hG) .  
\end{align*} 
By Lemma~\ref{mct}, we have that $\| \mcT( (I_2\otimes \bS(e^{\imath\theta}))
\bG ) \| \leq 2 \gmax \bM / \Im\eta$ for any of the possibilities for $\bS$ and
for $\bG = G, \hG$.  Thus, for 
\[
2 \gmax \bM / \Im\eta \leq 1/2, 
\]
we have  $\| A_{\bS}(\bG, e^{\imath\theta})^{-1} \| \leq 2$. 
Therefore,
\[
\| \chi_1 \| \leq 2 \| \bar S - S \|^\TT_\infty . 
\]
Moreover, 
\begin{align*} 
\chi_2 &= ( A_{\bar S,\bar U}(\hG)^{-1} - A_{S,U}(\hG)^{-1}) \otimes S \\
 &= \left( A_{\bar S,\bar U}(\hG)^{-1} \left( 
 \mcT( (I_2\otimes (\bar S - S)) \hG )  + U - \bar U \right) 
 A_{S,U}(\hG)^{-1} \right) \otimes S 
\end{align*} 
satisfies 
\[
\| \chi_2 \| \leq 4 \bM \gmax (\Im\eta)^{-1} \| \bar S - S \|^\TT_\infty 
 + 4 \bM \| U - \bar U \| 
\]
for the same values of $\eta$. 
By mimicking the calculation that lead to Inequality~\eqref{contr}, we obtain
\[
\| \chi_3 \| \leq 
\frac{4}{(\Im\eta)^2} \gmax \bM^2 \| G - \hG \| . 
\]
Using the inequality $\| \cF_{\bS}(\bG) \| \leq 1/(\Im\eta)$, we thus arrive at 
\begin{multline*} 
\left( 1 - \frac{4\gmax \bM^2}{(\Im\eta)^4}    \right) 
\| G^{(N)} - \hG^{(N,K)} \| \\
 \leq \frac{2}{(\Im\eta)^2} 
   \left( 1 + \frac{4\gmax\bM}{\Im\eta}\right) 
   \| \bar S^{(N,K)}-S^{(N)} \|^\TT_\infty
 + \frac{4\bs M}{(\Im\eta)^2} \| \bar U_L - U_L \|^\TT_\infty . 
\end{multline*} 
Thus, if $\Im\eta > (8\gmax \bM^2)^{1/4} \vee (4\gmax \bs M)$, then 
\[
\| G^{(N)} - \hG^{(N,K)} \| \leq C \| \bar S^{(N,K)}-S^{(N)} \|^\TT_\infty
 + C' \| \bar U_L - U_L \|^\TT_\infty 
\]
for some constants $C, C' > 0$. Now the proof can be completed by using the
convergence~\eqref{cvg-K}.

\subsubsection*{Proof of Proposition~\ref{hQ-hG}} 

From now on, $C$ is a positive constant that depends on $\eta$, $\bM$, and $\gmax$ at most, and can change from line to
line. Recalling Properties~\ref{hS>0} and \ref{hS-laurent} of the density
$\hS^{(N,K)}$ that were stated in Section~\ref{prf-eqdet} above, our first step
is to apply the well-known operator version of the Fej\'er-Riesz theorem 
(see \cite[Sec.~6.6]{ros-rov-livre85}) to $\hS^{(N,K)}$. This implies that for each 
$(N,K)$, there exists an $N\times N$ matrix trigonometric polynomial 
\[
P^{(N,K)}(e^{\imath\theta}) = 
  \sum_{\ell=0}^{K} e^{\imath\ell\theta} B_\ell^{(N,K)} 
\]
such that $\hS^{(N,K)}(e^{\imath\theta}) = P^{(N,K)}(e^{\imath\theta})
P^{(N,K)}(e^{\imath\theta})^*$. 

Letting $\bs\xi^{(N)} = (\bs\xi^{(N)}_{k})_{k\in\ZZ}$ be an i.i.d.~process with
$\bs \xi^{(N)}_{k} \sim \cN_\CC(0, I_N)$, the process 
$(\hat x^{(N,K)}_k)_{k\in\ZZ}$ that we used to construct the resolvent
$\hQ^{(N,K)}$ can be defined as 
\[
\hat x^{(N,K)}_k = \frac{1}{\sqrt{n}} 
  \sum_{\ell=0}^{K} B_\ell^{(N,K)} \bs\xi^{(N)}_{k-\ell}. 
\] 
Define the finite sequence of random vectors 
$(\tilde x^{(N,K)}_k)_{k\in[n]}$ as 
\[
\tilde x^{(N,K)}_k = \frac{1}{\sqrt{n}} 
  \sum_{\ell=0}^{K} B_\ell^{(N,K)} \bs\xi^{(N)}_{(k-\ell)\!\! \mod n} 
\]
(thus, $\tilde x^{(N,K)}_k$ is the analogue of $\hat x^{(N,K)}_k$ obtained
through a circular convolution). Define 
\[
\tX^{(N,K)} = \begin{bmatrix} \tilde x^{(N,K)}_0 & 
 \cdots & \tilde x^{(N,K)}_{n-1} \end{bmatrix} \in \CC^{N\times n} , 
\]
and
\[
\tQ^{(N,K)}(z,\eta) = 
 (\bH(\tX^{(N,K)} J^L (\tX^{(N,K)})^* - z) - \eta I)^{-1}.
\]
Observing that $\rank(\hX-\tX) \leq K$, we get  
$$\rank(\bH(\hX J^L \hX^* - z)- \bH(\tX J^L \tX^* - z)) \leq 4K.$$
Thus, 
for each matrix $D \in \CC^{2N\times 2N}$, the inequality 
\begin{equation}
\label{fin-rk} 
 \left| \tr D (\hQ(z,\eta) - \tQ(z,\eta) ) \right| 
 \leq \frac{4 K\| D \|}{\Im\eta} 
\end{equation} 
holds (see \cite[Lemma 2.6]{SilBai95}). We can thus work with $\tQ$ in place of $\hQ$ for establishing Proposition~\ref{hQ-hG}. 

For $k \in [n]$, let 
\[
w_k^{(N,K)} = \frac{1}{\sqrt{n}} 
    \sum_{\ell=0}^{n-1} e^{2\imath\pi k\ell/n} \tilde x_\ell^{(N,K)}   
\]
be the discrete Fourier transform of the finite sequence 
$(\tilde x_0, \ldots, \tilde x_{n-1})$, and define the matrix 
\[
W^{(N,K)} = \begin{bmatrix} w_0^{(N,K)} & \cdots & w_{n-1}^{(N,K)} 
  \end{bmatrix} = \tX^{(N,K)} {\sF} \in \CC^{N\times n} 
\]
where $\sF$ is the Fourier matrix defined in~\eqref{four}.  Since the $\tilde
x_\ell$ are built through a circular convolution, the vectors $w_k$ are
independent, and $w_k \sim \cN_\CC(0, n^{-1} \hS(e^{2\imath\pi k / n}))$.
Recalling that $J = \sF \Omega \sF^*$ with $\Omega =
\diag(\omega^\ell)_{\ell=0}^{n-1}$ and $\omega = \exp(-2\imath\pi / n)$, we 
can write 
\[
\tQ(z,\eta) = (\bH(\tX J^L \tX^* - z) - \eta I)^{-1} = 
 (\bH(W \Omega^L W^* - z) - \eta I)^{-1} .
\]
The remainder of the proof will be devoted towards showing that 
\begin{equation}
\label{tQ-hG} 
\left\| \EE \tQ^{(N,K)}(z,\eta) - \hG^{(N,K)}(z,\eta) \right\| \leq 
 C N^{-1/2} , 
\end{equation}
taking advantage of the independence of the columns of $W$.  This bound, used
in conjunction with the bound~\eqref{fin-rk}, immediately leads to the result
of Proposition~\ref{hQ-hG}. 

The proof of Inequality~\eqref{tQ-hG} relies on the NP inequality that we used
above, as well as on the well-known Integration by Parts (IP) formula for
Gaussian functionals \cite{gli-jaf-(livre87), kho-pastur93}.  Recalling the
definition of the vector $\bv$ and the functional $\varphi$ after 
Proposition~\ref{np-Q} above, the IP formula reads as 
\[
\EE v_k \varphi( \bv  ) = 
\sum_{\ell = 0}^{n-1} \left[ \Sigma \right]_{k \ell} 
\EE \left[ \frac{\partial \varphi(\bv)}{\partial \bar v_{\ell}} \right].
\]
Write 
$\tQ(z,\eta) = \begin{bmatrix} \tQ_{00}  & \tQ_{01} \\
  \tQ_{10}  & \tQ_{11} \end{bmatrix}$, 
and $W = \begin{bmatrix} w_{ij} \end{bmatrix}_{i\in[N],j\in[n]}$. By
reproducing verbatim the derivation that we made to obtain the
Identity~\eqref{deriv}, we have 
\[
\frac{\partial a^* \tQ_{uv} b}{\partial \bar w_{ij}}
 = - \begin{bmatrix} a^* \tQ_{u1} W \Omega^{-L} \end{bmatrix}_{j} 
     \begin{bmatrix} \tQ_{0v} b \end{bmatrix}_i  
  - \begin{bmatrix} a^* \tQ_{u0} W \Omega^{L} \end{bmatrix}_{j} 
     \begin{bmatrix} \tQ_{1v} b \end{bmatrix}_i  . 
\]
In the subsequent derivations, we write $\hS_k = \hS(e^{2\imath\pi k / n})$.
Let $a,b$ be two constant vectors in $\CC^N$.  Write $b = [ b_0, \ldots,
b_{N-1} ]^\T$, and let 
$\alpha_{uv}(\ell) = \EE \begin{bmatrix} [ a^* \tQ_{uv} W ]_\ell 
  [ W^* b ]_\ell \end{bmatrix}$. By the IP formula, we have  
\begin{align*}
& \alpha_{uv}(\ell) \\
&= 
\sum_{i\in[N]} \EE \begin{bmatrix} 
 [ a^* \tQ_{uv} ]_i w_{i\ell} [ W^* b ]_\ell \end{bmatrix}  
= \frac 1n \sum_{i,m\in[N]} [ \hS_\ell ]_{im} 
 \EE \frac{\partial ( [ a^* \tQ_{uv} ]_i [ W^* b ]_\ell)}
  {\partial \bar w_{m\ell}}  \\
&= \frac 1n \sum_{i,m} [ \hS_\ell ]_{im} \EE\Bigl[ 
 - [ a^* \tQ_{u1} W \Omega^{-L} ]_\ell [\tQ_{0v}]_{mi} [ W^* b ]_\ell 
 - [ a^* \tQ_{u0} W \Omega^{L} ]_\ell [\tQ_{1v}]_{mi} [ W^* b ]_\ell \\ 
& \quad\quad \quad\quad \quad\quad \quad \ 
 + [ a^* \tQ_{uv} ]_i b_m \Bigr] \\
&= 
 - \EE \Bigl[ [ a^* \tQ_{u1} W ]_\ell [ W^* b ]_\ell 
    \omega^{-\ell L} \frac{\tr \tQ_{0v}\hS_\ell}{n} \Bigr] 
 - \EE \Bigl[ [ a^* \tQ_{u0} W ]_\ell [ W^* b ]_\ell 
    \omega^{\ell L} \frac{\tr \tQ_{1v}\hS_\ell}{n} \Bigr] \\
&\phantom{=} + \frac{\EE [ a^* \tQ_{uv} \hS_\ell b ]}{n} . 
\end{align*} 
Write $\tqs_{uv}(\ell) = \EE \tr \tQ_{uv} \hS_\ell / n$. We shall isolate the
terms $\tqs_{0v}(\ell)$ and $\tqs_{1v}(\ell)$ in the last display by resorting 
to the following lemma. 
\begin{lemma}
The two inequalities,  
$\var \tqs_{uv}(\ell) \leq C / n^2$, and 
$\var [ a^* \tQ_{uv} W ]_\ell [ W^* b ]_\ell \leq C \| a \|^2 \| b \|^2 / n$ hold. 
\end{lemma}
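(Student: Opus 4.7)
The plan is to apply the Poincaré–Nash inequality~\eqref{np} to the Gaussian vector $\bv = \vect W$. The crucial structural observation is that the $n$ columns of $W$ are independent with $w_k\sim\cN_\CC(0, n^{-1}\hS_k)$, so $\Sigma := \EE\vect W(\vect W)^*$ is block-diagonal with the $N\times N$ blocks $n^{-1}\hS_k$ along its diagonal; this causes the PN quadratic forms to collapse into single-column sums indexed by $j\in[n]$. Derivatives will be computed exactly as in~\eqref{deriv}: for any fixed $a,b$,
$$\frac{\partial a^*\tQ_{uv}b}{\partial\bar w_{ij}} = -[a^*\tQ_{u1} W \Omega^{-L}]_j [\tQ_{0v}b]_i - [a^*\tQ_{u0} W\Omega^L]_j [\tQ_{1v}b]_i,$$
and the deterministic bounds $\|\tQ\|\leq 1/\Im\eta$ together with $\sup_N\|\hS_k\|\leq 2\bM$ will provide all the control needed.

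For the first bound, specialize $a=e_k$, $b=\hS_\ell e_k$ and sum over $k\in[N]$ to get $\partial(\tr\tQ_{uv}\hS_\ell)/\partial\bar w_{ij} = -A_{ij}$, where $A := \tQ_{0v}\hS_\ell\tQ_{u1}W\Omega^{-L} + \tQ_{1v}\hS_\ell\tQ_{u0}W\Omega^L$. Plugging into~\eqref{np} and using the block-diagonal structure of $\Sigma$ yields
$$\var(\tr\tQ_{uv}\hS_\ell) \leq \tfrac{2}{n}\EE\sum_{j\in[n]} A^*_{\cdot,j}\hS_j A_{\cdot,j} \leq \tfrac{4\bM}{n}\EE\|A\|_\HS^2.$$
Submultiplicativity of $\|\cdot\|_\HS$ together with unitarity of $\Omega^{\pm L}$ yields $\|A\|_\HS \leq C\|W\|_\HS$, and $\EE\|W\|_\HS^2 = \sum_k n^{-1}\tr\hS_k = O(N)$, so $\var(\tr\tQ_{uv}\hS_\ell) = O(N/n) = O(1)$ by~\eqref{n/N}. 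Dividing by $n^2$ produces the required $O(1/n^2)$ bound.

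For the second bound, take $\varphi = (a^*\tQ_{uv}w_\ell)(w_\ell^*b)$. The derivative with respect to $\bar w_{ij}$ decomposes into a resolvent-variation piece (the derivative formula with $b$ replaced by $w_\ell$, multiplied by the scalar $w_\ell^*b$) plus an explicit-$w_\ell$ piece $(a^*\tQ_{uv}w_\ell)\delta_{j\ell}b_i$; a symmetric decomposition holds for $\partial\varphi/\partial w_{ij}$. Inserting into~\eqref{np} and handling the two pieces separately, the explicit piece contributes at most $\tfrac{1}{n}\EE|a^*\tQ_{uv}w_\ell|^2\cdot b^*\hS_\ell b$, which is $O(\|a\|^2\|b\|^2/n)$ since $\EE\|w_\ell\|^2 = n^{-1}\tr\hS_\ell = O(1)$ and $\|\tQ\|\leq 1/\Im\eta$. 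The resolvent-variation piece reduces to a contribution of the form $Cn^{-1}\|a\|^2\|b\|^2\EE[\|W\|^2\|w_\ell\|^4]$, also $O(\|a\|^2\|b\|^2/n)$, once one invokes $\|w_\ell\|\leq\|W\|$ together with polynomial moment bounds $\EE\|W\|^p=O(1)$, obtainable as in Lemma~\ref{spec_norm} adapted to $W$. The only genuine subtlety is the coupling between $\tQ$ and $W$ in the second estimate—the derivative contains $[\tQ_{uv}w_\ell]_i$ while $\tQ$ itself depends on $w_\ell$ through the rank-one perturbation $w_\ell w_\ell^*$—but this is sidestepped by using only deterministic spectral-norm bounds on $\tQ$ inside the expectation and invoking the moment bounds on $\|W\|$ independently; sharper conditioning would improve constants but is not needed for the stated rates.
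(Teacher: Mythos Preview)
Your proposal is correct and follows essentially the same approach as the paper: both apply the Poincar\'e--Nash inequality~\eqref{np} to $\vect W$, compute the derivatives via~\eqref{deriv}, and bound the resulting quadratic forms using $\|\tQ\|\leq 1/\Im\eta$, $\|\hS_k\|\leq 2\bM$, and moment bounds on $\|W\|$ (the paper invokes Lemma~\ref{spec_norm} exactly as you suggest). Your explicit mention of the block-diagonal covariance of $\vect W$ is the one structural point the paper leaves implicit, but the three-term decomposition of $\partial\varphi/\partial\bar w_{ij}$ and the term-by-term estimates you outline match the paper's argument line for line.
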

\begin{proof}
The first bound is obtained by repeating almost word for word the 
proof of Proposition~\ref{np-Q}. 
To obtain the second bound, we also use the NP inequality again. We start by 
writing 
\begin{align*}
\frac{\partial [ a^* \tQ_{uv} W ]_\ell [ W^* b ]_\ell}{\partial \bar w_{ij}} 
&= \sum_{k} 
\frac{\partial[ a^* \tQ_{uv}]_k w_{k\ell} [ W^* b ]_\ell}{\partial\bar w_{ij}} 
  \\
&= [ a^* \tQ_{uv} W ]_\ell b_i \1_{j=\ell}  
   - [ a^* \tQ_{u1} W\Omega^{-L} ]_j [ Q_{0v} W ]_{i\ell} [ W^* b ]_\ell  \\
&\phantom{=} 
  - [ a^* \tQ_{u0} W\Omega^{L} ]_j [ Q_{1v} W ]_{i\ell}  [ W^* b ]_\ell .
\end{align*}  
We focus on the second term at the right hand side of
Inequality~\eqref{np}, treating separately the three terms at the right hand
side of the last display. Starting with the first term, we get 
\begin{multline*} 
\frac 1n \sum_{i_1, i_2\in[N]} 
\EE \overline{[ a^* \tQ_{uv} W ]_\ell b_{i_1}} [\hS_\ell]_{i_1,i_2} 
{[ a^* \tQ_{uv} W ]_\ell b_{i_2}}  \\ 
= \frac 1n b^* \hS_\ell b \, \EE | [ a^* \tQ_{uv} W ]_\ell |^2 
\leq \| a \|^2 \| b \|^2 C / n. 
\end{multline*} 
Turning to the second of these terms, we have 
\begin{align*} 
&\frac 1n \sum_{i_1, i_2\in[N]} \sum_{j\in [n]} 
\EE | [ W^* b ]_\ell |^2 \, | [ a^* \tQ_{u1} W\Omega^{-L} ]_{j}|^2 
  \overline{[ Q_{0v} W ]_{i_1\ell}} [\hS_j]_{i_1,i_2} [ Q_{0v} W ]_{i_2\ell} \\
&\leq \frac{\bM}{n}  
 \EE | [ W^* b ]_\ell |^2 \, \| a^* \tQ_{u1} W \|^2
  \| [ Q_{0v} W ]_{\cdot,\ell} \|^2 \\ 
&\leq \| a \|^2 \| b \|^2 C / n^2, 
\end{align*} 
where the last inequality can be obtained by applying, \emph{e.g.}, 
Lemma~\ref{spec_norm} along with standard inequalities. 
The third term can be handled similarly. 
\end{proof} 

Thanks to these bounds and to Cauchy-Schwarz inequality, we obtain
the identity 
\[
\alpha_{uv}(\ell) = -\alpha_{u0}(\ell) \omega^{\ell L} \tqs_{1v}(\ell) 
  - \alpha_{u1}(\ell) \omega^{-\ell L} \tqs_{0v}(\ell) 
  + n^{-1} \EE [ a^* \tQ_{uv} \hS_\ell b ] + \varepsilon, 
\]
with $\| \varepsilon \| \leq C \| a \| \| b \| n^{-3/2}$. 
This leads to the system of equations 
\[
\begin{bmatrix} 
1+ \omega^{\ell L} \tqs_{10}(\ell) &  \omega^{-\ell L} \tqs_{00}(\ell) \\
 \omega^{\ell L} \tqs_{11}(\ell) & 1+\omega^{-\ell L} \tqs_{01}(\ell) 
\end{bmatrix} 
\begin{bmatrix} \alpha_{u0}(\ell) \\ \alpha_{u1}(\ell) \end{bmatrix} 
= 
\frac 1n \begin{bmatrix}
  \EE [ a^* \tQ_{u0} \hS_\ell b ] \\ 
  \EE [ a^* \tQ_{u1} \hS_\ell b ] 
\end{bmatrix} 
+ \bs\varepsilon, 
\]
with $\| \bs\varepsilon \| \leq C \| a \| \| b \| n^{-3/2}$. 
The matrix at the left hand side of this expression, that we denote as 
$T_\ell$, is written as 
\[
T_\ell = I_2 + \mcT\left( (I_2\otimes \hS_\ell) 
 \left( \tQ^\T 
 \begin{bmatrix} & \omega^{-\ell L} \\ \omega^{\ell L} \end{bmatrix} 
 \right)\right) .
\]
Assume that $K$ is large enough so that $\| \hS_\ell \| \leq 2\bM$ for all 
$\ell\in[n]$, and take $\Im\eta \geq 4 \gmax \bM$. Then by Lemma~\ref{mct} we get that 
$\| T_\ell - I_2 \| \leq 1/2$. Thus, the determinant 
\[
d(\ell) = (1 + \omega^{\ell L} \tqs_{10}(\ell)) 
     (1 + \omega^{-\ell L} \tqs_{01}(\ell)) - \tqs_{00}(\ell) \tqs_{11}(\ell)
\]
of $T_\ell$ is such that $| d(\ell) |$ is bounded away from zero uniformly in
$N$ and $\ell$.  Solving our system, and reusing henceforth the notations 
$\varepsilon$ and $\bs\varepsilon$ at will, we get that 
\begin{equation} 
\label{alpha} 
\begin{bmatrix} \alpha_{u0}(\ell) \\ \alpha_{u1}(\ell) \end{bmatrix} 
= 
\frac 1n \frac{1}{d(\ell)} 
\begin{bmatrix}
(1 + \omega^{-\ell L} \tqs_{01}(\ell)) &  
 - \omega^{-\ell L} \tqs_{00}(\ell)  \\
- \omega^{\ell L} \tqs_{11}(\ell) &  (1 + \omega^{\ell L} \tqs_{10}(\ell)) 
\end{bmatrix}
\begin{bmatrix}
\EE [ a^* \tQ_{u0} \hS_\ell b ] \\
\EE [ a^* \tQ_{u1} \hS_\ell b ] 
\end{bmatrix}
+ \bs\varepsilon, 
\end{equation} 
with $\| \bs\varepsilon \| \leq C \| a \| \| b \| n^{-3/2}$. 

Now, keeping in mind the identity 
$\tQ \Bigl( \bH(W\Omega^L W^*) - \begin{bmatrix} \eta & z \\ \bar z & \eta
 \end{bmatrix} \otimes I_N \Bigr) = I$, our purpose is to find an 
approximant of the matrix 
\[
\tQ \bH(W\Omega^L W^*) = 
 \begin{bmatrix} 
 \tQ_{01} W \Omega^{-L} W^* & \tQ_{00} W \Omega^{L} W^* \\
 \tQ_{11} W \Omega^{-L} W^* & \tQ_{10} W \Omega^{L} W^*
\end{bmatrix} . 
\]
To that end, we write 
\[
\EE [a^* \tQ_{u0} W \Omega^L W^* b ] = 
 \sum_{\ell\in[n]} \omega^{\ell L} \alpha_{u0}(\ell) , 
\quad\text{and}\quad 
\EE [a^* \tQ_{u1} W \Omega^{-L} W^* b ] = 
 \sum_{\ell\in[n]} \omega^{-\ell L} \alpha_{u1}(\ell) ,   
\]
and we use Equation~\eqref{alpha} to obtain 
\begin{align} 
 & \begin{bmatrix} 
\EE a^* \tQ_{u1} W \Omega^{-L} W^* b & \EE a^* \tQ_{u0} W \Omega^{L} W^* b 
 \end{bmatrix} \nonumber \\ 
 &= 
 a^* \EE \begin{bmatrix} \tQ_{u0} & \tQ_{u1} \end{bmatrix} 
 \times 
 \frac 1n \sum_{\ell\in[n]} \frac{1}{d(\ell)} 
 \begin{bmatrix} - \tqs_{11}(\ell) & \omega^{\ell L} + \tqs_{01}(\ell) \\
   \omega^{-\ell L} + \tqs_{10}(\ell) & - \tqs_{00}(\ell)
    \end{bmatrix} 
 \otimes (\hS_\ell b) 
   + \bs\varepsilon 
\label{int-C} 
\end{align} 
with $\| \bs\varepsilon \| \leq C \| a \| \| b \| n^{-1/2}$ (note that we lost
a factor of $n^{-1}$ because of the summation $\sum_{\ell\in[n]}$). 
Let $U_{L,\ell} \eqdef U_L(e^{2\imath\pi\ell/n}) = 
 \begin{bmatrix} & \omega^{\ell L} \\ \omega^{-\ell L} \end{bmatrix}$, and 
define the matrix 
\begin{align*} 
C(z,\eta) &\eqdef 
\frac 1n \sum_{\ell\in[n]} \frac{1}{d(\ell)} \begin{bmatrix} 
- \tqs_{11}(\ell) & \omega^{\ell L} + \tqs_{01}(\ell) \\ 
\omega^{- \ell L} + \tqs_{10}(\ell) & - \tqs_{00}(\ell) 
\end{bmatrix} \otimes \hS_\ell  \\ 
 &= \frac 1n \sum_{\ell\in[n]} 
\left( \begin{bmatrix} \tqs_{00}(\ell) & \tqs_{01}(\ell) \\
 \tqs_{10}(\ell) & \tqs_{11}(\ell) \end{bmatrix} + U_{L,\ell} 
 \right)^{-1} \otimes \hS_\ell \\ 
 &= \frac 1n \sum_{\ell\in[n]}  
\left( \mcT( (I_2\otimes \hS_\ell) \EE\tQ  ) + U_{L,\ell} \right)^{-1} 
    \otimes \hS_\ell  . 
\end{align*} 
Then, Equation~\eqref{int-C} can be rewritten as 
\begin{align*} 
\EE a^* \tQ_{u1} W \Omega^{-L} W^* b 
  &= a^* \EE \begin{bmatrix} \tQ_{u0} & \tQ_{u1} \end{bmatrix} 
    C \begin{bmatrix} b \\ 0 \end{bmatrix} + \varepsilon, \\ 
\EE a^* \tQ_{u0} W \Omega^{L} W^* b &= 
   a^* \EE \begin{bmatrix} \tQ_{u0} & \tQ_{u1} \end{bmatrix} 
    C \begin{bmatrix} 0 \\ b \end{bmatrix} + \varepsilon', 
\end{align*} 
with $|\varepsilon|, |\varepsilon'| \leq C \| a \| \| b \| n^{-1/2}$. 
Let $\bs a$ and $\bs b$ be two constant vectors in $\CC^{2N}$. Recalling 
the expression of $\tQ \bH(W\Omega^L W^*)$ above, the last display can be 
written compactly as 
\[
\left| \bs a \left( \EE \tQ \bH(W\Omega^L W^*) -  \EE \tQ C \right) \bs b 
 \right| \leq C \| \bs a \| \| \bs b \| n^{-1/2},
\]
equivalently, 
\[ 
\left\| \EE \tQ \bH(W\Omega^L W^*) -  \EE \tQ C \right\| \leq C n^{-1/2}. 
\] 
Since $\EE\tQ \in \mathfrak S^{2N}$, it is easy to prove, mostly by mimicking 
the first part of the proof of Theorem~\ref{sys}, that the matrix function 
\[
R(z, \eta)
 = \Bigl( C(z,\eta) - \begin{bmatrix} \eta & z \\ \bar z & \eta \end{bmatrix} 
  \otimes I_N \Bigr)^{-1} 
\]
is well defined for $\eta\in\CC\setminus\RR$, and 
$R(z, \cdot)\in\mathfrak S^{2N}$. In particular, 
$\| R(z,\eta) \| \leq 1 / \Im\eta$ for $\eta\in \CC_+$. We therefore have 
\begin{align*}
\| \EE \tQ - R \| &= \| \EE \tQ( R^{-1} - \tQ^{-1} ) R \| 
 = \| ( \EE \tQ C - \EE \tQ \bH(W\Omega^L W^*) ) R \| \\ 
 &\leq \| ( \EE \tQ C - \EE \tQ \bH(W\Omega^L W^*) \| \, \| R \| \\
 &\leq C n^{-1/2}.  
\end{align*}

To complete the proof of Proposition~\ref{hQ-hG}, it remains to control the
norm $\| R - \hG \|$. Remembering that $\hG$ is defined through the implicit
equation in Proposition~\ref{hG-G}, we use the contraction
property of $\ceF_{\hS,z}(\cdot, \eta)$ to gain this control.  By mimicking the
derivation that led to Inequality~\eqref{contr}, we obtain that if $K$ is large
enough so that $\| \hS \|^\TT_\infty \leq 2\bM$, and if 
$\Im\eta$ is large enough, then 
\[
\| \ceF_{\hS,z}(M, \eta) - \ceF_{\hS,z}(M',\eta) \| \leq 
 \frac 12 \| M - M' \|. 
\]
Notice that $R = \ceF_{\hS,z}(\EE\tQ, \eta)$. Therefore, if $\Im\eta$ is large 
enough, we have 
\[
\| R - \hG \| = \| \ceF_{\hS, z}(\EE \tQ,\eta) - \ceF_{\hS, z}(R,\eta) 
  + \ceF_{\hS, z}(R,\eta) - \ceF_{\hS, z}(\hG,\eta) \|  
 \leq \frac 12 \| \EE\tQ - R \| + \frac 12 \| R - \hG \| ,  
\] 
leading to 
\[
\| R - \hG \| \leq \| \EE\tQ - R \| \leq C n^{-1/2} . 
\]
Finally $\| \EE\tQ - \hG \| \leq \| \EE \tQ - R \| + \| R - \hG \| 
 \leq C n^{-1/2}$. The proof of Proposition~\ref{hQ-hG} can now be completed 
by combining this bound with the inequality~\eqref{fin-rk}.

\subsubsection*{Proof of Proposition~\ref{Q-hQ}} 

We can assume that the processes $(x^{(N)}_k)_k$ and $({\hat x}^{(N,K)}_k)_k$
that constitute the columns of the matrices $X^{(N)}$ and $\hX^{(N,K)}$
respectively, are generated by applying the filters with Fourier transforms
$n^{-1/2}S^{(N)}(e^{\imath\theta})^{1/2}$ and
$n^{-1/2}\hS^{(N,K)}(e^{\imath\theta})^{1/2}$ respectively to the same
i.i.d.~process $\bs\xi^{(N)} = (\bs\xi^{(N)}_{k})_{k\in\ZZ}$ with $\bs
\xi^{(N)}_{k} \sim \cN_\CC(0, I_N)$. This being the case, we have  
\[
\EE \| x^{(N)}_k - \hat x^{(N,K)}_k \|^2 = 
 \frac 1n \frac{1}{2\pi} \int_0^{2\pi} 
  \| S^{(N)}(e^{\imath\theta})^{1/2} - \hS^{(N,K)}(e^{\imath\theta})^{1/2} 
   \|_\HS^2 d\theta. 
\]
Hence, by \eqref{cvg-K}, we get that 
\[
\sup_N \EE \| x^{(N)}_k - \hat x^{(N,K)}_k \|^2 \xrightarrow[K\to\infty]{} 0 .
\]
For any two square matrices $M_1$ and $M_2$ of same order, by Cauchy-Schwarz 
inequality, 
$| \tr M_1 M_2 | \leq \| M_1 \|_\HS \| M_2 \|_\HS$. Thus, by the resolvent identity, 
\begin{align*}
&\frac{1}{N^2}
  \left| \tr D ( \EE Q(z, \eta) - \EE \hQ(z, \eta) ) \right|^2 \\ 
&\leq \frac{1}{N^2}
 \EE \left| \tr D Q(z,\eta)( \bH(\hX J^L \hX^*) - \bH(X J^L X^*) ) 
                                                \hQ(z,\eta) \right|^2 \\
&\leq \frac{1}{(\Im\eta)^4} \frac 1N 
 \EE \| \bH(\hX J^L \hX^*) - \bH(X J^L X^*) \|_\HS^2 .
\end{align*} 
Writing 
$\EE \| X J^L X^* - \hX J^L \hX^* \|_\HS^2 = 
\EE \| (X - \hX) J^L X^* + \hX J^L (X - \hX)^* \|_\HS^2$, it is enough to
bound $\EE \| (X - \hX) J^L X^* \|_\HS^2$. Given a constant $\kappa > 0$, we 
have 
\begin{align*}
\frac 1n \EE \| (X - \hX) J^L X^* \|_\HS^2 &\leq  
\frac 1n \EE \| (X - \hX) \|_\HS^2 \| X \|^2 \\
&\leq \frac{\kappa^2}{n} \EE \| (X - \hX) \|_\HS^2 + 
\frac 1n \EE \| (X - \hX) \|_\HS^2 \| X \|^2 \1_{\| X \| > \kappa} \\
&\leq \kappa^2 \EE \| x_k - \hat x_k \|^2 
  + \frac 1n (\EE (\| X - \hX \|_\HS^4)^{1/2} 
  (\EE \| X \|^4 \1_{\| X \| > \kappa})^{1/2}  . 
\end{align*}  
With the help of Lemma \ref{spec_norm}, the second term in the last expression
can be made as small as desired, independently of $N$,  when $N$ is large
enough, by choosing $\kappa$ large enough. The first term converges to 
zero as $K\to\infty$ as shown above. 

\subsection{Corollary~\ref{bh-20}: sketch of the proof} 

We have here $G(z,\eta) = \cF_{I_N,z}(G(z,\eta),\eta)$ for $\eta\in\CC_+$.
We also know from the proof of Theorem~\ref{sys} that if we start with 
$P_0(z,\eta) = - \eta^{-1} I_{2N}$, then the iterates 
$P_{k+1}(z,\eta) = \cF_{I,z}(P_{k}(z,\eta),\eta)$ converge to 
$G(z,\eta)$ uniformly on the compacts of $\CC_+$ in the parameter $\eta$. 
Writing $\mcT(P_k) = \begin{bmatrix} p_{00,k} & p_{01,k} \\ 
  p_{10,k} & p_{11,k} \end{bmatrix}$ where the argument $(z,\eta)$ is omitted, 
by developing the expression of $\cF_{I_N,z}(P_k,\eta)$, we get that
\[
P_{k+1} = 
 \begin{bmatrix} 
\frac{1}{2\pi} \int  \frac{p_{11,k}}{\Delta_k(e^{\imath\theta})}d\theta - \eta 
  & - \frac{1}{2\pi} \int \frac{(p_{01,k}+e^{-\imath \theta})}
  {\Delta_k(e^{\imath\theta})}d\theta  - z \\ 
 - \frac{1}{2\pi} \int \frac{(p_{10,k}+e^{\imath \theta})}
  {\Delta_k(e^{\imath\theta})}d\theta  
   - \bar z 
 & \frac{1}{2\pi} \int \frac{p_{00,k}}
  {\Delta_k(e^{\imath\theta})} d\theta -\eta 
  \end{bmatrix}^{-1} \otimes I_N , 
\]
where $\Delta_k(e^{\imath\theta}) = p_{00,k} p_{11,k} - 
 (p_{01,k}+e^{-\imath \theta}) (p_{10,k}+e^{\imath \theta})$. 
Setting from now on $\eta = \imath t$ with $t > 0$, it is obvious that 
$p_{00,0} = p_{11,0} = \imath h_0$ for $h_0 = 1/t > 0$, and 
$p_{01,0} = \bar p_{10,0}$ ($=0$ here). Assuming that 
$p_{00,k} = p_{11,k} = \imath h_k$ for some $h_k > 0$ and 
$p_{01,k} = \bar p_{10,k}$, it is not difficult to show by developing the
last display that the same properties hold for $P_{k+1}$. Passing to the limit,
we obtain that 
$g_{00}(z,\imath t) = g_{11}(z,\imath t) = \imath h$ for some $h > 0$, and
$g_{01}(z,\imath t) = \bar g_{10}(z,\imath t)$, where we wrote
$\mcT(G) = \begin{bmatrix} g_{00} & g_{01} \\ 
  g_{10} & g_{11} \end{bmatrix}$. 
With this at hand, the equation 
$G(z,\imath t) = \cF_{I,z}(G(z,\imath t),\imath t)$ becomes 
\begin{align*} 
G &= \Bigl( \frac{1}{2\pi} \int_0^{2\pi} 
  \frac{1}{\tDelta(e^{\imath\theta})} \begin{bmatrix} 
   - \imath h & g_{01} + e^{-\imath  \theta} \\
   \bar g_{01} + e^{\imath  \theta} & - \imath h \end{bmatrix} d\theta 
  - \begin{bmatrix} \imath t & z \\ \bar z & \imath t \end{bmatrix}  
 \Bigr)^{-1} \otimes I_N \\
 &= \frac nN \begin{bmatrix} \imath h & g_{01} \\ \bar g_{01} & 
 \imath h \end{bmatrix} \otimes I_N
\end{align*} 
where $\tDelta(e^{\imath\theta}) = h^2 + |g_{01} + e^{-\imath  \theta}|^2$.  
After some calculation, this equation can be equivalently restated in the form of 
the following system of two equations:
\begin{align*}
\frac Nn &= 
 \frac{1}{2\pi} \int_0^{2\pi} 
  \frac{h^2 + |g_{01}|^2 + g_{01} e^{\imath\theta}}{\tDelta(e^{\imath\theta})} 
  d\theta 
 + t h - \bar z g_{01} , \\
0 &= \frac{1}{2\pi} \int_0^{2\pi} 
  \frac{h e^{-\imath\theta}}{\tDelta(e^{\imath\theta})} d\theta 
 -  z h - t g_{01} . 
\end{align*} 
This coincides with the system of equations (33a) and (33b) of \cite{bos-hac-20}.

\begin{appendix}
\section{Proof of Lemma~\ref{gauss}} 
\begin{enumerate}
\item 
Using Markov's inequality,  an obvious integration with respect to the 
exponential distribution, and using the inequality 
$-\log(1-x) \leq 2x$ for $x\in [0,1/2]$, we have 
\begin{align*}
&\PP\left[ \| \Sigma^{1/2} \xi \| \geq \sqrt{2 N t} \right] \\ 
&= \PP\left[ \sum_{\ell=0}^{N-1} s_\ell(\Sigma) | \xi_{\ell} |^2 
    \geq 2 N t \right] 
= \PP\left[ \exp{\frac{\sum_{\ell=0}^{N-1} s_\ell(\Sigma) | \xi_{\ell} |^2}
   {2\|\Sigma\|}}  \geq \exp \frac{N t}{\|\Sigma\|} \right] \\
&\leq e^{- N t/\|\Sigma\|}  
 \EE\left[ \exp{\frac{\sum_{\ell=0}^{N-1} s_\ell(\Sigma) | \xi_{\ell} |^2}
   {2\|\Sigma\|}}  \right] 
= e^{- \frac{N t}{\|\Sigma\|}}  
 \prod_{\ell=0}^{N-1} \left( 1 - s_\ell(\Sigma)/(2\|\Sigma\|) \right)^{-1} \\
&\leq \exp \Bigl( - \frac{Nt}{\|\Sigma\|} + 
  \sum_{\ell=0}^{N-1} \frac{s_\ell(\Sigma)}{\|\Sigma\|} \Bigr) \\ 
&\leq e^{-(t /\|\Sigma\| - 1)N} . 
\end{align*} 
 
\item 
We have $\| \Sigma^{1/2} \xi \|^2 \eqlaw 
 \sum_{\ell=0}^{N-1} s_\ell(\Sigma) | \xi_{\ell} |^2 
\geq \alpha \sum_{\ell=0}^{m-1} |\xi_\ell|^2$.
Thus, by a calculation similar to
above, 
\[
\PP[ \| \Sigma^{1/2} \xi \| \leq \sqrt{\alpha m / 2} ] \leq 
 \PP[ \sum_{\ell=0}^{m-1} |\xi_\ell|^2 \leq m / 2] \leq 
\exp(- c_{\ref{gauss},\ref{normgauss}} m).
\]

\item We obviously have that 
\[
(\xi+a)^* M (\xi+a) = (\xi+a)^* \Re M (\xi+a) +
\imath \, (\xi+a)^* \Im M (\xi+a),
\]
 and both $(\xi+a)^* \Re M (\xi+a)$ and
$(\xi+a)^* \Im M (\xi+a)$ are real.  Furthermore, 
\[
\| M \|_\HS^2 = \| \Re M
\|_\HS^2 + \| \Im M \|_\HS^2.
\]
Let us assume that $\| \Re M \|_\HS \geq \|
M \|_\HS / \sqrt{2}$, otherwise, we replace $\Re M$ with $\Im M$. From these
facts, we deduce that 
\[
\cL((\xi+a)^* M (\xi+a) / \| M \|_\HS, t) \leq
\cL((\xi+a)^* \Re M (\xi+a) / \| \Re M \|_\HS, \sqrt{2} t).
\]
By a spectral factorization of the Hermitian matrix $\Re M$, 
\[
(\xi+a)^* \Re M (\xi+a) / \| \Re M \|_\HS \eqlaw \sum_{\ell\in [N]}
\beta_\ell |\xi_\ell + u_\ell|^2,
\]
where the $u_\ell$ are deterministic
complex numbers, and the $\beta_\ell$ are deterministic reals that satisfy
$\sum \beta_\ell^2 = 1$, since they are the eigenvalues of $\| \Re M
\|_\HS^{-1} \Re M$. The random variable $|\xi_\ell+u_\ell|^2$ is non-central chi-squared with two
degrees of freedom and has the density 
\[
f_\ell(x) =
\exp(-(x+|u_\ell|^2)) \bs I_0(2|u_\ell|\sqrt{x})
\]
on $\RR_+$,  where $\bs I_0$
is the modified Bessel function. Since these densities are bounded by one
\cite{abr-ste-64}, we  can get the result from \cite[Th.~1.2]{rud-ver-imrn15}.  

\item By the restriction property of L\'evy's anti-concentration function,  
\[
\cL(\Sigma^{1/2} \xi, \sqrt{m} t) \leq \sup_{(d_0,\ldots, d_{m-1})\in \CC^m} 
\PP\left[ \sum_{\ell= 0}^{m-1} 
 \left| s_\ell(\Sigma)^{1/2} \xi_\ell - d_\ell \right|^2 
 \leq m t^2 \right] . 
\]
We furthermore use the following well-known \textit{tensorization} result (see \cite[Lemma 2.2]{rud-ver-advmath08}):  
Suppose $\{ w_0, \ldots, w_{m-1} \}$ is a 
collection of independent non-negative random variables such that 
there is a constant $c > 0$ such that for each $t \geq 0$,  
\[
\PP[ w_\ell \leq t ] \leq c t.
\]
Then there exists a constant
$c' > 0$ so that 
\[
\PP[ \sum_{\ell=0}^{m-1} w_\ell^2 \leq m t^2 ] \leq (c' t)^m.
\]  
In the present case, for each $\ell\in[m]$ and each $d_\ell \in\CC$, $w_\ell^2=
[| s_\ell(\Sigma)^{1/2} \xi_\ell - d_\ell |]^2$ is a non-central chi-squared
random variable with two degrees of freedom and has a density bounded by a
constant that depends only on $\alpha$, as can be checked from the previous 
item. Thus, the tensorization argument applies and the result follows. 

\item By a singular value decomposition of $M$, we obtain that 
\[
\PP[ \| M\xi \|^2 \geq t \| M \|_\HS^2 ] = 
 \PP[ \sum_{\ell=0}^{N-1} \sigma_\ell^2 | \xi_\ell |^2 \geq t ]
\]
where 
$\sum\sigma_\ell^2 = 1$. Writing 
\[
\PP[ \sum_{\ell} \sigma_\ell^2 | \xi_\ell |^2 \geq t ] = 
 \PP[ \exp( \sum_{\ell} \sigma_\ell^2 | \xi_\ell |^2 / 2 ) \geq 
   \exp(t/2) ]
\]
and following the arguments given for Item~\ref{small-var}, we easily get the
result.  
\end{enumerate} 
Proof of Lemma~\ref{gauss} is now complete. 
\end{appendix} 


\begin{thebibliography}{10}

\bibitem{abr-ste-64}
M.~Abramowitz and I.~A. Stegun.
\newblock {\em Handbook of Mathematical Functions with Formulas, Graphs, and
  Mathematical Tables}, volume~55 of {\em National Bureau of Standards Applied
  Mathematics Series}.
\newblock United States Department of Commerce, 1964.

\bibitem{BB2014free}
M.~Bhattacharjee and A.~Bose.
\newblock Large sample behaviour of high dimensional autocovariance matrices.
\newblock {\em Ann. Statist.}, 44(2):598--628, 2016.

\bibitem{BB2019}
M.~Bhattacharjee and A.~Bose.
\newblock Joint convergence of sample autocovariance matrices when $p/n \to 0$
  with application.
\newblock {\em Ann. Statist.}, 47(6):3470--3503, 2019.
\newblock To appear.

\bibitem{bol-97}
V.~Bolotnikov.
\newblock On a general moment problem on the half axis.
\newblock {\em Linear Algebra Appl.}, 255:57--112, 1997.

\bibitem{bor-cha-12}
Ch. Bordenave and D.~Chafa\"\i.
\newblock Around the circular law.
\newblock {\em Probab. Surv.}, 9:1--89, 2012.

\bibitem{BB2018}
A.~Bose and M.~Bhattacharjee.
\newblock {\em Large Covariance and Autocovariance Matrices.}
\newblock Chapman \& Hall/CRC, Boca Raton, London, New York, 2018.

\bibitem{bos-hac-20}
A.~Bose and W.~Hachem.
\newblock Smallest singular value and limit eigenvalue distribution of a class
  of non-hermitian random matrices with statistical application.
\newblock {\em J. Multi. Anal.}, 178:Paper number 104623. To appear, 2020.

\bibitem{bot-gru-livre05}
A.~B\"{o}ttcher and S.~M. Grudsky.
\newblock {\em Spectral {P}roperties of {B}anded {T}oeplitz {M}atrices}.
\newblock Society for Industrial and Applied Mathematics (SIAM), Philadelphia,
  PA, 2005.

\bibitem{bri-livre01}
D.~R. Brillinger.
\newblock {\em Time series: data analysis and theory}.
\newblock SIAM, 2001.

\bibitem{cha-bos-04}
S.~Chatterjee and A.~Bose.
\newblock A new method for bounding rates of convergence of empirical spectral
  distributions.
\newblock {\em J. Theoret. Probab.}, 17(4):1003--1019, 2004.

\bibitem{coo-hac-naj-ren-18}
N.~Cook, W.~Hachem, J.~Najim, and D.~Renfrew.
\newblock Non-{H}ermitian random matrices with a variance profile ({I}):
  deterministic equivalents and limiting {ESD}s.
\newblock {\em Electron. J. Probab.}, 23:Paper No. 110, 1--61, 2018.

\bibitem{dam-pus-sim-08}
D.~Damanik, A.~Pushnitski, and B.~Simon.
\newblock The analytic theory of matrix orthogonal polynomials.
\newblock {\em Surv. Approx. Theory}, 4:1--85, 2008.

\bibitem{fei-zee-97}
J.~Feinberg and A.~Zee.
\newblock Non-{H}ermitian random matrix theory: method of {H}ermitian
  reduction.
\newblock {\em Nuclear Phys. B}, 504(3):579--608, 1997.

\bibitem{GH2003}
J.~S. Geronimo and T.~P. Hill.
\newblock Necessary and sufficient condition that the limit of {S}tieltjes
  transforms is a {S}tieltjes transform.
\newblock {\em J. Approx. Theory}, 121(1):54--60, 2003.

\bibitem{ges-tse-00}
F.~Gesztesy and E.~Tsekanovskii.
\newblock On matrix-valued {H}erglotz functions.
\newblock {\em Math. Nachr.}, 218:61--138, 2000.

\bibitem{gir-84}
V.~L. Girko.
\newblock The circular law.
\newblock {\em Teor. Veroyatnost. i Primenen.}, 29(4):669--679, 1984.
\newblock English translation: Theory Probab. Appl. 29(4), 694-706, 1984.

\bibitem{gli-jaf-(livre87)}
J.~Glimm and A.~Jaffe.
\newblock {\em Quantum Physics. {\rm A functional integral point of view}}.
\newblock Springer-Verlag, New York, second edition, 1987.

\bibitem{got-tik-ap10}
F.~G{\"o}tze and A.~Tikhomirov.
\newblock The circular law for random matrices.
\newblock {\em Ann. Probab.}, 38(4):1444--1491, 2010.

\bibitem{hachem-loubaton-najim07}
W.~Hachem, Ph. Loubaton, and J.~Najim.
\newblock Deterministic equivalents for certain functionals of large random
  matrices.
\newblock {\em Ann. Appl. Probab.}, 17(3):875--930, 2007.

\bibitem{HorJoh90}
R.~A. Horn and C.~R. Johnson.
\newblock {\em Matrix Analysis}.
\newblock Cambridge University Press, Cambridge, 1990.
\newblock Corrected reprint of the 1985 original.

\bibitem{kai-vie-mor-78}
T.~Kailath, A.~Vieira, and M.~Morf.
\newblock Inverses of {T}oeplitz operators, innovations, and orthogonal
  polynomials.
\newblock {\em SIAM Rev.}, 20(1):106--119, 1978.

\bibitem{kho-pastur93}
A.~M. Khorunzhy and L.~A. Pastur.
\newblock Limits of infinite interaction radius, dimensionality and the number
  of components for random operators with off-diagonal randomness.
\newblock {\em Comm. Math. Phys.}, 153(3):605--646, 1993.

\bibitem{lit-paj-rud-tom-05}
A.~E. Litvak, A.~Pajor, M.~Rudelson, and N.~Tomczak-Jaegermann.
\newblock Smallest singular value of random matrices and geometry of random
  polytopes.
\newblock {\em Adv. Math.}, 195(2):491--523, 2005.

\bibitem{nev-livre79}
P.~G. Nevai.
\newblock Orthogonal polynomials.
\newblock {\em Mem. Amer. Math. Soc.}, 18(213):v+185, 1979.

\bibitem{ngu-12}
H.~H. Nguyen.
\newblock On the least singular value of random symmetric matrices.
\newblock {\em Electron. J. Probab.}, 17:1--19, 2012.

\bibitem{pas-05}
L.~A. Pastur.
\newblock A simple approach to the global regime of {G}aussian ensembles of
  random matrices.
\newblock {\em Ukra\"\i n. Mat. Zh.}, 57(6):790--817, 2005.

\bibitem{pas-livre}
L.~A. Pastur and M.~Shcherbina.
\newblock {\em Eigenvalue Distribution of Large Random Matrices}, volume 171 of
  {\em Mathematical Surveys and Monographs}.
\newblock American Mathematical Society, Providence, RI, 2011.

\bibitem{pel-pel-16}
C.~Peligrad and M.~Peligrad.
\newblock The limiting spectral distribution in terms of spectral density.
\newblock {\em Random Matrices Theory Appl.}, 5(1):1650003, 1--19, 2016.
\newblock Erratum: 5(1):1792001-1 (2016).

\bibitem{ros-rov-livre85}
M.~Rosenblum and J.~Rovnyak.
\newblock {\em Hardy {C}lasses and {O}perator {T}heory}.
\newblock Oxford Mathematical Monographs. Oxford University Press, New York and
  Clarendon Press, Oxford, 1985.

\bibitem{roz-livre67}
Y.~A. Rozanov.
\newblock {\em Stationary {R}andom {P}rocesses}.
\newblock Holden-Day, Inc., San Francisco, Calif.-London-Amsterdam, 1967.
\newblock Translated from the Russian by A. Feinstein.

\bibitem{rud-ver-advmath08}
M.~Rudelson and R.~Vershynin.
\newblock The {L}ittlewood-{O}fford problem and invertibility of random
  matrices.
\newblock {\em Adv. Math.}, 218(2):600--633, 2008.

\bibitem{rud-ver-cpam09}
M.~Rudelson and R.~Vershynin.
\newblock Smallest singular value of a random rectangular matrix.
\newblock {\em Comm. Pure Appl. Math.}, 62(12):1707--1739, 2009.

\bibitem{rud-ver-imrn15}
M.~Rudelson and R.~Vershynin.
\newblock Small ball probabilities for linear images of high-dimensional
  distributions.
\newblock {\em Int. Math. Res. Not. IMRN}, (19):9594--9617, 2015.

\bibitem{SilBai95}
J.~W. Silverstein and Z.~D. Bai.
\newblock On the empirical distribution of eigenvalues of a class of
  large-dimensional random matrices.
\newblock {\em J. Multivariate Anal.}, 54(2):175--192, 1995.

\bibitem{sim-livre-11}
B.~Simon.
\newblock {\em Szeg\H{o}'s {T}heorem and its {D}escendants: Spectral theory for
  $L^2$ perturbations of orthogonal polynomials}.
\newblock M. B. Porter Lectures. Princeton University Press, Princeton, NJ,
  2011.

\bibitem{sta-tot-livre92}
H.~Stahl and V.~Totik.
\newblock {\em General {O}rthogonal {P}olynomials}, volume~43 of {\em
  Encyclopedia of Mathematics and its Applications}.
\newblock Cambridge University Press, Cambridge, 1992.

\bibitem{tao-topics}
T.~Tao.
\newblock {\em Topics in Random Matrix Theory}, volume 132 of {\em Graduate
  Studies in Mathematics}.
\newblock American Mathematical Society, Providence, RI, 2012.

\bibitem{tao-vu-aop10}
T.~Tao and V.~Vu.
\newblock Random matrices: universality of {ESD}s and the circular law.
\newblock {\em Ann. Probab.}, 38(5):2023--2065, 2010.
\newblock With an appendix by Manjunath Krishnapur.

\bibitem{tre-emb-livre05}
L.~N. Trefethen and M.~Embree.
\newblock {\em Spectra and {P}seudospectra: The behavior of nonnormal matrices
  and operators}.
\newblock Princeton University Press, Princeton, NJ, 2005.

\bibitem{ver-14}
R.~Vershynin.
\newblock Invertibility of symmetric random matrices.
\newblock {\em Random Structures \& Algorithms}, 44(2):135--182, 2014.

\bibitem{ver-livre18}
R.~Vershynin.
\newblock {\em High-Dimensional Probability: An Introduction with Applications
  in Data Science}.
\newblock Cambridge Series in Statistical and Probabilistic Mathematics.
  Cambridge University Press, 2018.

\bibitem{yao-yua-arxiv20}
J.~Yao and W.~Yuan.
\newblock On eigenvalue distributions of large auto-covariance matrices.
\newblock {\em arXiv preprint arXiv:2011.09165}, 2020.

\end{thebibliography}

\def\cprime{$'$} \def\cdprime{$''$} \def\cprime{$'$} \def\cprime{$'$}
  \def\cprime{$'$} \def\cprime{$'$}

\end{document}